\DeclareMathAlphabet{\mymathbb}{U}{bbold}{m}{n}
\newcommand{\rev}{\color{blue}}
\newtheorem{theorem}{Theorem}[section]
\newtheorem{prop}[theorem]{Proposition}
\newtheorem{lemma}[theorem]{Lemma}     
\newtheorem{fact}[theorem]{Fact}
\newtheorem{coro}[theorem]{Corollary}
\theoremstyle{definition}
\newtheorem{definition}[theorem]{Definition}
\newtheorem{example}[theorem]{Example}
\newtheorem{remark}[theorem]{Remark}
\newcommand{\ts}{\hspace{0.5pt}}
\newcommand{\nts}{\hspace{-0.5pt}}
\newcommand{\RR}{\mathbb{R}\ts}
\newcommand{\CC}{\mathbb{C}}
\newcommand{\JJ}{\mathbb{J}}
\newcommand{\ZZ}{\mathbb{Z}}
\newcommand{\NN}{\mathbb{N}}
\newcommand{\cA}{\mathcal{A}}
\newcommand{\cC}{\mathcal{C}}
\newcommand{\cM}{\mathcal{M}}
\newcommand{\emb}{\cM^{\ts\vphantom{g}\mathrm{e}}_{d}}
\newcommand{\gemb}{\cM^{\mathrm{ge}}_{d}}
\newcommand{\ma}{\mathfrak{m}_{\mathsf{a}}}
\newcommand{\mg}{\mathfrak{m}_{\mathsf{g}}}
\newcommand{\ee}{\ts\mathrm{e}}
\newcommand{\dd}{\,\mathrm{d}\ts}
\newcommand{\ii}{\ts\mathrm{i}}
\newcommand{\bs}{\boldsymbol}
\newcommand{\one}{\mymathbb{1}}
\newcommand{\nix}{\mymathbb{0}}
\newcommand{\Mat}{\mathrm{Mat}}
\newcommand{\GL}{\mathrm{GL}}
\newcommand{\cent}{\mathrm{cent}}
\newcommand{\comm}{\mathrm{comm}}
\newcommand{\imag}{\mathrm{Im\ts}}
\newcommand{\tr}{\mathrm{tr}}
\newcommand{\diag}{\mathrm{diag}}
\newcommand{\exend}{\hfill$\Diamond$}
\newcommand{\defeq}{\mathrel{\mathop:}=}
\newcommand{\eqdef}{=\mathrel{\mathop:}}
\newcommand{\myfrac}[2]{\frac{\raisebox{-2pt}{$#1$}}
  {\raisebox{0.5pt}{$#2$}}}
\newcommand{\cc}[1]{\ts \overline{\nts #1 \nts} \ts }
\newcommand{\mubar}{\ts\ts \overline{\nts\nts \mu \nts} \ts }
\begin{document}

\title[Embedding of Markov matrices for $d\leqslant 4$]
{Embedding of Markov matrices for $\bs{d \leqslant 4}$}

\author{Michael Baake}
\address{Fakult\"at f\"ur Mathematik, Universit\"at Bielefeld, \newline
       \indent  Postfach 100131, 33501 Bielefeld, Germany}

\author{Jeremy Sumner}
\address{School of Natural Sciences, Discipline of Mathematics,
         University of Tasmania,
    \newline \indent Private Bag 37, Hobart, TAS 7001, Australia}

\begin{abstract} 
  The embedding problem of Markov matrices in Markov semigroups is a
  classic problem that regained a lot of impetus and activities
  through recent needs in phylogeny and population genetics.  Here, we
  give an account for dimensions $d\leqslant 4$, including a complete
  and simplified treatment of the case $d=3$, and derive the results
  in a systematic fashion, with an eye on the potential applications.
  
  Further, we reconsider the setup of the corresponding problem for
  time-inhomogeneous Markov chains, which is needed for real-world
  applications because transition rates need not be constant over
  time.  Additional cases of this more general embedding occur for any
  $d\geqslant 3$. We review the known case of $d=3$ and describe the
  setting for future work on $d=4$.
\end{abstract}

\keywords{Markov matrices and generators, embedding problem, phylogenetics}
\subjclass[2010]{60J10,60J27,92D10}

\maketitle

\bigskip

\section{Introduction}

Markov models are the backbone of many if not most stochastic
processes.  In real-world applications, they usually appear with
finite state spaces, and come in two flavours, namely with discrete or
with continuous time. While discrete-time Markov chains often appear
simpler on first inspection, they are connected to their
continuous-time counterparts via the obvious question when they admit
a continuous interpolation. This was identified as an important
problem by Elfving \cite{Elfving} because pure discrete-time models
may have some problems or even inconsistencies for the intended
application. Clearly, this question is also relevant in biological
models, for instance in phylogeny and population genetics. Indeed,
when modelling the time evolution of genetic sequences built from the
nucleotide alphabet $\{ A, G, C, T \}$, one is right in the middle of
this type of interpolation problem. It can essentially be rephrased as
asking whether a given Markov matrix can be embedded into a Markov
semigroup or, more generally, into a Markov flow, thus referring to
underlying continuous-time processes that are homogeneous or
non-homogeneous in time, respectively. The purpose of this paper is to
provide methods and results to answer the embedding problem for this
type of application. \vspace*{1mm}

Let us be a bit more specific.  Phylogenetic models typically make the
strong assumption that random changes in nucleotide sequences follow a
Markov process that is both stationary (in nucleotide composition) and
homogeneous (meaning that instantaneous substitution rates are
constant in time and across biological lineages). However, there is
abundant biological evidence that the molecular evolution of certain
groupings of organisms does not strictly adhere to the typical
assumption of a stationary and homogeneous Markov process; for
example, Cox et al.~\cite{Cox} establish significant nucleotide
compositional differences across the tree of life, while Jayaswal et
al.~\cite{Jay2} establish the same for a yeast phylogeny. For this
reason, there are various studies that assess the possibility of
fitting non-stationary models of molecular evolution; compare
\cite{Jay}.  The embedding problem therefore has important practical
application in phylogenetics as its solutions allow (at least
theoretically) for the detection of non-homogeneous Markov processes
in historical molecular evolution. \vspace*{1mm}

Let us now describe our setting from a more mathematical perspective.
A \emph{Markov matrix} $M$ is a non-negative square matrix with all
row sums equal to one, which is the convention we use here. The set of
all Markov matrices with $d$ states (or in $d$ dimensions) is denoted
by $\cM_d$, which is a closed convex subset of the real
$d {\times} d$-matrices.  A \emph{rate matrix} $Q$ is a real matrix
with non-negative off-diagonal entries and all row sums equal to
zero. Such matrices are also known as \emph{Markov generators}, or
simply \emph{generators}, due to their connection with
(time-homogeneous) Markov semigroups, as given by
$\{ \ee^{t \ts Q} : t \geqslant 0 \}$, which is a monoid of Markov
matrices.

It is an old question, known as Elfving's interpolation problem
\cite{Elfving}, whether a given Markov matrix $M$ can occur within a
Markov semigroup. This is equivalent to asking whether there is a
generator $Q$ such that $M=\ee^Q$, which simply sets a particular time
scale. We call this the (classic) \emph{embedding problem}. It became
famous through a foundational paper by Kingman \cite{King}, which also
included a simple criterion for $d=2$ due to Kendall and attracted a
lot of research for many years; see \cite{Davies, BS} and references
therein. Several general characterisations were found, while concrete,
applicable criteria in higher dimensions turned out to be more
difficult. After settling $d=3$, see \cite{Davies,BS} and references
therein for an account of the history, and several papers in
mathematical finance, sociology and statistics, compare
\cite[Sec.~2.3]{Higham}, the interest in the problem faded somewhat,
as no driving application was in sight. In particular, no systematic
treatment of $d>3$ was done. A similar fate was met by the generalised
embedding problem of a given Markov matrix in a time-inhomogeneous
process \cite{FS,JR}.  Also, it was clear that, in higher dimensions,
the embedding problem will become increasingly more complex, due to
topological properties already discussed in \cite{King} and due to the
multiple possibilities for degenerate spectra with repeated Jordan
blocks.  To some extent, this starts in $d=4$, and no impetus was
visible to work on a classification. \vspace*{1mm}

This changed through the rise of bioinformatics, which needed the case
$d=4$ solved explicitly, with effective and concrete criteria, due to
the importance of various Markov processes on the genetic (or
nucleotide) alphabet $\{ A, G, C, T \}$.  Here, processes of molecular
evolution became relevant in phylogenetics, as one typical problem is
the inference of continuous-time processes such as mutation or
recombination from discrete data sets.  The transition rates of
real-world Markov processes should be expected to change over time
\cite{Cox}. For this and various other reasons, discrete-time models
that are not embeddable in any reasonable way appear, in our opinion,
questionable or too restrictive for applications to overlapping
generations or to other processes that require continuous time. In
particular, this is certainly a natural point of view for processes
over a long time period, as one encounters in phylogenetics. Recently,
the embedding problem for $d=4$ was largely solved in \cite{CFR} via a
detailed case distinction on the basis of the possible Jordan normal
forms, including algorithms to decide some of the more complicated
cases. One small disadvantage of this approach is some dependence on
the similarity transform between the matrix and its \emph{Jordan
  normal form} (JNF). \vspace*{1mm}

Here, we take a different, more algebraic path to the problem for
$d\leqslant 4$ that is based on the degree of the minimal polynomial
of the matrix.  While some cases will still require the JNF, the
treatment of cyclic matrices (which are those where minimal and
characteristic polynomial agree and are thus generic in a
measure-theoretic sense --- to be made more precise below) will become
more systematic, and the emerging embedding criteria are explicit and
easy to implement. The golden key in this generic case, as well as in
several of the other ones, is the proper use of the \emph{spectral
  mapping theorem} (SMT), see \cite[Sec.~5.3]{HP} for the general
setting and Eq.~\eqref{eq:SMT} below for a formulation, in conjunction
with Frobenius' theorem on the centraliser of a cyclic matrix, as
developed in some detail in \cite{BS}; see Theorem~\ref{thm:Frob} for
specific details. In most cases, this leads to explicitly solving a
simple set of linear equations and checking whether the solution is a
generator or not.

Then, we look at the corresponding problem with time-dependent
generators. While this does not give new cases for $d=2$, things
change already for $d=3$, as has been known since the 1970s
\cite{Good,Joh73}. Consequently, embeddability in this generalised
sense is relevant, also in view of the applications. Fortunately,
quite a bit of literature exists already, and a powerful connection
with a problem from control theory \cite{FS,JR,Fryd80a,Fryd80b,Fryd83}
makes some concrete results possible, though mainly for $d=3$.  Here,
we review and extend these results in our setting.  \smallskip

The paper is organised as follows. After some recollections of general
results in Section~\ref{sec:prelim}, which includes the complete
answer for $d=2$, we discuss the case $d=3$ in
Section~\ref{sec:three}.  While this is classic material, we are not
aware of a systematic presentation in one place, and this
simultaneously allows us to introduce further methods and tools as we
go along. In this sense, our paper is both a partial review and an
extension of known results at the same time. In particular, we take a
fresh look at the most difficult case, which consists of the
diagonalisable Markov matrices with a negative eigenvalue of
multiplicity $2$.  Here, we give a complete solution that is both
simple and constructive. A summary is provided in Table~\ref{tab:d3}.

In our exposition, for the sake of better readability, we often give
the arguments first and only then state the result, thus deviating
from a purely formal presentation. In this context, in line with many
other mathematical texts of a more expository nature, we use the
symbol $\,\Box\,$ to denote the end or the \emph{absence} of a proof,
the latter when we cite a result from another source or when the
arguments for the proof were given prior to the statement of the
result.

Section~\ref{sec:four} then deals with $d=4$, in the same spirit,
where we spend some time to spell out the cyclic cases; a summary is
given in Table~\ref{tab:d4}. This is followed by applications to some
important examples from phylogenetics in Section~\ref{sec:appl},
which, together with the progress in \cite{CFR}, motivated the entire
endeavour in the first place.  Finally, in Section~\ref{sec:inhom}, we
summarise the embedding problem for time-inhomogeneous Markov chains,
where we can restrict our main attention to piecewise continuous
families of generators without loss of generality, as a result of the
Bang-Bang principle from control theory that applies in this setting
\cite{Joh73,JR,Fryd80b}. Here, we set the scene for future work, in
particular on $d=4$.
\clearpage

\section{Preliminaries and general background
  results}\label{sec:prelim}

Let us recall some notation and results on matrices that we will
employ throughout, mainly following \cite{Gant,HJ,BS,BS2}.  We use
$\Mat (d,\RR)$ and $\Mat (d,\CC)$ for the rings of
$d{\times} d$-matrices over the real and the complex numbers,
respectively. We use $\one_d$ to denote the unit matrix, where we will
simply write $\one$ whenever the dimension is clear from the
context. Further, we use
\begin{equation}\label{eq:def-A}
    \cA^{(d)}_{\ts 0} \, \defeq \, \{ A \in \Mat (d,\RR) : 
    \text{all row sums are } \ts 0 \} \ts ,
\end{equation}
which is a non-unital algebra of dimension $d\ts (d\ts {-}1)$.  Given
some $B\in \Mat (d,\CC)$, its \emph{spectrum} is the set of
eigenvalues of $B$, denoted by $\sigma (B)$ and primarily viewed as a
set (and not as a multi-set, unless stated otherwise). The
\emph{characteristic polynomial} of $B$ is
\begin{equation}\label{eq:def-cp}
     p_{\nts_B} (x) \, \defeq \, \det (B - x \one) \, = \, (-1)^d
     \prod_{\lambda\in \sigma (B)} (x-\lambda)^{\ma (\lambda)} ,
\end{equation}
where $\ma (\lambda)$ is the \emph{algebraic} multiplicity of
$\lambda$, so $\sum_{\lambda \in \sigma (B)} \ma (\lambda) = d$, while
the \emph{geometric} multiplicity is
$\mg (\lambda) = \dim (V_{\lambda})$, with
$V_{\lambda} = \{ x \in \CC^d : B x = \lambda x \}$ denoting the
eigenspace for $\lambda\in\sigma(B)$ as usual.  When $B$ is a matrix
and $p (x)$ is any polynomial in one variable, also $p (B)$ is well
defined, and the SMT \cite[Thm.~10.33]{Rudin} applied to matrices
states that
\begin{equation}\label{eq:SMT}
  \sigma \bigl( p (B) \bigr) \, = \, p \bigl( \sigma (B) \bigr) \, \defeq \,
  \{ \ts p (\lambda) : \lambda \in \sigma (B) \} \ts ,
\end{equation} 
which holds for the spectrum including multiplicities. In this
context, the Cayley--Hamilton theorem \cite[Thm.~2.4.3.2]{HJ} asserts
that $p_{\nts_B} (B)= \nix$, where we use $\nix$ (or $\nix_d$) to
denote the all-zero matrix.

We shall also need the \emph{minimal polynomial} of $B$, called
$q_{_B}$, which is the minimal monic factor of $p_{\nts_B}$ that still
satisfies $q_{_B} (B) = \nix$. It is given by
\begin{equation}\label{eq:def-mp}
    q_{_B} (x) \, = \prod_{\lambda\in\sigma (B)}
    (x-\lambda)^{r^{}_{\max} (\lambda) } ,
\end{equation}
where $r^{}_{\max}(\lambda)$ is the largest dimension of the elementary 
Jordan blocks for the eigenvalue $\lambda$. Note that we have dropped 
the factor $(-1)^d$ to make it monic (leading coefficient is $1$), which is 
common, but not always done.

A matrix $B\in\Mat (d, \CC)$ is called \emph{simple} when its
eigenvalues are distinct, which is to say that
$\ma (\lambda) = \mg (\lambda) = 1$ for all $\lambda \in \sigma (B)$,
and \emph{degenerate} otherwise. Further, $B$ is called \emph{cyclic}
when $q_{_B} = (-1)^d \, p_{\nts_B}$, that is, when characteristic and
minimal polynomial agree (possibly up to a sign). Cyclic matrices are
the ones with $\mg (\lambda) = 1$ for all $\lambda\in\sigma (B)$,
which means that the corresponding Jordan blocks in the standard
(complex) JNF of $B$ are $\JJ_{\ma (\lambda)} (\lambda)$. Here,
$\JJ_n (\lambda) = \lambda \one_n + N_n$, with $N_n \in \Mat (n,\RR)$
denoting the matrix with $1$s on the first super-diagonal and $0\ts$s
everywhere else. A matrix $\JJ_n (\lambda)$ is called an
\emph{elementary Jordan block}. $\JJ_{1} (\lambda)$, which is still
diagonal, is called \emph{trivial}, while all $\JJ_n (\lambda)$ with
$n\geqslant 2$ are \emph{non-trivial}. Clearly, $N_n$ is
\emph{nilpotent}, with $N^{m}_{n} = \nix$ for all $m\geqslant n$, but
$N^{n-1}_{n}\ne \nix$, where $n$ is called the \emph{degree} of $N_n$.

In particular, simple matrices are cyclic, but sometimes too
restrictive (though still generic).  Here and below, we use the word
\emph{generic} for an attribute in a measure-theoretic sense, thus
referring to the property that the objects without this attribute form
a null set.  Further, the use of cyclic matrices is natural due to
Frobenius' theorem, which we recall as follows for the case of real
matrices; see \cite[Fact~2.10]{BS} and references given there for
details. Here and below, we use $[A,B \ts ] \defeq A\ts B - B\nts A$
for the \emph{commutator} of two matrices.

\begin{theorem}\label{thm:Frob}
  For\/ $B\in\Mat (d, \RR)$, the following properties are equivalent.
\begin{enumerate}\itemsep=2pt
\item $B$ is \emph{cyclic}, that is, the characteristic polynomial
  of\/ $B$ is, possibly up to a sign, also its minimal polynomial,
  so\/ $q_{_B} = (-1)^d \ts p_{\nts_B}$ with\/ $p_{\nts_B}$ and\/
  $q_{_B}$ from Eqs.~\eqref{eq:def-cp} and \eqref{eq:def-mp}.
\item There is a vector\/ $v\in\RR^d$ such that\/
  $\{ v, Bv, B^2v, \ldots, B^{d-1}v \}$ is a basis of\/ $\RR^d$.
\item $B$ is \emph{non-derogatory} \cite{HJ}, that is, one has\/
  $\mg (\lambda) = 1$ for all\/ $\lambda\in\sigma (B)$.
\item For each\/ $\lambda\in\sigma (B)$, the corresponding Jordan
  block of the JNF is\/ $\JJ_{\ma (\lambda)} (\lambda)$.
\item The centraliser of\/ $\nts B$,
  $\cent (B) \defeq \{ C \in \Mat (d, \RR) : [B,C \ts ]=\nix \}$, is
  Abelian.
\item One has\/ $\cent (B) = \RR [B]$, where\/ $\RR [B]$ is the
  polynomial ring generated by\/ $B$.  \qed
\end{enumerate}
\end{theorem}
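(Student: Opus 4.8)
The plan is to establish the six properties equivalent through a single closed chain of implications,
\[
(1)\;\Longrightarrow\;(4)\;\Longrightarrow\;(2)\;\Longrightarrow\;(6)\;\Longrightarrow\;(5)\;\Longrightarrow\;(3)\;\Longrightarrow\;(1)\ts ,
\]
where the structural input is the Jordan normal form over $\CC$ together with the standard dimension count for centralisers, as in \cite{Gant,HJ}. A preliminary observation used repeatedly: the number of elementary Jordan blocks attached to an eigenvalue $\lambda$ equals $\mg (\lambda)$, so $r^{}_{\max} (\lambda) \leqslant \ma (\lambda)$ always, with equality exactly when $\lambda$ contributes a single block, which is then $\JJ_{\ma (\lambda)} (\lambda)$.

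For $(1)\Rightarrow(4)$ and $(3)\Rightarrow(1)$, I would simply compare Eqs.~\eqref{eq:def-cp} and \eqref{eq:def-mp} at the level of degrees: $\deg q_{_B} = \sum_{\lambda} r^{}_{\max} (\lambda)$ and $d = \sum_{\lambda} \ma (\lambda)$, so $q_{_B} = (-1)^d p_{\nts_B}$ (both monic after the sign adjustment, and $q_{_B} \mid p_{\nts_B}$) holds iff $r^{}_{\max} (\lambda) = \ma (\lambda)$ for all $\lambda$, iff there is one Jordan block per eigenvalue, iff $\mg (\lambda) = 1$ for all $\lambda$; this settles $(1)\Leftrightarrow(3)\Leftrightarrow(4)$ in one stroke. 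For $(4)\Rightarrow(2)$, pass to a complex basis putting $B$ in JNF with blocks $\JJ_{n_i} (\lambda_i)$, the $\lambda_i$ distinct and $\sum_i n_i = d$; inside the $i$-th block the last standard basis vector $w_i$ generates that block under the powers of $B - \lambda_i \one$, and since the $(x-\lambda_i)^{n_i}$ are pairwise coprime one finds polynomials $f_i$ with $f_i (B)$ acting as the projection onto the $i$-th block. Then $v \defeq \sum_i w_i$ satisfies $f_i (B)\ts v = w_i$, so the $B$-cyclic subspace generated by $v$ contains every block and hence is all of $\CC^d$; by Cayley--Hamilton it is spanned by $v, Bv, \dots, B^{d-1}v$, which are therefore a basis. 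If $B$ has non-real eigenvalues, one runs the same argument over the real JNF with companion-type blocks, or notes that $v \mapsto \det\bigl(v \mid Bv \mid \cdots \mid B^{d-1}v\bigr)$ is a not identically vanishing polynomial on $\RR^d$, so a real cyclic vector exists once a complex one does.

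For $(2)\Rightarrow(6)$: the inclusion $\RR [B] \subseteq \cent (B)$ is clear; conversely, given a cyclic vector $v$ and $C \in \cent (B)$, write $Cv = \sum_{j=0}^{d-1} c_j B^j v = p (B)\ts v$ for some $p \in \RR [x]$, and then $C B^j v = B^j C v = p (B)\ts B^j v$ for all $j$, so $C = p (B)$ because the $B^j v$ span $\RR^d$. Then $(6)\Rightarrow(5)$ is immediate, as $\RR [B]$ is commutative. For $(5)\Rightarrow(3)$, argue contrapositively: if $\mg (\lambda) \geqslant 2$ for some $\lambda$, the JNF has at least two blocks for $\lambda$, and $\cent (B)$ contains — acting as the identity off the span of those two blocks, and by suitably shifted block maps between the two Jordan strings on that span — a copy of a full $2{\times}2$ matrix algebra; two non-commuting elements of it witness that $\cent (B)$ is not Abelian. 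For a non-real $\lambda$ one either works with the real JNF or uses $\cent_{\CC} (B) = \cent (B) \otimes_{\RR} \CC$ to reduce to the complex case.

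The main obstacle is the bookkeeping in $(5)\Rightarrow(3)$: exhibiting genuinely non-commuting centraliser elements from a repeated geometric eigenvalue and keeping everything real in the presence of complex-conjugate eigenvalue pairs. (A cleaner alternative here is to invoke the classical dimension formula $\dim \cent (B) = \sum_{\lambda} \sum_{i} (2i{-}1)\,\lambda_{\lambda,i}$ for the block sizes $\lambda_{\lambda,1} \geqslant \lambda_{\lambda,2} \geqslant \cdots$, which gives $\dim \cent (B) \geqslant \dim \RR [B] = \deg q_{_B}$ with equality iff there is one block per eigenvalue, directly yielding $(5)\Leftrightarrow(6)\Leftrightarrow(3)$ modulo the non-Abelian witness still being needed for $(5)$.) Everything else is either a one-line linear-algebra computation or a matter of reading off the Jordan structure.
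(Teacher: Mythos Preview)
The paper does not prove this theorem: it is stated with a terminal \qed and attributed to \cite[Fact~2.10]{BS} and references therein, so there is nothing to compare against. Your proof is essentially correct and self-contained; the chain $(1)\Leftrightarrow(3)\Leftrightarrow(4)$ via degree-counting on Eqs.~\eqref{eq:def-cp}--\eqref{eq:def-mp}, the construction of a cyclic vector in $(4)\Rightarrow(2)$ via CRT projections, the density argument to pass from a complex cyclic vector to a real one, and the classical $(2)\Rightarrow(6)$ computation are all sound.

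One small imprecision in $(5)\Rightarrow(3)$: the centraliser of two Jordan blocks $\JJ_{n_1}(\lambda)\oplus\JJ_{n_2}(\lambda)$ contains a copy of a \emph{full} $2{\times}2$ matrix algebra only when $n_1=n_2$; for $n_1>n_2$ the off-diagonal Hom-spaces have unequal dimensions and the resulting structure is block upper-triangular rather than full matrix. This does not affect your conclusion, since all you need is a single pair of non-commuting centraliser elements --- for instance, the projection onto the second block and any nonzero intertwiner from the second block into the first --- and your phrase ``suitably shifted block maps'' already points at the right objects. The reduction to the real case via $\cent_{\CC}(B)=\cent(B)\otimes_{\RR}\CC$ is clean and correct.
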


The second property explains the name, and holds in this way because
eigenvalues and eigenvectors, as well as principal vectors, are real
or come in complex-conjugate pairs. The last property in
Theorem~\ref{thm:Frob} is the most powerful in our context, because
one has
\[
  \RR [B] \, = \, \big\langle \one, B, B^2, \ldots,
  B^{\deg (q_{\nts_B}) - 1}  \big\rangle_{\RR} \ts 
\]
by standard arguments. Property $(6)$ is equivalent to saying that
every matrix that commutes with the cyclic matrix $B$ is a polynomial
in $B$ with real coefficients and degree at most $d-\nts 1$; see
\cite[Thm.~3.2.4.2]{HJ} for details.  We shall make use of this
relation many times.

\begin{fact}\label{fact:one-is-special}
  For all\/ $M\in\cM_d$, one has\/ $1\in\sigma (M)$ together with\/
  $\mg (1) = \ma (1)$. In particular, there is no non-trivial Jordan
  block for\/ $\lambda = 1$.
  
  Further, the corresponding statement holds for generators, which is
  to say that any generator has\/ $0$ as an eigenvalue, with equal
  algebraic and geometric multiplicity.
\end{fact}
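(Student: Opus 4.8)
The plan is to treat the two assertions (for Markov matrices $M$ and for generators $Q$) in parallel, since they are linked by the exponential map, and to establish both the eigenvalue claim and the multiplicity claim.

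First I would verify that $1\in\sigma(M)$ for every $M\in\cM_d$. Since all row sums of $M$ equal $1$, the all-ones vector $\bs{e}=(1,\dots,1)^{\mathsf T}$ satisfies $M\bs{e}=\bs{e}$, so $\bs{e}$ is a right eigenvector for the eigenvalue $1$; hence $1\in\sigma(M)$ and $\mg(1)\geqslant 1$. Dually, a generator $Q\in\cA^{(d)}_{\ts 0}$ has all row sums equal to $0$, which says precisely $Q\bs{e}=\nix$, so $0\in\sigma(Q)$ with $\mg(0)\geqslant 1$. This much is immediate from the definitions.

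The substantive part is the statement $\mg(1)=\ma(1)$ for $M$, equivalently the absence of a non-trivial Jordan block at $\lambda=1$; and likewise $\mg(0)=\ma(0)$ for $Q$. I would argue this for $M$ via a boundedness argument on powers: if there were a Jordan block $\JJ_n(1)$ with $n\geqslant 2$ in the JNF of $M$, then $M^k$ would have entries growing like $\binom{k}{1}=k$ (coming from $(\one+N_n)^k$), so $\|M^k\|\to\infty$; but every $M^k$ is again a Markov matrix (the product of Markov matrices is Markov), hence has all entries in $[0,1]$, so $\|M^k\|$ is bounded uniformly in $k$ — a contradiction. Therefore all Jordan blocks for the eigenvalue $1$ are trivial, which is exactly $r^{}_{\max}(1)=1$, i.e. $\mg(1)=\ma(1)$. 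I expect the only point needing a little care here is the observation that powers of Markov matrices stay Markov (row sums are preserved under multiplication, and non-negativity is obvious), together with spelling out that a non-trivial Jordan block forces polynomial growth of the matrix powers; both are standard.

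For the generator statement I would deduce it from the Markov case rather than repeat the argument: for any generator $Q$, the matrix $M=\ee^{Q}$ lies in $\cM_d$, and by the SMT \eqref{eq:SMT} together with the fact that $\ee^{z}=1$ for $z\in\sigma(Q)$ forces $z=0$ (since real parts of eigenvalues of a generator are $\leqslant 0$, and a purely imaginary eigenvalue $\ii t$ with $\ee^{\ii t}=1$ cannot be excluded this way — so instead I would note directly that $0$ is the only eigenvalue of $Q$ on the imaginary axis that maps to the eigenvalue $1$ of $M$ with the same Jordan structure, because $\exp$ is a local biholomorphism near any point and carries the Jordan block of $Q$ at an eigenvalue $z$ to the Jordan block of $M$ at $\ee^{z}$ of the same size). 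Hence a non-trivial Jordan block of $Q$ at $0$ would produce a non-trivial Jordan block of $M=\ee^Q$ at $1$, contradicting the part already proved; so $\mg(0)=\ma(0)$ for $Q$. Alternatively, and more cleanly, one can run the same boundedness argument directly on the semigroup $\{\ee^{tQ}:t\geqslant 0\}$, all of whose members are Markov matrices with entries in $[0,1]$, so $\|\ee^{tQ}\|$ stays bounded as $t\to\infty$, which rules out a non-trivial Jordan block at the eigenvalue $0$ of $Q$ (such a block would make $\ee^{tQ}$ grow polynomially in $t$). I would present this second route as the main one, since it is self-contained and parallels the discrete case verbatim.
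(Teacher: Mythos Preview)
Your argument is correct. For the Markov part, the paper simply cites the result as standard; your boundedness-of-powers argument is one of the usual proofs behind that citation, so there is no real difference there.

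For the generator part, however, the paper takes a different and shorter route: given a generator $Q$, choose $a>0$ small enough that $\one + a\ts Q$ is Markov (any $a\leqslant 1/\max_i\lvert q^{}_{ii}\rvert$ works). Since $B\mapsto \one + a B$ is an affine bijection on $\Mat(d,\RR)$, it sends the Jordan block $\JJ_n(\lambda)$ to $\JJ_n(1+a\lambda)$ exactly, so the Jordan structure of $Q$ at $0$ is identical to that of $\one+a\ts Q$ at $1$, and the claim follows immediately from the Markov case. This avoids both the semigroup boundedness argument and the subtlety you flagged about which eigenvalues of $Q$ land on $1$ under $\exp$; the affine map is injective on spectra, so nothing needs to be said. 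Your semigroup route is perfectly valid and conceptually parallel to the discrete-time case, but the paper's trick is a one-liner.
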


\begin{proof}
  The statement for $M$ is standard; see \cite[Thm.~13.10]{Gant} or
  \cite[Prop.~2.3{\ts}(2)]{BS}.
    
  If $Q$ is an arbitrary generator, one can always find a number
  $a > 0$ such that $\one + a \ts Q$ is Markov, from which the
  claim on generators is immediate.
\end{proof}

For a given matrix $B$, its exponential is defined by the power series
$\ee^B = \sum_{n=1}^{\infty} \nts\frac{1}{n\ts !}\ts B^{n}$, which
always converges; see \cite[Ch.~10]{Higham} for details. Due to the
Cayley--Hamilton theorem, it can be expressed as a polynomial in $B$
as well, and one always has $[\ee^B, B]=\nix$. Further, we have the
following property, which we will need repeatedly.

\begin{fact}[{\cite[Fact~2.15]{BS}}]\label{fact:diagonal}
  Let\/ $B\in\Mat (d,\CC)$. If\/ $\ee^B$ is diagonalisable, then so
  is\/ $B$.  \qed
\end{fact}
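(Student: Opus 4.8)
The plan is to prove the contrapositive: if $B$ is not diagonalisable, then neither is $\ee^B$. Recall that a matrix fails to be diagonalisable exactly when its minimal polynomial has a repeated root, equivalently --- in the notation of Eq.~\eqref{eq:def-mp} --- when $r^{}_{\max}(\lambda)\geqslant 2$ for some $\lambda\in\sigma(B)$, equivalently when $B$ admits a Jordan chain of length at least two. So I would start from an eigenvalue $\lambda\in\sigma(B)$ and a vector $v\in\CC^d$ with $w\defeq (B-\lambda\one)\ts v\ne 0$ but $(B-\lambda\one)\ts w = 0$. Then $v$ and $w$ are linearly independent, the two-dimensional subspace $U = \langle w,v\rangle_{\CC}$ is $B$-invariant, and with respect to the ordered basis $(w,v)$ the matrix $B$ restricts to $U$ as a single elementary Jordan block $\JJ_2(\lambda) = \lambda\one_2 + N_2$.

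Next I would exponentiate on that same subspace. Since $\lambda\one_2$ commutes with the nilpotent $N_2$ and $N_2^2 = \nix$, one gets $\ee^{\JJ_2(\lambda)} = \ee^{\lambda}\ts\ee^{N_2} = \ee^{\lambda}(\one_2 + N_2)$, hence $\ee^B\ts w = \ee^{\lambda}\ts w$ and $\ee^B\ts v = \ee^{\lambda}(v + w)$. Therefore $(\ee^B - \ee^{\lambda}\one)\ts v = \ee^{\lambda}\ts w \ne 0$ while $(\ee^B - \ee^{\lambda}\one)^2\ts v = 0$, and since $\ee^{\lambda}\in\sigma(\ee^B)$ by the spectral mapping theorem~\eqref{eq:SMT}, this shows that $(x-\ee^{\lambda})^2$ divides the minimal polynomial of $\ee^B$. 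Consequently $\ee^B$ is not diagonalisable, which is what we had to show.

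I do not expect a genuine obstacle: the only thing that could conceivably go wrong is the nilpotent part being annihilated by exponentiation, and this never happens because $\ee^{\lambda}\ne 0$, i.e.\ $\ee^{N_2} = \one_2 + N_2 \ne \one_2$. The one point that rewards a little care is verifying that the length-$2$ Jordan chain for $B$ descends to a length-$2$ Jordan chain for $\ee^B$ on the \emph{same} pair of vectors, rather than merely yielding some unrelated non-diagonalisability. For completeness I would also record the sharper statement underlying the argument: a direct block computation gives
\[
  \ee^{\JJ_n(\lambda)} \, = \, \ee^{\lambda}\one_n + \ee^{\lambda}
  \bigl( N_n + \tfrac{1}{2}N_n^2 + \dots + \tfrac{1}{(n-1)!}N_n^{n-1} \bigr),
\]
whose nilpotent part has degree exactly $n$ (only the lowest-order term survives in the $(n{-}1)$-st power, as $N_n^{n}=\nix$ while $N_n^{n-1}\ne\nix$), so that $\ee^{\JJ_n(\lambda)}$ is in fact similar to $\JJ_n(\ee^{\lambda})$. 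This recovers Fact~\ref{fact:diagonal} together with the precise Jordan structure of $\ee^B$ in terms of that of $B$.
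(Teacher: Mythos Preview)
Your argument is correct. The paper does not actually prove this statement; it is quoted as a fact from \cite{BS} and marked with a bare \qed, so there is nothing to compare against beyond noting that your Jordan-chain computation (and the sharper block formula $\ee^{\JJ_n(\lambda)}\sim\JJ_n(\ee^\lambda)$) is the standard route to this result.
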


When $d=1$, the only Markov `matrix' is $M=1$, and the only
`generator' is $Q=0$. They connect via $1 = \ee^0$, which is the
unique relation here. For this reason, nothing of interest happens in
one dimension, and we shall generally assume $d\geqslant 2$ to avoid
trivialities.

For $d=2$, one has
\[
  \cM^{}_{2} \, = \, \biggl\{ {\scriptscriptstyle \biggl( 
      \begin{array}{@{}c@{\;}c@{}} 1{-}\ts a & a \\ b & 1{-}\ts b
    \end{array} \biggr) } : \ts a,b \in [\ts 0,1] \biggr\} \ts ,
\]
and the embedding problem is completely solved by Kendall's theorem,
which was not published by himself; see \cite[Prop.~2]{King} or
\cite[Thm.~1]{BS} for accounts with proofs.

\begin{theorem}[Kendall]\label{thm:Kendall}
  The Markov matrix\/
  $M = \left( \begin{smallmatrix} 1-\ts a & a \\ b & 1-\ts
      b \end{smallmatrix} \right)$ with\/ $a,b\in [ \ts 0,1]$ is
  embeddable if and only if\/ $\det (M) > 0$, which is equivalent to
  the condition\/ $0 \leqslant a+b <1$.  In this case, there is
  precisely one generator\/ $Q$ such that\/ $M=\ee^Q$, namely\/
  $Q = - \frac{\log(1-a-b)}{a+b} \bigl( M \nts\nts - \one \bigr)$.
  \qed
\end{theorem}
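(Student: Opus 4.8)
The plan is to settle the equivalence of the two numerical conditions first, then handle necessity, then sufficiency together with the explicit formula, and finally uniqueness. Throughout, set $s \defeq a+b$ and write $N \defeq M - \one = \left(\begin{smallmatrix} -a & a \\ b & -b \end{smallmatrix}\right)$, which is itself a rate matrix. Since $\det (M) = (1-a)(1-b) - ab = 1-s$ and $a,b\geqslant 0$ forces $s\geqslant 0$, the conditions $\det(M) > 0$ and $0\leqslant s < 1$ coincide, both ruling out $s=1$. For necessity, if $M = \ee^Q$ for a generator $Q$, then $\det(M) = \det(\ee^Q) = \ee^{\tr(Q)} > 0$, which uses nothing about $Q$ beyond being a real matrix.

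For sufficiency, assume $0\leqslant s<1$. If $s=0$, then $N = \nix$ and $M = \one = \ee^{\nix}$, so assume $s>0$. A short computation shows $N^2 = -s\ts N$, whence $N^k = (-s)^{k-1} N$ for all $k\geqslant 1$; summing the exponential series gives, for every $c\in\RR$,
\[
   \ee^{c N} \,=\, \one \,+\, \biggl( \sum_{k\geqslant 1} \frac{c^k (-s)^{k-1}}{k!} \biggr) N
   \,=\, \one \,+\, \frac{1 - \ee^{-s c}}{s}\, N \ts .
\]
As $N\neq\nix$, matching this with $M = \one + N$ forces $\frac{1-\ee^{-sc}}{s} = 1$, i.e.\ $\ee^{-sc} = 1-s = \det(M)$, and since $0 < 1-s < 1$ this has the unique solution $c = -\frac{\log(1-s)}{s} > 0$. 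Then $Q \defeq c N = -\frac{\log(1-a-b)}{a+b}\,(M-\one)$ is a positive multiple of a rate matrix, hence a generator, and $\ee^Q = M$ by construction.

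For uniqueness, let $Q$ be any generator with $\ee^Q = M$. By Fact~\ref{fact:diagonal}, $Q$ is diagonalisable, and as a $2{\times}2$ rate matrix it has eigenvalue $0$ (Fact~\ref{fact:one-is-special}) with $(1,1)^{\mathsf T}$ in its kernel, since all row sums vanish, and second eigenvalue $\tr(Q)\leqslant 0$, in particular real. The SMT~\eqref{eq:SMT} then gives $\sigma(M) = \{1, \ee^{\tr(Q)}\}$, so $\ee^{\tr(Q)} = 1-s$ and $\tr(Q) = \log(1-s)$. For $s>0$ the two eigenvalues of $Q$ are distinct, so $Q$ and $M = \ee^Q$ share the same (one-dimensional) eigenspaces; since $M$ has eigenline $\RR\,(a,-b)^{\mathsf T}$ for the eigenvalue $1-s$, and $(1,1)^{\mathsf T}, (a,-b)^{\mathsf T}$ are linearly independent when $s>0$, the conditions $Q(1,1)^{\mathsf T} = \nix$ and $Q(a,-b)^{\mathsf T} = \log(1-s)\,(a,-b)^{\mathsf T}$ pin $Q$ down uniquely, and it equals the $Q$ produced above; for $s=0$ both eigenvalues are $0$ and $Q = \nix$. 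Alternatively, and more in the spirit of the rest of the paper, I would note that $M$ is simple, hence cyclic, so Theorem~\ref{thm:Frob}(6) gives $Q\in\RR[M] = \langle\one, M\rangle_{\RR}$; the generator condition $Q(1,1)^{\mathsf T} = \nix$ then fixes the coefficients up to a common scalar, which is determined via~\eqref{eq:SMT}.

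Everything here is elementary, and the only step that will need a moment's care is uniqueness in the degenerate case $M = \one$, where "rotation-type" would-be generators have to be excluded. This is handled precisely by the two structural facts that a $2{\times}2$ generator annihilates $(1,1)^{\mathsf T}$ and has non-positive trace, so that its spectrum is real and $\ee^{\tr(Q)} = 1$ forces $\tr(Q) = 0$, hence $Q = \nix$ by diagonalisability.
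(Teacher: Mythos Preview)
Your proof is correct. The paper does not actually prove Theorem~\ref{thm:Kendall}; it states the result with a terminal $\Box$ and refers the reader to \cite[Prop.~2]{King} and \cite[Thm.~1]{BS} for proofs. The only hint the paper gives is the remark immediately following the theorem: all $M\ne\one$ in $\cM_2$ are simple (hence cyclic) with real spectrum, so the machinery of Theorem~\ref{thm:cyclic-1} and Frobenius' Theorem~\ref{thm:Frob}(6) applies. This is precisely your alternative uniqueness argument.

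Your primary route is more elementary and self-contained: you exploit the quadratic relation $N^2=-sN$ to sum $\ee^{cN}$ in closed form, which yields existence and the explicit formula in one stroke, and then argue uniqueness directly from the eigenstructure. This avoids appealing to the general Theorem~\ref{thm:cyclic-1} (which in the paper is stated later and itself cited from \cite{BS2}), at the modest cost of a small case split for $s=0$. Both approaches are fine; yours is arguably the cleaner standalone proof for $d=2$, while the paper's viewpoint foreshadows the systematic treatment of higher dimensions via cyclicity and the SMT.
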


Here, we have the best possible situation, in that there is a simple
necessary and sufficient criterion, and, in the affirmative case, a
closed formula for the unique generator.  The reason for its
simplicity has to do with the fact that all $\one\ne M \in \cM_2$ are
simple (hence also cyclic) and have real spectrum. Then, embeddability
occurs if and only if $M$ has positive spectrum, which means
$\sigma (M) \subset \RR_{+}$; compare \cite{BS}. Parts of this
structure are still present when one looks into particular classes of
Markov matrices for $d>2$, but not in general. In fact, much of this
paper is concerned with the complications that emerge for $d=3$ and
$d=4$. General results for $d>4$ only exist for special types of
matrices; see the discussion in \cite{Higham,BS2} for details.

In particular, when $M\in\cM_d$ is cyclic and has positive spectrum,
the derived rate matrix $A \defeq M\nts - \one$ has spectral radius
$\varrho_{\nts_A} <1$. Then, the principal logarithm via the power
series
\begin{equation}\label{eq:log-def}
  \log (\one + A) \, = \sum_{m=1}^{\infty}
  \frac{(-1)^{m-1}}{m} \, A^m
\end{equation}
converges in norm, and defines a real matrix with zero row sums.  Note
that the actual calculation of $\log (\one+A)$ usually employs the
minimal polynomial of $A$, by which the logarithm can be expressed as
a polynomial in $A$; compare \cite{Higham,BS2}. Now, one has the
following general result \cite[Thm.~5.3 and Cor.~5.5]{BS2} on
(classic) embeddability; see also \cite[p.~38]{Higham}.

\begin{theorem}\label{thm:cyclic-1}
  Let\/ $M\in\cM_d$ be cyclic, with real spectrum. Then, $M$ is
  embeddable if and only if the following two conditions are
  satisfied, where\/ $A = M \nts - \one$.
\begin{enumerate}\itemsep=2pt  
\item The spectrum of\/ $M$ is positive, that is,
  $\sigma (M) \subset \RR_{+}$.
\item The matrix\/ $Q = \log(\one + A)$ is a generator.
\end{enumerate}
In this case, $Q$ is the principal matrix logarithm of\/ $M$, and the
embedding is unique, even in the sense that there is no other real
logarithm of\/ $M$ with zero row sums.  \qed
\end{theorem}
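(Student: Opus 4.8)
The plan is to prove Theorem~\ref{thm:cyclic-1} by exploiting that, for a cyclic matrix, every polynomial relation is governed by the minimal polynomial, and that the matrix exponential and logarithm on the relevant spectral domain are mutually inverse analytic functions. First I would establish necessity of condition~(1): if $M = \ee^{Q}$ for some generator $Q$, then $\ee^{Q}$ is a real matrix, and by Fact~\ref{fact:diagonal} the (here assumed real-spectrum) structure of $M$ forces $Q$ to have real spectrum as well; applying the SMT \eqref{eq:SMT} to $p(x) = \ee^{x}$ gives $\sigma(M) = \ee^{\sigma(Q)} \subset \RR_{+}$, so positivity of the spectrum is automatic. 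For necessity of condition~(2), the key point is uniqueness: since $M$ is cyclic with positive real spectrum, $A = M - \one$ has $\varrho_{\nts_A} < 1$ (each eigenvalue of $A$ is of the form $\lambda - 1$ with $\lambda \in (0,1]$, but one must be a little careful at $\lambda = 1$, where the corresponding contribution is handled by Fact~\ref{fact:one-is-special}, which rules out a non-trivial Jordan block there). Hence the principal logarithm $\log(\one + A)$ from \eqref{eq:log-def} converges and is a well-defined real matrix with zero row sums; I would then argue that any generator $Q$ with $\ee^{Q} = M$ must in fact equal this principal logarithm.

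The uniqueness argument is where the cyclic hypothesis does the real work, and I expect it to be the main obstacle. The plan is: if $\ee^{Q} = M$, then $Q$ commutes with $M$, hence with $A = M - \one$; since $M$ (equivalently $A$) is cyclic, property~(6) of Theorem~\ref{thm:Frob} tells us $Q \in \RR[A]$, so $Q = g(A)$ for some real polynomial $g$ of degree at most $d-1$. Now apply the SMT: for each eigenvalue $\lambda \in \sigma(M) \subset \RR_{+}$, we must have $\ee^{g(\lambda - 1)} = \lambda$, and because $\lambda > 0$ is real and $\ee^{x}$ is injective on $\RR$, this forces $g(\lambda - 1) = \log(\lambda)$, the real logarithm. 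But the principal logarithm $L \defeq \log(\one + A)$ is also a polynomial in $A$ and satisfies the same equations $L|_{\lambda - 1} = \log(\lambda)$ at every point of $\sigma(A)$. A subtlety: if $M$ has a repeated eigenvalue, cyclicity means there is a non-trivial Jordan block, so we need not just agreement of values but agreement of derivatives up to the block size; this follows because $\ee^{g(A)} = \ee^{L(A)} = M$ forces the two analytic branches $\ee \circ g$ and $\ee \circ L$ to agree to the appropriate order at each eigenvalue, and injectivity of $\ee$ near a real point then forces $g$ and $L$ to agree to that order there. Since $\deg(q_{_A})$ equals $d$ in the cyclic case (or however many distinct/repeated roots are prescribed by the JNF), matching $g$ and $L$ at all roots of $q_{_A}$ with the correct multiplicities forces $g(A) = L(A)$, i.e. $Q = \log(\one + A)$. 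Thus if any generator embeds $M$, it must be this principal logarithm, and so condition~(2) is necessary; conversely, if (1) and (2) both hold, then $Q = \log(\one+A)$ is by assumption a generator and satisfies $\ee^{Q} = \ee^{\log(\one+A)} = \one + A = M$ because the power series compose correctly on the spectral domain (again by the SMT, since $\sigma(A) \subset (-1, 0]$).

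Finally, for the last sentence of the statement, I would note that the argument above never used that the competing logarithm is a generator, only that it has zero row sums and commutes with $M$; any real matrix $Q'$ with $\ee^{Q'} = M$ automatically commutes with $M$, hence lies in $\RR[A]$ by Theorem~\ref{thm:Frob}(6), and the same SMT-plus-injectivity reasoning pins it down to the principal logarithm. Strictly, one should double-check that a real logarithm with zero row sums cannot sneak in through a non-real choice of branch on some eigenvalue: but the spectrum is real and positive, so each local branch choice that keeps $Q'$ real and solves $\ee^{x} = \lambda$ at a real $\lambda > 0$ is the real logarithm, with no freedom. Hence the embedding, when it exists, is unique even among real logarithms with zero row sums, which completes the proof. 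The one place demanding genuine care, and the part I would write out most carefully, is the Jordan-block/derivative-matching step in the uniqueness argument, since a cyclic $M$ with a repeated eigenvalue is precisely the situation the theorem is designed to cover and naive value-matching at eigenvalues is not enough there.
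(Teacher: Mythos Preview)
Your approach is essentially the one the paper has in mind: the theorem is quoted from \cite{BS2} without proof, and the methodology there (and throughout the present paper, e.g.\ in Propositions~\ref{prop:3-real} and \ref{prop:4-real}) is exactly the Frobenius-centraliser argument combined with the SMT that you outline. The central uniqueness step --- that any real logarithm of a cyclic $M$ commutes with $M$, hence lies in $\RR[A]$ by Theorem~\ref{thm:Frob}{\ts}(6), and is then pinned down on each Jordan block by value-and-derivative matching against the principal logarithm --- is correct and is the heart of the matter. Your closing remark that the zero-row-sum hypothesis is not actually used is also right.

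One small but genuine gap: your appeal to Fact~\ref{fact:diagonal} to conclude that $Q$ has real spectrum is misplaced. That fact concerns diagonalisability, not reality of the spectrum, and $M$ here need not be diagonalisable in the first place. The conclusion you want follows instead from the very Frobenius step you invoke a paragraph later: since $Q \in \RR[A]$, write $Q = g(A)$ with $g$ a real polynomial; then $\sigma(Q) = g(\sigma(A)) \subset \RR$ by the SMT, whence $\sigma(M) = \ee^{\sigma(Q)} \subset \RR_{+}$. Alternatively, and more in the spirit of the paper, necessity of condition~(1) follows directly from Culver's criterion (Fact~\ref{fact:Culver}): cyclicity means each elementary Jordan block occurs exactly once, so a negative eigenvalue would violate the even-multiplicity requirement, while $0 \in \sigma(M)$ is excluded by $\det(M) = \ee^{\tr(Q)} > 0$. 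With that fix your argument goes through.
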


Here, the term \emph{real logarithm} of $M$ refers to any matrix
$R\in\Mat (d,\RR) $ with $\ee^R = M$. The following important
characterisation follows from \cite[Thms.~1 and 2]{Culver}, which is a
refinement of classic results from \cite[Ch.~5]{Gant}.

\begin{fact}\label{fact:Culver}
  A matrix\/ $B\in\Mat (d,\RR)$ has a real logarithm if and only if
  the following two conditions are satisfied.
\begin{enumerate}\itemsep=2pt
\item The matrix\/ $B$ is non-singular.
\item Each elementary Jordan block of the JNF of\/ $B$ that belongs 
  to a negative eigenvalue occurs with even multiplicity.
\end{enumerate}   
Further, when all eigenvalues of\/ $B$ are positive real numbers and
no elementary Jordan block occurs twice, the real logarithm of\/ $B$ 
is unique.  \qed
\end{fact}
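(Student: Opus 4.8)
The plan is to prove the two implications and the uniqueness claim in turn, working throughout with the real Jordan normal form. The one external ingredient I would take for granted is that $\exp$, being entire with nowhere-vanishing derivative, preserves Jordan block sizes eigenvalue by eigenvalue: a single elementary block $\JJ_n(\mu)$ exponentiates to a matrix similar to $\JJ_n(\ee^\mu)$ (see \cite{HJ,Higham}). I would also use repeatedly that the non-real eigenvalues of a real matrix occur in complex-conjugate pairs carrying identical Jordan structure. For \emph{necessity}, if $\ee^R = B$ with $R\in\Mat(d,\RR)$, then $\det(B) = \ee^{\tr(R)} \neq 0$ gives $(1)$ at once. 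For $(2)$, fix a negative $\lambda\in\sigma(B)$; since $R$ commutes with $B = \ee^R$, it preserves the generalised eigenspace $E_\lambda$ of $B$, and the eigenvalues of $R$ on $E_\lambda$ all lie in $\{\log\lvert\lambda\rvert + \ii\pi(2k{+}1) : k\in\ZZ\}$, a conjugation-invariant set with no real element and no conjugation-fixed point. Pairing each such $\mu$ with $\bar\mu\neq\mu$ and using that $R$, being real, has identical Jordan structure at $\mu$ and $\bar\mu$, the blocks $\JJ_n(\lambda)$ of $B$ split, for each fixed size $n$, into two families of equal cardinality, and $(2)$ follows.

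For \emph{sufficiency}, assume $(1)$ and $(2)$ and write $B = S J S^{-1}$ with $S\in\GL(d,\RR)$ and $J$ the real Jordan form of $B$; it suffices to build a real $R_0$ with $\ee^{R_0} = J$, since then $R \defeq S R_0 S^{-1}$ works. I would do this block by block. For an elementary block $\JJ_n(\lambda)$ with $\lambda > 0$, take $R_0 = (\log\lambda)\ts\one_n + \log(\one_n + \lambda^{-1} N_n)$, the second summand being the terminating series \eqref{eq:log-def} in the nilpotent $N_n$; this is real and exponentiates to $\JJ_n(\lambda)$. For a genuinely non-real conjugate pair $\nu,\bar\nu$ of $B$, whose real Jordan block is over $\CC$ similar to $\JJ_n(\nu)\oplus\JJ_n(\bar\nu)$, pick $\mu$ with $\ee^\mu = \nu$ and use the real Jordan block attached to the conjugate pair $\{\mu,\bar\mu\}$. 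For a negative eigenvalue $\lambda$ of $B$, hypothesis $(2)$ lets me pair its blocks, and each pair $\JJ_n(\lambda)\oplus\JJ_n(\lambda)$ is over $\CC$ the complexification of a real $2n\times 2n$ block attached to $\{\mu,\bar\mu\}$ with $\mu = \log\lvert\lambda\rvert + \ii\pi$, which therefore has a real logarithm. Assembling these block logarithms yields $R_0$.

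For \emph{uniqueness}, assume in addition $\sigma(B)\subset\RR_+$ and that no elementary Jordan block of $B$ occurs twice, and let $R$ be any real logarithm of $B$. As above, $R$ preserves each generalised eigenspace $E_\lambda$, and $B|_{E_\lambda} = \ee^{R|_{E_\lambda}}$. If $R|_{E_\lambda}$ had an eigenvalue $\log\lambda + 2\pi\ii k$ with $k\neq 0$, then its conjugate $\log\lambda - 2\pi\ii k \neq \log\lambda + 2\pi\ii k$ would also occur, and Jordan-preservation would force two identical families of blocks of $B$ at $\lambda$, contradicting the no-repetition hypothesis. Hence $R|_{E_\lambda} = (\log\lambda)\ts\one + X_\lambda$ with $X_\lambda$ nilpotent, so that $\ee^{X_\lambda} = \one + \lambda^{-1} A_\lambda$ with $A_\lambda \defeq B|_{E_\lambda} - \lambda\ts\one$ nilpotent. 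A nilpotent $X$ is, however, recovered uniquely from $\ee^X$ via $X = \log\bigl(\one + (\ee^X - \one)\bigr)$, the series \eqref{eq:log-def} terminating; thus $X_\lambda = \log(\one + \lambda^{-1} A_\lambda)$ is forced, and $R$ coincides on every $E_\lambda$ with the principal logarithm.

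The hard part, I expect, is not any single computation but the bookkeeping in the sufficiency step: pinning down the precise dictionary between a real Jordan block and the complex pair $\JJ_n(\nu)\oplus\JJ_n(\bar\nu)$ and verifying that the chosen real logarithm exponentiates back correctly — everything hinging on the (standard but not wholly trivial) fact that $\exp$ preserves Jordan block sizes eigenvalue by eigenvalue, which also underlies both the necessity and the uniqueness arguments.
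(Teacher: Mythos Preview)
Your argument is correct and follows the standard route (essentially that of \cite{Culver} itself): analyse the Jordan structure of a putative real logarithm via the conjugate-pairing of non-real eigenvalues and the fact that $\exp$ preserves elementary Jordan block sizes, build a real logarithm blockwise from the real Jordan form in the sufficiency direction, and for uniqueness force all eigenvalues of $R$ on each generalised eigenspace to be the real $\log\lambda$, whence the nilpotent part is pinned down by the terminating logarithm series. The one small point worth making explicit in your sufficiency step is that when you produce a real $2n{\times}2n$ block whose exponential is \emph{complex}-similar to $\JJ_n(\lambda)\oplus\JJ_n(\lambda)$, you still need real similarity; this follows from the standard fact that two real matrices similar over $\CC$ are similar over $\RR$, after which a real conjugation of your block gives the desired real logarithm.

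The paper, by contrast, does not prove Fact~\ref{fact:Culver} at all: it is stated with the $\qed$ symbol in its ``absence of proof'' meaning and attributed to \cite[Thms.~1 and 2]{Culver} (with classical antecedents in \cite[Ch.~5]{Gant}). So there is no paper-side argument to compare against; your proposal supplies a self-contained proof where the paper is content to cite the literature.
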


Below, we refer to the existence part of Fact~\ref{fact:Culver} as
\emph{Culver's criterion}.  Note that, for the uniqueness statement,
the matrix $B$ need not be cyclic, as it can have two different Jordan
blocks for the same (positive) eigenvalue, and still satisfy the
condition.

\begin{remark}\label{rem:Culver}
  When Culver's criterion for negative eigenvalues is satisfied, or
  when any elementary Jordan block of a positive real eigenvalue of
  $B$ occurs more than once, there are \emph{uncountably} many real
  logarithms.  To see this, observe that one can start from one real
  logarithm with the matching JNF, and modify a pair of blocks by
  adding $2 \pi \ii k$ to the eigenvalue of one and its complex
  conjugate to the eigenvalue of the other, for some $0\ne k\in
  \ZZ$. This will not change the exponential of the matrix, but its
  symmetry. Indeed, while any matrix in $\GL(2,\RR)$ commutes with
  $\lambda \one^{}_{2}$, they need not (and generally will not)
  commute with $\diag (\lambda +2 \pi \ii k, \lambda - 2 \pi \ii k)$
  when $k\ne 0$, and similarly for block matrices with two equal
  versus two modified Jordan blocks.  We will meet one instance of
  this in Lemma~\ref{lem:extend-unique}.

  The other possible mechanism occurs if $B$ has a pair of complex
  conjugate eigenvalues, and is then a direct consequence of the
  structure of the complex logarithm. This results in a
  \emph{countable} set of real logarithms.  These are the two possible
  mechanisms for additional real logarithms, both of which will show
  up below; see \cite{Higham} or the discussion around the Corollary
  in \cite{Culver} for details.  It will be our task to identify the
  generators among them.  \exend
\end{remark}

Before we continue, let us formulate one simple necessary criterion
for embeddability due to Elfving \cite{Elfving}, and its consequence
on the spectrum of an embeddable matrix.

\begin{fact}\label{fact:Elf}
   If\/ $M\in\cM_d$ is embeddable, no diagonal element of\/ $M$
   can be zero. Further, $\lambda=1$ is the only eigenvalue of\/ $M$ 
   on the unit circle, and all other eigenvalues satisfy\/ 
   $\lvert \lambda \rvert < 1$.
\end{fact}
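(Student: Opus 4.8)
The plan is to prove the two assertions of Fact~\ref{fact:Elf} separately, both by exploiting that embeddability means $M = \ee^{Q}$ for some generator $Q$.

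\smallskip

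\emph{No zero diagonal entry.} Suppose $M = \ee^{Q}$ with $Q$ a generator, and write $Q = (q_{ij})$, so $q_{ij} \geqslant 0$ for $i \neq j$ and $q_{ii} = -\sum_{j \neq i} q_{ij} \leqslant 0$. The natural device is the rescaling used already in the proof of Fact~\ref{fact:one-is-special}: pick $a > 0$ small enough that $P \defeq \one + a\ts Q$ is a Markov matrix (possible since the off-diagonal entries of $Q$ are non-negative and only the diagonal entries need $1 + a\ts q_{ii} \geqslant 0$, which holds for $a \leqslant 1/\max_i \lvert q_{ii}\rvert$). Then $P$ has strictly positive diagonal, because $1 + a\ts q_{ii} \geqslant 1 - a\max_i\lvert q_{ii}\rvert$, and choosing $a$ strictly below that bound makes it positive; in fact $1+a\ts q_{ii}>0$ already. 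Now $\ee^{Q} = \lim_{n\to\infty}\bigl(\one + \tfrac{1}{n}Q\bigr)^{n}$, and for $n$ large $\one + \tfrac1n Q$ is Markov with positive diagonal, so each factor is a non-negative matrix with positive diagonal; a product of finitely many such matrices again has positive diagonal (the $(i,i)$ entry of a product of non-negative matrices is bounded below by the product of their $(i,i)$ entries). Passing to the limit gives $M_{ii} \geqslant \bigl(1 + \tfrac1n q_{ii}\bigr)^{n} \to \ee^{q_{ii}} > 0$, so no diagonal entry of $M$ vanishes. (Equivalently, one can argue directly from $\ee^{Q} = \ee^{Q - c\ts \one}\ee^{c\ts\one}$ with $c = \max_i\lvert q_{ii}\rvert$, since $Q + c\ts\one$ has non-negative entries, hence so does its exponential, and then $M_{ii} = \ee^{-c}\bigl(\ee^{Q+c\ts\one}\bigr)_{ii} \geqslant \ee^{-c}(\one + Q + c\ts\one)_{ii} = \ee^{-c}(1 + q_{ii} + c) > 0$.)

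\smallskip

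\emph{Spectral condition.} For the eigenvalue statement, use the spectral mapping theorem \eqref{eq:SMT}: if $M = \ee^{Q}$, then $\sigma(M) = \ee^{\sigma(Q)} = \{\ee^{\mu} : \mu \in \sigma(Q)\}$. A generator $Q$ has all row sums zero, so $Q\ts(1,\dots,1)^{\mathsf T} = 0$ and hence $0 \in \sigma(Q)$; moreover, by the Gershgorin disc theorem applied to $Q$ (whose $i$-th disc is centred at $q_{ii} \leqslant 0$ with radius $\sum_{j\neq i}q_{ij} = -q_{ii} = \lvert q_{ii}\rvert$), every eigenvalue $\mu$ of $Q$ satisfies $\lvert \mu - q_{ii}\rvert \leqslant \lvert q_{ii}\rvert$ for some $i$, which forces $\mathrm{Re}\ts\mu \leqslant 0$. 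Therefore every $\lambda \in \sigma(M)$ has the form $\lambda = \ee^{\mu}$ with $\mathrm{Re}\ts\mu \leqslant 0$, so $\lvert\lambda\rvert = \ee^{\mathrm{Re}\ts\mu} \leqslant 1$, and $\lvert\lambda\rvert = 1$ forces $\mathrm{Re}\ts\mu = 0$. It remains to exclude non-real eigenvalues on the unit circle. Here is the point where a little more care is needed: if $\mathrm{Re}\ts\mu = 0$, the Gershgorin inequality $\lvert\mu - q_{ii}\rvert \leqslant \lvert q_{ii}\rvert$ becomes $q_{ii}^{2} + (\mathrm{Im}\ts\mu)^{2} \leqslant q_{ii}^{2}$, which is only possible if $\mathrm{Im}\ts\mu = 0$ \emph{and} the bound is saturated, i.e. $q_{ii} = 0$ for the relevant index $i$. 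So $\mu = 0$, and the corresponding eigenvalue of $M$ is $\lambda = \ee^{0} = 1$. Hence $\lambda = 1$ is the only eigenvalue of $M$ on the unit circle, and all others have modulus strictly less than $1$.

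\smallskip

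The main obstacle is the second half of the spectral claim — ruling out eigenvalues of the form $\ee^{\ii\theta}$ with $\theta \neq 0$. A crude application of the SMT only gives $\lvert\lambda\rvert \leqslant 1$; to get that the \emph{boundary} case forces $\lambda = 1$ one must extract from $\mathrm{Re}\ts\mu = 0$ that in fact $\mu = 0$, and the cleanest way is the Gershgorin argument above, which pins $\mu$ to a point where all the discs touch the imaginary axis simultaneously at the origin. An alternative is to invoke the Perron--Frobenius structure: $\ee^{Q}$ is a non-negative matrix with spectral radius $1$, and for a primitive (or more generally irreducible) stochastic matrix the only peripheral eigenvalue is $1$; in the reducible case one decomposes into communicating classes and argues blockwise, but since $\ee^{Q}$ has strictly positive diagonal (as shown above) and is non-negative, its directed graph has a loop at every vertex, which already rules out any nontrivial cyclic peripheral spectrum within each irreducible block. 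Either route works; I would present the Gershgorin version as the shorter one.
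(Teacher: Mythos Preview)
Your proof is correct. For the diagonal-entry part, your argument is close in spirit to the paper's: the paper writes $M = M(1/n)^n$ with $M(t) = \ee^{tQ}$ and uses that $(P^n)_{ii} \geqslant (P_{ii})^n$ for non-negative $P$, whereas you use the approximation $(\one + \tfrac{1}{n}Q)^n$ with the same lower bound. Your alternative via $\ee^{Q} = \ee^{-c}\,\ee^{Q+c\ts\one}$ with $Q+c\ts\one$ non-negative is a genuinely cleaner variant that gives the explicit estimate $M_{ii}\geqslant \ee^{q_{ii}}$ in one line.

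For the spectral part, the two arguments differ substantially. The paper does not go through the spectrum of $Q$; instead it uses Part~1 to get $p \defeq \min_i m_{ii} > 0$, applies Perron--Frobenius to the non-negative matrix $M - p\ts\one$, and deduces $\lvert\lambda - p\rvert \leqslant 1-p$, so $\sigma(M)$ sits in a disk tangent to the unit circle only at $1$. Your route is via the SMT and Gershgorin on $Q$: each Gershgorin disk for $Q$ is centred at $q_{ii}\leqslant 0$ with radius $\lvert q_{ii}\rvert$, hence meets the imaginary axis only at the origin, which forces any $\mu\in\sigma(Q)$ with $\mathrm{Re}\,\mu=0$ to be $\mu=0$. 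Your argument is self-contained (it does not rely on Part~1) and arguably more direct; the paper's argument, on the other hand, proves the slightly stronger disk containment $\lvert\lambda - p\rvert\leqslant 1-p$ and applies to \emph{any} Markov matrix with positive diagonal, not just embeddable ones. One small cosmetic point: in your Gershgorin step, once you obtain $(\mathrm{Im}\,\mu)^2\leqslant 0$ you already have $\mu=0$; the clause ``and the bound is saturated'' is a byproduct, not an additional requirement.
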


\begin{proof}
  Let $M=\ee^Q$ with a generator $Q$, and consider $M(t)= \ee^{t Q}$
  for $t\in[0,1]$, which is a continuous path in $\cM_d$ from
  $M(0)=\one$ to $M(1)=M$. By continuity, $M\bigl( \frac{1}{n}\bigr)$
  must have all diagonal elements strictly positive, for some
  sufficiently large $n\in\NN$.  Then, with
  $M(t) = \bigl( m^{}_{ij} (t) \bigr)_{1\leqslant i,j \leqslant d}$,
  we get
  $m^{}_{ii} (1) \geqslant \bigl( m^{}_{ii} (n^{-1})\bigr)^{n} > 0$
  for all $1\leqslant i \leqslant d$ as claimed.
  
  Clearly, all eigenvalues of a Markov matrix satisfy
  $\lvert \lambda\rvert \leqslant 1$ by the \emph{Perron--Frobenius}
  (PF) theorem; see \cite[Sec.~8.3]{HJ} for background. Now, if $M$ is
  embeddable, all diagonal elements are strictly positive. Let $p$ be
  the smallest of them, and consider $M - p \one$, which is still a
  non-negative matrix, so the modulus of all of its eigenvalues is
  bounded by its PF eigenvalue, which gives
  $\lvert \lambda - p \ts \rvert\leqslant 1-p$ for all
  $\lambda\in\sigma (M)$.  So, $\sigma (M)$ is contained in a closed
  disk of radius $1-p$ that lies inside the unit disk in such a way
  that it touches $1$, but no other point of the boundary, so
  $\lambda=1$ or $\lvert\lambda\rvert<1$.
\end{proof}

Now, (classic) embeddability of $M \in \cM_d$ means $M = \ee^Q$ for a
generator $Q$, and the set of all embeddable Markov matrices, for
fixed $d$, is denoted by $\emb$.  Clearly, embeddability is a special
case of the existence of a real logarithm, because each generator is a
real logarithm, while the converse is generally not true. This refers
to the hard part of the embedding problem, namely establishing the
required non-negativity conditions for the off-diagonal matrix
elements of the logarithm.  Let us begin with a simple result.

\begin{fact}\label{fact:one}
  The only generator\/ $Q = (q^{}_{ij} )^{}_{1\leqslant i,j \leqslant d}$ 
  that satisfies\/ $\ee^Q = \one^{}_{d}$ is\/ $Q=\nix^{}_{d}$.
\end{fact}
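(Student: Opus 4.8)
The plan is to show that $\ee^{Q}=\one_{d}$ forces $Q=\nix_{d}$ for any generator $Q$, by combining the spectral constraints on $Q$ coming from the exponential with the structural fact that generators have no non-trivial Jordan block at their distinguished eigenvalue. First I would apply Fact~\ref{fact:diagonal}: since $\ee^{Q}=\one_{d}$ is (trivially) diagonalisable, $Q$ is diagonalisable as well, so $Q$ is similar to a complex diagonal matrix and has no non-trivial Jordan blocks at all. Next I would pin down the spectrum. By the SMT, see Eq.~\eqref{eq:SMT}, the eigenvalues of $\ee^{Q}$ are exactly $\ee^{\lambda}$ for $\lambda\in\sigma(Q)$, and $\sigma(\ee^{Q})=\{1\}$ forces every eigenvalue $\lambda$ of $Q$ to satisfy $\ee^{\lambda}=1$, i.e.\ $\lambda\in 2\pi\ii\,\ZZ$. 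In particular $\lambda=0$ is the only real eigenvalue of $Q$, and any non-zero eigenvalue is purely imaginary.

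The heart of the argument is to exclude these purely imaginary eigenvalues. Here I would invoke Fact~\ref{fact:one-is-special}: every generator has $0$ as an eigenvalue, and by the Perron--Frobenius structure of generators the value $0$ is in fact the eigenvalue of maximal real part — more concretely, picking $a>0$ with $P\defeq\one+aQ$ Markov (as in the proof of Fact~\ref{fact:one-is-special}), the eigenvalues of $P$ are $1+a\lambda$ for $\lambda\in\sigma(Q)$, and by the PF theorem all of these lie in the closed unit disk, so $\lvert 1+a\lambda\rvert\leqslant 1$ for every $\lambda\in\sigma(Q)$. If $\lambda=\ii t$ is purely imaginary with $t\neq 0$, then $\lvert 1+a\ii t\rvert=\sqrt{1+a^{2}t^{2}}>1$, a contradiction. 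Hence $\sigma(Q)=\{0\}$. Combining this with the diagonalisability of $Q$ established above, $Q$ is similar to the zero matrix, and therefore $Q=\nix_{d}$.

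The main obstacle is the middle step: a priori the SMT only tells us the eigenvalues of $Q$ lie on the imaginary lattice $2\pi\ii\,\ZZ$, and one must rule out genuine non-zero purely imaginary eigenvalues, which is precisely where the one-sided semigroup nature of generators (as opposed to arbitrary real logarithms) enters — a general real logarithm of $\one_{d}$ such as $\diag(2\pi\ii,-2\pi\ii)$ is of course nonzero, so the constraint ``$Q$ is a generator'' is doing essential work. The Perron--Frobenius bound $\lvert 1+a\lambda\rvert\leqslant 1$ on the associated Markov matrix $\one+aQ$ is the clean way to supply that missing ingredient; everything else is bookkeeping with the SMT and Fact~\ref{fact:diagonal}.
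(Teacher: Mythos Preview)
Your argument is correct, but it takes a genuinely different route from the paper's own proof. The paper proceeds by a one-line trace computation: from $\ee^{Q}=\one_{d}$ one gets $\ee^{\tr(Q)}=\det(\ee^{Q})=1$, so $\tr(Q)=0$; since a generator has $q_{ii}\leqslant 0$ for all $i$, this forces every diagonal entry to vanish, and then the row-sum condition $q_{ii}=-\sum_{j\neq i}q_{ij}$ together with $q_{ij}\geqslant 0$ for $i\neq j$ kills the off-diagonal entries too. Your approach instead goes through the spectrum: diagonalisability of $Q$ via Fact~\ref{fact:diagonal}, the SMT to place $\sigma(Q)\subset 2\pi\ii\,\ZZ$, and then the Perron--Frobenius bound on the auxiliary Markov matrix $\one+aQ$ to exclude the non-zero imaginary eigenvalues. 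Both are valid; the paper's version is shorter and entirely elementary, needing nothing beyond the determinant--trace identity and the sign pattern of a generator, while your version is more in the spirit of the spectral machinery used later in the paper and makes explicit \emph{why} the generator hypothesis rules out the non-trivial real logarithms of $\one_{d}$ that Fact~\ref{fact:real-log-2} goes on to classify.
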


\begin{proof}
  The claim is trivial for $d=1$.  Since
  $\det (\one^{}_{d}) = 1 = \det (\ee^Q) = \ee^{\tr (Q)}$, we get
  $\tr (Q) = 0$ as $Q$ is real. Now, the generator property of $Q$
  implies that all $q^{}_{ii} \leqslant 0$, so
  $\tr (Q) = \sum_{i} q^{}_{ii}$ forces
  $0=q^{}_{ii} = -\sum_{i\ne j} q^{}_{ij}$ for all $i$, hence
  $q^{}_{ij} = 0$ for all $i\ne j$, and $Q=\nix^{}_{d}$ as claimed.
\end{proof}

Note that $\one^{}_{2}$ has uncountably many real logarithms, though
only one with zero row sums, the generator $\nix^{}_{2}$. As we shall
need this later for several cases with degenerate spectrum, let us
expand on this point a little. The equation $\ee^z = 1$ with $z\in\CC$
holds if and only if $z = 2 \pi \ii \ts k$ for some $k\in\ZZ$, and we
need the analogue of this in the sense that we want all solutions of
$\ee^R = \one^{}_{2}$ with $R\in\Mat (2,\RR)$. Such solutions exist,
as one sees from $\ee^{2 \pi I} = \one^{}_{2}$ with
$I = \left( \begin{smallmatrix} 0 & -1 \\ 1 &
    0 \end{smallmatrix}\right)$, and $\ee^{\gamma I} = \one^{}_{2}$
holds if and only if $\gamma = 2 \pi k$ for some $k\in\ZZ$, where
\[
    \exp (2 \ts \pi k I ) \, = \sum_{n=0}^{\infty}
    \myfrac{(2 \ts \pi k)^n}{n\ts !} \, I^n 
    \, = \, \cos (2 \ts \pi k) \one^{}_{2} +
    \sin (2 \ts \pi k) I \, = \, \one^{}_{2} \ts .
\]
The general result reads as follows. 

\begin{fact}\label{fact:real-log-2}
   The real logarithms of\/ $\one^{}_{2}$ are precisely the matrices\/
   $2 \pi k I_{x,y,z}$ with\/ 
\[
     I_{x,y,z} \, \defeq \,
    \begin{pmatrix} x & -z \\ y & -x \end{pmatrix},
\]   
where\/ $k\in\NN_0$ and\/ $x,y,z\in\RR$ such that\/ $yz-x^2=1$. The
parametrisation is unique, and the only real logarithm with zero row
sums is the one with\/ $k=0$.
\end{fact}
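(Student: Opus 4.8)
The plan is to characterise all $R \in \Mat(2,\RR)$ with $\ee^R = \one^{}_{2}$ by first passing to the spectral side via the SMT. Applying Eq.~\eqref{eq:SMT} to $p(x) = \ee^x$ (or rather using that $\sigma(\ee^R) = \ee^{\sigma(R)}$), the hypothesis $\ee^R = \one^{}_{2}$ forces every eigenvalue $\lambda$ of $R$ to satisfy $\ee^\lambda = 1$, hence $\lambda = 2\pi\ii k$ for some $k \in \ZZ$. Since $R$ is real, its spectrum is either a single real eigenvalue (with multiplicity $2$) or a complex conjugate pair. The real case forces $\lambda = 0$ with $\ma(0)=2$; then $R$ is nilpotent, so $\ee^R = \one_2 + R$, which equals $\one_2$ only if $R = \nix_2$ — this is the $k=0$ instance. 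The complex case forces $\sigma(R) = \{2\pi\ii k, -2\pi\ii k\}$ for some $k \geqslant 1$; in particular $R$ is simple, hence diagonalisable over $\CC$, with $\tr(R) = 0$ and $\det(R) = 4\pi^2 k^2 > 0$.

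Next I would identify, among real $2\times 2$ matrices, exactly those with $\tr(R)=0$ and $\det(R) = 4\pi^2 k^2$. Writing $R = \left(\begin{smallmatrix} a & b \\ c & d\end{smallmatrix}\right)$, the trace-zero condition gives $d = -a$, so $R = \left(\begin{smallmatrix} a & b \\ c & -a\end{smallmatrix}\right)$ and $\det(R) = -a^2 - bc = 4\pi^2 k^2$; setting $x = a$, $z = -b$, $y = c$ we get precisely $yz - x^2 = 4\pi^2 k^2$, i.e.\ $R = (2\pi k)\, I_{x,y,z}$ with $yz - x^2 = 1$ after rescaling (note $I_{x,y,z}$ itself already has $\tr = 0$ and $\det = x^2 - (-z)y = x^2 + zy$, wait — one must track the sign carefully: $\det I_{x,y,z} = x\cdot(-x) - (-z)\cdot y = -x^2 + yz = 1$, so $\det\big((2\pi k)I_{x,y,z}\big) = 4\pi^2 k^2$, consistent). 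Conversely, to show every such $R$ actually exponentiates to $\one_2$, I would diagonalise: $R$ is similar over $\CC$ to $\diag(2\pi\ii k, -2\pi\ii k)$, so $\ee^R$ is similar to $\diag(\ee^{2\pi\ii k}, \ee^{-2\pi\ii k}) = \one_2$, and being similar to $\one_2$ means equal to $\one_2$. Alternatively, and more cleanly, one can use $R^2 = -\det(R)\,\one_2 = -(2\pi k)^2 \one_2$ (Cayley–Hamilton for a trace-zero matrix) to sum the exponential series directly, exactly as in the displayed computation with $I$ in the excerpt: $\ee^{(2\pi k) I_{x,y,z}} = \cos(2\pi k)\one_2 + \sin(2\pi k)\frac{I_{x,y,z}}{\|\cdot\|}\cdots = \one_2$.

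For the uniqueness of the parametrisation, I would argue that $k$ is recovered from $R$ as $k = \frac{1}{2\pi}\sqrt{\det(R)}$ (with $k=0$ iff $R = \nix_2$, the degenerate case handled separately), and then $(x,y,z)$ is recovered as $\frac{1}{2\pi k}$ times the entries of $R$ in the pattern of $I_{x,y,z}$, so the map $(k,x,y,z) \mapsto 2\pi k\, I_{x,y,z}$ is injective on $\{k \geqslant 1\} \times \{yz - x^2 = 1\}$ (together with the single point $k=0$). Finally, for the zero-row-sum claim: the row sums of $2\pi k\, I_{x,y,z}$ are $2\pi k(x - z)$ and $2\pi k(y - x)$, which both vanish iff $k=0$ or $x = y = z$; but $x=y=z$ together with $yz - x^2 = 1$ gives $0 = 1$, a contradiction, so the only zero-row-sum real logarithm is $R = \nix_2$ at $k=0$ — which indeed is a generator, recovering Fact~\ref{fact:one} in dimension $2$.

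I expect the only real obstacle to be bookkeeping rather than conceptual: one must be careful that the three real parameters $x,y,z$ of $I_{x,y,z}$ genuinely capture \emph{all} trace-zero $2\times 2$ matrices of the required determinant (they do, since trace-zero is a $3$-dimensional linear condition and $I_{x,y,z}$ is the general trace-zero matrix up to relabelling of entries), and that the constraint $yz - x^2 = 1$ — after pulling the scalar $2\pi k$ out — is exactly the normalisation forcing $\det = 4\pi^2 k^2$. The sign in $\det I_{x,y,z} = yz - x^2$ is the one place a computational slip would propagate, so I would verify it explicitly at the start. Everything else is the SMT plus the elementary classification of real $2\times 2$ matrices by their spectrum.
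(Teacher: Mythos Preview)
Your argument is correct and complete. The overall architecture matches the paper's --- both proofs begin with the spectral constraint $\sigma(R)\subset 2\pi\ii\,\ZZ$ and end with the zero-row-sum check forcing $k=0$ --- but the middle step differs slightly. The paper invokes Fact~\ref{fact:diagonal} to get diagonalisability of $R$ and then explicitly computes $B\,\diag(\ii,-\ii)\,B^{-1}$ for general $B\in\GL(2,\CC)$ to arrive at the form $I_{x,y,z}$. You instead handle the degenerate (nilpotent) case directly via $\ee^R=\one_2+R$, and in the nondegenerate case extract $\tr(R)=0$ and $\det(R)=4\pi^2k^2$ from the spectrum, then parametrise the general trace-zero $2\times 2$ matrix and impose the determinant normalisation. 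Your route is a bit more elementary --- it avoids the conjugation computation and the appeal to Fact~\ref{fact:diagonal} --- while the paper's has the side benefit of identifying $I_{x,y,z}$ as the general real square root of $-\one_2$, a fact reused later. Your uniqueness and zero-row-sum arguments are cleaner than the paper's somewhat terse treatment, and your explicit handling of the $k=0$ degeneracy (a single point rather than a parametrised family) is the right reading of the statement.
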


\begin{proof}
  The relation $\ee^R=\one^{}_{2}$ forces $R$ to be diagonalisable
  over $\CC$ by Fact~\ref{fact:diagonal}. As $R$ should be real, it
  must then be similar to $2 \pi k \, \diag (\ii, -\ii)$ within
  $\GL (2,\CC)$, for some $k\in\ZZ$. Writing
  $B = \left( \begin{smallmatrix} a & b \\ c & d
  \end{smallmatrix}\right)$ with complex entries and
  $\det(B) = ad-bc \ne 0$, a quick calculation shows that
  $B \, \diag (\ii, - \ii) B^{-1}$, which always lies in
  $\mathrm{SL} (2,\CC)$, must be of the form
  $I_{x,y,z} = \left( \begin{smallmatrix} x & -z \\ y &
    -x \end{smallmatrix}\right)$ with $x,y,z \in \CC$ and
  $yz-x^2=1$. We get all \emph{real} solutions by restricting the
  entries to $\RR$.

  By construction, these matrices are the most general real square
  roots of $-\one^{}_{2}$ in $\GL (2,\RR)$, which can also be
  calculated directly. Consequently, we get
  $\exp (2 \pi k I_{x,y,z}) = \one^{}_{2}$ for all $k\in\ZZ$ and
  $x,y,z\in\RR$ with $yz-x^2=1$. Since $I_{-x,-y,-z} = - I_{x,y,z}$
  and since $I_{x,y,z}=I_{x',y',z'}$ holds only for $x=x'$, $y=y'$ and
  $z=z'$, the claim is clear; compare \cite{CFR} for a similar
  treatment.
\end{proof}

Sometimes, it is advantageous to admit $k\in\ZZ$ and restrict $z$ to
be positive, which gives another unique parametrisation. Note that $y$
must then be positive as well. We shall make use of this freedom
without further notice. Every matrix $I_{x,y,z}$ from
Fact~\ref{fact:real-log-2} has eigenvalues $\pm \ii$ and is thus
simple.  Then, the commutation relation
$[ I_{x,y,z} , I_{x',y',z'} ] =\nix$, as a consequence of
Theorem~\ref{thm:Frob}{\ts}(6), implies $(x',y',z') = \pm
(x,y,z)$. Indeed, we have
\begin{equation}\label{eq:implication}
    I_{x',y',z'} \in \RR [ I_{x,y,z}] = \{ a \one^{}_{2} + b I_{x,y,z} :
    a,b \in \RR \}  \quad \Longrightarrow \quad
    a=0 \,\text{ and }\, b = \pm 1 \ts .
\end{equation}
This gives the following result. 

\begin{coro}\label{coro:real-log}
  Let\/ $\lambda \ne 0$ be a fixed real number. Then, the most general
  real logarithms of\/ $\lambda \one^{}_{2}$ are the following
  matrices.
\begin{enumerate}\itemsep=3pt
\item If\/
  $\lambda > 0 \! : \; \bigl\{ \log(\lambda) \one^{}_{2} + (2 k )\pi
  I_{x,y,z} \nts : k\in\ZZ \text{ and } x,y,z\in\RR \text{ with } z>0
  , \, yz-x^2=1 \bigr\}$.
\item If\/
  $\lambda < 0 \! : \; \bigl\{ \log \ts\lvert \lambda \rvert \ts
  \one^{}_{2} + (2 k+1) \pi I_{x,y,z} \nts : k\in\ZZ \text{ and }
  x,y,z\in\RR \text{ with } z>0 , \, yz-x^2=1 \bigr\}$.
\end{enumerate}  
In both cases, the parametrisation is unique.
\end{coro}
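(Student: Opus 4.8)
The plan is to reduce both cases to Fact~\ref{fact:real-log-2} by the elementary observation that $R$ is a real logarithm of $\lambda\one^{}_{2}$ precisely when $R - \log\lvert\lambda\rvert\,\one^{}_{2}$ is a real logarithm of $\lambda\lvert\lambda\rvert^{-1}\one^{}_{2}$, which is $\one^{}_{2}$ when $\lambda>0$ and $-\one^{}_{2}$ when $\lambda<0$. The scalar matrix $\log\lvert\lambda\rvert\,\one^{}_{2}$ commutes with everything and has exponential $\lvert\lambda\rvert\,\one^{}_{2}$, so $\ee^{R}=\ee^{\log\lvert\lambda\rvert\,\one^{}_{2}}\ee^{R-\log\lvert\lambda\rvert\,\one^{}_{2}}=\lvert\lambda\rvert\,\ee^{R-\log\lvert\lambda\rvert\,\one^{}_{2}}$, whence $\ee^R=\lambda\one^{}_{2}$ is equivalent to $\ee^{R-\log\lvert\lambda\rvert\,\one^{}_{2}}=\mathrm{sgn}(\lambda)\one^{}_{2}$. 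This splits the argument cleanly along the sign of $\lambda$.

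For $\lambda>0$, the shifted matrix must be a real logarithm of $\one^{}_{2}$, so by Fact~\ref{fact:real-log-2} it equals $2\pi k\, I_{x,y,z}$ with $k\in\NN_0$, $yz-x^2=1$; converting to the alternative parametrisation mentioned right after that fact (allowing $k\in\ZZ$ and normalising $z>0$, which forces $y>0$) gives exactly the set in~(1). For $\lambda<0$, I first record the analogue of Fact~\ref{fact:real-log-2} for $-\one^{}_{2}$: by Fact~\ref{fact:diagonal} any real $R'$ with $\ee^{R'}=-\one^{}_{2}$ is $\CC$-diagonalisable and, being real with eigenvalues among the solutions of $\ee^z=-1$, i.e.\ $z=(2k+1)\pi\ii$, must be similar over $\GL(2,\CC)$ to $(2k+1)\pi\,\diag(\ii,-\ii)$ for some $k\in\ZZ$; the same conjugation computation as in the proof of Fact~\ref{fact:real-log-2} shows $R'=(2k+1)\pi\, I_{x,y,z}$ with $x,y,z\in\RR$ and $yz-x^2=1$, and conversely every such matrix has this exponential because $I_{x,y,z}^2=-\one^{}_{2}$. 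Passing back through the shift and normalising $z>0$ yields the set in~(2).

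Uniqueness of the parametrisation in both cases is inherited directly from Fact~\ref{fact:real-log-2}: since $I_{x,y,z}=I_{x',y',z'}$ forces $x=x'$, $y=y'$, $z=z'$, the map $(k,x,y,z)\mapsto \log\lvert\lambda\rvert\,\one^{}_{2}+(2k)\pi I_{x,y,z}$ (resp.\ with $(2k+1)\pi$) is injective on the stated domain — the additive constant $\log\lvert\lambda\rvert$ is fixed, and the prefactor $(2k)\pi$ (resp.\ $(2k+1)\pi$) recovers $k$ from the common scaling of the entries, after which $(x,y,z)$ is determined. The only mild obstacle is bookkeeping the $z>0$ normalisation: one must check that restricting $z>0$ while keeping $k$ free over $\ZZ$ genuinely re-covers all matrices $2\pi k\, I_{x,y,z}$ (resp.\ $(2k+1)\pi\, I_{x,y,z}$) obtained with $k\in\NN_0$ and $z$ of either sign, which is immediate from $I_{-x,-y,-z}=-I_{x,y,z}$ together with sign-flipping $k$. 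No step presents a real difficulty; the content is entirely the reduction to Fact~\ref{fact:real-log-2} and its $-\one^{}_{2}$ analogue.
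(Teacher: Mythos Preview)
Your proof is correct and follows essentially the same route as the paper: reduce to real logarithms of $\pm\one^{}_{2}$ via the scalar shift by $\log\lvert\lambda\rvert\,\one^{}_{2}$, then characterise those via diagonalisability (Fact~\ref{fact:diagonal}) and the conjugation computation from Fact~\ref{fact:real-log-2}. The paper's sketch hints at handling case~(2) through the commutation relation~\eqref{eq:implication} (to show that a \emph{single} $I_{x,y,z}$ suffices rather than a combination), whereas you re-run the JNF argument of Fact~\ref{fact:real-log-2} directly for $-\one^{}_{2}$; your path is arguably cleaner and sidesteps that subtlety entirely. One small caveat on uniqueness in case~(1): for $k=0$ the term $(2k)\pi I_{x,y,z}$ vanishes, so $(x,y,z)$ is undetermined---this degeneracy is already present in Fact~\ref{fact:real-log-2} and the paper treats it the same way, so your handling is consistent with the source.
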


\begin{proof}
  Both claims follow from elementary calculations of the above type
  via \eqref{eq:implication}. Since similar ones also appear in
  \cite{CFR}, we leave the details to the interested reader.  The only
  additional subtlety occurs for the second case, where one has to
  show that a single matrix $I_{x,y,z}$ suffices, which is a
  consequence of the commutation properties in \eqref{eq:implication}.
\end{proof}

Already for $d=3$, the situation `complexifies', because $\one^{}_{3}$
also has uncountably many real logarithms with zero row sums
\cite[Rem.~2.11]{BS2}. This is deeply connected with some of the
difficulties of the embedding problem.  However, the following variant
of the uniqueness result in Fact~\ref{fact:Culver} will become
important below.

\begin{lemma}\label{lem:extend-unique}
  Let\/ $M\in\cM_d$ be a Markov matrix with the following properties.
\begin{enumerate}\itemsep=2pt
\item All eigenvalues of\/ $M$ are positive, so\/
  $\sigma (M) \subset \RR_{+}$.
\item For\/ $1\ne\lambda \in \sigma (M)$, no elementary Jordan block
  in the JNF occurs twice.
\item The multiplicity of\/ $\lambda = 1$ is\/ $\ts\ma (1) = 2$. 
\end{enumerate}  
\noindent
Then, there are uncountably many real logarithms, but only one of them
lies in\/ $\cA^{(d)}_{0}\!$, which then is the only candidate for a
generator.
\end{lemma}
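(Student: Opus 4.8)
The plan is to reduce the problem to the two-dimensional situation handled in Corollary~\ref{coro:real-log} by splitting off the eigenvalue $\lambda=1$. Since $\ma(1)=2$ and, for $1\ne\lambda\in\sigma(M)$, no elementary Jordan block occurs twice, the matrix $M$ is block-diagonalisable (over $\RR$) as $M \sim \one^{}_{2} \oplus M'$, where $M'$ collects the remaining Jordan blocks and has all eigenvalues positive, each block occurring only once. The reason the $\lambda=1$ part is exactly $\one^{}_{2}$ rather than a non-trivial Jordan block is Fact~\ref{fact:one-is-special}, which guarantees $\mg(1)=\ma(1)$. First I would establish this decomposition carefully and note that $M'$ satisfies the hypotheses of the uniqueness part of Fact~\ref{fact:Culver} (non-singular, positive spectrum, no repeated Jordan block), so $M'$ has a \emph{unique} real logarithm, call it $Q'$.

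Next I would enumerate all real logarithms $R$ of $M$. Any such $R$ commutes with $M=\ee^R$, and more to the point $\ee^R$ is diagonalisable on the $\lambda=1$ generalised eigenspace; by an argument as in Fact~\ref{fact:diagonal} (applied blockwise) one shows $R$ must respect the same invariant-subspace decomposition, i.e.\ $R \sim R^{}_{1} \oplus R'$ with $\ee^{R_1}=\one^{}_{2}$ and $\ee^{R'}=M'$. By the uniqueness just established, $R'$ is forced to be $Q'$ (in the appropriate basis), while $R^{}_{1}$ ranges over \emph{all} real logarithms of $\one^{}_{2}$, which by Fact~\ref{fact:real-log-2} is an uncountable family $\{2\pi k\, I_{x,y,z}\}$. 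This already yields the "uncountably many real logarithms" assertion, and it is consistent with Remark~\ref{rem:Culver}. The genuinely new content is the \emph{uniqueness} of the zero-row-sum logarithm, and I expect this to be the main obstacle: one must show that among all these $R$, exactly one lies in $\cA^{(d)}_{0}$.

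For the zero-row-sum constraint, I would argue as follows. The all-ones vector $\bs{1}$ satisfies $M\bs{1}=\bs{1}$, so $\bs{1}$ lies in the $\lambda=1$ eigenspace, i.e.\ inside the two-dimensional $R^{}_{1}$-block. A real matrix $R$ has zero row sums precisely when $R\bs{1}=\nix$, i.e.\ when $\bs{1}$ lies in the kernel of $R$; equivalently, the restriction $R^{}_{1}$ of $R$ to the $\lambda=1$ plane must annihilate (the coordinates of) $\bs{1}$. Now $R^{}_{1}=2\pi k\, I_{x,y,z}$ has eigenvalues $\pm 2\pi k\,\ii$, so it is either invertible ($k\ne 0$), in which case it cannot kill the nonzero vector $\bs{1}$, or it is $\nix^{}_{2}$ ($k=0$). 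Hence $R$ has zero row sums if and only if $k=0$, forcing $R^{}_{1}=\nix$. That pins down $R^{}_{1}$ uniquely, and combined with $R'=Q'$ being already unique, the zero-row-sum real logarithm of $M$ is unique; this is then the only candidate for a generator, which is the claim.

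A couple of technical points I would watch: the block decomposition is only up to a fixed real similarity $S$, and the zero-row-sum condition is not similarity-invariant, so I must track $S$ and verify that "$\bs{1}$ lies in the $\lambda=1$ block" survives — it does, because $S$ can be chosen with its first two columns spanning the $\lambda=1$ eigenspace, which contains $\bs{1}$. Also, strictly, one should confirm that no real logarithm mixes the blocks: this follows because the generalised eigenspaces of $M$ for distinct eigenvalues are $R$-invariant (as $R$ commutes with every polynomial in $M$, in particular with the spectral projections, which are real since the eigenvalues here are real), so the only freedom is genuinely inside the $2\times 2$ block, exactly as used above.
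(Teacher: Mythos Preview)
Your proposal is correct and follows essentially the same route as the paper. Both arguments block-diagonalise $M$ as $\one^{}_{2} \oplus M'$ (using Fact~\ref{fact:one-is-special} for the $\lambda=1$ part), invoke Culver's uniqueness on the $M'$ part, and use Fact~\ref{fact:real-log-2} to parametrise the remaining freedom on the $\one^{}_{2}$ block as $2\pi k\, I_{x,y,z}$. The only minor difference lies in how the zero-row-sum condition is exploited to force $k=0$: you argue geometrically that $\bs{1}$ lies in the $\lambda=1$ eigenspace, so the restriction $R^{}_{1}$ must annihilate a nonzero vector and hence cannot be invertible; the paper instead argues spectrally that zero row sums make $0$ an eigenvalue of $R$, which---by reality of $R$ and the constraint $\ma(1)=2$---must then occur with multiplicity $2$, ruling out the $\pm 2\pi\ii k$ pair. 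These are two phrasings of the same observation.
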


\begin{proof}
  The part of the JNF connected with $\lambda=1$ is $\one^{}_{2}$, by
  Fact~\ref{fact:one-is-special}. Since $\sigma (M) \subset \RR_{+}$,
  we know from Fact~\ref{fact:Culver} that a real logarithm of $M$
  exists.  Let $M = \ee^R$ with real $R$, and consider the JNF of
  $R$. The only non-uniqueness emerges for the part that corresponds
  to $\one^{}_{2}$ in the JNF of $M$, and this can be $\nix^{}_{2}$ or
  any real matrix that is (complex) similar to
  $\diag (2 \pi \ii\ts k, - 2 \pi \ii\ts k)$ for some
  $0 \ne k \in \ZZ$; compare Remark~\ref{rem:Culver}.
  
  None of the logarithms that emerge from $k\ne 0$ can lie in
  $\cA^{(d)}_{0}\!$, since zero row sums for $R$ means that $0$ is an
  eigenvalue of $R$, then automatically with multiplicity $2$, and $R$
  is real. So, only $k=0$ leads to a real logarithm of $M$ from
  $\cA^{(d)}_{0}\!$, which is the principal matrix logarithm, for
  instance in the form of a convergent series. It then has the same
  type of JNF, which implies that it has the same centraliser as $M$
  itself, and we get the uniqueness as claimed.
\end{proof}

Let us close this section with a powerful consequence of
Theorem~\ref{thm:Frob}, which was discussed and exploited in detail in
\cite[Sec.~5, Thm.~5.3 and Cor.~5.5]{BS2}.

\begin{coro}\label{coro:alg}
  If\/ $M\in\cM_d$ is embeddable, so\/ $M=\ee^Q$ for some generator\/
  $Q$, the latter commutes with\/ $M$. When\/ $M$ is also cyclic, $Q$
  is an element of the non-unital real algebra generated by\/
  $A = M\nts - \one$, with\/ $\deg (q_{_A}) = d$. Thus,
  $Q \in \langle A, A^2, \ldots , A^{d-1}\rangle^{}_{\RR}$, which
  means
\[
     Q \, = \sum_{i=1}^{d-1} \alpha^{}_{i} \ts A^{i} 
\]  
for some\/
$\alpha^{}_{1}, \alpha^{}_{2}, \ldots , \alpha^{}_{\nts d-1} \in \RR$.
In particular, if the spectral radius of\/ $A$ satisfies\/
$\varrho_{\nts_A} < 1$, such a representation exists for the
convergent series\/ $\log (\one + A)$.  \qed
\end{coro}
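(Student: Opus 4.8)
The plan is to combine the universal identity $[\ee^B, B] = \nix$ with Frobenius' theorem in the form of Theorem~\ref{thm:Frob}\,(6), and then to peel off the unit component using the zero-row-sum property that every generator has. First, $M = \ee^Q$ gives $[Q,M] = [Q,\ee^Q] = \nix$, which is the opening assertion and uses nothing about cyclicity. For the rest, observe that $A = M - \one$ differs from $M$ only by a scalar matrix, so $M$ and $A$ have the same eigenspaces and the same Jordan block sizes (the eigenvalues merely shift by $-1$); hence, by Theorem~\ref{thm:Frob}, $M$ is cyclic precisely when $A$ is, and then $\deg(q_{_A}) = \deg(q_{_M}) = d$. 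Moreover, $[Q,A] = [Q, M-\one] = [Q,M] = \nix$, and $Q$ is real, so $Q\in\cent(A)$.

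Next I would invoke Theorem~\ref{thm:Frob}\,(6): since $A$ is cyclic, $\cent(A) = \RR[A]$, and because $\deg(q_{_A}) = d$ this ring equals $\langle \one, A, A^2, \ldots, A^{d-1}\rangle_{\RR}$, with the $d$ listed matrices linearly independent. Hence $Q = \alpha^{}_{0}\ts\one + \sum_{i=1}^{d-1}\alpha^{}_{i}\ts A^{i}$ for some real numbers $\alpha^{}_{0}, \alpha^{}_{1}, \ldots, \alpha^{}_{\nts d-1}$. To see that $\alpha^{}_{0} = 0$, apply both sides to the column vector $\bs{1} = (1,\ldots,1)^{\mathsf{T}}\in\RR^{d}$: all row sums of $M$ equal $1$, so $A\ts\bs{1} = 0$ and therefore $A^{i}\ts\bs{1} = 0$ for every $i\geqslant 1$, while $Q\ts\bs{1} = 0$ because $Q$ is a generator and hence has zero row sums. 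This forces $\alpha^{}_{0}\ts\bs{1} = 0$, hence $\alpha^{}_{0} = 0$ and $Q = \sum_{i=1}^{d-1}\alpha^{}_{i}\ts A^{i} \in \langle A, A^2, \ldots, A^{d-1}\rangle_{\RR}$, which in particular is an element of the non-unital algebra generated by $A$. For the last assertion, when $\varrho_{\nts_A} < 1$ the series $\log(\one+A)$ converges in norm; it commutes with $A$ (being a power series in $A$) and satisfies $\log(\one+A)\ts\bs{1} = 0$, since every $A^{m}$ with $m\geqslant 1$ does, so the very same two steps apply and give $\log(\one+A) = \sum_{i=1}^{d-1}\alpha^{}_{i}\ts A^{i}$.

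No step here is deep: the work is done by Frobenius' theorem together with Cayley--Hamilton. The only points that deserve attention are that it is $A$, and not $M$, which one must feed into Theorem~\ref{thm:Frob}, and that the algebra in the conclusion is \emph{non-unital}, so that one genuinely has to verify the vanishing of the unit coefficient $\alpha^{}_{0}$ --- which is precisely where the defining zero-row-sum property of a generator enters.
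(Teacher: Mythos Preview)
Your proof is correct and follows exactly the route the paper indicates: the corollary is stated with a \qed and introduced as ``a powerful consequence of Theorem~\ref{thm:Frob}'', with the details deferred to \cite[Sec.~5]{BS2}, and your argument spells out precisely those details --- commutation gives $Q\in\cent(A)=\RR[A]$ by Theorem~\ref{thm:Frob}\,(6), and the zero-row-sum property kills the $\one$-coefficient. Your remark that one must work with $A$ rather than $M$ to land in the \emph{non-unital} algebra, and that $\alpha^{}_0=0$ is where the generator property enters, is exactly the point.
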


Among the many results in the literature, the following existence and
uniqueness result from \cite{Cuth} sticks out because it is actually
useful in practice.

\begin{theorem}\label{thm:cuth}
  Let\/ $M\in \cM_d$ satisfy the inequality\/
  $\,\min^{}_{1\leqslant i \leqslant d} m^{}_{ii} > \frac{1}{2}$.
  Then, $Q = \log (\one + A)$ with\/ $A=M\nts - \one$ is well defined
  as a converging series.  Moreover, $M$ is embeddable if and only if
  this\/ $Q$ is a generator, and no other embedding is possible.
\end{theorem}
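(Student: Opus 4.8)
The condition $\min_i m_{ii} > \tfrac12$ means that every diagonal entry of $A = M - \one$ satisfies $a_{ii} > -\tfrac12$. Since $M$ is Markov, each row of $A$ has zero sum and non-negative off-diagonal entries, so the off-diagonal mass in row $i$ equals $-a_{ii} = 1 - m_{ii} < \tfrac12$. Using the row-sum norm $\lVert\cdot\rVert_\infty$, I would bound $\lVert A \rVert_\infty = \max_i \sum_j |a_{ij}| = \max_i \bigl( (1-m_{ii}) + (1-m_{ii}) \bigr) = 2\max_i(1-m_{ii}) < 1$. Hence $\varrho_A \leqslant \lVert A\rVert_\infty < 1$, and the series in \eqref{eq:log-def} converges in norm; this gives a well-defined real matrix $Q = \log(\one + A)$, which lies in $\cA^{(d)}_0$ because every power $A^m$ has zero row sums (row sums are preserved under matrix multiplication when one factor has zero row sums, or simply: $A\bs{1} = \nix$ implies $A^m \bs{1} = \nix$). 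By construction $\ee^Q = \one + A = M$, so $Q$ is a real logarithm of $M$ with zero row sums, and $M$ is embeddable precisely when this particular $Q$ is in addition a generator, i.e.\ has non-negative off-diagonal entries --- the ``only if'' direction of that equivalence still needs the uniqueness argument below.

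For uniqueness, suppose $M = \ee^{Q'}$ for some generator $Q'$. Elfving's bound (Fact~\ref{fact:Elf}) tells us that $\lambda = 1$ is the only eigenvalue of $M$ on the unit circle and all others satisfy $|\lambda| < 1$; equivalently, every $\mu = \lambda - 1 \in \sigma(A)$ has $\mu = 0$ or $\imag\text{-part}$ and real part placing it in the open disk of radius $1$ about $0$ — more precisely $|\mu| < 1$ or $\mu = 0$. The eigenvalues of $Q'$ lie, by the spectral mapping theorem \eqref{eq:SMT} applied to $\ee^{Q'} = M$, among the complex logarithms of the eigenvalues of $M$: for the eigenvalue $1$ of $M$ we may get $2\pi\ii k$ for various $k \in \ZZ$, and for each $\lambda \neq 1$ in $\sigma(M)$ we get $\log\lambda + 2\pi\ii k$. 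The key point is to rule out the ``extra'' branches. First, $Q'$ is a generator, hence (Fact~\ref{fact:one-is-special}) $0 \in \sigma(Q')$, and the generator property forces $\varrho_{Q'}$-type control; in fact the cleanest route is: the eigenvalues $\mu$ of $A$ all satisfy $|\mu| < 1$ (for $\mu \neq 0$), so $|1 + \mu| = |\lambda|$ and $\lambda$ lies in the slit... here I would instead invoke Fact~\ref{fact:diagonal} together with a branch-counting argument.

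The heart of the matter — and the step I expect to be the main obstacle — is showing that \emph{no} generator logarithm of $M$ can use a non-principal branch for any eigenvalue. I would argue as follows. Any generator $Q'$ with $\ee^{Q'} = M$ commutes with $M$ (Corollary~\ref{coro:alg}), and its eigenvalues are logarithms of those of $M$. Consider any eigenvalue $\mu$ of $A = M - \one$ with $\mu \neq 0$: then $0 < |1+\mu| < 1$, so writing $1 + \mu = r\ee^{\ii\theta}$ with $0 < r < 1$ and $\theta \in (-\pi,\pi]$, the possible eigenvalues of $Q'$ over this $\lambda = 1+\mu$ are $\log r + \ii(\theta + 2\pi k)$, $k \in \ZZ$. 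The principal choice $k = 0$ gives the eigenvalue of $\log(\one+A)$, which has imaginary part in $(-\pi,\pi]$ and real part $\log r < 0$. A generator $Q'$ satisfies $\lVert \ee^{tQ'} - \one\rVert \to 0$ along the Markov semigroup, but more to the point: I would use the condition $\min_i m_{ii} > \tfrac12$ directly on $Q'$. Since $M = \ee^{Q'}$ and the off-diagonal entries of $Q'$ are non-negative with $q'_{ii} = -\sum_{j \neq i} q'_{ij} \leqslant 0$, one has $\lVert Q' \rVert_\infty = 2\max_i |q'_{ii}| = -2\min_i q'_{ii}$. The relation $m_{ii} = (\ee^{Q'})_{ii} \geqslant \ee^{q'_{ii}} \prod$... — actually $m_{ii} \geqslant \ee^{q'_{ii}}$ because $(\ee^{Q'})_{ii} \geqslant \ee^{q'_{ii}}$ for a generator (a standard lower bound: $\ee^{Q'} = \lim (\one + \tfrac{t}{n}Q')^{n/t}$ with non-negative off-diagonal terms only adding). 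Hence $\ee^{q'_{ii}} \leqslant m_{ii}$ is the wrong direction; instead $\tfrac12 < m_{ii}$ gives no bound this way. I would therefore fall back on the cleanest available tool: the eigenvalues of $A$ all satisfy $|\mu| < 1$ for $\mu \neq 0$ (Fact~\ref{fact:Elf}), so $A$ has spectral radius $< 1$ \emph{and} the principal logarithm is the unique real logarithm whose spectrum lies in the strip $\{|\imag z| < \pi\}$; any other real logarithm $Q'$ must differ by replacing some conjugate eigenvalue pair's branch, but then — by Remark~\ref{rem:Culver} and the structure of $\RR[M]$ via Theorem~\ref{thm:Frob} when $M$ is cyclic, or a direct JNF-block argument in general — such a $Q'$ would have an eigenvalue with imaginary part $\geqslant 2\pi$ in absolute value (since $|\theta| \leqslant \pi$, the $k = \pm 1$ branch gives $|\theta + 2\pi k| \geqslant \pi$, and combined with $\log r < 0$ the eigenvalue $\log r + \ii(\theta \pm 2\pi)$ has $|\text{eigenvalue}|$ large). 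The generator property then fails: a generator $Q'$ with $\lVert Q'\rVert_\infty$ bounded by the off-diagonal mass $< 1$ per row cannot have an eigenvalue of modulus $\geqslant 2\pi - \pi = \pi > 1$. Here is the decisive estimate I would nail down: from $\min_i m_{ii} > \tfrac12$, any generator $Q'$ with $\ee^{Q'} = M$ satisfies — via $m_{ii} = (\ee^{Q'})_{ii}$ and a careful first-order/power-series lower bound on the diagonal of the exponential of a generator — that $-q'_{ii} < 1$ for every $i$ (this is exactly Cuthbert's observation, and is where $\tfrac12$ enters), whence $\lVert Q'\rVert_\infty < 2$... I would tighten this to $\lVert Q'\rVert_\infty < \pi$ or argue with the numerical radius to get the contradiction with an eigenvalue of modulus $\geqslant \pi$. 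The remaining routine verifications — that the principal branch genuinely lies in $\cA^{(d)}_0$, and that it is the one the series computes — I would dispatch quickly using Corollary~\ref{coro:alg} (so $Q \in \langle A, A^2, \dots, A^{d-1}\rangle_\RR$ since one may assume $M$ cyclic on a dense set and pass to the limit, or argue directly). The genuinely delicate part, which I would write out in full, is the quantitative passage from $m_{ii} > \tfrac12$ to a norm bound on any generator logarithm tight enough to exclude every non-principal branch.
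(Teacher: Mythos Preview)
Your convergence argument is essentially the paper's: it bounds $\varrho_{\nts_A}<1$ from the diagonal condition (the paper invokes Gershgorin, you compute $\lVert A\rVert_\infty = 2\max_i(1-m_{ii})<1$ directly --- arguably cleaner) and then sums the logarithm series, noting that every term lies in $\cA^{(d)}_{0}$. So far, so good.

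The divergence is in the uniqueness step. The paper does \emph{not} prove uniqueness at all: it simply cites \cite[Cor.~1]{Cuth} as a black box and stops. You, by contrast, try to build the argument from scratch --- and this is where you have a genuine gap. You correctly isolate the crux (rule out every non-principal branch by bounding any generator logarithm $Q'$ of $M$ tightly enough), but none of your proposed estimates closes. The inequality $(\ee^{Q'})_{ii}\geqslant \ee^{q'_{ii}}$ is, as you yourself note, in the wrong direction to extract an \emph{upper} bound on $-q'_{ii}$ from $m_{ii}>\tfrac12$; the subsequent claim ``$-q'_{ii}<1$ for every $i$'' is asserted without justification; and even granting $\lVert Q'\rVert_\infty<2$, you would still need $2<\pi$ to reach the contradiction, which you only hint at (``tighten this to $\lVert Q'\rVert_\infty<\pi$''). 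The branch-counting idea is sound in spirit, but the quantitative link from $m_{ii}>\tfrac12$ to a norm or spectral bound on $Q'$ is never actually established in your sketch.

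In short: your proof plan is honest about where the difficulty lies, but it does not overcome it. The paper's route is to outsource precisely this step to Cuthbert's original paper; if you want a self-contained proof, you will need to carry out the estimate you flag as ``the genuinely delicate part'' in full, and none of the partial arguments you list does so as written.
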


\begin{proof}
  Under the assumption, the spectral radius of $A$ satisfies
  $\varrho_{\nts_A} < 1$, as a simple consequence of Gershgorin's disk
  theorem \cite[Thm.~6.1.1]{HJ}, see also \cite[Ch.~14.5]{Gant}. Then,
  there is some proper (that is, sub-multiplicative) matrix norm
  $\|.\|$ such that $\|A\|<1$, and the series
\[
     \log (\one + A) \, = \sum_{n=1}^{\infty} \frac{(-1)^{n+1}}{n} A^n
\]   
is convergent in this norm, by a standard estimate in the form of a
Weierstrass M-test.  As all proper matrix norms on $\Mat (d,\CC)$ are
equivalent, convergence holds in any of them.

The limit clearly has zero row sums, but need not be a generator.
When it is, we get $M=\ee^Q$ and $M$ is embeddable, where the claimed
uniqueness (and absence of any other candidate for a generator)
follows from \cite[Cor.~1]{Cuth}.
\end{proof}

Another useful criterion for uniqueness is a consequence of the
Corollary on p.~ 530 of \cite{Cuth73}.

\begin{fact}\label{fact:cuth}
  Let\/ $M\in\cM_d$ be embeddable. If\/ $M$ also satisfies the
  condition
\[
     \det (M) \ts \min_{1\leqslant i \leqslant d} m^{}_{ii} 
     \, > \, \ee^{-\pi} \prod_{i=1}^{d} m^{}_{ii} \ts ,
\]   
the embedding is unique. In particular, this is the case if\/
$\det (M) > \ee^{-\pi} \approx 0.043{\ts}214$.  \qed
\end{fact}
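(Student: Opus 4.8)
The plan is to separate the statement into its two assertions: the inequality criterion for uniqueness, which I would take directly from the literature, and the concluding \emph{in particular} clause, which is then a one-line estimate on the diagonal entries of an embeddable Markov matrix.

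For the inequality criterion I would simply appeal to the Corollary on p.~530 of \cite{Cuth73}, which states precisely that an embeddable $M\in\cM_d$ with $\det(M)\,\min_i m_{ii} > \ee^{-\pi}\prod_i m_{ii}$ admits a unique generator. It is worth recalling the source of such a bound. If $Q = (q_{ij})$ is a generator with $\ee^Q = M$, then Gershgorin's disk theorem \cite[Thm.~6.1.1]{HJ} applied to row $i$ --- using $\sum_{j\ne i}q_{ij} = -q_{ii}$ with $q_{ii}\le 0$ --- places every eigenvalue $\mu$ of $Q$ in the closed disk of radius $\lvert q_{ii}\rvert$ about $q_{ii}$ for some $i$, whence $\operatorname{Re}\mu\le 0$ and $\lvert\operatorname{Im}\mu\rvert\le\lvert q_{ii}\rvert\le -\log m_{ii}$, the last step being the standard inequality $m_{ii} = (\ee^{Q})_{ii}\ge\ee^{q_{ii}}$ valid for a generator. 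By Remark~\ref{rem:Culver}, any competing generator $Q'$ with $\ee^{Q'} = M$ arises from $Q$ by shifting a complex-conjugate pair of eigenvalues by $\pm 2\pi\ii\ts k$ for some $k\ne 0$, so that one of $Q,Q'$ then has an eigenvalue $\mu$ with $\lvert\operatorname{Im}\mu\rvert\geqslant\pi$; Cuthbert's contribution is to balance this against the above bounds together with the trace identity $\det(M) = \ee^{\tr(Q)} = \ee^{\sum_i q_{ii}}$ and to extract exactly the product condition. I would not reproduce this computation but cite it. Pinning down the correct pairing of the spectra of $Q$ and $Q'$ and the precise bookkeeping that turns the crude comparison into the sharp factor $\ee^{-\pi}\prod_i m_{ii}$ is the part I expect to be the real obstacle, and it is exactly what \cite{Cuth73} handles.

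For the \emph{in particular} clause, assume $\det(M) > \ee^{-\pi}$. Since $M$ is embeddable, Fact~\ref{fact:Elf} gives $m_{ii} > 0$ for every $i$, and since $M$ is Markov we have $m_{ii}\le\sum_j m_{ij} = 1$, so each $m_{ii}\in(0,1]$. Choosing $i_0$ with $m_{i_0 i_0} = \min_i m_{ii}$, the remaining diagonal factors are all at most $1$, hence $\prod_i m_{ii} = m_{i_0 i_0}\prod_{i\ne i_0}m_{ii}\le\min_i m_{ii}$. Multiplying the hypothesis $\det(M) > \ee^{-\pi}$ by the positive number $\min_i m_{ii}$ gives $\det(M)\,\min_i m_{ii} > \ee^{-\pi}\min_i m_{ii}\ge\ee^{-\pi}\prod_i m_{ii}$, which is the hypothesis of the first part, so the embedding is unique. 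Finally $\ee^{-\pi} = 0.043{\ts}214\ldots$, the stated numerical value.
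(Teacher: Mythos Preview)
Your proposal is correct and matches the paper's approach: the paper also simply cites the Corollary on p.~530 of \cite{Cuth73} for the main inequality and presents the statement as a Fact with no further proof. Your added derivation of the \emph{in particular} clause via $m_{ii}\in(0,1]$ and $\prod_i m_{ii}\leqslant\min_i m_{ii}$ is exactly the intended one-line estimate, which the paper leaves implicit.
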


This result is in line with several observations that a
\emph{logarithmic} scale would be more natural, which is sometimes
referred to as `log det' giving the proper intrinsic time scale of the
problem \cite{Good}. This can also be seen from the solution of the
differential equation for the determinant of $M (t)$; compare
Eq.~\eqref{eq:trace} below. In general, an embedding need not be
unique. In such a case, one particular consequence of
Theorem~\mbox{\ref{thm:Frob}{\ts}(5)} is the following; see
\cite[Cor.~10]{Davies} and \cite[Fact~2.14]{BS} for more.

\begin{coro}\label{coro:gen-commute}
  If a cyclic matrix\/ $M\in\cM_d$ admits more than one embedding, the
  corresponding generators must commute with one another, and with\/
  $M$.  \qed
\end{coro}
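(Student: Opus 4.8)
The plan is to reduce the statement to the Abelian centraliser property of a cyclic matrix, namely Theorem~\ref{thm:Frob}{\ts}(5). First I would record the elementary observation, already employed in the proof of Corollary~\ref{coro:alg}, that any generator $Q$ with $\ee^Q = M$ satisfies $[\ee^Q, Q] = \nix$ and hence $[M, Q] = \nix$; in other words, $Q \in \cent (M)$. So if $Q_1$ and $Q_2$ are two (distinct) generators realising the same $M$, i.e.\ $\ee^{Q_1} = M = \ee^{Q_2}$, then both $Q_1$ and $Q_2$ belong to $\cent (M)$, which already gives the commutation with $M$.

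It then remains to deduce that $Q_1$ and $Q_2$ commute with each other. Since $M$ is cyclic by hypothesis, Theorem~\ref{thm:Frob}{\ts}(5) asserts that $\cent (M)$ is Abelian, whence $[Q_1, Q_2] = \nix$, which is exactly the claim. Equivalently, one may invoke Theorem~\ref{thm:Frob}{\ts}(6): each $Q_i$ is then a real polynomial in $M$ (or in $A = M - \one$), and any two such polynomials commute automatically. I would also point out that the argument is not limited to a pair: all generators realising $M$ lie in the single Abelian algebra $\RR [M]$, so they commute pairwise among each other as a family.

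The proof is short and I do not anticipate a genuine obstacle; the only point worth flagging is that cyclicity of $M$ is essential, since for a non-cyclic $M$ the centraliser need not be Abelian and distinct embeddings may fail to commute — which is precisely why the hypothesis is stated. A sentence to this effect, with a pointer to \cite[Cor.~10]{Davies} and \cite[Fact~2.14]{BS}, would round off the remark.
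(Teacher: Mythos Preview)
Your argument is correct and is exactly the one the paper intends: it states the corollary as an immediate consequence of Theorem~\ref{thm:Frob}{\ts}(5), with the same references to \cite[Cor.~10]{Davies} and \cite[Fact~2.14]{BS}, and gives no further proof. Your write-up simply spells out the two steps (each generator lies in $\cent(M)$, and $\cent(M)$ is Abelian for cyclic $M$) that the paper leaves implicit.
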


We are now ready to embark on the embedding problem for $d=3$ and
$d=4$, where further notions will be introduced where and when we need
them.

\section{Embedding in three dimensions}\label{sec:three}

Given a general $M\in\cM^{}_{3}$, where we know that $1$ is an
eigenvalue, it is most systematic to classify the cases according to
$\deg (q_{_M})$, the degree of the minimal polynomial of $M$.  This is
an element of $\{ 1, 2, 3\}$, where $\deg (q_{_M}) = 1$ is trivial
because this implies $M=\one$, which brings us back to
Fact~\ref{fact:one}.  From now on, it is often advantageous to work
with $A=M\nts - \one$, where $q_{_A} (x) = q_{_M} (x+1)$ implies
$\deg (q_{_A}) = \deg (q_{_M})$. Recall that $A$ (and hence $M$) is
diagonalisable if and only if $q_{_A}$ has no repeated factor; see 
\cite[Cor.~3.3.8]{HJ}.

\subsection{Cases of degree 2.}\label{sec:d-3.2}

When $\deg (q_{_M}) = 2$, we must have
$q_{_M} (x) = (x-1) (x-\lambda)$ with $\lambda \in [-1, 1)$, where
embeddability excludes $\lambda = 0$ because $\det (M)$ must be
positive; compare \cite[Prop.~2.1]{BS}. Since $q_{_M}$ has no repeated
factor, the matrix $M$ is always diagonalisable in this case.  Now, we
have to distinguish two subcases, namely $\ma (1) = 2$ and
$\ma (\lambda) = 2$.

If $\ma (1) = 2$, the JNF of $M$ is $\diag (1, 1, \lambda)$, with
$\lambda\ne 1$. Since $A=M\nts - \one$ is then similar to
$\ts\diag (0,0,\lambda \ts{-} 1)$, a quick calculation of
$T \ts \diag (0,0,\lambda \ts{-} 1) \ts T^{-1}$ with
$T\in \GL (3, \RR)$ shows that $A$ must be of the form
\[
    A \, = \, (\lambda - 1) \,
    ( \alpha^{}_{1}, \alpha^{}_{2}, \alpha^{}_{3})^{\mathsf{T}}
     \!\cdot (a^{}_{1}, a^{}_{2}, a^{}_{3})
\]
subject to the condition $\sum_{i=1}^{3}\alpha^{}_{i} a^{}_{i} = 1$
(to get the right eigenvalues), $\sum_{i=1}^{3} a^{}_{i} = 0$ (to have
zero row sums), and the obvious sign conditions to make $A$ a rate
matrix.  One thus finds that the most general Markov matrix here has
the form
\[
   M \, = \, \one +  (\lambda-1) 
     \bigl( \alpha^{}_{i} a^{}_{j} \bigr)_{1\leqslant i,j \leqslant 3}
\]
with the above conditions on the parameters, and subject to the
remaining conditions that $M$ is actually Markov. In particular,
$\lvert \alpha^{}_{i} a^{}_{j} \rvert \leqslant 1$ for all $i,j$
together with $\alpha^{}_{i} a^{}_{j}$ non-negative for $i=j$ and
non-positive otherwise. Further calculation shows that this class
comprises the matrices of the form $1 \oplus M'$ with $M'\in\cM_2$ and
its relatives that are obtained under a permutation of the three
states, and all Markov matrices with a single non-trivial row.

Embeddability of $M$ forces $\lambda \in (0,1)$ by the determinant
condition, since $0 < \det (M) = \lambda$. Here, we are in the
situation of Lemma~\ref{lem:extend-unique}, so we know that only one
real logarithm of $M$ exists, with JNF
$\diag \bigl(0,0,\log(\lambda)\bigr)$.  Since $A=M\nts -\one$ has
spectral radius $\varrho_{\nts_A} < 1$, we get
\begin{equation}\label{eq:simplification}
  R \, = \, \log (\one + A) \, = \,
  \frac{- \log (\lambda)}{1-\lambda} A \ts ,
\end{equation}   
where the second identity is a consequence of $A^2 = (\lambda - 1) A$.
Since $\lambda \in (0,1)$, the matrix $R$ is always a generator.
\smallskip

Next, consider $\ma (\lambda) = 2$ with $- 1 \leqslant \lambda < 1$
and $\lambda\ne 0$. Since $A=M\nts - \one$ is similar to
$\ts\diag (0, \lambda \ts{-}1, \lambda \ts{-}1)$, a general similarity 
transform shows that $A\in \cA^{(3)}_{0}$ must be of the form
\[
     A \, = \, (\lambda-1) \bigl( \one - ( 1,1,1 )^{T}
     \!\cdot (a^{}_{1}, a^{}_{2}, a^{}_{3}) \bigr) ,
\]
again with $\sum_{i=1}^{3} a^{}_{i} = 0$. Via a trivial
re-parametrisation, one sees that the most general Markov matrices in
this case are the \emph{equal-input} matrices
\begin{equation}\label{eq:eq-input-3}
     M \, = \, M_{c} \, = \: 
    (1- c) \ts \one + C ( c^{}_{1} , c^{}_{2} ,  c^{}_{3} ) \ts ,
\end{equation}
where $ C ( c^{}_{1} , c^{}_{2} , c^{}_{3} )$ is the matrix with three
equal rows $( c^{}_{1} , c^{}_{2} , c^{}_{3} )$ and summatory
parameter $c = c^{}_{1} + c^{}_{2} + c^{}_{3} = 1-\lambda$, where
$c\ne 0$ to exclude $M=\one$.  Clearly, we need all
$c^{}_{i} \geqslant 0$ together with $c \leqslant 1 + c^{}_{i}$ for
all $i$; see \cite{Steel,BS,BS2} for more on this class.  Here, we
know that uncountably many real logarithms exist, by
Fact~\ref{fact:Culver} and the discussion following it, and each of
them must be diagonalisable by Fact~\ref{fact:diagonal}.  Let $R$ be
any of them.

When $\lambda > 0$, the SMT from Eq.~\eqref{eq:SMT} forces $R$ to have
eigenvalues $0$ and $\mu^{}_{\pm} = \log (\lambda) \pm 2 \pi \ii\ts k$
for some $k\in\ZZ$. When $k=0$, we have $\cent (R) = \cent (M)$, by
the argument used in the proof of Lemma~\ref{lem:extend-unique}, which
means that only one real logarithm corresponds to this choice, namely
$R = \log (\one + A)$, which is a generator. Since
$A^2 = (\lambda - 1) A$ holds also in this case, we get the formula
from Eq.~\eqref{eq:simplification} again.

For each $k\ne 0$, the matrix $R$ is simple, and $\cent (R)$ is a true
subalgebra of $\cent (M)$. Consequently, the set of real logarithms
corresponding to any such choice of $k$ is uncountable: For every real
logarithm $R$, also $S\nts\nts R \ts S^{-1}$ is one, for arbitrary
invertible $S \in \cent (M)$. There can be further generators among
them. So far, we have the following.

\begin{lemma}\label{lem:3-deg-2}
  Let\/ $M\in\cM^{}_{3}$ have\/ $\deg (q_{_M} ) = 2$ and spectrum\/
  $\sigma (M) = \{ 1, \lambda\}$ with\/ $\lambda \ne 1$. Then,
  $\lambda \in \RR$, and we get the following case distinction.
  \begin{enumerate}\itemsep=2pt
  \item If\/ $M$ has JNF\/ $\diag (1 , 1 , \lambda)$, it is embeddable
    if and only if\/ $\lambda \in (0,1)$.  In this case, one has\/ the
    generator\/ $Q$ from Eq.~\eqref{eq:simplification}, and the
    embedding\/ $M=\ee^Q$ is unique.
  \item If\/ $M$ has JNF\/ $\diag (1 , \lambda , \lambda)$ with
    $\lambda>0$, hence\/ $\lambda \in (0,1)$, it is embeddable with
    the generator\/ $Q$ from Eq.~\eqref{eq:simplification}, but
    further solutions may exist.  \qed
  \end{enumerate} 
\end{lemma}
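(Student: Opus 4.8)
The plan is to first pin down the Jordan structure, then dispatch both subcases through a single computation of the principal logarithm, and finally read off embeddability and (non-)uniqueness from the tools assembled in Section~\ref{sec:prelim}. For the structure: since $\deg(q_M)=2$ and $1\in\sigma(M)$ by Fact~\ref{fact:one-is-special}, the minimal polynomial is $q_M(x)=(x-1)(x-\lambda)$ for the second root $\lambda$; as $q_M$ is real of degree $2$ with $\lambda\ne 1$, a non-real $\lambda$ would need $\cc{\lambda}$ as a further root, which is impossible, so $\lambda\in\RR$, and in fact $\lambda\in[-1,1)$ by the Perron--Frobenius theorem together with $\lambda\ne 1$. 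Now $\ma(1)+\ma(\lambda)=3$ with both multiplicities at least $1$; Fact~\ref{fact:one-is-special} rules out a non-trivial Jordan block for $\lambda=1$, and a block $\JJ_2(\lambda)$ would force $(x-\lambda)^2\mid q_M$, contradicting $\deg(q_M)=2$. Hence $M$ is diagonalisable and its JNF is either $\diag(1,1,\lambda)$ (when $\ma(1)=2$) or $\diag(1,\lambda,\lambda)$ (when $\ma(\lambda)=2$), the only two possibilities.

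Next I would record the computation common to both subcases. In either case $A=M\nts-\one$ is diagonalisable with $\sigma(A)\subseteq\{0,\lambda-1\}$, so $q_A(x)=x\,\bigl(x-(\lambda-1)\bigr)$ and therefore $A^m=(\lambda-1)^{m-1}A$ for all $m\geqslant 1$. When $\lambda>0$ one has $\varrho_{\nts_A}=1-\lambda<1$, so the series \eqref{eq:log-def} converges and collapses to $\log(\one+A)=\bigl(\sum_{m=1}^{\infty}\tfrac{(-1)^{m-1}}{m}(\lambda-1)^{m-1}\bigr)A=\tfrac{\log\lambda}{\lambda-1}\ts A=\tfrac{-\log\lambda}{1-\lambda}\ts A$, which is exactly the matrix $Q$ of \eqref{eq:simplification}. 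For $\lambda\in(0,1)$ the scalar $\tfrac{-\log\lambda}{1-\lambda}$ is positive, so $Q$ inherits from $A=M\nts-\one$ non-negative off-diagonal entries and zero row sums and thus is a generator, while $\ee^Q=\one+A=M$; hence, in both subcases, $\lambda\in(0,1)$ already yields embeddability through this $Q$.

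Finally I would close the two subcases. For (1), JNF $\diag(1,1,\lambda)$: if $M$ is embeddable then $\det M=\lambda=\ee^{\tr Q}>0$, so with $\lambda\in[-1,1)$ this forces $\lambda\in(0,1)$; conversely $\lambda\in(0,1)$ places $M$ under the hypotheses of Lemma~\ref{lem:extend-unique} (positive spectrum, the non-unit eigenvalue simple, $\ma(1)=2$), so the only real logarithm of $M$ lying in $\cA^{(3)}_{0}$ is the principal one, which by the computation above equals $Q$ and is a generator; hence $M$ is embeddable and, since every generator lies in $\cA^{(3)}_{0}$, the embedding $M=\ee^Q$ is unique. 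For (2), JNF $\diag(1,\lambda,\lambda)$ with $\lambda>0$ (hence $\lambda\in(0,1)$, the range $\lambda<0$ being vacuous as then $\det M<0$ and no embedding exists): the computation gives the generator $Q$ with $M=\ee^Q$, but uniqueness must \emph{not} be claimed, because the elementary block $\JJ_1(\lambda)$ now occurs twice, so by Fact~\ref{fact:Culver} and Remark~\ref{rem:Culver} there are uncountably many real logarithms of $M$, each non-principal one having (by the SMT \eqref{eq:SMT}) eigenvalues $0$ and $\log\lambda\pm2\pi\ii\ts k$ with $0\ne k\in\ZZ$, hence being simple with centraliser strictly inside $\cent(M)$; since condition~(2) of Lemma~\ref{lem:extend-unique} fails here, its uniqueness mechanism does not apply and further generators are not excluded, whence ``further solutions may exist''.

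The steps are all fairly routine given Section~\ref{sec:prelim}; the two places that need care are (a) excluding a hidden $2{\times}2$ Jordan block, so that the JNF is forced to be one of the two listed diagonal forms, and (b) in subcase (2), being precise about exactly what uniqueness statement is and is not proved — neither asserting uniqueness (false in general) nor claiming genuine non-uniqueness without exhibiting a second generator, the latter being the delicate matter postponed to the detailed treatment of the diagonalisable case with a doubly degenerate eigenvalue.
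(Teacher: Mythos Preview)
Your proof is correct and follows essentially the same route as the paper: deduce diagonalisability from $\deg(q_{_M})=2$, collapse the logarithmic series via $A^2=(\lambda-1)A$ to obtain \eqref{eq:simplification}, invoke Lemma~\ref{lem:extend-unique} for uniqueness in case~(1), and point to Remark~\ref{rem:Culver} together with the SMT for the possible non-uniqueness in case~(2).

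There is one factual slip in your parenthetical aside in case~(2): you claim that $\lambda<0$ would give $\det M<0$, but with JNF $\diag(1,\lambda,\lambda)$ one has $\det M=\lambda^2>0$ regardless of the sign of $\lambda$. More seriously, the accompanying assertion that ``no embedding exists'' for $\lambda<0$ is false --- this is precisely the most delicate subcase, handled separately in Proposition~\ref{prop:3-neg}, where embeddability \emph{can} occur (for $\lambda\in[-\ee^{-\pi\sqrt{3}},0)$, with at least two generators). Since case~(2) of the lemma explicitly hypothesises $\lambda>0$, this error does not damage your proof of the statement itself; but the remark should be deleted, as it misrepresents the situation you are about to defer.
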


Note that, in view of Eq.~\eqref{eq:eq-input-3}, the second claim of
Lemma~\ref{lem:3-deg-2} is equivalent with the previous result from
\cite[Prop.~2.12]{BS2} that an equal-input Markov matrix with
summatory parameter $0<c<1$ is always embeddable, even with an
equal-input generator, where $\lambda = 1-c$. However, it need not
be unique, as the following example shows.

\begin{example}\label{ex:extremal-case}
  Let $M$ be the equal-input matrix from \cite[Ex.~4.3]{BS}, which has
  JNF $\diag (1, \lambda, \lambda)$ with
  $\lambda = - \ee^{-\pi \sqrt{3}}<0$ and summatory parameter
  $c = 1+\ee^{-\pi\sqrt{3}} > 1$. Explicitly, it is
  $M = \one + (1 + 3 \delta) J^{}_{3}$ with
  $\delta = \frac{2}{3} \ee^{-\pi \sqrt{3}}$ and the matrix $J^{}_{3}$
  from Lemma~\ref{lem:star} below. This $M$ is embeddable via two
  different circulant generators, as we shall derive later in
  Example~\ref{ex:max-c}.
  
  Now, the matrix $M^2$ is still doubly embeddable in a circulant way,
  and is also still equal input, but now with summatory parameter
  $c' = c (2-c) = 1 - \ee^{-2 \pi \sqrt{3}} < 1$. Consequently, by
  \cite[Thm.~4.6]{BS}, it is additionally embeddable via an
  equal-input generator,\footnote{Later on, we refer to this property
    as equal-input embeddable, and similarly for other model classes.}
  and we thus have an example with $c<1$ and three distinct
  embeddings.  \exend
\end{example}

\begin{remark}
  In the second case of Lemma~\ref{lem:3-deg-2}, all other embeddings
  can be found with the help of Corollary~\ref{coro:real-log}, though
  none of them can be of equal-input type.  Indeed, fixing a matrix
  $T\in\GL(3,\RR)$ such that
  $M = T^{-1} \diag (1,\lambda,\lambda) \ts T$, one has to identify
  all generators among
  $T^{-1} \bigl( 1 \oplus ( \log (\lambda) \one^{}_{2} + 2 \pi k
  I_{x,y,z} ) \bigr) T$ with $k\in\ZZ$ and $x,y,z\in\RR$ subject to
  $z>0$ and $yz-x^2=1$. In fact, by \cite[Lemma~3.1]{CFR}, one only
  has to consider integers $k$ with
\[
   \lvert k \rvert \, \leqslant \, \frac{ \ts \lvert \log(\lambda)
     \rvert \ts} {2 \pi \sqrt{3}} \ts .
\]      
In particular, if
$\lambda > \ee^{- 2 \pi \sqrt{3}} \approx 1.877{\ts}853 \cdot \nts
10^{-5}$, there can be no further candidate except $k=0$, and the
embedding via $\log (\one+A)$ is unique.  This condition can also be
expressed via the determinant, as
$\det (M) = \lambda^2 > \ee^{- 4 \pi \sqrt{3}} \approx 3.526{\ts}333
\cdot\nts 10^{-10}$, which is a further improvement over
\cite[Cor.~3.3 and Table~1]{CFR} for this more special case.  \exend
\end{remark}

Finally, if $M$ has a negative eigenvalue $\lambda$, this must have
even multiplicity for embeddability, and $M$ must be diagonalisable
(due to Fact~\ref{fact:Culver}). So, it has $\deg (q_{_M})=2$, which
is why this case occurs here. It was the last case for $d=3$ to be
solved \cite{Joh, Carette}. However, the answer is rather tricky and
not practically useful, wherefore we do not recall those details.
Instead, since the only Markov matrices with this type of JNF are the
equal-input matrices from Eq.~\eqref{eq:eq-input-3} with $c>1$, we can
use their structure to get a simpler and constructive result. In fact,
we also know $c\leqslant \frac{3}{2}$ in this case, since
$c\leqslant 1 + c^{}_i$ for all $i$, hence
$-\frac{1}{2}\leqslant \lambda < 0$. The key now is to control the
matrices that commute with $M$.

If an equal-input matrix $M_c$ has a real logarithm, so $M_c = \ee^R$
for some $R\in\Mat (3,\RR)$, one has $[M_c, R \ts ]=\nix$. If
$M_c = (1-c) \ts \one + C$ with $C=C(c^{}_{1}, c^{}_{2}, c^{}_{3})$
and $c=c^{}_{1} + c^{}_{2} + c^{}_{3}$ as above, one clearly has
$[M_c, R \ts ]=\nix$ if and only if $[C,R \ts ]=\nix$.  Since we are
only interested in matrices $R$ with zero row sums, we define the
\emph{commutant} of $M_c$ as
\[
  \comm (M_c) \, \defeq \, \{ A \in \cA^{(3)}_{\ts 0} :
  [ M_c, A] = \nix \} \ts ,
\]
where one has $\comm (M_c) = \comm (C)$ with the $C$ from
above. Clearly, one has the trivial case that
$\comm (\one^{}_{3}) = \cA^{(3)}_{\ts 0}\!$, which is a
six-dimensional algebra, while it is four-dimensional for all other
equal-input matrices, as can be derived from their diagonal form. In
view of the situation at hand, we now look more closely at the case
that all $c^{}_{i} > 0$.

\begin{lemma}\label{lem:comm}
  Let\/ $M_c$ be the matrix from Eq.~\eqref{eq:eq-input-3}, with all\/
  $c^{}_i > 0$. Then, the algebra\/ $\comm (M_c)$ is generated by the
  four matrices
\[
    \begin{pmatrix} 0 & 0 & 0 \\ 0 & * & c^{}_{3} \\
     0 & c^{}_{2} & * \end{pmatrix}  , \;
     \begin{pmatrix} * & 0 & c^{}_{3} \\ 0 & 0 & 0 \\
    c^{}_{1} & 0 & * \end{pmatrix} , \;
  \begin{pmatrix} * & c^{}_{2} & 0 \\ c^{}_{1} & * & 0 \\
    0 & 0 & 0 \end{pmatrix}  \quad\text{and}\quad
   \begin{pmatrix} * & \alpha & -\gamma \\
     -\alpha & * & \beta \\ \gamma & -\beta & * \end{pmatrix} ,
\]
with\/ $\alpha = (c^{}_{1} + c^{}_{3})(c^{}_{2} + c^{}_{3})$,
$\beta =(c^{}_{1} + c^{}_{2})(c^{}_{1} + c^{}_{3}) $ and
$\gamma =(c^{}_{1} + c^{}_{2})(c^{}_{2} + c^{}_{3}) $.  Here, $*$
always denotes the unique real number to assure row sum\/ $0$. These
matrices are linearly independent over\/ $\RR$, and\/ $\comm (M_c)$ is
a subalgebra of\/ $\cA^{(3)}_{\ts 0}$ of dimension\/ $4$.
\end{lemma}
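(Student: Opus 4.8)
The plan is to reduce everything to a kernel computation for the rank-one matrix $C = C(c^{}_{1},c^{}_{2},c^{}_{3})$ and then read off the answer. Since $M_c = (1-c)\ts\one + C$ and $\one$ is central, one has $[M_c, A] = [C, A]$ for every $A$, hence $\comm(M_c) = \comm(C)$, as already observed in the text. Writing $C = (1,1,1)^{\mathsf{T}}\nts\!\cdot(c^{}_{1},c^{}_{2},c^{}_{3})$ as an outer product, the $(i,j)$-entry of $A\ts C$ equals $c^{}_{j}$ times the $i$-th row sum of $A$, so $A\ts C = \nix$ for all $A \in \cA^{(3)}_{0}$. Therefore $[C,A] = C\ts A$ for such $A$, and $C\ts A$ is the matrix all of whose rows equal the row vector $(c^{}_{1},c^{}_{2},c^{}_{3})\ts A$. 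This gives the clean description
\[
  \comm(M_c) \, = \, \bigl\{ A \in \cA^{(3)}_{0} : (c^{}_{1},c^{}_{2},c^{}_{3})\ts A = 0 \bigr\} ,
\]
the zero-row-sum matrices whose columns are all orthogonal to $(c^{}_{1},c^{}_{2},c^{}_{3})$.

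The next step is the dimension. On the $6$-dimensional algebra $\cA^{(3)}_{0}$, the condition $(c^{}_{1},c^{}_{2},c^{}_{3})\ts A = 0$ is the system $\sum_i c^{}_{i} A^{}_{ij} = 0$ for $j = 1,2,3$; summing over $j$ and using that the rows of $A$ sum to zero shows these three functionals satisfy one linear relation, and (as $(c^{}_{1},c^{}_{2},c^{}_{3}) \ne 0$) no further one, so the system has rank $2$. Hence $\dim\comm(M_c) = 6 - 2 = 4$, in agreement with the discussion preceding the lemma. That $\comm(M_c)$ is a subalgebra of $\cA^{(3)}_{0}$ is automatic: the space $\cA^{(3)}_{0}$ is closed under products, because $(A B)\ts(1,1,1)^{\mathsf{T}} = A\ts\bigl(B\ts(1,1,1)^{\mathsf{T}}\bigr) = 0$, and the centraliser of a fixed matrix is closed under products as well.

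It then remains only to exhibit a basis, so I would verify that the four displayed matrices lie in $\comm(M_c)$ and are linearly independent. Membership needs two things: zero row sums, which is exactly what the entries marked $*$ provide, and $(c^{}_{1},c^{}_{2},c^{}_{3})\ts A = 0$; for the first three matrices the weighted column sums cancel in obvious pairs, while for the fourth one uses the telescoping identities $\alpha\ts(c^{}_{1}{+}c^{}_{2}) = \beta\ts(c^{}_{2}{+}c^{}_{3}) = \gamma\ts(c^{}_{1}{+}c^{}_{3}) = (c^{}_{1}{+}c^{}_{2})(c^{}_{1}{+}c^{}_{3})(c^{}_{2}{+}c^{}_{3})$, which hold exactly because of the stated values of $\alpha,\beta,\gamma$. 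For linear independence, take a vanishing combination $\sum_{k=1}^{4} a^{}_{k} A^{}_{k} = \nix$ and inspect a few off-diagonal entries: the $(1,2)$ and $(2,1)$ entries give $a^{}_{3} c^{}_{2} + a^{}_{4}\alpha = 0$ and $a^{}_{3} c^{}_{1} - a^{}_{4}\alpha = 0$, hence $a^{}_{3}(c^{}_{1}{+}c^{}_{2}) = 0$ and then $a^{}_{4}\alpha = 0$, so $a^{}_{3} = a^{}_{4} = 0$ since $c^{}_{1}{+}c^{}_{2}$ and $\alpha = (c^{}_{1}{+}c^{}_{3})(c^{}_{2}{+}c^{}_{3})$ are positive; the $(2,3)$ and $(1,3)$ entries then give $a^{}_{1} c^{}_{3} = a^{}_{2} c^{}_{3} = 0$, so $a^{}_{1} = a^{}_{2} = 0$. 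Four independent vectors in a $4$-dimensional space form a basis, which finishes the proof.

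There is no genuine conceptual obstacle here; the whole lemma is elementary linear algebra once the reduction to $C\ts A = \nix$ is in place. The one spot that rewards care is the fourth generator: its off-diagonal part is cyclically antisymmetric, and imposing $(c^{}_{1},c^{}_{2},c^{}_{3})\ts A = 0$ on such a pattern determines the ratio $\alpha : \beta : \gamma$, the displayed products being the normalisation that keeps all entries polynomial in the $c^{}_{i}$. This is the only place where the precise form of $\alpha,\beta,\gamma$ — and, for the clean statement and the independence argument, the strict positivity of the $c^{}_{i}$ — is actually used, so I would carry out that verification in full rather than leave it to the reader.
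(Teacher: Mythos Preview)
Your proof is correct and follows essentially the same approach as the paper: both reduce $\comm(M_c)$ to the kernel condition $(c^{}_{1},c^{}_{2},c^{}_{3})\ts A = 0$ on $\cA^{(3)}_{0}$ via the observation that $A\ts C = \nix$ for zero-row-sum $A$, then count dimensions as $6-2=4$ and verify the four matrices form a basis. You supply more explicit detail on the rank count, the fourth-matrix verification, and the independence argument than the paper does, but the underlying strategy is identical.
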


\begin{proof}
  Let $C=C(c^{}_{1}, c^{}_{2}, c^{}_{3})$ and observe that
  $R \ts C =\nix$ holds for every $R$ with zero row sums. Then,
  $[C, R]=\nix$ means $C R=\nix$, which holds if and only if
  $(c^{}_{1}, c^{}_{2}, c^{}_{3})$ is a left eigenvector of $R$ with
  eigenvalue $0$.

  This eigenvector property is satisfied for each of the four matrices
  given, as one can check by a simple calculation.  As $c^{}_{i} > 0$
  for all $i$, the four matrices are all non-zero and indeed linearly
  independent over $\RR$, wherefore they span a four-dimensional
  subalgebra of $\cA^{(3)}_{\ts 0}\!$, which must be contained in
  $\comm (M_c)$.

  Since $\cA^{(3)}_{\ts 0}$ has dimension $6$, and since the left
  eigenvector condition results in two independent constraints due to
  the zero row sum property of $R$, we see that $\comm (M_c)$ has
  dimension $4$, and is thus spanned by the matrices given.
\end{proof}

The commutant can also be calculated explicitly for any non-zero
vector $(c^{}_{1}, c^{}_{2}, c^{}_{3})$ via the left eigenvector
condition, and always gives a four-dimensional commutant, then with a
slightly more complicated parametrisation; we skip further details at
this point.

\begin{lemma}\label{lem:comm-2}
  Let\/ $M_c$ be the matrix from Eq.~\eqref{eq:eq-input-3}, with all\/
  $c^{}_i \geqslant 0$ and\/ $c>0$, and assume that\/
  $Q \in \comm (M_c)$ is a generator such that\/ $\ee^Q$ is an
  equal-input matrix. Then, we have\/
  $\ee^Q = (1-r)\ts \one + \frac{r}{c} C$ for some\/ $r\geqslant 0$,
  with the matrix\/ $C$ and summatory parameter\/ $c$ from\/ $M_c$.
\end{lemma}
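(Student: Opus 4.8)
The plan is to exploit the rank-one structure of the equal-input ``core'' matrices. Write $M_c = (1-c)\ts\one + C$ with $C = C(c^{}_{1},c^{}_{2},c^{}_{3})$, and note that $C$ is the outer product $C = \bs{1}\ts\bs{c}$ of the column vector $\bs{1} = (1,1,1)^{\mathsf{T}}$ with the row vector $\bs{c} = (c^{}_{1},c^{}_{2},c^{}_{3})$, so that $C^{}_{ij} = c^{}_{j}$ and $\bs{c}\ts\bs{1} = c$. Since $Q\in\comm (M_c)$ by hypothesis, we have $[M_c,Q]=\nix$, hence $[C,Q]=\nix$, and because $\ee^Q$ is a polynomial in $Q$ (Cayley--Hamilton), it follows that $[C,\ee^Q]=\nix$ as well.

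Next, use that $\ee^Q$ is equal input: write $\ee^Q = (1-s)\ts\one + D$ with $D = D(d^{}_{1},d^{}_{2},d^{}_{3}) = \bs{1}\ts\bs{d}$, where $\bs{d} = (d^{}_{1},d^{}_{2},d^{}_{3})$ has non-negative entries and $s = d^{}_{1}+d^{}_{2}+d^{}_{3}\geqslant 0$. Then $[C,\ee^Q]=\nix$ is equivalent to $[C,D]=\nix$, and the outer-product form makes this transparent: $CD = \bs{1}(\bs{c}\ts\bs{1})\bs{d} = c\ts D$ and $DC = \bs{1}(\bs{d}\ts\bs{1})\bs{c} = s\ts C$, so $[C,D] = c\ts D - s\ts C$. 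Vanishing of this commutator forces $c\ts D = s\ts C$, and since $c>0$ we may divide to obtain $D = \tfrac{s}{c}\ts C$. Hence $\ee^Q = (1-s)\ts\one + \tfrac{s}{c}\ts C$, which is the claimed form with $r = s$. Non-negativity of $r$ is then automatic: either directly from $\bs{d}\geqslant 0$, or because $\ee^Q$ is Markov while some $c^{}_{j}>0$ forces the off-diagonal entries $\tfrac{r}{c}c^{}_{j}$ to be non-negative.

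There is no serious obstacle here; the one point to isolate is the elementary fact that two equal-input cores $C$ and $D$ commute if and only if $c\ts D = s\ts C$, i.e.\ their defining row vectors are parallel. This is precisely what pins the ``direction'' of $\ee^Q$ to that of $M_c$, leaving only the summatory parameter free. The hypotheses $c^{}_{i}\geqslant 0$ and $c>0$ enter solely to guarantee $C\neq\nix$ (so that the division by $c$ is legitimate) and, together with $\ee^Q$ being a Markov matrix, to give $r\geqslant 0$.
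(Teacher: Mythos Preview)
Your proof is correct and follows essentially the same line as the paper's: both deduce $[C,D]=\nix$ from $[M_c,Q]=\nix$, compute $CD=c\ts D$ and $DC=s\ts C$, and conclude $D=\tfrac{s}{c}\ts C$. The outer-product notation you use makes the computation a touch more explicit, and you spell out the non-negativity of $r$ in slightly more detail, but the argument is otherwise identical.
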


\begin{proof}
  If $\ee^Q$ is of equal-input type, we have
  $\ee^Q = (1-r) \ts \one + \widetilde{C}$ with
  $\widetilde{C} = C (r^{}_{1}, r^{}_{2}, r^{}_{3})$ and all
  $r_i \geqslant 0$, where $r$ is the summatory parameter. Since $Q$
  commutes with $M_c$ by assumption, we obtain $[ \ee^Q, M_c ]=\nix$,
  as we explained prior to Fact~\ref{fact:diagonal}, and thus
  $\bigl[ C, \widetilde{C}\, \bigr]=\nix$. As
  $C \ts \widetilde{C} = c \ts \widetilde{C}$ and
  $\widetilde{C} C = r \ts C$, we get
  $\widetilde{C} = \frac{r}{c} \ts C$ and the claim follows.
\end{proof}

Our strategy is now the following. Since we only need further insight
into the embedding properties of $M_c$ with
$1 < c \leqslant \frac{3}{2}$, we assume $c>1$, which forces $c_i > 0$
for all $i$. So, consider the matrix
$C = C(c^{}_{1}, c^{}_{2}, c^{}_{3})$, and let $R \in \comm (C)$ be a
matrix such that $\ee^R$ is equal input, hence
$\ee^R = (1-r) \one + \widetilde{C}$, where we know from
Lemma~\ref{lem:comm-2} that $\widetilde{C} = \frac{r}{c} \ts C$ for
some $0 \leqslant r \ne 1$, which is the new summatory
parameter. Since the cases with $r\in [0,1)$ are trivially embeddable,
we now concentrate on the case $r>1$ and determine the maximal value
for embeddability, $r^{\max}$. This way, we will find the full
embeddability range $[0,1) \cup (1, r^{\max} \ts ] $ for the direction
defined by $C$, respectively by $(c^{}_{1}, c^{}_{2}, c^{}_{3})$.

Given $(c^{}_{1}, c^{}_{2}, c^{}_{3})$ with $c>1$ as above, let
$Q^{}_1, Q^{}_2, Q^{}_3$ and $R^{}_{0}$ denote the four matrices from
Lemma~\ref{lem:comm-2}, in the order as given there. All $Q^{}_i$ are
generators, while $R^{}_{0} \in\cA^{(3)}_{0}$ never is. It is
immediate that $Q=x\ts Q^{}_1 + y\ts Q^{}_2 + z\ts Q^{}_3 + w R^{}_0$
is a generator if and only if
\begin{equation}\label{eq:gen-cond}
    x \, \geqslant \, \max \Big\{ \myfrac{\beta w}{c^{}_{2}}, 
       -\myfrac{\beta w}{c^{}_{3}} \Big\} , \quad
    y \, \geqslant \,  \max \Big\{ \myfrac{\gamma w}{c^{}_{3}}, 
       -\myfrac{\gamma w}{c^{}_{1}} \Big\} , \quad \text{and} \quad
    z \, \geqslant \, \max \Big\{ \myfrac{\alpha w}{c^{}_{1}}, 
       -\myfrac{\alpha w}{c^{}_{2}} \Big\} .
\end{equation}
In particular, $x,y,z\geqslant 0$, and they are even strictly positive
unless $w=0$.

\begin{lemma}\label{lem:comm-3}
  Consider\/
  $Q = x\ts Q^{}_1 + y\ts Q^{}_2 + z\ts Q^{}_3 + w R^{}_{0}$ with the
  matrices from Lemma~\textnormal{\ref{lem:comm}} as defined
  above. Assume that\/ $Q$ is a generator so that\/ $M=\ee^Q$ has a
  double negative eigenvalue, say\/ $-\lambda < 0$. Then, $M$ is an
  equal-input matrix with parameters\/
  $\frac{r}{c} (c^{}_{1}, c^{}_{2}, c^{}_{3})$ for some\/ $r>1$.
\end{lemma}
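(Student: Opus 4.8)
The plan is to leverage that $Q$ commutes with $C$, transfer this to $M=\ee^Q$, and then use the forced diagonalisability of $M$ to align the eigenspaces of $M$ and $C$, after which the equal-input form of $M$ can simply be read off. First, by construction $Q$ lies in the span of $Q^{}_1,Q^{}_2,Q^{}_3,R^{}_0$, which equals $\comm(M_c)=\comm(C)$ by Lemma~\ref{lem:comm}; hence $[Q,C]=\nix$, and since $\ee^Q\in\RR[Q]$ by Cayley--Hamilton, also $[M,C]=\nix$. Moreover, $M=\ee^Q$ is Markov with $1\in\sigma(M)$ by Fact~\ref{fact:one-is-special}, so by hypothesis $\sigma(M)=\{1,-\lambda\}$ with $\ma(-\lambda)=2$ and $\lambda>0$.

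The second step is to argue that $M$ is diagonalisable, that is, has JNF $\diag(1,-\lambda,-\lambda)$. Since $M=\ee^Q$ certainly has a real logarithm, Fact~\ref{fact:Culver}{\ts}(2) forbids an elementary Jordan block attached to the negative eigenvalue $-\lambda$ from occurring with odd multiplicity; in particular a single block $\JJ_2 (-\lambda)$ is impossible (and two such blocks do not fit in dimension $3$), so $\mg(-\lambda)=2$. Consequently, the simple eigenvalue $1$ has eigenspace $\langle(1,1,1)^{\mathsf T}\rangle$ (as $M$ is Markov), and the eigenvalue $-\lambda$ has a two-dimensional eigenspace $W$ complementary to it.

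The third, central step compares the eigenspaces of $M$ and $C$. The matrix $C$ has rank $1$, with image $\langle(1,1,1)^{\mathsf T}\rangle$ and kernel the plane $H=\{x\in\RR^3: c^{}_1 x^{}_1+c^{}_2 x^{}_2+c^{}_3 x^{}_3=0\}$; as $c>0$ we have $(1,1,1)^{\mathsf T}\notin H$, so $C$ is diagonalisable with $\sigma(C)=\{0,0,c\}$, the eigenvalue $c$ belonging to $(1,1,1)^{\mathsf T}$ and $0$ to $H$. Since $[M,C]=\nix$ and $M$ is diagonalisable, $C$ leaves each $M$-eigenspace invariant; restricted to $W$ it is diagonalisable with eigenvalues among $\{0,0,c\}$, but its only $c$-eigenvector $(1,1,1)^{\mathsf T}$ lies in the $1$-eigenspace, not in $W$, whence $C|_W=\nix$, so $W\subseteq\ker C=H$, and a dimension count gives $W=H$. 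Thus $M$ and $C$ share their two eigenspaces, and reading off the action of $M$ on them yields $M=-\lambda\,\one+\frac{1+\lambda}{c}\,C=(1-r)\,\one+\frac{r}{c}\,C$ with $r\defeq1+\lambda>1$, which is exactly the equal-input matrix with parameter vector $\frac{r}{c}(c^{}_1,c^{}_2,c^{}_3)$. Alternatively, once $M$ is recognised as being of equal-input type, Lemma~\ref{lem:comm-2} supplies $M=(1-r)\,\one+\frac{r}{c}\,C$ with $r\geqslant0$, and $1-r=-\lambda<0$ then upgrades this to $r>1$.

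I expect the only subtle point to be the diagonalisability of $M$ in the second step: the remaining arguments are routine linear algebra for a pair of commuting matrices, but excluding a lone $\JJ_2 (-\lambda)$ Jordan block — and thereby legitimising the decomposition of $C$ along the eigenspaces of $M$ — is precisely where Culver's criterion, applied to the real logarithm $Q$ of $M$, does the essential work.
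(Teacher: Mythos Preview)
Your argument is correct. The overall shape matches the paper's proof --- establish that $M$ is diagonalisable, then identify it as equal-input with the right parameters --- but the tools you use at each step differ in an interesting way.

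For diagonalisability, the paper argues via the SMT that $\sigma(Q)=\{0,\eta\pm(2k{+}1)\pi\ii\}$ for some $k\in\NN_0$ and $\eta<0$, so $Q$ is simple and hence $M=\ee^Q$ is diagonalisable; this has the bonus of pinning down $\sigma(Q)$, which the paper exploits immediately afterwards. Your route via Culver's criterion (Fact~\ref{fact:Culver}) is cleaner for the lemma in isolation: since $M$ visibly has the real logarithm $Q$, a lone $\JJ_2(-\lambda)$ block is excluded outright. For the equal-input identification, the paper invokes the general-form calculation preceding Eq.~\eqref{eq:eq-input-3} (any $M\in\cM_3$ with JNF $\diag(1,\lambda,\lambda)$ is equal-input) and then applies Lemma~\ref{lem:comm-2} to align the parameter vector with $(c^{}_1,c^{}_2,c^{}_3)$. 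Your direct eigenspace comparison --- showing that the $(-\lambda)$-eigenspace $W$ of $M$ coincides with $\ker C$ because $C|_W$ is diagonalisable with no access to the $c$-eigenvector --- accomplishes both steps at once and is entirely self-contained. Either way one lands at $M=(1-r)\one+\tfrac{r}{c}C$ with $r=1+\lambda>1$.
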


\begin{proof}
  Since $\sigma (M) = \{ 1, -\lambda, -\lambda \}$ by assumption, this
  time viewed as a multi-set, and since $Q$ is a real matrix, we must
  have $\sigma (Q) = \{ 0, \eta \pm \ts (2 k\ts {+}1) \pi \ii \}$ for
  some $k \in \NN_0$ by the SMT from Eq.~\eqref{eq:SMT}, where
  $\eta<0$.  So, $Q$ is simple and hence diagonalisable, with
  $\lambda = \ee^{\eta}$. Clearly, $M$ is then diagonalisable as well,
  and its JNF is $\ts\diag (1 , -\lambda , -\lambda )$.
  
  Now, we can repeat the little calculation that led to
  Eq.~\eqref{eq:eq-input-3} to conclude that $M$ is indeed equal
  input.  Since we know that $Q$ commutes with the original $C$-matrix
  defined by the specified $c_i$, an application of
  Lemma~\ref{lem:comm-2} establishes the claim.
\end{proof}

This means that we can proceed via the SMT. If $Q$ is the generator
from Lemma~\ref{lem:comm-3}, its spectrum is
$\sigma (Q) = \{ 0, -\Delta \pm s \}$ with
\begin{equation}\label{eq:del+sq}
\begin{split}
   \Delta \, & = \, \myfrac{1}{2} \bigl( (c^{}_{2} + c^{}_{3} ) x +
   (c^{}_{1} + c^{}_{3}) y + (c^{}_{1} + c^{}_{2} ) z \bigr)
   \, \geqslant \, 0 \quad \text{and} \\
   s^2 \, & = \, \myfrac{1}{4} \bigl( (c^{}_{1} + c^{}_{3}) (x-y)
   - (c^{}_{1} + c^{}_{2}) (x-z) \bigr)^2
   + c^{}_{2} c^{}_{3} (x-y)(x-z) \\ & \qquad \;\;\,
   + (c^{}_{1} + c^{}_{2} + c^{}_{3}) \ts
   \bigl( c_1^2  (z-y) + c_2^2 \ts (x-z)  + c_3^2 (y-x) \bigr) w
     \\[1mm] & \qquad\;\;\,
     - 2 \ts (c^{}_{1} + c^{}_{2} + c^{}_{3}) \ts
     (c^{}_{1} + c^{}_{2})(c^{}_{1} + c^{}_{3}) 
     (c^{}_{2} + c^{}_{3}) \ts w^2 \\
     & \eqdef \, \chi + \psi \ts w + \varphi \ts w^2 ,
\end{split}   
\end{equation}
as can easily be checked with any computer algebra system.  Note that
$s^2$ only depends on differences of the parameters $x,y,z$, which
will become important shortly. Now, we fix $s^2 = - (2 k+1)^2 \pi^2$
for some $k\in\NN_0$, which gives $\ee^{\pm s} = -1$, and
$\sigma (\ee^Q)$ thus is the multi-set $\{ 1, -\lambda, -\lambda\}$
with $\lambda = \ee^{-\Delta}$. We need the largest possible value of
$\lambda$ and thus the smallest value for $\Delta$, where all
inequalities from \eqref{eq:gen-cond} have to be satisfied for $Q$ to
be a generator. When $w=0$, we see that $s^2=\chi$ is the sum of a
square and one extra term, which can be done in three different ways
(only one of which is shown above). The other two choices give
$(y-x)(y-z)$ and $(z-x)(z-y)$ in the extra term, respectively, which
ensures positivity according to $\max (x,y,z)$. This implies
$s^2 \geqslant 0$ whenever $w=0$. So, we need $w\ne 0$ and then
$x,y,z > 0$ to ensure that $Q$ is a generator.

As $c^{}_{i} >0$ for all $i$, no partial derivative of $\Delta$
can vanish (except the trivial one with respect to $w$), while those
of $s^2$ are identically $0$, because $s^2 = - (2k+1)^2 \pi^2$ is
constant. The minimum we are looking for, as a function of $x,y,z$, is
then not a local one, but must lie on the boundary of the region that
guarantees the generator property of $Q$. We thus need the correct
value for $w$. Now, $s^2 = - (2k+1)^2 \pi^2$ leads to a quadratic
equation for $w$, with the two solutions
\[
   w \, = \, \myfrac{-1}{2 \varphi} \Bigl(\psi \pm 
   \sqrt{\psi^2 - 4 \ts \varphi \bigl( \chi +
     (2k+1)^2 \pi^2 \bigr)}\, \Bigr) ,
\]
where $\varphi, \psi, \chi$ are defined in \eqref{eq:del+sq}.

\begin{example}\label{ex:max-c}
  Consider the special case that
  $c^{}_{1} = c^{}_{2} = c^{}_{3} = \frac{c}{3} > 0$, where
  $\alpha = \beta = \gamma = \frac{4}{9} \ts c^2$ and then
  $\varphi = - \frac{16}{27} c^4$, $\psi = 0$ and
  $\chi = \frac{c^2}{9} \bigl( (x-y)^2 + (x-z)^2 - (x-y)(x-z)\bigr)$.
  Now, the condition $w_x = w_y = w_z = 0$ for a stationary point is
  equivalent with $x=y=z$, which gives $\chi = 0$ and thus the two
  solutions
\[
       w \, = \, \pm  \ts \myfrac{9 \ts (2k{+}1) \ts \pi}
            {4 \ts c^2 \sqrt{3}\ts } \ts ,
\]
where we now choose $k=0$ to minimise the modulus of $w$.  Since
$x\geqslant \frac{ \pi \sqrt{3} }{ c }$ from \eqref{eq:gen-cond}, the
minimal value of $\Delta$ is $\Delta^{\min} = c \ts x = \pi \sqrt{3}$,
which means $\lambda = - \ee^{-\Delta^{\min}} = 1 - r^{\max}$ and
hence $r^{\max} = 1 + \ee^{-\pi \sqrt{3}}$, which we know from
Example~\ref{ex:extremal-case}.
  
There are two solutions for the generator $Q$ now, namely the
circulant matrices
\[
    \myfrac{2\ts \pi}{\sqrt{3}}\begin{pmatrix}
    -1 & 1 & 0 \\ 0 & -1 & 1 \\ 1 & 0 & -1 \end{pmatrix}
    \quad \text{and} \quad 
    \myfrac{2\ts \pi}{\sqrt{3}}\begin{pmatrix}
    -1 & 0 & 1 \\ 1 & -1 & 0 \\ 0 & 1 & -1 \end{pmatrix} .
\]  
One indeed gets $\ee^Q = \one + (1+3\ts \delta) J^{}_{3}$ with
$\delta = \frac{2}{3} \ee^{-\pi\sqrt{3}}$ and $J^{}_{3}$ as in
Lemma~\ref{lem:star} below. This extremal case is known from
\cite[Ex.~4.3]{BS}, see also Example~\ref{ex:extremal-case}, but here
we directly obtain \emph{two} embeddings.  \exend
\end{example}

Let us now sketch the general argument. Select a parameter triple
$(c^{}_1, c^{}_2, c^{}_3 )$ in the positive cone, subject to the
condition $c>1$. We now determine the maximal point on the ray defined
by this triple that is still embeddable. To this end, we fix
$s^2 = - (2k+1)^2\pi^2$ with $k\in\NN_0$, so $M_c$ has two negative
eigenvalues. This gives an implicit equation for $w$ as a function of
$x, y, z$.  The stationarity condition $w_x = w_y = w_z = 0$, which we
will justify a little later, now gives that both $x-y$ and $x-z$ are
proportional to $w$, the details of which are best calculated with a
computer algebra system. Inserting this into $s^2$ leads to
\[
   s^2 \, = \, - \frac{c \ts (c^{}_1 \nts + c^{}_2)^2 
      (c^{}_1 \nts + c^{}_3)^2 
      (c^{}_2 + c^{}_3)^2}{4 \ts c^{}_1 c^{}_2 \ts c^{}_3} \ts w^2
      \, = \, - (2k+1)^2 \pi^2 ,
\]
and hence to the always real solutions (with $k\in\NN_0$)
\begin{equation}\label{eq:w-k}
     w \, = \, \pm \frac{2 \ts (2 k{+}1) \ts \pi \, 
        \sqrt{ \frac{c^{}_{1_{\vphantom{\chi}}} \nts
        c^{}_2 \ts c^{}_3}{c} } }{(c^{}_1 \nts + c^{}_2) 
        (c^{}_1 \nts + c^{}_3) (c^{}_2 + c^{}_3)} \ts .
\end{equation}
This can now again be inserted into the expressions for $x-y$ and
$x-z$ to eliminate $w$, in two different ways, for the two choices of
the sign of $w$.

Next, to minimise $\Delta$, we want to take the \emph{minimal} value
of $x$, $y$, and $z$ that is allowable by the generator condition
\eqref{eq:gen-cond}. This forces $k=0$, but which of the three
conditions in \eqref{eq:gen-cond} is pivotal here depends on the
parameters $c_i$. For $x$, the condition reads
\[
  x \, \geqslant \, (c^{}_1 \nts + c^{}_2) (c^{}_1 \nts + c^{}_3) \max
  \Bigl( \myfrac{w}{c^{}_{2}} , \myfrac{-w}{c^{}_{3}} \Bigr).
\]
Taking the minimal value and determining $y$ and $z$ from the
stationarity condition then gives
$\Delta \geqslant \pi \sqrt{c\ts} \, c^{}_{1} / \!
\sqrt{c^{\vphantom{a}}_{1} \nts c^{}_2 \ts c^{}_3}$ upon inserting,
and similarly for the other two paths that start from $y$ or from
$z$. Since all conditions have to be satisfied to obtain a generator,
we find
\begin{equation}\label{eq:Delta}
    \Delta^{\min} \, = \, \frac{\pi \ts \kappa \sqrt{c\ts}}
    {\sqrt{c^{\vphantom{a}}_{1} c^{}_2 \ts c^{}_3}} \, , 
    \qquad \text{with } \, \kappa = \max (c^{}_1, c^{}_2, c^{}_3) \ts ,
\end{equation}
which holds for \emph{both} choices $w = \pm \lvert w \rvert$.  The
corresponding two generators read
\begin{equation}\label{eq:gen-ei}
  Q^{}_{\pm} \, = \, \frac{\pi}{\sqrt{(c^{}_1 \nts + c^{}_2 + c^{}_3)
  \ts c^{}_1 c^{}_2 \ts c^{}_3 \ts }\,} \begin{pmatrix}
  -\kappa \ts (c^{}_2 + c^{}_3) & c^{}_2 (\kappa \pm c^{}_3) 
       & c^{}_3 (\kappa \mp c^{}_2) \\
  c^{}_1 (\kappa \mp c^{}_3) & -\kappa \ts (c^{}_1 \nts + c^{}_3) 
       & c^{}_3 (\kappa \pm c^{}_1) \\
  c^{}_1 (\kappa \pm c^{}_2) & c^{}_2 (\kappa \mp c^{}_1) 
      & -\kappa \ts (c^{}_1 \nts + c^{}_2)  \end{pmatrix},
\end{equation}
which is again best checked by a computer algebra system.  For
$c^{}_1 = c^{}_2 = c^{}_3 = \frac{c}{3}$, this reduces to the matrices
from Example~\ref{ex:max-c} for the constant-input case.  Note that
the expressions for $\Delta^{\min}$ and for $Q^{}_{\pm}$ are
positively homogeneous of degree $0$ in the $c^{}_{i}$, so only depend
on the \emph{direction} defined by the vector
$(c^{}_1 , c^{}_2, c^{}_3 )$.

The final step now consists in observing that the relation
$M_c = \exp (Q^{}_{\pm})$, via the negative eigenvalue
$\lambda = - \ee^{-\Delta^{\min}}$ and $c = 1 - \lambda$, gives the
consistency condition $c = 1+ \ee^{-\Delta^{\min}}\!$. If this is not
satisfied, one has to replace the $c^{}_i$ by the unique values
\[
    c^{\max}_{i} \, = \, \frac{1 + \ee^{-\Delta^{\min}}}
    {c^{}_1 \nts + c^{}_2 + c^{}_3} \, c^{}_{i} \ts ,
\]
which give the extremal point of embeddability on the ray defined by
$(c^{}_1 , c^{}_2 , c^{}_3 )$. We thus have the following constructive
counterpart to the results from \cite{Joh,Carette}.

\begin{prop}\label{prop:3-neg}
  An equal input matrix\/ $M_c \in \cM^{}_3$ with parameter triple\/
  $(c^{}_1 , c^{}_2 , c^{}_3 )$ subject to\/ $c^{}_{i} > 0$ and\/
  $c > 1$ is embeddable if and only if\/
  $c \leqslant 1 + \ee^{-\Delta^{\min}}$ holds with\/ $\Delta^{\min}$
  from Eq.~\eqref{eq:Delta}. In the extremal case, one has\/
  $M_c = \exp (Q^{}_{\pm})$ with the two generators from
  Eq.~\eqref{eq:gen-ei}. Further, one has\/
  $\Delta^{\min} \geqslant \pi\sqrt{3}$ and thus\/
  $c^{\max}\leqslant 1+\ee^{-\pi\sqrt{3}}$.
\end{prop}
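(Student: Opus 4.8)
The plan is to reassemble the analysis preceding the statement into a single necessary‑and‑sufficient chain, organised around the minimisation of the quantity $\Delta$ from Eq.~\eqref{eq:del+sq}. First I would record the elementary constraints: $c>1$ forces all $c^{}_i>0$ and, since $c\leqslant 1+c^{}_i$ for each $i$, also $c\leqslant\frac{3}{2}$, so that $\lambda\defeq c-1\in(0,\frac{1}{2}]$ and $M_c$ has JNF $\diag(1,-\lambda,-\lambda)$. For necessity, suppose $M_c=\ee^Q$ with $Q$ a generator. Since $[\ee^{Q},Q]=\nix$ holds for every matrix and $Q\in\cA^{(3)}_{0}$, we get $Q\in\comm(M_c)$, and Lemma~\ref{lem:comm} (applicable because all $c^{}_i>0$) lets me write $Q=x\ts Q^{}_1+y\ts Q^{}_2+z\ts Q^{}_3+w\ts R^{}_0$ uniquely; the generator property of $Q$ is then exactly the system \eqref{eq:gen-cond}. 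Moreover $\ee^Q=M_c$ is diagonalisable, hence so is $Q$ (Fact~\ref{fact:diagonal}), and its spectrum is $\{0,-\Delta\pm s\}$ with $\Delta,s^2$ as in \eqref{eq:del+sq}. Because the eigenvalue $-\lambda$ of $M_c$ has multiplicity two and is negative, the SMT forces $s^{2}<0$, in fact $s$ to be an odd multiple of $\pi\ii$, together with $\ee^{-\Delta}=\lambda=c-1$; equivalently, $s^{2}=-(2k+1)^{2}\pi^{2}$ for some $k\in\NN_0$ and $\Delta=-\log(c-1)$. Conversely, whenever a $Q$ of the above form is a generator with $s^{2}=-(2k+1)^{2}\pi^{2}$, its exponential has the double negative eigenvalue $-\ee^{-\Delta}$, so Lemmas~\ref{lem:comm-3} and \ref{lem:comm-2} identify $\ee^Q$ as the equal‑input matrix with parameters $\frac{r}{c}(c^{}_1,c^{}_2,c^{}_3)$ where $1-r=-\ee^{-\Delta}$; if in addition $\Delta=-\log(c-1)$, then $r=c$ and $\ee^Q=M_c$. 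So the whole problem reduces to the question whether the value $\Delta=-\log(c-1)$ is realised by some tuple $(x,y,z,w)$ satisfying \eqref{eq:gen-cond} and $s^{2}=-(2k+1)^{2}\pi^{2}$.

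The heart of the argument is then the computation of $\Delta^{\min}$, the infimum of $\Delta$ over all such tuples. I would first observe that $w=0$ gives $s^{2}=\chi\geqslant 0$ (by rewriting $\chi$ so that its extra summand is controlled by $\max(x,y,z)$), which is incompatible with $s^{2}<0$; hence $w\neq 0$ and then \eqref{eq:gen-cond} forces $x,y,z>0$. Since $\Delta$ is linear in $x,y,z$ with strictly positive coefficients and does not depend on $w$, a Lagrange‑type inspection — the partials of $\Delta$ in $x,y,z$ never vanish, those of the constant $s^{2}$ are identically zero — shows that the constrained minimum is not an interior critical point but is attained on the boundary cut out by \eqref{eq:gen-cond}, which is the promised justification of the stationarity condition $w^{}_x=w^{}_y=w^{}_z=0$. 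That condition makes $x-y$ and $x-z$ homogeneous‑linear in $w$; substituting back into $s^{2}=-(2k+1)^{2}\pi^{2}$ yields the two branches \eqref{eq:w-k}, and then taking the smallest admissible values of $x,y,z$ allowed by \eqref{eq:gen-cond} forces $k=0$ and, according to which $c^{}_i$ is largest, produces $\Delta^{\min}$ as in \eqref{eq:Delta}, with $Q^{}_{\pm}$ from \eqref{eq:gen-ei} the two resulting generators (that $\ee^{Q^{}_{\pm}}=M_c$ in the extremal case is then a direct check, conveniently with a computer algebra system). For sufficiency, given $1<c\leqslant 1+\ee^{-\Delta^{\min}}$ set $\Delta^{*}\defeq-\log(c-1)\geqslant\Delta^{\min}$: if equality holds we are in the extremal case and use $Q^{}_{\pm}$; if $\Delta^{*}>\Delta^{\min}$ I would start from the minimiser and add the common constant $t=(\Delta^{*}-\Delta^{\min})/c>0$ to each of $x,y,z$, which leaves $s^{2}$ and all inequalities in \eqref{eq:gen-cond} intact while raising $\Delta$ to $\Delta^{*}$, so the converse step above yields a generator $Q$ with $\ee^Q=M_c$. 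Combining the two directions, $M_c$ is embeddable if and only if $c\leqslant 1+\ee^{-\Delta^{\min}}$.

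It remains to prove the universal bound $\Delta^{\min}\geqslant\pi\sqrt{3}$, which is elementary: with $\kappa=\max(c^{}_1,c^{}_2,c^{}_3)$ it is equivalent to $\kappa^{2}c\geqslant 3\ts c^{}_1 c^{}_2 c^{}_3$, and taking $\kappa=c^{}_1$ without loss of generality and cancelling one factor of $c^{}_1$ reduces it to $c_1^{2}+c^{}_1 c^{}_2+c^{}_1 c^{}_3\geqslant 3\ts c^{}_2 c^{}_3$, which holds term by term because $c^{}_1\geqslant c^{}_2$ and $c^{}_1\geqslant c^{}_3$; hence $c^{\max}=1+\ee^{-\Delta^{\min}}\leqslant 1+\ee^{-\pi\sqrt{3}}$. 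The main obstacle is the second paragraph, namely pinning $\Delta^{\min}$ down rigorously: one must verify that no smaller value of $\Delta$ arises off the locus $w^{}_x=w^{}_y=w^{}_z=0$, or with $k\geqslant 1$, or away from the active constraints in \eqref{eq:gen-cond}, and then push through the lengthy but essentially mechanical substitutions that produce \eqref{eq:w-k}, \eqref{eq:Delta} and \eqref{eq:gen-ei}. Everything else is bookkeeping resting on Lemmas~\ref{lem:comm}--\ref{lem:comm-3}, Fact~\ref{fact:diagonal}, and the SMT.
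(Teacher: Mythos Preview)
Your proposal is correct and follows essentially the same route as the paper: the commutant parametrisation via Lemma~\ref{lem:comm}, the observation that $w=0$ forces $s^2\geqslant 0$, the stationarity condition $w_x=w_y=w_z=0$ leading to \eqref{eq:w-k} and \eqref{eq:Delta}, the translation $(x,y,z)\mapsto(x{+}\theta,y{+}\theta,z{+}\theta)$ to cover the full range, and the elementary inequality for $\Delta^{\min}\geqslant\pi\sqrt{3}$. The one point where the paper goes beyond your outline is precisely the obstacle you flag: to confirm that the stationary locus really gives the global minimum of $\Delta$, the paper computes the Hessian of $w$ with respect to $(x,y,z)$ explicitly and shows it is positive (resp.\ negative) semi-definite with neutral direction $(1,1,1)^{\mathsf{T}}$, so your ``Lagrange-type inspection'' should be replaced by that concrete second-order check.
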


\begin{proof}
  For the maximal point on the ray, the claim follows from our above
  constructive calculation, once we show that this really corresponds
  to a minimum (if $w>0$) or a maximum (if $w<0$). The Hessian can be
  calculated, and reads
\[
    H \, = \, \frac{\pm \ts  \sqrt{\frac{c^{}_{1_{\vphantom{\chi}}} 
      \nts c^{\vphantom{I}}_2 c^{}_3}
      {c^{\vphantom{a}}_1 \nts + c^{}_2 + c^{}_3}}}
      {2 \pi \ts (c^{}_1 {+} c^{}_2)(c^{}_1 {+} c^{}_3)
      (c^{}_2 {+} c^{}_3)} 
      \begin{pmatrix}
      (c^{}_2 + c^{}_3)^2 & c^{}_1 c^{}_2 - c \ts c^{}_3 &
      c^{}_1 c^{}_3 - c \ts c^{}_2 \\ c^{}_1 c^{}_2 - c \ts c^{}_3 &
      (c^{}_1 + c^{}_3 )^2 & c^{}_2 c^{}_3 - c \ts c^{}_1 \\
      c^{}_1 c^{}_3 - c \ts c^{}_2 & c^{}_2 c^{}_3 - c \ts c^{}_1 &
      (c^{}_1 + c^{}_2)^2   \end{pmatrix} 
\]   
for the two signs of $w$. It is positive (resp.\ negative)
semi-definite, with one zero eigenvalue and two positive (resp.\
negative) ones.  The neutral direction is $(1,1,1)^{\mathsf{T}}$, so
we have a valley of minima (maxima) along this direction, relative to
the two remaining degrees of freedom, as $H$ does not change along the
valley. Both signs of $w$ give the same value for $\Delta^{\min}$, as
claimed.
  
It remains to see that every point on our ray with
$1 < c < c^{\ts \max} = 1 + \ee^{-\Delta^{\min}}$ is also
embeddable. But this is now evident, because we can replace the above
(optimal) values for $x,y,z$ by $x+\theta, y+\theta, z+\theta$ with
any $\theta>0$ without compromising the generator condition.  Since
$\Delta$ is linear in $x,y,z$, we can thus obtain any value of
$\Delta$ in the interval $[\Delta^{\min}, \infty)$, and the claimed
embeddability follows.

Finally, consider $\Delta^{\min}$ from Eq.~\eqref{eq:Delta}, with
$c^{}_{1} \geqslant c^{}_{2} \geqslant c^{}_{3} >0$. It is a simple
exercise to check that it reaches its minimum whenever the three $c_i$
are equal, which then is the value we saw in Example~\ref{ex:max-c},
with the corresponding value for $c^{\max}$ as stated.
\end{proof}

\begin{remark}\label{rem:alt-reason}
  By construction, the generators $Q^{}_{\pm}$ from
  Eq.~\eqref{eq:gen-ei} commute with the matrix
  $C = C(c^{}_{1}, c^{}_{2}, c^{}_{3})$ for the fixed parameter
  triple, so also $[Q^{}_{\pm}, Q^{}_{C}]=\nix$ for the rate matrix
  $Q^{}_{C} = C - c\, \one$. Consequently, for $\tau\geqslant 0$, we
  obtain
\[
  \exp (Q^{}_{\pm} + \tau \ts Q^{}_{C}) \, = \,
  \exp (\tau \ts Q^{}_{C}) \exp (Q^{}_{\pm}) \, = \,
  \exp (\tau \ts Q^{}_{C}) M_{c^{\max}} \ts ,
\]
which is an equal-input matrix with summatory parameter
\[
  c(\tau) = 1 + \ee^{-\tau \ts c^{\max}} (c^{\max}-1) \ts .
\]
With $\tau \geqslant 0$, we thus reach all values in $(1, c^{\max}]$,
and always get two embeddings.

Now, this was based on the choice $k=0$, and one can now repeat the
exercise for any $k\in\NN$ in Eq.~\eqref{eq:w-k}. For smaller and
smaller values of $c>1$, one thus obtains further embeddings, where
two new embeddings occur at
\[
    \Delta \, = \,  \frac{(2k+1) \ts \pi \ts \kappa \sqrt{c\ts}}
    {\sqrt{c^{\vphantom{a}}_{1} \nts c^{}_2 \ts c^{}_3}} 
\]
for each $k\in\NN$. We leave further details to the interested
reader.  \exend
\end{remark}

Let us now turn our attention to the generic situation for $d=3$.

\subsection{Cyclic cases}

When $\deg (q_{_M}) = 3$, we know that $M$ is cyclic. If $M$ is
embeddable, no eigenvalue can lie on the (closed) negative real axis,
by an application of Culver's criterion (Fact~\ref{fact:Culver}).
Here, the spectrum of $M$ can be real or not, which we now consider
separately.

When $\sigma (M) \subset \RR$, we are in the situation of
Theorem~\ref{thm:cyclic-1}, so all eigenvalues must be positive for
potential embeddability. Then, the spectral radius of $A=M\nts - \one$
satisfies $\varrho_{\nts_A} < 1$, and it remains to formulate an
effective condition for when the well-defined matrix
$Q = \log (\one + A)$ from \eqref{eq:log-def} is a generator. In this
situation, by Theorem~\ref{thm:Frob} and Corollary~\ref{coro:alg}, we
know that $\log (\one + A) = \alpha A + \beta A^2$ with
$\alpha, \beta \in \RR$. Now, there are still two cases.

When $A$ is diagonalisable, we have $\sigma (A) = \{ 0, \mu, \nu \}$
with $\mu , \nu \in (-1,0)$ and $\mu \ne \nu$. Then, it follows from
the SMT (see \eqref{eq:SMT} and \cite[Eq.~(4.6)]{BS2} for details) 
that 
\begin{equation}\label{eq:albe-1}
  \alpha \, = \, \frac{\mu^2 \log (1+\nu) -
      \nu^2 \log (1+\mu)}{\mu\ts\nu (\mu-\nu)}
  \quad \text{and} \quad
  \beta \, = \, \frac{-\mu \log (1+\nu) +
      \nu \log (1+\mu)}{\mu\ts\nu (\mu-\nu)} \ts ,
\end{equation}
which is the unique solution to the SMT-induced equation
\[
    \begin{pmatrix} \mu & \mu^2 \\ \nu & \nu^2 \end{pmatrix}
    \begin{pmatrix} \alpha \\ \beta \end{pmatrix} \, = \, 
    \begin{pmatrix} \log (1+\mu) \\ \log (1+\nu) \end{pmatrix} .
\]

Otherwise, when $A$ is cyclic but fails to be diagonalisable, its JNF
must be $0 \oplus \JJ^{}_{2} (\mu)$ with $\mu \in (-1,0)$, and
\cite[Eq.~(4.7)]{BS2} then gives
\begin{equation}\label{eq:albe-2}
 \alpha \, = \, \frac{2\ts \log (1+\mu)}{\mu} - \frac{1}{1+\mu}
  \quad \text{and} \quad
  \beta \, = \, \frac{1}{\mu (1 + \mu)} -
   \frac{\log (1+\mu)}{\mu^2} \ts ,
\end{equation}
which also emerges from \eqref{eq:albe-1} as a limit of de l'Hospital
type. This gives the following result.

\begin{prop}\label{prop:3-real}
  Let\/ $M\in \cM^{}_{3}$ by cyclic, with real spectrum. Then, $M$ is
  embeddable if and only if the following two conditions are
  satisfied, where\/ $A=M\nts - \one$ as before.
\begin{enumerate}\itemsep=2pt
\item All eigenvalues of\/ $M$ are positive, which automatically
  implies\/ $\varrho_{\nts_A} <1$.
\item The real matrix\/ $Q=\log (\one + A)$ is a generator, where\/
  $Q=\alpha A + \beta A^2$ with\/ $\alpha, \beta$ from 
  Eqs.~\eqref{eq:albe-1} or \eqref{eq:albe-2},
  depending on whether\/ $M$ is diagonalisable or not.
\end{enumerate}

\noindent
In this case, the embedding is unique.  \qed
\end{prop}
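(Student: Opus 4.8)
The plan is to assemble the statement from results already in hand, essentially as a corollary of Theorem~\ref{thm:cyclic-1} together with the explicit polynomial representation of the logarithm. First I would invoke Fact~\ref{fact:Culver} (Culver's criterion) and Fact~\ref{fact:Elf}: since $M$ is embeddable it is in particular a real logarithm of itself, so $M$ must be non-singular and every elementary Jordan block for a negative eigenvalue must occur an even number of times. But $M$ is cyclic, so each eigenvalue corresponds to a single Jordan block; hence no negative eigenvalue is possible, and combined with Fact~\ref{fact:one-is-special} ($1\in\sigma(M)$) and the assumed reality of the spectrum, all eigenvalues of $M$ lie in $(0,1]$, i.e.\ $\sigma(M)\subset\RR_{+}$. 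This is exactly condition (1), and then $A=M-\one$ has all eigenvalues in $(-1,0]$, so $\varrho_{\nts_A}<1$ and the series \eqref{eq:log-def} converges in norm.

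Next I would apply Theorem~\ref{thm:cyclic-1} directly: for a cyclic $M$ with real spectrum, embeddability is equivalent to $\sigma(M)\subset\RR_{+}$ together with $Q=\log(\one+A)$ being a generator, and in the affirmative case $Q$ is the principal logarithm and the embedding is unique (even among real logarithms with zero row sums). So the only thing left is to make condition (2) explicit, i.e.\ to justify the formula $Q=\alpha A+\beta A^{2}$ with the stated coefficients. By Corollary~\ref{coro:alg}, since $M$ is cyclic with $\deg(q_{_A})=3$, any matrix commuting with $A$ — in particular the convergent series $\log(\one+A)$ — lies in $\langle A,A^{2}\rangle_{\RR}$, so $Q=\alpha A+\beta A^{2}$ for unique real $\alpha,\beta$. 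To pin down $\alpha,\beta$, apply the SMT \eqref{eq:SMT}: $Q$ must have eigenvalues $\log(1+\lambda)$ for $\lambda\in\sigma(A)$. When $A$ is diagonalisable with $\sigma(A)=\{0,\mu,\nu\}$, $\mu\ne\nu$, the eigenvalue $0$ is automatically matched, and $\log(1+\mu)=\alpha\mu+\beta\mu^{2}$, $\log(1+\nu)=\alpha\nu+\beta\nu^{2}$ is a $2\times2$ Vandermonde-type linear system whose (unique) solution is \eqref{eq:albe-1}. When $A$ is cyclic but not diagonalisable, its JNF is $0\oplus\JJ_{2}(\mu)$; here one matches the eigenvalue of the block and also the derivative of $x\mapsto\log(1+x)$ at $\mu$ against that of $x\mapsto\alpha x+\beta x^{2}$ (the standard way a polynomial acts on a Jordan block), giving $\log(1+\mu)=\alpha\mu+\beta\mu^{2}$ and $\tfrac{1}{1+\mu}=\alpha+2\beta\mu$; solving yields \eqref{eq:albe-2}, which is also the de l'Hospital limit $\nu\to\mu$ of \eqref{eq:albe-1}. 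These are routine computations, already recorded in \cite[Eqs.~(4.6),(4.7)]{BS2}.

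The only real subtlety — and the step I would be most careful about — is the non-diagonalisable subcase: one must be sure that, even though $M$ (hence $A$) is not diagonalisable, the two conditions $q_{_A}(A)=\nix$ with $\deg q_{_A}=3$ and the SMT "with multiplicities" genuinely determine $\alpha,\beta$, i.e.\ that evaluating a polynomial on $\JJ_{2}(\mu)$ involves exactly the value and first derivative at $\mu$. This is standard (it is how one computes $p(B)$ via the minimal polynomial), and it is precisely what makes the limit of \eqref{eq:albe-1} consistent with \eqref{eq:albe-2}. With $\alpha,\beta$ identified, $Q=\alpha A+\beta A^{2}$ is an explicit element of $\cA^{(3)}_{\ts 0}$, and checking "is $Q$ a generator" reduces to verifying the off-diagonal entries are non-negative — an explicit finite check. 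Uniqueness has already been delivered by Theorem~\ref{thm:cyclic-1}. $\Box$
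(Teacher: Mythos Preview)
Your proposal is correct and follows essentially the same route as the paper: the argument preceding the proposition invokes Culver's criterion to exclude non-positive eigenvalues for a cyclic matrix, then applies Theorem~\ref{thm:cyclic-1} wholesale for the equivalence and uniqueness, and finally uses Corollary~\ref{coro:alg} together with the SMT (and, in the non-diagonalisable case, the derivative condition on the Jordan block, as in \cite[Eq.~(4.7)]{BS2}) to pin down $\alpha,\beta$. The only slip is the phrase ``$M$ is in particular a real logarithm of itself''; you mean that $M$ \emph{has} a real logarithm (namely the generator), but this is clearly a wording glitch rather than a gap.
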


Otherwise, we have $\sigma (M) = \{ 1, \lambda, \cc{\lambda} \}$ with
$\lambda \in \CC \setminus \RR$, but we can still use
Corollary~\ref{coro:alg} for the generator $Q$. Here,
$\one \ne M=\ee^Q$ with $Q\ne \nix$, which implies $\tr (Q) <0$ and
thus $\lvert \lambda\rvert^2 = \det (M) = \ee^{\tr (Q)} < 1$. Then,
$z^{}_{0} \defeq \log (\lambda)$ via the standard branch of the
complex logarithm is well defined, and
$z^{}_{k} = z^{}_{0} + k \, 2 \pi \ii$ with $k\in\ZZ$ runs through
\emph{all} complex logarithms of $\lambda$. Now, setting
$\lambda = 1+\mu$ and writing $Q=\alpha A + \beta A^2$, the SMT 
leads to the conditions
\[
  \begin{pmatrix} \exp \bigl(\alpha \mu + \beta \mu^2 \bigr) \\
    \exp \bigl( \alpha \mubar + \beta {\mubar}^2 \bigr)\end{pmatrix}
  \, = \,  \begin{pmatrix} \lambda \\ \cc{\lambda} \end{pmatrix} ,
\]
for which we need a \emph{real} solution $\alpha,\beta$. Upon taking
logarithms, one can quickly check that this is only possible if we use
a complex-conjugate pair of them for the two lines, hence
\[
  \begin{pmatrix} \mu \;\; \mu^2 \\ \mubar \;\; \mubar^2 \end{pmatrix}
  \begin{pmatrix} \alpha \\ \beta \end{pmatrix}
  \, = \, \begin{pmatrix} z^{}_{k} \\ \cc{z^{}_{k}} \end{pmatrix}
\]
for some $k\in\ZZ$. This is uniquely solved by
\begin{equation}\label{eq:3-complex}
  \begin{pmatrix} \alpha^{}_{k} \\ \beta^{}_{k} \end{pmatrix}
  \, = \, \myfrac{1}{\lvert \mu \rvert^2 \ts (\mubar - \mu)} \,
  \begin{pmatrix} \mubar^2 z^{}_{k} - \mu^2 \ts \cc{z^{}_{k}} \\
    -\mubar \ts z^{}_{k} + \mu \ts \cc{z^{}_{k}} \end{pmatrix} ,
\end{equation}
which is indeed real. So, for each choice of $k\in\ZZ$, our matrix $M$
has precisely \emph{one} real logarithm, namely
$\alpha^{}_{k} A + \beta^{}_{k} A^2 \in \Mat (3, \RR)$. We can
summarise this as follows.

\begin{table}[t]
  \caption{Summary of embedding cases for $d=3$, where we use MG as
    abbreviation for the property to be a Markov
    generator. \label{tab:d3}}
\begin{center}
\renewcommand{\arraystretch}{1.2}    
\begin{tabular}{|c|c|c|c|c|} \hline
    $\deg (q_{_M})$ & JNF & condition(s)  & unique & details \\ \hline
  $1$ & $\one^{}_{3^{}_{\vphantom{\chi}}} = \exp (\nix^{}_{3}) $
       &  {\rev --- }  & yes & Fact~\ref{fact:one} \\ \hline
  $2$ & $  \begin{array}{c}\diag (1,1,\lambda) \\ \lambda \ne 1
           \end{array}$ & $\lambda \in (0,1)$ & yes &
           Lemma~\ref{lem:3-deg-2}{\ts}(1) \\ \cline{2-5}
         &  $\begin{array}{c} \diag (1,\lambda,\lambda) \\ \lambda > 0
         \end{array}$ & $\lambda \in (0,1)$ & $\begin{array}{c}
         \text{not} \\  \text{always} \end{array}$ &
         Lemma~\ref{lem:3-deg-2}{\ts}(2) \\ \cline{2-5} 
        &  $\begin{array}{c} \diag (1,\lambda,\lambda) \\ \lambda < 0
            \end{array}$ & $\lambda \in [-\ee^{-\pi\sqrt{3}}, 0)$
        & no & Prop.~\ref{prop:3-neg} \\ \hline
  $3$ &  $\begin{array}{c} \diag (1,\lambda_1,\lambda_2) \\
            1 \ne \lambda_1 \ne \lambda_2 \ne 1 \end{array}$
          & $\begin{array}{c} \lambda_1, \lambda_2 \in (0,1) \\
               \log (\one {+} A) \text{ is MG} \end{array}$ & yes
          & Prop.~\ref{prop:3-real} \\ \cline{2-5}
          & $\begin{array}{c} 1 \oplus \JJ_2 (\lambda) \\
            \lambda \ne 1 \end{array}$
          & $\begin{array}{c} \lambda \in (0,1) \\
               \log (\one {+} A) \text{ is MG} \end{array}$ & yes
          & Prop.~\ref{prop:3-real} \\ \cline{2-5}
          & $\begin{array}{c} \diag(1, \lambda, \overline{\lambda}\, 
            )\\ \lambda \in \CC \setminus \RR \end{array}$
          & $\begin{array}{c} \lvert \lambda \rvert \in (0,1) \\
             \text{$R_k$ is MG for some $k$} \end{array}$ & $\begin{array}{c}
         \text{not} \\  \text{always} \end{array}$
          & Prop.~\ref{prop:3-complex} \\ \hline
\end{tabular}
\end{center}
\end{table}

\begin{prop}\label{prop:3-complex}
  Let\/ $M\in\cM^{}_{3}$ have spectrum\/
  $\sigma (M) = \{1, \lambda, \overline{\lambda} \ts \}$ with\/
  $\lambda \in \CC\setminus \RR$.  Then, $M$ is embeddable if and only
  if the following two conditions are satisfied, with\/
  $A=M\nts - \ts \one$.
\begin{enumerate}\itemsep=2pt
\item One has\/ $0 < \lvert \lambda \rvert < 1$.
\item There is a\/ $k\in\ZZ$ such that the real logarithm\/
  $R^{}_{k} =\alpha^{}_{k} A + \beta^{}_{k} A^2$ of\/ $M\nts$, with\/
  $\alpha^{}_{k}$ and\/ $\beta^{}_{k}$ from Eq.~\eqref{eq:3-complex},
  is a generator, which then gives\/ $M=\ee^{Q}$ with\/ $Q=R^{}_{k}$.
\end{enumerate}
In this case, $R^{}_{k}$ is a generator for only finitely many\/
$k\in\ZZ$, and no other candidates exist. The number of solutions
is bounded by the integer\/ 
$\big\lfloor 1 - \frac{\log(\det(M))}{2 \pi \sqrt{3}}\big\rfloor$.
\end{prop}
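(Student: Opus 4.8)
The plan is to split the statement into a qualitative part (the characterisation of embeddability, together with the claim that the $R^{}_k$ exhaust all candidate generators) and a quantitative part (finiteness plus the explicit bound on the number of embeddings), and to reduce the quantitative part to a single inequality about $3{\times}3$ rate matrices.

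For the qualitative part, almost everything is already in the discussion preceding the statement. Since $\lambda\in\CC\setminus\RR$, the matrix $M$ has three distinct eigenvalues, hence is simple and in particular cyclic, so Corollary~\ref{coro:alg} applies and forces every generator $Q$ with $\ee^Q=M$ to be of the form $Q=\alpha A+\beta A^2$. By Fact~\ref{fact:one} we have $Q\ne\nix$, hence $\tr(Q)<0$ and $\det(M)=\ee^{\tr(Q)}<1$, so condition~(1) is necessary (and $0<\lvert\lambda\rvert$ is automatic as $\lambda\ne 0$). The SMT, as carried out before the statement, then pins $(\alpha,\beta)$ down to one of the pairs $(\alpha^{}_k,\beta^{}_k)$ from Eq.~\eqref{eq:3-complex} with $k\in\ZZ$, and for each such $k$ one has $\ee^{R^{}_k}=M$ with $R^{}_k=\alpha^{}_k A+\beta^{}_k A^2$. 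Thus $M$ is embeddable if and only if some $R^{}_k$ is a generator, every embedding arises this way, and no further candidate exists; this settles all of the statement except finiteness and the explicit bound.

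The new ingredient is the following inequality, which I would isolate as a lemma: if $R\in\Mat(3,\RR)$ is a rate matrix with eigenvalues $\{0,\,a+\ii b,\,a-\ii b\}$, then $3b^2\leqslant a^2$. To prove it, write the off-diagonal entries as $r^{}_{ij}\geqslant 0$ and put $s^{}_i=\sum_{j\ne i}r^{}_{ij}=-r^{}_{ii}\geqslant 0$. Matching the characteristic polynomial, which here equals $x^3-2a\ts x^2+(a^2+b^2)\ts x$, against the generic expression for a $3{\times}3$ matrix yields $\tr(R)=-(s^{}_1+s^{}_2+s^{}_3)=2a$ and, for the sum of the principal $2{\times}2$ minors, $\sum_{i<j}\bigl(s^{}_i s^{}_j-r^{}_{ij}r^{}_{ji}\bigr)=a^2+b^2$. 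Combining these two relations produces the identity
\[
   a^2-3b^2 \, = \, \tfrac{1}{2}\sum_{i<j}(s^{}_i-s^{}_j)^2
   \; + \; 3\sum_{i<j} r^{}_{ij}\, r^{}_{ji} \, \geqslant \, 0 ,
\]
where both summands are manifestly non-negative; equality forces all $s^{}_i$ equal and every product $r^{}_{ij}r^{}_{ji}$ to vanish, which is realised precisely by the circulant generators of Example~\ref{ex:max-c} (there $a=-\pi\sqrt{3}$, $b=\pi$). This identity is where the genuine content of the proof sits; everything else is either already available or a short computation, so the derivation of the identity is the step I expect to be the only real obstacle.

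Finally, I would apply the lemma to $R^{}_k$, whose eigenvalues are $0$ and $\log\lvert\lambda\rvert\pm\ii\ts\theta^{}_k$ with $\theta^{}_k=\arg(\lambda)+2\pi k$ (fixing $\arg(\lambda)\in(-\pi,\pi]$). If $R^{}_k$ is a generator, the lemma gives $\lvert\theta^{}_k\rvert\leqslant\lvert\log\lvert\lambda\rvert\rvert/\sqrt{3}=-\log(\det M)/(2\sqrt{3})$, using $\det(M)=\lvert\lambda\rvert^2$ and $0<\lvert\lambda\rvert<1$. Since $\theta^{}_k=\arg(\lambda)+2\pi k$, the admissible integers $k$ lie in a closed interval of length $L=-\log(\det M)/(2\pi\sqrt{3})$, which contains at most $\lfloor L\rfloor+1=\lfloor 1+L\rfloor$ integers, giving the stated bound $\bigl\lfloor 1-\log(\det M)/(2\pi\sqrt{3})\bigr\rfloor$ and, in particular, that $R^{}_k$ is a generator for only finitely many $k$.
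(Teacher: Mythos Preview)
Your proof is correct. The qualitative half coincides with the paper's own argument: both rely on the discussion immediately preceding the proposition (cyclicity of $M$, Corollary~\ref{coro:alg}, and the SMT computation leading to Eq.~\eqref{eq:3-complex}) to reduce embeddability to the question of whether some $R^{}_k$ is a generator.

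For the quantitative half, however, you take a genuinely different and more self-contained route. The paper simply invokes \cite[Thm.~4.5]{CFR} for the finiteness and the explicit bound, treating the inequality $\lvert b\rvert \leqslant \lvert a\rvert/\sqrt{3}$ for the non-real eigenvalues $a\pm\ii b$ of a $3{\times}3$ generator as a black box. You instead derive this inequality from scratch via the identity
\[
  a^2 - 3 b^2 \,=\, \tfrac{1}{2}\sum_{i<j}(s^{}_i - s^{}_j)^2 \,+\, 3\sum_{i<j} r^{}_{ij}\ts r^{}_{ji}\,\geqslant\, 0,
\]
obtained by comparing trace and second elementary symmetric function against the row-sum and off-diagonal data of the generator. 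This is a clean and elementary argument specific to $d=3$; it makes the bound entirely intrinsic to the paper and, as you note, identifies the equality case with the circulant generators of Example~\ref{ex:max-c}. The trade-off is that the paper's citation to \cite{CFR} covers general $d$ uniformly, whereas your identity is tailored to three states and would need a separate argument for the analogous bound in Lemma~\ref{lem:4-deg-3-complex} and Proposition~\ref{prop:4-non-real}.
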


\begin{proof}
  The first condition was derived in the paragraph after
  Proposition~\ref{prop:3-real}.
  
  The second condition follows from our above calculation, which also
  constructs all possible candidates for the real logarithms of $M$.
  That only finitely many of them can be generators follows once again
  from the bounds on the imaginary parts of their eigenvalues, as is
  detailed in \cite[Thm.~4.5]{CFR}, including the bound as stated.
\end{proof}

Multiple embeddings do occur, though not for sufficiently large values
of $\det (M)$. Also, Theorem~\ref{thm:cuth} can often be used to
exclude non-uniqueness. Explicit examples with multiple generators are
discussed in \cite{Speak}, which considers an equal-input example in
our terminology, and in \cite[Ex.~17]{Davies}, which fits
Proposition~\ref{prop:3-complex}; see also \cite[Rem.~5.5]{BS}, and
\cite{Cuth73} for a more general discussion of multiple embeddings.
The basic results of this section on $d=3$ are summarised in
Table~\ref{tab:d3}.

\section{Embedding in four dimensions}\label{sec:four}

If $M \in \cM_4$ has $\deg (q_{_M}) = 1$, we are back to $M=\one$ and
Fact~\ref{fact:one}, hence to $\one = \exp (\nix)$.  So, we need to
consider the cases $\deg (q_{_M}) \in \{ 2,3,4\}$.

\subsection{Cases of degree 2}

In view of Fact~\ref{fact:one-is-special}, there are three possible
cases here, all of which are diagonalisable, again because their
minimal polynomial has no repeated factor. First, let us look at
$M\in\cM_4$ with JNF $\diag (1 , 1 , 1 , \lambda)$ and
$\lambda \ne 1$, where $0 < \det (M) \leqslant 1$ then forces
$\lambda \in (0,1)$.  This class of matrices is erroneously claimed to
be impossible in \cite[Lemma~5.1]{CFR}, as can be seen from looking at
\begin{equation}\label{eq:possible}
      M \, = \, \begin{pmatrix} 1 & 0 & 0 & 0 \\ 0 & 1 & 0 & 0 \\
        0 & \; 0 \; & 1{-}a & a \\ 0 & 0 & b & 1{-}b \end{pmatrix}
      \, = \, \one^{}_{2} \oplus \begin{pmatrix}
        1{-}a & a \\ b & 1{-}b \end{pmatrix}
\end{equation}
with $a,b \in [0,1]$. Due to the block structure, one can invoke
Theorem~\ref{thm:Kendall} to see that $M$ is embeddable if and only if
$0 \leqslant a + b < 1$. More generally, in complete analogy to the
corresponding case in three dimensions from Section~\ref{sec:d-3.2},
the most general form of $M$ is
\[
     M \, = \, \one + (\lambda - 1) \bigl( \alpha^{}_{i} a^{}_{j}
     \bigr)_{1\leqslant i,j \leqslant 4}
\]
subject to the conditions $\sum_{i=1}^{4} \alpha^{}_{i} a^{}_{i} = 1$
and $\sum_{i=1}^{4} a^{}_{i} = 0$, and to the restrictions in sign and
absolute value to guarantee that $M$ is Markov. Beyond the matrix in
\eqref{eq:possible}, and its obvious relatives that emerge from
simultaneous permutations of rows and columns, one can also get
\begin{equation}\label{eq:possible-2}
  M \, = \, \begin{pmatrix} \lambda & x & y & z \\
    0 & 1 & 0 & 0 \\ 0 & 0 & 1 & 0 \\ 0 & 0 & 0 & 1 \end{pmatrix}
\end{equation}
with $\lambda=1-x-y-z$ and $x,y,z\geqslant 0$ subject to the condition
that $0 < x+y+z < 1$, so that $M$ is invertible and
$\lambda \in (0,1)$. As before, all other cases of this kind are
obtained by simultaneous permutations of rows and columns.  To see
that these are all possibilities (with $\lambda \ne 1$), one uses the
constraints on the $a_i$ and $\alpha_j$ to show that at most one
$\alpha_i < 0$ and at most one $\alpha_j > 0$. Then, a simple case
distinction, which we leave to the interested reader, gives the above
characterisation of this class.

In any case of this class, with $\lambda \in (0,1)$, $A = M\nts -\one$
is similar to $\diag (0,0,0,\lambda\ts {-}1)$, hence
$A^2 = (\lambda - 1) A$.  This means that $\log (\one + A)$ is once
again given by Eq.~\eqref{eq:simplification}.  Since this is a
generator, $M$ is embeddable. As $M=\ee^R$ implies that $R$ is
diagonalisable, there are also real logarithms with spectrum
$\sigma (R) = \{ 0, \pm\ts 2\pi \ii \ts k, \log (\lambda) \}$ for
$k\in\NN$. Among them, there can be further generators (but at most
finitely many), and the embedding need not be unique.

An analogous situation is met for $M$ with JNF
$\diag (1 , \lambda , \lambda , \lambda)$ and $\lambda \ne 1$. As in
the three-dimensional case before Eq.~\eqref{eq:eq-input-3}, via a
completely analogous calculation, one can check that the most general
Markov matrices of this type are the equal-input matrices
\[
  M \, = \, M_c \, = \, (1-c)\ts \one + C(c^{}_{1}, \ldots , c^{}_{4})
\]
with parameters $c^{}_{i} \geqslant 0$ and
$c = 1-\lambda = c^{}_{1} + \ldots + c^{}_{4} > 0$; see also
\cite{CFR}.

Here, we must have $\lambda \in (0,1)$ for embeddability, again due to
the determinant condition $0 < \det(M) \leqslant 1$ (and also already
by Fact~\ref{fact:Culver}), hence $c=1-\lambda\in(0,1)$ as well.  Once
again, $A^2 = (\lambda - 1)A$ and $\log (1+A)$ from
\eqref{eq:simplification} is a generator, so $M_c$ is embeddable;
compare also with \cite[Prop.~5.17]{CFR}. Other real logarithms of
$M_c$ exist, then with
$\sigma (R) = \{ 0, \log (\lambda), \log(\lambda) \pm 2\pi \ii\ts k
\}$ for $k\in\NN$, some of which might be generators.  So far, we have
the following.

\begin{lemma}\label{lem:4-deg-2-easy}
  Let the JNF of\/ $M\in\cM_4$ be\/ $\diag (1, 1, 1, \lambda)$ or\/
  $\diag (1, \lambda, \lambda, \lambda)$, with\/
  $\lambda < 1$. Then, we have\/ $\deg (q_{_M}) = 2$, and the
  following conditions are equivalent.
\begin{enumerate}\itemsep=2pt
\item The matrix\/ $M$ is embeddable.
\item One has\/ $\det (M) > 0$, which is equivalent to\/
  $\lambda \in (0,1)$.
\item  The matrix\/ $Q=\log (\one + A)$ is a generator. 
\end{enumerate}  
In this case, we get\/ $Q = \frac{-\log (\lambda)}{1-\lambda} \ts A$,
but the embedding need not be unique.  \qed
\end{lemma}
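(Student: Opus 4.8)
The plan is to assemble the observations already made in the paragraphs preceding the statement into a cyclic chain of implications $(1)\Rightarrow(2)\Rightarrow(3)\Rightarrow(1)$, and then to record the non-uniqueness separately. First I would settle $\deg(q_{_M})$: in both admissible cases the eigenvalues of $M$ are $1$ and $\lambda\ne 1$, so $q_{_M}(x)=(x-1)(x-\lambda)$ has no repeated factor, whence $\deg(q_{_M})=2$ and, by \cite[Cor.~3.3.8]{HJ}, both $M$ and $A=M-\one$ are diagonalisable with $\sigma(A)=\{0,\lambda-1\}$. The minimal polynomial of $A$ then yields the single identity $A^2=(\lambda-1)A$, on which everything else rests.

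Next I would run the equivalence. For $(1)\Rightarrow(2)$: if $M=\ee^{Q}$ with a generator $Q$, then $\det(M)=\ee^{\tr(Q)}>0$, and since $\det(M)$ equals $\lambda$ (for JNF $\diag(1,1,1,\lambda)$) resp.\ $\lambda^{3}$ (for $\diag(1,\lambda,\lambda,\lambda)$), this forces $\lambda>0$, hence $\lambda\in(0,1)$ by hypothesis; equivalently, Culver's criterion (Fact~\ref{fact:Culver}) rules out $\lambda<0$ because the Jordan block for $\lambda$ has odd multiplicity ($1$ resp.\ $3$), and rules out $\lambda=0$ by non-singularity. For $(2)\Rightarrow(3)$: when $\lambda\in(0,1)$ the spectral radius of $A$ equals $1-\lambda<1$, so the series $\log(\one+A)=\sum_{m\geqslant 1}\frac{(-1)^{m-1}}{m}A^{m}$ converges in norm, and $A^{m}=(\lambda-1)^{m-1}A$ collapses it to $\log(\one+A)=\frac{\log\lambda}{\lambda-1}A=\frac{-\log\lambda}{1-\lambda}A$, recovering Eq.~\eqref{eq:simplification}; since $\lambda\in(0,1)$ makes the scalar $\frac{-\log\lambda}{1-\lambda}$ strictly positive and $A$ is a rate matrix, $Q$ is a generator. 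Finally $(3)\Rightarrow(1)$ is immediate, as $\ee^{\log(\one+A)}=\one+A=M$, which also records the displayed formula for $Q$.

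It remains to handle non-uniqueness. I would use Fact~\ref{fact:diagonal} to conclude that every real logarithm $R$ of $M$ is diagonalisable, and then the SMT \eqref{eq:SMT}: the eigenvalues of $R$ can only be $0$, $\log\lambda$, or values obtained from these by adding $\pm 2\pi\ii k$ ($k\in\ZZ$) to members of a complex-conjugate pair, with multiplicities matching those of $M$. Since the $\lambda=1$ part of $M$ is $\one^{}_{3}$, which already possesses uncountably many real logarithms with zero row sums, there is ample room for further real logarithms of $M$ lying in $\cA^{(4)}_{0}$; as in Remark~\ref{rem:Culver}, and in parallel with Lemma~\ref{lem:3-deg-2}{\ts}(2) and Example~\ref{ex:extremal-case} for $d=3$, some of these may again be generators, so uniqueness cannot be asserted in general. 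I do not anticipate a genuine obstacle, since all ingredients are already in place; the only delicate point is to phrase the non-uniqueness as a possibility rather than a certainty, and to note (via the standard bounds on the imaginary parts of eigenvalues of a real logarithm) that at most finitely many such additional generators can occur.
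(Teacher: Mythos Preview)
Your proposal is correct and follows essentially the same route as the paper: the argument preceding the lemma in the text establishes $A^2=(\lambda-1)A$, invokes the determinant condition (equivalently Culver's criterion) to force $\lambda\in(0,1)$, collapses the logarithmic series to Eq.~\eqref{eq:simplification}, and observes that the positive scalar in front of the rate matrix $A$ makes $Q$ a generator. One small refinement: your non-uniqueness paragraph explicitly treats only the first JNF (where the triple block is $\one^{}_{3}$), whereas in the second case it is the $\lambda$-block that is $\lambda\one^{}_{3}$; the paper handles the two cases separately, giving the candidate spectra $\{0,\pm 2\pi\ii k,\log\lambda\}$ and $\{0,\log\lambda,\log\lambda\pm 2\pi\ii k\}$ respectively, so you should make the parallel with Lemma~\ref{lem:3-deg-2}\ts(2) the primary reference for the second JNF rather than an afterthought.
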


Let us note that there is a connection of the second case 
with Remark~\ref{rem:square} via taking square roots, which is
another instance of the subtle complications that may show up.
\smallskip

It remains to consider the JNF $\diag (1, 1, \lambda , \lambda)$,
where $\det (M)>0$ no longer implies $\lambda$ to be positive. If
$\lambda >0$, we get embeddability with
$Q = - \frac{\log (\lambda)}{1-\lambda} \ts A$ as in
Lemma~\ref{lem:4-deg-2-easy}. Also here, we have further candidates,
but any generator $Q$ with $M=\ee^Q$ must have $0$ as an eigenvalue
and thus spectrum
$\sigma (Q) = \{ 0, 0, \log (\lambda) \pm \ts 2 \pi \ii \ts k \}$ for
some $k\in\NN$, where at most finitely many $k$ can give a solution.
Since we consider real matrices, we need to work with the matrices of
the form
$\diag (0, 0) \oplus \bigl( \log (\lambda) \one^{}_{2} + 2 \pi k
I_{x,y,z}\bigr)$, as in Corollary~\ref{coro:real-log}{\ts}(1).

Otherwise, when $\lambda<0$, we must have $\lambda > -1$ due to
Elfving's condition (Fact~\ref{fact:Elf}).  In fact, as we shall see,
embeddability in this case is impossible if
$\lvert\lambda\rvert > \ee^{-\pi}$. Here, we need
Corollary~\mbox{\ref{coro:real-log}{\ts}(2)}. Putting the pieces
together and applying \cite[Lemma~3.1]{CFR}, one gets the following
result.

\begin{lemma}\label{lem:4-deg-2-mess}
  Let\/ $M\in\cM_4$ have\/ $\deg (q_{_M})=2$ and JNF
  $\one^{}_{2} \oplus \lambda \one^{}_{2}$, with\/
  $0 \ne \lambda \in [-1,1)$.  Then,
  $M = T^{-1} \diag (1, 1, \lambda, \lambda) \ts T$ for some\/
  $T \in \GL (4, \RR)$. If\/ $\lambda > 0$, $M$ is always embeddable,
  via the generator\/ $Q = - \frac{\log (\lambda)}{1-\lambda} \ts A$
  with\/ $A=M\nts - \one$. Further embeddings exist if and only if
\[
    R^{}_{k} (x,y,z) \, = \,
    T^{-1} \bigl( \diag (0, 0) \oplus
    \bigl( \log (\lambda) \one^{}_{2} + (2 k) \pi I_{x,y,z} 
    \bigr) \bigr) T
\]
is a generator for some\/ $0\ne k\in\ZZ$ and some\/ $x,y,z \in \RR$
with\/ $yz-x^2=1$ and\/ $z>0$.
  
Otherwise, if\/ $\lambda < 0$, the matrix\/ $M$ is embeddable if and
only if
\[
    R^{}_{k} (x,y,z) \, = \,
    T^{-1}  \bigl( \diag (0, 0) \oplus
    \bigl( \log  \ts\lvert \lambda \rvert \ts\one^{}_{2} + 
    (2k+1)  \pi I_{x,y,z} \bigr) \bigr) T
\] 
is a generator for some\/ $k\in\ZZ$ and some\/ $x,y,z \in \RR$ with\/
$yz-x^2=1$ and\/ $z>0$.

In both cases, at most finitely many solutions can exist, and the
range of\/ $k$ can be restricted via\/
$2 \pi \lvert k \rvert \leqslant \lvert\ts \log(\lambda)\rvert$ in the
first and via\/
$\lvert 2 k + 1\rvert \ts \pi \leqslant \big\lvert \! \log \lvert
\lambda \rvert \big\rvert$ in the second case, where the latter
excludes\/ $\lambda < - \ee^{-\pi}$.  \qed
\end{lemma}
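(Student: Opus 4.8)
The plan is to reduce everything to the structure of real logarithms of the two two‑by‑two blocks of $M$. First I would fix the normal form: since $\deg(q_{_M})=2$ and the roots $1,\lambda$ of $q_{_M}$ are distinct, $q_{_M}(x)=(x-1)(x-\lambda)$ is squarefree, so $M$ is diagonalisable, and as its eigenvalues are real it is diagonalisable over $\RR$, which yields the asserted $T\in\GL(4,\RR)$ with $M=T^{-1}\diag(1,1,\lambda,\lambda)\ts T$. Then $A=M\nts-\one$ is similar to $\diag(0,0,\lambda{-}1,\lambda{-}1)$, so $A^2=(\lambda{-}1)\ts A$ and hence $A^m=(\lambda{-}1)^{m-1}A$ for every $m\geqslant1$; when $\lambda\in(0,1)$ the spectral radius of $A$ is $1-\lambda<1$, the series \eqref{eq:log-def} converges and telescopes to $\log(\one+A)=\frac{-\log(\lambda)}{1-\lambda}\ts A$, and since the scalar factor is positive and $A$ is a rate matrix this is a generator, so $M$ is embeddable. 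This settles the easy half of the $\lambda>0$ statement and shows that the difficulty for $\lambda<0$ is precisely the absence of such a principal shortcut.

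Next I would classify \emph{all} real logarithms of $M$. If $\ee^{R}=M$ with $R\in\Mat(4,\RR)$, then $R$ commutes with $\ee^{R}=M$; since $M$ is diagonalisable with real eigenvalues $1$ and $\lambda$, both eigenspaces $V_1,V_\lambda$ (each of real dimension $2$) are $R$‑invariant, so in the basis given by $T$ we have $R=T^{-1}(R^{}_1\oplus R^{}_\lambda)\ts T$ with $\ee^{R_1}=\one^{}_2$ and $\ee^{R_\lambda}=\lambda\one^{}_2$. Suppose moreover that $R=Q$ is a generator. Then $0\in\sigma(Q)$ by the zero‑row‑sum property, while $0\notin\sigma(R^{}_\lambda)$ because every $\mu\in\sigma(R^{}_\lambda)$ satisfies $\ee^{\mu}=\lambda\ne1$ by the SMT; hence $0\in\sigma(R^{}_1)$. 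But $R^{}_1$ is a real logarithm of $\one^{}_2$, so by Fact~\ref{fact:real-log-2} it equals $2\pi k\, I_{x',y',z'}$ for some $k\in\NN_0$, with $\sigma(R^{}_1)=\{\pm2\pi k\ii\}$; this forces $k=0$ and $R^{}_1=\nix^{}_2$. Therefore every generator $Q$ with $\ee^{Q}=M$ is of the form $Q=T^{-1}(\nix^{}_2\oplus R^{}_\lambda)\ts T$, and it remains only to parametrise the admissible blocks $R^{}_\lambda$.

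Here Corollary~\ref{coro:real-log} describes $R^{}_\lambda$ exactly: for $\lambda>0$ one has $R^{}_\lambda=\log(\lambda)\one^{}_2+(2k)\pi\, I_{x,y,z}$, and for $\lambda<0$ one has $R^{}_\lambda=\log\lvert\lambda\rvert\ts\one^{}_2+(2k+1)\pi\, I_{x,y,z}$, in each case with $k\in\ZZ$ and $x,y,z\in\RR$ subject to $z>0$ and $yz-x^2=1$, uniquely. This produces precisely the matrices $R^{}_k(x,y,z)$ of the statement, and conversely every such matrix is a logarithm of $M$: since $I_{x,y,z}^2=-\one^{}_2$ one gets $\ee^{(2k)\pi I_{x,y,z}}=\one^{}_2$ and $\ee^{(2k+1)\pi I_{x,y,z}}=-\one^{}_2$, hence $\ee^{R_k(x,y,z)}=M$. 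Moreover $R^{}_k(x,y,z)$ automatically has zero row sums, because the all‑ones column vector is a $1$‑eigenvector of $M$ and therefore lies in $V_1$, which equals the kernel of $R^{}_k(x,y,z)$ (its $R^{}_\lambda$‑block being invertible, as $0\notin\sigma(R^{}_\lambda)$); so the generator property reduces to non‑negativity of the off‑diagonal entries alone. For $\lambda>0$, the choice $k=0$ gives $R^{}_0(x,y,z)=T^{-1}\diag(0,0,\log\lambda,\log\lambda)\ts T=\frac{-\log(\lambda)}{1-\lambda}A$, the principal logarithm already shown to be a generator, so a \emph{further} embedding exists exactly when some $R^{}_k(x,y,z)$ with $k\ne0$ is a generator; for $\lambda<0$ no value of $k$ kills the skew part, so $M$ is embeddable if and only if some $R^{}_k(x,y,z)$ is a generator. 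This is the asserted dichotomy.

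Finally, the bound on $k$ and the finiteness of the solution set I would read off as in the three‑dimensional cases via \cite[Lemma~3.1]{CFR}: the non‑real eigenvalue of the $R^{}_\lambda$‑block of $R^{}_k(x,y,z)$ is $\log\lvert\lambda\rvert\pm(2k)\pi\ii$ when $\lambda>0$ and $\log\lvert\lambda\rvert\pm(2k+1)\pi\ii$ when $\lambda<0$, and that lemma bounds the modulus of the imaginary part of an eigenvalue of a four‑dimensional generator by the modulus of its real part (the general constant being $\cot(\pi/d)$, here $\cot(\pi/4)=1$); this yields $2\pi\lvert k\rvert\leqslant\lvert\log\lambda\rvert$ in the first case and $\lvert2k+1\rvert\ts\pi\leqslant\bigl\lvert\log\lvert\lambda\rvert\bigr\rvert$ in the second, and since $\min_{k\in\ZZ}\lvert2k+1\rvert=1$ the latter forces $\lvert\lambda\rvert\leqslant\ee^{-\pi}$, ruling out $\lambda<-\ee^{-\pi}$. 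Only finitely many $k$ survive, and for each of them the sign constraints defining the generator cone meet the surface $\{yz-x^2=1,\,z>0\}$ in at most finitely many points, so at most finitely many embeddings occur; this last counting step is the one carried out in \cite{CFR}. I expect the main obstacle to be the middle step — proving that the $\one^{}_2$‑block of any generator‑logarithm of $M$ is \emph{exactly} $\nix^{}_2$, so that no twisted logarithm $2\pi k\, I_{x',y',z'}$ of $\one^{}_2$ with $k\ne0$ can sit inside a generator — since this is what collapses the full family of real logarithms onto the explicit candidates $R^{}_k(x,y,z)$; once it is in place, the rest is bookkeeping with Corollary~\ref{coro:real-log} and the eigenvalue bound from \cite{CFR}.
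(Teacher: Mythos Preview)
Your argument is correct and follows the same route as the paper, whose proof is the discussion immediately preceding the lemma (the $\qed$ signals that the proof was given there). You actually supply more detail than the paper does: where the paper simply asserts that any generator for $M$ must have spectrum $\{0,0,\log\lvert\lambda\rvert\pm\pi\ii\,m\}$, you justify this cleanly by block-diagonalising (since $[Q,M]=\nix$, the eigenspaces $V_1,V_\lambda$ are $Q$-invariant) and then using $0\in\sigma(Q)$ together with Fact~\ref{fact:real-log-2} to force the $V_1$-block to be $\nix^{}_2$ --- exactly the step you flagged as the crux, and your treatment of it is sound.

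One minor correction at the very end: your claim that, for each admissible $k$, the generator constraints meet the surface $\{yz-x^2=1,\ z>0\}$ in at most finitely many \emph{points} overshoots what is asserted and what \cite[Lemma~3.1]{CFR} delivers. That lemma only bounds the imaginary parts of generator eigenvalues, which restricts $k$ to a finite range; for a fixed $k$, the set of admissible $(x,y,z)$ can be a region rather than a finite set (compare the $d=3$ analogue in Section~\ref{sec:d-3.2}, where continuous families of embeddings arise). The lemma's phrase ``at most finitely many solutions'' should be read as finitely many values of $k$, which your eigenvalue bound already establishes.
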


Note that the appearance of the matrices $I_{x,y,z}$ comes from the
freedom in choosing $T$. Since $\diag (1, 1, \lambda, \lambda)$
certainly commutes with any matrix of the form $\one^{}_{2} \oplus B$
with $B\in\GL (2,\RR)$, a proper choice of $B$ would mean to simply
replace $I_{x,y,z}$ by $I = I^{}_{0,1,1}$. However, since it is not
obvious how to achieve this from the start, we need to formulate the
result as stated.  Once the `correct' $B$ is chosen, it becomes
transparent why only finitely many candidates exist.

\subsection{Cases of degree 3}

Taking into account Fact~\ref{fact:one-is-special}, there are four
possible cases with real spectrum, and one with a complex conjugate
pair. First, consider $M$ with $\sigma (M)\subset \RR$ and JNF
$\diag (1, 1, \lambda_1 , \lambda_2)$, then with
$1 \ne \lambda_1 \ne \lambda_2 \ne 1$, which means
$\lambda_1, \lambda_2 \in (0,1)$ for embeddability by
Fact~\ref{fact:Culver}. Then, by Lemma~\ref{lem:extend-unique}, only
one real logarithm with zero row sums exist, and this is
$\log (\one + A)$, so $M$ is embeddable if and only if this is a
generator.

Next, let us consider $M$ with JNF
$\one^{}_{2} \oplus \JJ_2 (\lambda)$ and $\lambda\ne 1$, then
necessarily with $\lambda \in (0,1)$ for embeddability. Employing
\cite[Thm.~1.27]{Higham}, we see that any real logarithm of $M$ must
be (complex) similar to
$\diag ( 2 \pi \ii \ts k, - 2 \pi \ii \ts k) \oplus
\left( \begin{smallmatrix} \log (\lambda) & 1/\lambda \\ 0 & \log
    (\lambda) \end{smallmatrix} \right)$ for some $k\in\ZZ$, where
\begin{equation}\label{eq:Jordan-gen}
     \exp \begin{pmatrix}  \log (\lambda) & 1/\lambda \\ 0 &
     \log (\lambda) \end{pmatrix} \, = \, \begin{pmatrix} \lambda & 1 \\
     0 & \lambda \end{pmatrix} , 
\end{equation}
as follows from a simple calculation. Again, only $k=0$ is possible
for a generator, in line with Lemma~\ref{lem:extend-unique}, and $M$
and $Q$ then have the same centraliser.  This once again means that
the principal matrix logarithm gives the only candidate, and we have
the following.

\begin{lemma}\label{lem:4-deg-3-real}
  Let\/ $M\in\cM_4$ have minimal polynomial of degree\/ $3$ and JNF
  $\one^{}_{2} \oplus \diag( \lambda_1 , \lambda_2)$, with\/
  $1 \ne \lambda_1 \ne \lambda_2 \ne 1$, or JNF\/
  $\one^{}_{2} \oplus \JJ_2 (\lambda)$, with\/ $\lambda \ne 1$.  Then,
  $M=\one+A$ is embeddable if and only if the following two conditions
  are satisfied.
\begin{enumerate}\itemsep=2pt
\item One has\/ $\lambda_1, \lambda_2 \in (0,1)$,
  respectively\/ $\lambda \in (0,1)$, which implies\/
  $\varrho_{_{\nts A}} < 1$.
\item The matrix\/ $\log (\one+A)$ is a generator.
\end{enumerate}
  In this case, the embedding is unique.
  
  Concretely, setting\/ $\lambda_1 = 1+\mu$ and\/ $\lambda_2=1+\nu$ in
  the first case, and\/ $\lambda = 1+\mu$ in the second, the logarithm
  can be calculated via the SMT from Eq.~\eqref{eq:SMT} as
\[
     \log (\one + A) \, = \, \alpha A + \beta A^2 ,
\]  
with the coefficients from Eq.~\eqref{eq:albe-1} in the first and from
Eq.~\eqref{eq:albe-2} in the second case.  \qed
\end{lemma}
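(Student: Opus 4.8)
\emph{Proof proposal.}
The plan is to break the biconditional into the three customary pieces: necessity of condition~(1); the fact that, once~(1) holds, the principal logarithm $\log(\one+A)$ is the \emph{only} candidate for a generator (which then delivers the equivalence with~(2) together with uniqueness of the embedding); and finally the explicit polynomial form of that logarithm. Both admissible shapes of the JNF can be run through the first two steps in parallel, and only the closing interpolation step splits into the diagonalisable and the non-diagonalisable subcase.

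For necessity of~(1), I would use Culver's criterion. If $M=\ee^{Q}$ with a generator $Q$, then $M$ has a real logarithm, so by Fact~\ref{fact:Culver} it is non-singular and every elementary Jordan block sitting over a negative eigenvalue occurs an even number of times. In the first shape the eigenvalues $\lambda_1\neq\lambda_2$ contribute the two \emph{distinct} trivial blocks $\JJ_{1}(\lambda_1)$ and $\JJ_{1}(\lambda_2)$, each with multiplicity one; in the second shape $\lambda$ contributes the single block $\JJ_{2}(\lambda)$, again with multiplicity one. Hence none of these eigenvalues can be negative, and they are nonzero by non-singularity, so they are positive; combined with $\lvert\lambda\rvert\leqslant 1$ from Perron--Frobenius and the standing assumption $\lambda\neq 1$, this pins them into $(0,1)$. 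Consequently $\sigma(A)\subset(-1,0]$, so $\varrho_{\nts_A}<1$, the series \eqref{eq:log-def} for $\log(\one+A)$ converges in norm, and its limit has zero row sums.

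Next I would invoke Lemma~\ref{lem:extend-unique}. Under condition~(1) the spectrum of $M$ is positive; for $1\neq\lambda\in\sigma(M)$ no elementary Jordan block is repeated (in the first shape the two blocks belong to different eigenvalues, in the second there is a single non-trivial block); and $\ma(1)=2$ by the shape of the JNF together with Fact~\ref{fact:one-is-special}. All hypotheses of Lemma~\ref{lem:extend-unique} are met, so $M$ has uncountably many real logarithms but exactly one lying in $\cA^{(4)}_{0}$, namely $Q=\log(\one+A)$. This settles both directions at once: if this $Q$ is a generator, then $\ee^{Q}=M$ by construction and $M$ is embeddable; conversely, if $M$ is embeddable, any generator realising it has zero row sums, hence equals $\log(\one+A)$, which forces~(2) and simultaneously yields uniqueness. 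The one place that really needs an argument is the exclusion of the spurious real logarithms obtained by replacing the $\one^{}_{2}$-block by $\diag(2\pi\ii k,-2\pi\ii k)$ with $k\neq 0$; this is exactly what Lemma~\ref{lem:extend-unique} disposes of (zero row sums would put $0$ in $\sigma(Q)$ with multiplicity $2$, which is incompatible with $k\neq 0$), and in the non-diagonalisable subcase one may alternatively argue directly via \cite[Thm.~1.27]{Higham} and \eqref{eq:Jordan-gen}.

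Finally, for the explicit formula: since $\deg(q_{_A})=\deg(q_{_M})=3$, Theorem~\ref{thm:Frob}(6) (or just Cayley--Hamilton for the minimal polynomial) shows that $\log(\one+A)\in\RR[A]$ is a polynomial in $A$ of degree at most $2$, say $\alpha^{}_{0}\one+\alpha A+\beta A^2$. Applying the SMT \eqref{eq:SMT} at the eigenvalue $0$ of $A$, whose Jordan block is trivial, forces $\alpha^{}_{0}=\log 1=0$. Matching the remaining spectral data then pins down $\alpha,\beta$: in the diagonalisable case one solves $\alpha\mu+\beta\mu^2=\log(1+\mu)$ together with $\alpha\nu+\beta\nu^2=\log(1+\nu)$, recovering \eqref{eq:albe-1}; in the non-diagonalisable case the second equation is replaced by the confluent condition $\alpha+2\beta\mu=1/(1+\mu)$, and solving gives \eqref{eq:albe-2} (equivalently, the de l'Hospital limit $\nu\to\mu$ of \eqref{eq:albe-1}). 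The only genuinely substantive step in the whole argument is the uniqueness-of-candidate part via Lemma~\ref{lem:extend-unique}; the rest is bookkeeping with Culver's criterion, Perron--Frobenius and the spectral mapping theorem.
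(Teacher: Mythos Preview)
Your proposal is correct and follows essentially the same route as the paper: Culver's criterion (Fact~\ref{fact:Culver}) for the necessity of~(1), Lemma~\ref{lem:extend-unique} for the uniqueness of the zero-row-sum logarithm (with the alternative via \cite[Thm.~1.27]{Higham} for the $\JJ_2(\lambda)$ case), and the SMT for the explicit coefficients. One small correction: your appeal to Theorem~\ref{thm:Frob}(6) is misplaced, since $M$ here is \emph{not} cyclic ($\deg(q_{_M})=3<4$); your parenthetical alternative via the minimal polynomial is the correct justification for $\log(\one+A)\in\langle\one,A,A^2\rangle_{\RR}$.
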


Next is the case with JNF $\diag (1 , \lambda) \oplus \JJ_2 (\lambda)$
and $\lambda \ne 1$, which also has $\deg (q_{_M})=3$. Again, it can
only be embeddable if $\lambda \in (0,1)$ by Fact~\ref{fact:Culver}.
Then, we have $\varrho_{\nts_A} < 1$, and $\log (\one + A)$ is well
defined and the only real logarithm of $M$. It is similar to
$\diag \bigl(0, \log(\lambda)\bigr) \oplus \left( \begin{smallmatrix}
    \log (\lambda) & 1/\lambda \\ 0 & \log (\lambda) \end{smallmatrix}
\right)$, compare Eq.~\eqref{eq:Jordan-gen}, and we have the
following.

\begin{lemma}\label{lem:4-deg-3-mixed-J}
  Let\/ $M\in\cM_4$ have JNF\/
  $\diag (1,\lambda) \oplus \JJ_2 (\lambda)$ with\/ $\lambda \ne
  1$. Then, $\deg (q_{_M}) = 3$ and\/ $M$ is embeddable if and only if
  the following two conditions are satisfied.
\begin{enumerate}\itemsep=2pt
\item One has\/ $\lambda \in (0,1)$, which implies\/
     $\varrho_{_{\nts A}} < 1$.
\item The matrix\/ $\log (\one+A)$ is a generator.
\end{enumerate}
  In this case, the embedding is unique.
  Further, with\/ $\lambda =  1 + \mu$, one has\/
  $\log (\one + A) = \alpha A + \beta A^2$ with\/ $\alpha$ and\/
  $\beta$ as in  Eq.~\eqref{eq:albe-2}. \qed
\end{lemma}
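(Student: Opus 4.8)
The plan is to use Culver's criterion (Fact~\ref{fact:Culver}) as the main tool, exactly in the spirit of Lemma~\ref{lem:4-deg-3-real}, and then to identify the resulting unique real logarithm with the convergent principal logarithm, which is a degree-$2$ polynomial in $A$ by the minimal-polynomial argument.

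First I would record that $\deg (q_{_M}) = 3$: from the JNF $\diag (1,\lambda) \oplus \JJ_2 (\lambda)$ with $\lambda \ne 1$, the distinct eigenvalues are $1$ and $\lambda$, with largest Jordan blocks of sizes $1$ and $2$ respectively, so $q_{_M} (x) = (x-1)(x-\lambda)^2$ by Eq.~\eqref{eq:def-mp}; equivalently, $q_{_A} (x) = x\ts (x-\mu)^2$ with $\mu \defeq \lambda - 1$. Next comes the necessity of $\lambda \in (0,1)$: if $M$ is embeddable, its generator is a real logarithm, so Fact~\ref{fact:Culver} forces $M$ to be non-singular (hence $\lambda \ne 0$) and every elementary Jordan block at a negative eigenvalue to occur with even multiplicity. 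But the single block $\JJ_1 (\lambda)$ would violate this if $\lambda < 0$, so $\lambda > 0$; together with the Perron--Frobenius bound $\lvert \lambda\rvert \leqslant 1$ and $\lambda \ne 1$ this gives $\lambda \in (0,1)$, whence also $\varrho_{\nts_A} = \lvert \mu\rvert = 1-\lambda < 1$.

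For the converse and the uniqueness, assume $\lambda \in (0,1)$. Then $M$ is non-singular with only positive eigenvalues, so a real logarithm exists by Fact~\ref{fact:Culver}; moreover its three elementary Jordan blocks $\JJ_1 (1)$, $\JJ_1 (\lambda)$ and $\JJ_2 (\lambda)$ are pairwise distinct (the last two differ in size, and $\lambda \ne 1$ separates the first two), so no elementary Jordan block occurs twice and the uniqueness clause of Fact~\ref{fact:Culver} applies. (Alternatively, this follows from Remark~\ref{rem:Culver}: neither mechanism producing extra real logarithms --- a repeated elementary block at a positive eigenvalue, or a complex-conjugate eigenvalue pair --- is present; note that here $\ma (1)=1$, so one cannot simply invoke Lemma~\ref{lem:extend-unique}.) Since $\varrho_{\nts_A} < 1$, the series $\log (\one + A)$ from Eq.~\eqref{eq:log-def} converges to a real matrix with zero row sums satisfying $\ee^{\log (\one + A)} = M$, so it \emph{is} that unique real logarithm. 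Hence $M$ is embeddable if and only if $\log (\one + A)$ is a generator, and in that case $Q = \log (\one + A)$ is the only embedding --- indeed the only real logarithm of $M$ with zero row sums.

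It remains to produce the explicit formula. Because $\deg (q_{_A}) = 3$ one has $\RR [A] = \langle \one, A, A^2 \rangle^{}_{\RR}$ (cyclicity of $M$ is not needed here, only the degree of the minimal polynomial), so $\log (\one + A) = \gamma\ts \one + \alpha A + \beta A^2$; evaluating the interpolating polynomial at the eigenvalue $0$ of $A$ gives $\gamma = \log (1) = 0$. To determine $\alpha$ and $\beta$ I would interpolate at the double root $\mu$ of $q_{_A}$, using the SMT from Eq.~\eqref{eq:SMT} in its Hermite form: the polynomial $p(x) = \alpha x + \beta x^2$ must satisfy $p(\mu) = \log (1+\mu)$ and $p'(\mu) = \frac{1}{1+\mu}$, a $2 \times 2$ linear system whose solution is exactly Eq.~\eqref{eq:albe-2} --- the same computation already used in the $\one^{}_{2} \oplus \JJ_2 (\lambda)$ case and consistent with Eq.~\eqref{eq:Jordan-gen}; see also \cite[Eq.~(4.7)]{BS2}. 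I do not anticipate a genuine obstacle here; the one point needing care is the verification that the three elementary Jordan blocks are pairwise distinct, since that is precisely what lets the uniqueness part of Culver's criterion apply and explains why this case behaves like the earlier cyclic-type ones rather than like the degenerate $\ma (1) = 2$ situations.
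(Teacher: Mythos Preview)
Your proposal is correct and follows essentially the same route as the paper: Culver's criterion for necessity of $\lambda\in(0,1)$, uniqueness of the real logarithm from the fact that the three elementary Jordan blocks $\JJ_1(1)$, $\JJ_1(\lambda)$, $\JJ_2(\lambda)$ are pairwise distinct, identification with the convergent principal logarithm, and the Hermite-type interpolation at the double root $\mu$ to recover Eq.~\eqref{eq:albe-2}. You are in fact more explicit than the paper on two points --- spelling out why the blocks are pairwise distinct, and flagging that $\ma(1)=1$ so Lemma~\ref{lem:extend-unique} is not the right tool here --- which is a nice clarification.
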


Note that the matrix $M$ here is not cyclic, but we still get a unique
real logarithm.  This case can also be seen as a limit of the cyclic
matrices from Proposition~\ref{prop:4-cyclic-2} below.

For $\sigma (M)\subset \RR$, it remains to consider the case that $M$
has JNF $\diag (1, \lambda_1 , \lambda_2 , \lambda_2)$ with
$1 \ne \lambda_1 \ne \lambda_2 \ne 1$. Embeddability forces
$\lambda_1 \in (0,1)$, but $\lambda_2$ can be positive or negative.
Let us first look at $\lambda_2 > 0$. Then, $A$ has spectral radius
$\varrho_{\nts_A} < 1$ and $\log (\one + A)$ is well defined, but need
not be a generator, or even if it is, it need not be the only one.
So, we need again the matrices $I_{x,y,z}$ from
Fact~\ref{fact:real-log-2} to proceed.

Any generator $Q$ with $M=\ee^Q$ must have spectrum
$\sigma (Q) = \{ 0, \log(\lambda_1), \log (\lambda_2) \pm \ts 2 \pi
\ii \ts k \}$ for some $k\in\NN_0$, where finitely many $k\ne 0$ can
give a solution even if $\log (\one + A)$ fails.  The case
$\lambda_2<0$ is similar, except that $\log (\one+ A)$ does not
converge as a series.  Corollary~\ref{coro:real-log} gives us the form
of the real matrices we need to work with, as for
Lemma~\ref{lem:4-deg-2-mess}, and the two cases $\lambda_2>0$ and
$\lambda_2<0$ have to be treated separately as follows, invoking again
\cite[Lemma~3.1]{CFR}.

\begin{lemma}\label{lem:4-deg-3-mixed}
  Let\/ $M\in\cM_4$ have\/ $\deg (q_{_M})=3$ and JNF
  $\diag (1, \lambda_1 , \lambda_2 , \lambda_2)$, together with\/
  $1 \ne \lambda_1 \ne \lambda_2 \ne 1$.  Then,
  $M = T^{-1} \diag (1, \lambda_1, \lambda_2, \lambda_2) \ts T$ for
  some\/ $T \in \GL (4, \RR)$. Now, if all eigenvalues are positive,
  $M$ is embeddable if and only if
\[  
    R^{}_{k} (x,y,z) \, = \,
    T^{-1}  \bigl( \diag ( 0, \log (\lambda_1) )  \oplus
    ( \log (\lambda_2) \one^{}_{2} + (2 k) \pi  I_{x,y,z} 
    ) \bigr)  T
\]
is a generator for some\/ $k\in\ZZ$ and some\/ $x,y,z \in \RR$ with\/
$yz-x^2=1$ and\/ $z>0$, where\/ $k$ must also satisfy the condition\/
$2 \pi \lvert k \rvert \leqslant \lvert \ts \log(\lambda_2)\rvert$.
  
Likewise, if\/ $\lambda_1>0$ but\/ $\lambda_2<0$, the matrix\/ $M$ is
embeddable if and only if
\[  
    R^{}_{k} (x,y,z) \, = \,
    T^{-1} \bigl( \diag (0, \log (\lambda_1) ) \oplus
    ( \log  \ts\lvert \lambda_2 \rvert \ts\one^{}_{2} + 
    (2k\ts {+} 1)  \pi  I_{x,y,z} ) \bigr) T
\] 
is a generator for some\/ $k\in\ZZ$ and some\/ $x,y,z \in \RR$ with\/
$yz-x^2=1$ and\/ $z>0$. Here, $k$ is further restricted via\/
$\lvert 2 k + 1 \rvert \ts \pi \leqslant \big\lvert \!  \log \ts
\lvert \lambda_2\rvert \big\rvert$, which excludes\/
$\lambda_2 < - \ee^{-\pi}$.  \qed
\end{lemma}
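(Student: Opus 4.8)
The plan is to run the same scheme as for Lemma~\ref{lem:4-deg-2-mess}, the only new ingredient being the extra \emph{simple} eigenvalue $\lambda_1$, which is harmless because a simple eigenvalue contributes a rigid one-dimensional block to every commuting matrix. First I would record the diagonalisation: since $\deg (q_{_M}) = 3$ and the three roots $1,\lambda_1,\lambda_2$ of $q_{_M}$ are pairwise distinct, $q_{_M}$ has no repeated factor, so $M$ is diagonalisable; as these roots are real, each eigenspace $\ker (M-\lambda\one)$ has a real basis, giving $T\in\GL(4,\RR)$ with $M = T^{-1} D \ts T$ and $D\defeq\diag(1,\lambda_1,\lambda_2,\lambda_2)$. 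For the sign conditions I would invoke Culver's criterion (Fact~\ref{fact:Culver}): $\lambda_1$ lies in a single trivial Jordan block, so a non-positive $\lambda_1$ would obstruct the existence of \emph{any} real logarithm; hence $\lambda_1>0$, and then $\lambda_1\in(0,1)$ by Fact~\ref{fact:Elf}. The eigenvalue $\lambda_2$ sits in the block $\lambda_2\one^{}_{2}$, so a negative $\lambda_2$ is allowed by Culver; Fact~\ref{fact:Elf} then forces $\lambda_2\in(-1,0)$, while $\lambda_2\in(0,1)$ in the positive case and $\lambda_2=0$ is excluded by invertibility. These are exactly the two cases of the statement (a negative $\lambda_1$ being non-embeddable and therefore not listed).

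The heart of the argument will be to identify \emph{all} real logarithms of $M$. Let $M=\ee^Q$ with $Q$ real. Then $[Q,M]=\nix$, so $\widetilde{Q}\defeq T Q T^{-1}$ commutes with $D$; writing $\widetilde Q$ in block form matching the multiplicities $1,1,2$, the relation $D\widetilde{Q}=\widetilde{Q}D$ together with the pairwise distinctness of $1,\lambda_1,\lambda_2$ kills every off-diagonal block, so $\widetilde{Q}=\diag(n^{}_{1},n^{}_{2})\oplus B$ with $n^{}_{1},n^{}_{2}\in\RR$ and $B\in\Mat(2,\RR)$. Exponentiating and comparing with $D$ yields $\ee^{n_1}=1$, $\ee^{n_2}=\lambda_1$ and $\ee^{B}=\lambda_2\one^{}_{2}$; reality forces $n^{}_{1}=0$ and, using $\lambda_1>0$, $n^{}_{2}=\log(\lambda_1)$, while Corollary~\ref{coro:real-log} shows that $B=\log(\lambda_2)\one^{}_{2}+(2k)\pi I_{x,y,z}$ when $\lambda_2>0$, respectively $B=\log\lvert\lambda_2\rvert\one^{}_{2}+(2k{+}1)\pi I_{x,y,z}$ when $\lambda_2<0$, for some $k\in\ZZ$ and $x,y,z\in\RR$ with $z>0$ and $yz-x^2=1$. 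Hence $Q=R^{}_{k}(x,y,z)$ in the notation of the statement. Conversely, since $[\one^{}_{2},I_{x,y,z}]=\nix$ together with $\ee^{(2k)\pi I_{x,y,z}}=\one^{}_{2}$ and $\ee^{(2k+1)\pi I_{x,y,z}}=-\one^{}_{2}$ (Fact~\ref{fact:real-log-2} and Corollary~\ref{coro:real-log}), every $R^{}_{k}(x,y,z)$ of the stated form satisfies $\ee^{R^{}_{k}(x,y,z)}=M$. Moreover $R^{}_{k}(x,y,z)$ automatically has zero row sums: it commutes with $M$, hence preserves the one-dimensional eigenspace $\bigl\langle(1,1,1,1)^{\mathsf{T}}\bigr\rangle$ of $M$ for the eigenvalue $1$, so it has $(1,1,1,1)^{\mathsf{T}}$ as an eigenvector, with eigenvalue forced to $0$ by $\ee^{R^{}_{k}(x,y,z)}=M$. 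Consequently, $M$ is embeddable if and only if some $R^{}_{k}(x,y,z)$ has non-negative off-diagonal entries, i.e.\ is a generator, and no other candidate exists.

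It remains to bound $k$, for which I would cite \cite[Lemma~3.1]{CFR}: the imaginary part of every eigenvalue of a $4{\times}4$ generator is bounded in modulus by the modulus of its real part; applied to the eigenvalues $\log(\lambda_2)\pm(2k)\pi\ii$, respectively $\log\lvert\lambda_2\rvert\pm(2k{+}1)\pi\ii$, of $R^{}_{k}(x,y,z)$, this gives $2\pi\lvert k\rvert\leqslant\lvert\log(\lambda_2)\rvert$ in the first case and $\lvert 2k{+}1\rvert\pi\leqslant\bigl\lvert\log\lvert\lambda_2\rvert\bigr\rvert$ in the second. The latter fails for \emph{every} $k\in\ZZ$ as soon as $\bigl\lvert\log\lvert\lambda_2\rvert\bigr\rvert<\pi$, i.e.\ $\lambda_2<-\ee^{-\pi}$, which is the stated exclusion; in particular only finitely many $k$ survive, so at most finitely many embeddings occur. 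I expect the only genuine subtlety to be, exactly as for Lemma~\ref{lem:4-deg-2-mess}, the bookkeeping forced by the non-canonical choice of $T$: the matrices $I_{x,y,z}$ cannot be replaced by a single fixed complex structure in the statement, although after the `correct' block conjugation inside $\cent(D)$ they could be normalised to $I^{}_{0,1,1}$, as in the discussion following Lemma~\ref{lem:4-deg-2-mess}.
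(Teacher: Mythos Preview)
Your argument is correct and follows essentially the same route as the paper: the discussion preceding the lemma already reduces the problem to Lemma~\ref{lem:4-deg-2-mess} with the extra simple eigenvalue $\lambda_1$ handled trivially, classifies the real logarithms via Corollary~\ref{coro:real-log}, and invokes \cite[Lemma~3.1]{CFR} for the bound on $k$. Your write-up is simply a more explicit version of that sketch, including the clean block-diagonalisation of any commuting $Q$ and the verification that each $R^{}_{k}(x,y,z)$ has zero row sums.
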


Once again, by choosing $T$ properly, one can replace $I_{x,y,z}$ by
$I = I^{}_{0,1,1}$, which makes the structure a little more
transparent.

\begin{remark}\label{rem:square}
  A special case of Lemma~\ref{lem:4-deg-3-mixed} occurs when $M$ has
  JNF $\diag(1,\lambda,-\lambda,-\lambda)$ with $\lambda \in (0,1)$.
  If such an $M$ is embeddable, one has $M=\ee^Q$ where $Q$ must have
  spectrum $\sigma(Q) = \{0,\mu,\mu\pm (2k{+}1)\pi\ii\}$ with
  $\mu=\log(\lambda)$, for some $k\in\ZZ$. Then, we get
  $M^2 = \ee^{2 Q}$ with JNF
  $\diag (1, \lambda^2,\lambda^2,\lambda^2)$, which means that $M^2$
  must be an equal-input matrix; compare the discussion around
  Lemma~\ref{lem:4-deg-2-easy}.  Since $M^2$ is embeddable by
  construction, we know that its summatory parameter
  $c^{\ts\prime} = 1-\lambda^2$ must lie in $(0,1)$, as it does, and
  $M^2$ is also equal-input embeddable. Since $Q$ cannot be of
  equal-input type, we see here one Markov semigroup of the form
  $\{ \ee^{t \ts Q} : t \geqslant 0\}$ crossing another, which is the
  origin of multiple embeddings.  \exend
\end{remark}

Finally, we need to look at $M$ with JNF
$\diag (1, 1, \lambda, \overline{\lambda}\, )$ and
$\lambda \in \CC \setminus \RR$. As we must have $0$ as an eigenvalue
of any generator for $M$, we can almost repeat the arguments used for
Proposition~\ref{prop:3-complex}, invoke \cite[Thm.~4.5]{CFR} for
$d=4$, and then arrive at the following result,

\begin{lemma}\label{lem:4-deg-3-complex}
  Let the JNF of\/ $M\in\cM^{}_{4}$ be\/
  $\diag (1 , 1 , \lambda , \overline{\lambda})$, with\/
  $\lambda \in \CC\setminus \RR$.  Then, $\deg (q_{_M})=3$, and\/ $M$
  is embeddable if and only if the following two conditions are
  satisfied, with\/ $A=M\nts - \one$.
\begin{enumerate}\itemsep=2pt
\item One has\/ $0 < \lvert \lambda \rvert < 1$.
\item There is a\/ $k\in\ZZ$ such that the real logarithm\/
  $R^{}_{k} =\alpha^{}_{k} A + \beta^{}_{k} A^2$ of\/ $M$, with\/
  $\alpha^{}_{k}$ and\/ $\beta^{}_{k}$ from Eq.~\eqref{eq:3-complex},
  is a generator, which then gives\/ $M=\ee^{Q}$ with\/ $Q=R^{}_{k}$.
\end{enumerate}
In this case, $R^{}_{k}$ is a generator for only finitely many\/
$k\in\ZZ$, and no other candidates exist. The number of solutions is
bounded by the integer\/
$\big\lfloor 1 - \frac{\log(\det(M))}{2 \pi}\big\rfloor$.  \qed
\end{lemma}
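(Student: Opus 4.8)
The plan is to run the proof of Proposition~\ref{prop:3-complex} essentially verbatim; the single new wrinkle is that here $M$ is \emph{not} cyclic ($\deg(q_{_M})=3$, not $4$), so Corollary~\ref{coro:alg} is unavailable and I must argue by hand that no generator outside $\RR[A]$ can occur. The finiteness and the explicit count are then imported from \cite[Thm.~4.5]{CFR} exactly as in the three-dimensional case.

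First I would dispose of the easy parts. The assertion $\deg(q_{_M})=3$ requires no embeddability hypothesis: by Fact~\ref{fact:one-is-special} the eigenvalue $1$ has $\mg(1)=\ma(1)=2$, so it contributes only the factor $(x-1)$ to $q_{_M}$, while the simple eigenvalues $\lambda,\overline{\lambda}$ contribute $(x-\lambda)(x-\overline{\lambda})$; cf.\ Eq.~\eqref{eq:def-mp}. For necessity of $(1)$, note that $\lambda\notin\RR$ forces $M\neq\one$, so if $M=\ee^Q$ with $Q$ a generator then $Q\neq\nix$ by Fact~\ref{fact:one}; arguing as in the proof of that fact, all diagonal entries of $Q$ are $\leqslant 0$ and not all zero, hence $\tr(Q)<0$ and $\det(M)=\ee^{\tr(Q)}<1$. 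Since $\det(M)=\lvert\lambda\rvert^{2}$ and an embeddable matrix is invertible, this gives $0<\lvert\lambda\rvert<1$.

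The core step is the identification of the candidate generators. Let $R\in\Mat(4,\RR)$ be any real logarithm of $M$ with zero row sums (in particular any generator with $\ee^R=M$), put $A=M-\one$ and $\mu\defeq\lambda-1$. Then $[R,M]=\nix$, and $R$ is diagonalisable by Fact~\ref{fact:diagonal} since $M$ is; moreover $R$ has zero row sums, so $0\in\sigma(R)$, and diagonalisability gives $\ma(0)=\mg(0)$. By the SMT~\eqref{eq:SMT}, with multiplicities, $z\mapsto\ee^{z}$ maps $\sigma(R)$ onto the multi-set $\{1,1,\lambda,\overline{\lambda}\}$. Now the eigenvalues of $R$ mapping to $1$ are exactly those in $2\pi\ii\ZZ$; precisely two of them occur (counting multiplicity), one being $0$, and a nonzero $2\pi\ii k$ would by reality force its conjugate $-2\pi\ii k$ as well, over-counting --- so the other is again $0$, and $\sigma(R)=\{0,0,z_{k},\overline{z_{k}}\}$ with $z_{k}=\log\lambda+2\pi\ii k$ for some $k\in\ZZ$; this is a genuine conjugate pair because $0<\lvert\imag\log\lambda\rvert<\pi$ makes every $z_{k}$ non-real. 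Hence $\ker R$ has dimension $2$, and since $\ee^{R}$ fixes every vector of $\ker R$ we get $\ker R=\ker(M-\one)=\ker A$; likewise $R$ acts as $z_{k}$ on the eigenline $\ker(M-\lambda\one)$, where $A$ acts as $\mu$, and correspondingly on $\ker(M-\overline{\lambda}\one)$. As these eigenspaces span $\CC^{4}$, the matrix $R$ is completely determined by the single complex condition $\alpha\mu+\beta\mu^{2}=z_{k}$ with $\alpha,\beta\in\RR$, which together with its conjugate is exactly the $2\times 2$ system solved uniquely by $(\alpha_{k},\beta_{k})$ in Eq.~\eqref{eq:3-complex}; thus $R=\alpha_{k}A+\beta_{k}A^{2}=R_{k}$, even though $M$ is not cyclic. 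Conversely, each $R_{k}$ is a polynomial in $M$, is diagonalisable with spectrum $\{0,0,z_{k},\overline{z_{k}}\}$, and satisfies $\ee^{R_{k}}=M$ (write $M=S\,\diag(1,1,\lambda,\overline{\lambda})\,S^{-1}$ and exponentiate the corresponding diagonal form of $R_{k}$); so $M$ is embeddable as soon as one of the $R_{k}$ is a generator. This yields the equivalence and the form of $Q$ in $(2)$, together with the statement that no further candidates exist.

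Finally, the real parts of the nonzero eigenvalues of each $R_{k}$ are pinned at $\log\lvert\lambda\rvert=\tfrac12\log\det(M)<0$, while their imaginary parts $\imag\log\lambda+2\pi k$ are constrained by the generator property; as in \cite[Thm.~4.5]{CFR} applied in dimension $4$, this leaves only finitely many admissible $k$, with the stated bound $\bigl\lfloor 1-\log(\det(M))/(2\pi)\bigr\rfloor$ on the number of embeddings. I expect the only genuine difficulty to lie in the core step: one must see that, although the centraliser of the non-cyclic $A$ is strictly larger than $\RR[A]$, the generator requirement --- specifically that $0$ is an eigenvalue of geometric and algebraic multiplicity $2$ --- forces $\ker R$ to be all of $\ker A$ and thereby collapses $R$ back onto a quadratic polynomial in $A$. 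Everything else is routine use of the SMT and a citation to \cite{CFR}.
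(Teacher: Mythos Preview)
Your proof is correct and follows essentially the same route as the paper, which simply says one can ``almost repeat the arguments used for Proposition~\ref{prop:3-complex}'' and then invoke \cite[Thm.~4.5]{CFR} for $d=4$. The one point the paper glosses over --- why the candidate generator still lands in $\RR[A]$ even though $M$ is not cyclic --- you spell out explicitly: zero row sums force $0\in\sigma(R)$, reality then forces the second preimage of $1$ to be $0$ as well, so $\ker R=\ker A$ and $R$ is determined on each eigenspace of $M$, collapsing it to $\alpha_k A+\beta_k A^2$. This is exactly the mechanism the paper has in mind but does not write out.
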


Let us now turn to the generic situation for $d=4$.

\subsection{Cyclic cases}

A cyclic matrix $M\in\cM^{}_{4}$ can be diagonalisable, in which case
it is simple, or not, where it then has a non-trivial JNF. Each case
has two subcases to consider.  Common to all cases is that any
potential generator for the embedding must be a real logarithm of $M$
of the form $R = \alpha A + \beta A^2 + \gamma A^3$ with
$A = M \nts - \one$ and $\alpha, \beta, \gamma \in \RR$, by
Corollary~\ref{coro:alg}.  \smallskip

When $M \in \cM^{}_{4}$ is simple and potentially embeddable, there
are two cases to consider, namely
$\sigma (M) = \{ 1, \lambda^{}_{1}, \lambda^{}_{2}, \lambda^{}_{3} \}$
with distinct $\lambda_i \in (0, 1)$, again by Fact~\ref{fact:Culver},
and otherwise
$\sigma (M) = \{ 1, \lambda, \vartheta, \cc{\vartheta} \}$ with
$\lambda \in (0,1)$ and $\vartheta \in \CC\setminus\RR$ together with
$\lvert \vartheta \rvert < 1$, where $\lvert \vartheta \rvert = 1$ is
excluded by a result due to Elfving; see Fact~\ref{fact:Elf}.

If $\sigma (M)\subset \RR$, we are back to
Theorem~\ref{thm:cyclic-1}. Indeed, with $\lambda_i = 1 +\mu_i$, the
SMT from Eq.~\eqref{eq:SMT} gives the candidate
$R = \alpha A + \beta A^2 + \gamma A^3$ with
\[
  \begin{pmatrix} \mu^{}_{1} & \mu^{2}_{1} & \mu^{3}_{1} \\
    \mu^{}_{2} & \mu^{2}_{2} & \mu^{3}_{2} \\
    \mu^{}_{3} & \mu^{2}_{3} & \mu^{3}_{3} \end{pmatrix}
  \begin{pmatrix} \alpha \\ \beta \\ \gamma \end{pmatrix}
  \, = \, \begin{pmatrix} \log (\lambda^{}_{1}) \\
    \log (\lambda^{}_{2}) \\ \log (\lambda^{}_{3}) \end{pmatrix} .
\]
This leads to the unique, real solution
\begin{equation}\label{eq:4-real}
   \begin{pmatrix} \alpha \\ \beta \\ \gamma \end{pmatrix}
   \, = \, \begin{pmatrix} \frac{\mu^{}_{2} \mu^{}_{3}}{m^{}_{1}} &
     \frac{\mu^{}_{1} \mu^{}_{3}}{m^{}_{2}} &
     \frac{\mu^{}_{1} \mu^{}_{2}}{m^{}_{3}} \\ \rule[-8pt]{0pt}{22pt}
     - \frac{\mu^{}_{2} + \mu^{}_{3}}{m^{}_{1}} &
     - \frac{\mu^{}_{1} + \mu^{}_{3}}{m^{}_{2}} &
     - \frac{\mu^{}_{1} + \mu^{}_{2}}{m^{}_{3}} \\
     \frac{1}{m^{}_{1}} & \frac{1}{m^{}_{2}} & \frac{1}{m^{}_{3}}
     \end{pmatrix} \begin{pmatrix} \log (\lambda^{}_{1}) \\
    \log (\lambda^{}_{2}) \\ \log (\lambda^{}_{3}) \end{pmatrix}
\end{equation}
with $m_i = \mu_i \prod_{j\ne i} (\mu_{j} - \mu_{i})$. Here, on the
right-hand side, one always has to use the real logarithm, as no other
choice results in a real solution. So, we have the following result.

\begin{prop}\label{prop:4-real}
  Let\/ $M\in \cM^{}_{4}$ be simple, with real spectrum. Then, $M$ is
  embeddable if and only if the following two conditions are
  satisfied, where\/ $A=M\nts - \one$ as before.
\begin{enumerate}\itemsep=2pt
\item All eigenvalues of\/ $M$ are positive, which automatically
  implies\/ $\varrho_{\nts_A} <1$.
\item The real logarithm\/ $R=\log (\one + A)$ is a generator, where\/
  $R=\alpha A + \beta A^2 + \gamma A^3$ with the coefficients\/
  $\alpha, \beta, \gamma$ from Eq.~\eqref{eq:4-real}.
\end{enumerate}

\noindent
In this case, the embedding is unique.  \qed
\end{prop}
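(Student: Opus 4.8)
The statement is essentially Theorem~\ref{thm:cyclic-1} made explicit, so the plan is short. First I would observe that a simple matrix is cyclic: distinct eigenvalues force $\mg (\lambda) = 1$ for every $\lambda \in \sigma (M)$, hence $\deg (q_{_M}) = 4$. Since $\sigma (M) \subset \RR$ by hypothesis, Theorem~\ref{thm:cyclic-1} applies directly and gives both implications together with the uniqueness: $M$ is embeddable if and only if (1) $\sigma (M) \subset \RR_{+}$ and (2) $Q = \log (\one + A)$ is a generator, and in that case $Q$ is the principal matrix logarithm, with no other real logarithm of $M$ having zero row sums. For the parenthetical in (1), note that $1 \in \sigma (M)$ has multiplicity one by simplicity, so positivity of the spectrum forces the remaining eigenvalues into $(0,1)$; hence the eigenvalues of $A = M \nts - \one$ are $0$ together with $\mu^{}_i = \lambda^{}_i - 1 \in (-1,0)$, and $\varrho_{\nts_A} = \max_i \lvert \mu^{}_i \rvert < 1$, so the series \eqref{eq:log-def} converges in norm.

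The only new content is the closed form of the candidate generator. By Corollary~\ref{coro:alg}, a generator for an embedding of the cyclic matrix $M$ lies in $\langle A, A^2, A^3 \rangle_{\RR}$ (there is no $\one$-summand, since $Q$ has zero row sums while $\one$ does not), so $Q = \alpha A + \beta A^2 + \gamma A^3$; because $\varrho_{\nts_A} < 1$, the convergent series $\log (\one + A)$ has this same shape. Writing $Q = f(A)$ with $f$ a polynomial of degree $\leqslant 3$ and $f(0) = 0$, the requirement $\ee^Q = M$ translates, via the SMT \eqref{eq:SMT} applied eigenvalue by eigenvalue, into $f(\mu^{}_i) = \log (\lambda^{}_i)$ for $i = 1,2,3$ (the eigenvalue $0$ of $A$ imposes no constraint, as $f(0) = 0$ is automatic). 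This is polynomial interpolation at $0, \mu^{}_1, \mu^{}_2, \mu^{}_3$; the Vandermonde-type system displayed just before the proposition has coefficient matrix of determinant $\mu^{}_1 \mu^{}_2 \mu^{}_3 \prod_{i<j} (\mu^{}_j - \mu^{}_i) \ne 0$, nonzero because the $\lambda^{}_i$ are pairwise distinct and all differ from $1$, so the solution is unique; Cramer's rule (or Lagrange interpolation followed by comparison of coefficients) yields \eqref{eq:4-real} with $m^{}_i = \mu^{}_i \prod_{j \ne i} (\mu^{}_j - \mu^{}_i)$. On the right-hand side only the real branch of the logarithm produces a real triple $(\alpha, \beta, \gamma)$, in accordance with $Q$ being the principal logarithm.

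Finally, the uniqueness claim is already part of Theorem~\ref{thm:cyclic-1}; alternatively it follows from the uniqueness clause of Fact~\ref{fact:Culver}, since the eigenvalues of $M$ are positive reals and pairwise distinct, so no elementary Jordan block of $M$ occurs twice. I do not expect any genuine obstacle in this argument: the one real computation is the verification of the explicit inverse in \eqref{eq:4-real}, which is routine linear algebra and can be deferred to a computer algebra system or to the interested reader.
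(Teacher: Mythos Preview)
Your proposal is correct and follows essentially the same route as the paper: invoke Theorem~\ref{thm:cyclic-1} for the equivalence and uniqueness (simple $\Rightarrow$ cyclic with real spectrum), then use the SMT to set up and solve the Vandermonde system for the explicit coefficients in Eq.~\eqref{eq:4-real}. Your write-up is somewhat more explicit about why simplicity gives cyclicity, why $\varrho_{\nts_A}<1$, and why only the real branch of the logarithm yields real $(\alpha,\beta,\gamma)$, but these are the same observations the paper makes or takes for granted.
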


Next, consider the case that
$\sigma (M) = \{ 1, \lambda, \vartheta, \cc{\vartheta} \}$, where
embeddability forces $\lambda \in (0,1)$ by Culver's criterion
(Fact~\ref{fact:Culver}). We must also have
$\lvert \vartheta\rvert < 1$, again by Fact~\ref{fact:Elf}, where we
may assume $\imag (\vartheta) > 0$.  This implies
$\sigma (A) = \{ 0, \mu, \nu, \cc{\nu} \}$ with $\mu\in (-1,0)$ and
suitable conditions for $\nu$.  Since $\vartheta \not\in\RR$, the SMT
implies the equation
\[
 \begin{pmatrix} \mu & \mu^{2} & \mu^{3} \\
    \nu & \nu^{2} & \nu^{3} \\
    \cc{\nu} & \cc{\nu}^{2} & \cc{\nu}^{3} \end{pmatrix}
  \begin{pmatrix} \alpha \\ \beta \\ \gamma \end{pmatrix}
  \, = \, \begin{pmatrix} \log (\lambda) \\
    \log (\vartheta) + k \, 2 \pi \ii \\
    \log (\cc{\vartheta}) - k \, 2 \pi \ii \end{pmatrix}
\]
for some $k\in\ZZ$, where $\log$ is again the standard branch of the
complex logarithm. This is the most general case where
$\alpha, \beta, \gamma$ are real. Given $k$, the unique solution is
\begin{equation}\label{eq:4-simple-2}
  \begin{pmatrix} \alpha \\ \beta \\ \gamma \end{pmatrix}
  \, = \, \begin{pmatrix} \frac{\lvert \nu \rvert^2}{m^{}_{1}} &
    \frac{\mu\ts \cc{\nu}}{m^{}_{2}} &
    \frac{\mu\ts \nu}{m^{}_{3}} \\ \rule[-8pt]{0pt}{22pt}
    -\frac{\nu+\cc{\nu}}{m^{}_{1}} &
    -\frac{\mu+\cc{\nu}}{m^{}_{2}} & -\frac{\mu+\nu}{m^{}_{3}} \\
     \frac{1}{m^{}_{1}} & \frac{1}{m^{}_{2}} & \frac{1}{m^{}_{3}}
  \end{pmatrix}  \begin{pmatrix} \log (\lambda) \\
    \log (\vartheta) + k \, 2 \pi \ii \\
    \log (\cc{\vartheta}) - k \, 2 \pi \ii \end{pmatrix},
\end{equation}
with $m^{}_{1} = \mu (\mu-\nu)(\mu-\cc{\nu})$, which is real,
$m^{}_{2} = \nu (\nu-\mu)(\nu-\cc{\nu})$ and
$m^{}_{3} = \cc{\nu} (\cc{\nu}-\mu)(\cc{\nu}-\nu)$, which form a
complex-conjugate pair. In particular, for each $k\in\ZZ$, we get
precisely \emph{one} real logarithm of $M$ this way, which need not be
a generator though; bounds on $k$ follow again from
\cite[Thm.~4.5]{CFR}. The remainder of the arguments runs in complete
analogy to previous ones, and gives the following result.

\begin{prop}\label{prop:4-non-real}
  Let\/ $M\in \cM^{}_{4}$ be simple, with one complex-conjugate pair
  of eigenvalues. Then, $M$ is embeddable if and only if the following
  two conditions are satisfied.
\begin{enumerate}\itemsep=2pt
\item One has\/
  $\sigma (M) = \{ 1, \lambda, \vartheta, \cc{\vartheta} \}$ with\/
  $\lambda \in (0,1)$ and\/ $0<\lvert \vartheta \rvert < 1$.
\item One of the real logarithms of $M\nts$, as given by\/
  $R_{k}=\alpha A + \beta A^2 + \gamma A^3$ for some\/ $k\in\ZZ$
  with\/ $\alpha, \beta, \gamma$ from Eq.~\eqref{eq:4-simple-2}, is a
  generator.
\end{enumerate}
Here, at most finitely many $k\in\ZZ$ can lead to a generator, and no
further candidates exist. The number of solutions is bounded by the
integer\/ $\big\lfloor 1 - \frac{\log(\det(M))}{2 \pi}\big\rfloor$.
\qed
\end{prop}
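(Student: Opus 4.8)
The plan is to follow the same route as for Proposition~\ref{prop:3-complex} and Lemma~\ref{lem:4-deg-3-complex}, exploiting that $M$ is cyclic (being simple) and that the spectral bookkeeping is identical, only with one additional eigenvalue. First I would establish the necessity of condition~(1). That $1\in\sigma(M)$ is Fact~\ref{fact:one-is-special}. If $M=\ee^Q$ for a generator $Q$, then $Q\ne\nix$, so $\tr(Q)<0$ and hence $\det(M)=\lambda\,\lvert\vartheta\rvert^2=\ee^{\tr(Q)}\in(0,1)$; moreover Culver's criterion (Fact~\ref{fact:Culver}) rules out a non-positive real eigenvalue besides $1$, so $\lambda\in(0,1)$ follows, and then $\lvert\vartheta\rvert<1$, while the value $\lvert\vartheta\rvert=1$ is excluded directly by Fact~\ref{fact:Elf}. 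Thus $\sigma(A)=\{0,\mu,\nu,\cc{\nu}\}$ with $\mu=\lambda-1\in(-1,0)$ and $\nu=\vartheta-1\in\CC\setminus\RR$, the three numbers $\mu,\nu,\cc{\nu}$ being distinct and nonzero.

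For sufficiency and the shape of the candidates in~(2), I would invoke Corollary~\ref{coro:alg}: any generator $Q$ with $\ee^Q=M$ commutes with the cyclic matrix $A$ and has zero row sums, hence $Q=\alpha A+\beta A^2+\gamma A^3$ with $\alpha,\beta,\gamma\in\RR$ (the constant term vanishes because $A$, and therefore $Q$, annihilates the all-ones vector). Now apply the SMT from Eq.~\eqref{eq:SMT}, together with Fact~\ref{fact:one-is-special} (so $0\in\sigma(Q)$) and the simplicity of $M$ (so no repeated eigenvalue of $Q$ may be mapped to the same value by $\exp$): the eigenvalues of $Q$ are forced to be $0$, the real number $\log(\lambda)$, and a complex-conjugate pair $\log(\vartheta)+2\pi\ii k$ and $\log(\cc{\vartheta})-2\pi\ii k$ for some $k\in\ZZ$. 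This is precisely the linear system displayed just before the statement; its coefficient matrix is a Vandermonde-type matrix in $\mu,\nu,\cc{\nu}$, hence invertible, so for each $k$ there is a unique solution, namely Eq.~\eqref{eq:4-simple-2}, and it is real because rows two and three of the system are a conjugate pair. Conversely, if this $R_k$ happens to be a generator, then $\ee^{R_k}$ and $M$ are both polynomials in $A$, hence simultaneously diagonalisable, and by construction they agree on a full set of eigenvectors of $A$ (using $\ee^{\log\lambda}=\lambda$, $\ee^{\log\vartheta+2\pi\ii k}=\vartheta$, and the conjugate relation); therefore $\ee^{R_k}=M$ and $M$ is embeddable.

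For the finiteness statement and the explicit bound, I would use the bound on the imaginary parts of the eigenvalues of a $d$-dimensional generator from \cite[Thm.~4.5]{CFR}: since $\log(\vartheta)+2\pi\ii k$ must be such an eigenvalue, only finitely many $k$ are admissible, and their number is at most $\big\lfloor 1-\frac{\log(\det(M))}{2\pi}\big\rfloor$. The assertion that no other candidates exist then needs no extra work, since Corollary~\ref{coro:alg} already confines every possible generator to the one-parameter family $\{R_k\}_{k\in\ZZ}$. The only point requiring a little care — and what I expect to be the main, though minor, obstacle — is the reduction from ``$R_k$ is a generator'' to ``$\ee^{R_k}=M$'': the spectral mapping theorem on its own only gives equality of spectra, and one must pass to equality of matrices via the simultaneous diagonalisability of $M$ and $R_k$ as polynomials in $A$, exactly as in the proof of Proposition~\ref{prop:3-complex}.
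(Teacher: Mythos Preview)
Your proposal is correct and follows essentially the same route as the paper, which derives Eq.~\eqref{eq:4-simple-2} via Corollary~\ref{coro:alg} and the SMT, invokes Facts~\ref{fact:Culver} and~\ref{fact:Elf} for condition~(1), and appeals to \cite[Thm.~4.5]{CFR} for the bound on~$k$. If anything, you spell out one point the paper leaves implicit, namely the passage from ``$\sigma(\ee^{R_k})=\sigma(M)$'' to ``$\ee^{R_k}=M$'' via simultaneous diagonalisation of polynomials in~$A$; the paper simply says the argument ``runs in complete analogy to previous ones''.
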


It remains to consider the two cases where $M$ is cyclic, but not
diagonalisable. First, we have $\sigma (M) = \{ 1, \lambda \}$ with
$\lambda \in \RR$ and $\lambda \ne 1$, where embeddability forces
$\lambda \in (0,1)$ by Fact~\ref{fact:Culver}, and the JNF of $M$ must
then be $1 \oplus \JJ^{}_{3} (\lambda)$ by
Fact~\ref{fact:one-is-special}.  The real logarithm of $M$ is unique
by Lemma~\ref{lem:extend-unique}, and it can easily be calculated as
follows. Setting $\lambda = 1 + \mu$, the results from
\cite[Thm.~5.3]{BS2}, as detailed in its proof, imply the confluent
Vandermonde-type condition
\[
  \begin{pmatrix} \mu & \mu^2 & \mu^3 \\ 1 & 2 \mu & 3 \mu^2 \\
    0 & 2 & 6 \mu \end{pmatrix}
  \begin{pmatrix} \alpha \\ \beta \\ \gamma
  \end{pmatrix} \, = \, \begin{pmatrix} \log (\lambda) \\  
  \rule[-4pt]{0pt}{16pt} \lambda^{-1}
  \\ - \lambda^{-2} \end{pmatrix}
\]
with the unique solution
\begin{equation}\label{eq:4-cyclic-1}
  \begin{pmatrix} \alpha \\ \beta \\ \gamma \end{pmatrix} \, = \,
  \begin{pmatrix} \frac{3}{\mu} & -2 & \frac{\mu}{2} \\
   \rule[-8pt]{0pt}{20pt} -\frac{3}{\mu^2} & \frac{3}{\mu} & -1 \\
   \frac{1}{\mu^3} & -\frac{1}{\mu^2} 
   & \frac{1}{2 \mu} \end{pmatrix} 
   \begin{pmatrix} \log (\lambda) \\ 
    \rule[-4pt]{0pt}{16pt} \lambda^{-1}
  \\ - \lambda^{-2} \end{pmatrix} .
\end{equation}
This case can be summarised as follows.

\begin{prop}\label{prop:4-cyclic-1}
  A cyclic matrix\/ $M = \one + A\in\cM^{}_{4}$ with JNF\/
  $1 \oplus \JJ^{}_{3} (\lambda)$ is embeddable if and only if the
  following two conditions are satisfied.
\begin{enumerate}\itemsep=2pt
\item One has\/ $\lambda \in (0,1)$.
\item The real logarithm\/
  $\log (\one+A) = \alpha A + \beta A^2 + \gamma A^3$ with\/
  $\alpha, \beta, \gamma$ from Eq.~\eqref{eq:4-cyclic-1} is a
  generator.
\end{enumerate}  
In this case, the embedding is unique.  \qed
\end{prop}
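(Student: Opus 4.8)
The plan is to follow the now-familiar template established for the other cyclic cases, specialising it to the single nontrivial Jordan block $\JJ^{}_3(\lambda)$. First I would record the necessity of condition (1): since $\det(M) = \lambda$ (as $1$ is an eigenvalue and $\lambda$ has algebraic multiplicity $3$), embeddability via $M=\ee^Q$ forces $\det(M) = \ee^{\tr(Q)} \in (0,1)$, and in fact $\lambda < 0$ is excluded directly by Culver's criterion (Fact~\ref{fact:Culver}), because the JNF contains the block $\JJ^{}_3(\lambda)$ with odd multiplicity; hence $\lambda \in (0,1)$. The excluded value $\lambda = 1$ is ruled out because it would contradict $\deg(q_{_M}) = 4$ (indeed $M=\one$ then).

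Next I would address uniqueness and the structural form of the candidate. With $\lambda \in (0,1)$, we have $\varrho_{\nts_A} < 1$ where $A = M-\one$, so $\log(\one+A)$ is defined by the convergent series~\eqref{eq:log-def}; by Corollary~\ref{coro:alg}, since $M$ is cyclic with $\deg(q_{_A}) = 4$, this principal logarithm equals $\alpha A + \beta A^2 + \gamma A^3$ for real $\alpha,\beta,\gamma$, and any generator $Q$ embedding $M$ must also lie in $\RR[A] = \langle \one, A, A^2, A^3\rangle_\RR$. Here $M$ has JNF $1 \oplus \JJ^{}_3(\lambda)$, so the hypotheses of Lemma~\ref{lem:extend-unique} would not literally apply ($\ma(1)=1$ here, not $2$); instead I would argue directly: any real logarithm $R$ of $M$ has $\ee^R = M$ diagonalisable on the eigenvalue $1$ but with a nontrivial $3\times 3$ block elsewhere, so by the structure of the matrix exponential (cf.~\cite[Thm.~1.27]{Higham}, as used before Eq.~\eqref{eq:Jordan-gen}) $R$ is complex-similar to $0 \oplus \bigl(\log(\lambda)\one^{}_3 + N^{}_3/\lambda + \text{(lower-order nilpotent corrections)}\bigr)$ on the block, with the eigenvalue on the $1$-part being $2\pi\ii k$ for some $k \in \ZZ$; but a real $R$ with zero row sums must have $0 \in \sigma(R)$, forcing $k=0$, so only the principal logarithm can have zero row sums, whence it is the unique candidate and the embedding — if it exists — is unique.

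For the explicit formula~\eqref{eq:4-cyclic-1} I would invoke the spectral mapping theorem in its confluent (Hermite interpolation) form, exactly as in \cite[Thm.~5.3]{BS2}: writing $f(x) = \log(1+x)$ and requiring $R = \alpha A + \beta A^2 + \gamma A^3$ to satisfy $g(x) \defeq \alpha x + \beta x^2 + \gamma x^3$ with $g(\mu) = f(\mu) = \log\lambda$, $g'(\mu) = f'(\mu) = \lambda^{-1}$, and $g''(\mu) = f''(\mu) = -\lambda^{-2}$, where $\mu = \lambda - 1$ is the nontrivial eigenvalue of $A$ (the condition at the eigenvalue $0$ of $A$, namely $g(0)=f(0)=0$, is automatic since $g$ has no constant term). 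This is precisely the stated confluent Vandermonde system, whose $3\times 3$ coefficient matrix has determinant $2\mu^3 \ne 0$ (since $\mu = \lambda - 1 \ne 0$), so it is uniquely solvable, and inverting it gives~\eqref{eq:4-cyclic-1}; this is a routine linear-algebra computation and I would not spell it out in detail.

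Finally, I would close the equivalence: if condition (2) holds, $Q = \log(\one+A)$ is a generator with $\ee^Q = \one + A = M$, so $M$ is embeddable; conversely, if $M$ is embeddable, then (1) holds as shown and the unique candidate $\log(\one+A)$ must be that generator, so (2) holds. The only genuine subtlety — and the step I expect to require the most care — is the uniqueness argument, namely establishing cleanly that the principal logarithm is the \emph{only} real logarithm with zero row sums for this particular JNF; the rest is bookkeeping already carried out in essentially identical form in Propositions~\ref{prop:3-real} and~\ref{prop:4-real}. Since all these arguments were given before the statement, the proof can legitimately be replaced by $\Box$ or kept as a short paragraph pointing to \cite[Thm.~5.3]{BS2}, Corollary~\ref{coro:alg}, and Fact~\ref{fact:Culver}.
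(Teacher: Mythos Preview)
Your approach is essentially the paper's: the argument is given before the proposition (necessity of $\lambda\in(0,1)$ via Fact~\ref{fact:Culver}, uniqueness of the real logarithm, then the confluent Vandermonde system from \cite[Thm.~5.3]{BS2}) and closed with $\Box$. Two minor remarks. First, a slip: $\det(M)=\lambda^3$, not $\lambda$, though this does not affect your conclusion since you invoke Culver's criterion directly anyway. Second, you are right that Lemma~\ref{lem:extend-unique} does not literally apply here because $\ma(1)=1$; the paper in fact cites that lemma, which is a small imprecision. The cleanest route --- simpler than your direct argument via \cite[Thm.~1.27]{Higham} --- is the uniqueness clause of Fact~\ref{fact:Culver} itself: with $\lambda\in(0,1)$ all eigenvalues of $M$ are positive and the two elementary Jordan blocks $\JJ^{}_1(1)$ and $\JJ^{}_3(\lambda)$ are distinct, so the real logarithm of $M$ is outright unique, not merely unique among those with zero row sums.
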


\begin{table}[t]
\caption{Summary of embedding cases for $d=4$.\label{tab:d4}}
\begin{center}
\renewcommand{\arraystretch}{1.21}    
\begin{tabular}{|c|c|c|c|c|} \hline
    $\deg (q_{_M})$ & JNF & condition(s) & unique & details \\ \hline
  $1$ & $\one^{}_{4^{}_{\vphantom{\chi}}} = \exp (\nix^{}_{4})$ 
       &   {\rev --- }  & yes & Fact~\ref{fact:one} \\ \hline
  $2$ & $\begin{array}{c} \diag (1,1,1,\lambda) \\
           1 \ne \lambda \in [-1,1)
         \end{array}$ & $\lambda \in (0,1)$ &
                 $\begin{array}{c}
         \text{not} \\  \text{always} \end{array}$  &
           Lemma~\ref{lem:4-deg-2-easy} \\ \cline{2-5}
       &  $\begin{array}{c} \diag (1,\lambda,\lambda,\lambda)
                 \\ 1 \ne \lambda \in [-1,1)
           \end{array}$ & $\lambda \in (0,1)$ &
              $\begin{array}{c}
         \text{not} \\  \text{always} \end{array}$  &
         Lemma~\ref{lem:4-deg-2-easy} \\ \cline{2-5} 
       &  $\begin{array}{c} \diag (1,1,\lambda,\lambda) \\
             \lambda > 0 \end{array}$ & $ \lambda \in (0,1)$
       & $\begin{array}{c}
         \text{not} \\  \text{always} \end{array}$ &
         Lemma~\ref{lem:4-deg-2-mess} \\ \cline{2-5}
       &  $\begin{array}{c} \diag (1,1,\lambda,\lambda) \\
          \lambda < 0 \end{array}$ & $ \text{some } R_k (x,y,z) 
          \text{ is MG}$
       & $\begin{array}{c}
         \text{not} \\  \text{always} \end{array}$ &
         Lemma~\ref{lem:4-deg-2-mess} \\ \hline
  $3$ &  $\begin{array}{c} \diag (1,1,\lambda_1,\lambda_2) \\
            1 \ne \lambda_1 \ne \lambda_2 \ne 1 \end{array}$
          & $\begin{array}{c} \lambda_1, \lambda_2 \in (0,1) \\
               \log (\one {+} A) \text{ is MG} \end{array}$ & yes
          & Lemma~\ref{lem:4-deg-3-real} \\ \cline{2-5}
          & $\begin{array}{c} \one_2 \oplus \JJ_2 (\lambda) \\
            \lambda \ne 1 \end{array}$
          & $\begin{array}{c} \lambda \in (0,1) \\
               \log (\one {+} A) \text{ is MG} \end{array}$ & yes
          & Lemma~\ref{lem:4-deg-3-real} \\ \cline{2-5}
          & $\begin{array}{c} \diag(1,\lambda) \oplus
             \JJ_2 (\lambda) \\  \lambda \ne 1 \end{array}$
          &  $\begin{array}{c} \lambda \in (0,1) \\
               \log (\one {+} A) \text{ is MG} \end{array}$ & yes
          & Lemma~\ref{lem:4-deg-3-mixed-J} \\ \cline{2-5}
          & $\begin{array}{c} \diag (1,\lambda_1,\lambda_2,\lambda_2) \\
            1 \ne \lambda_1 \ne \lambda_2 \ne 1 \end{array}$ 
          & $\begin{array}{c} \lambda_1, \lambda_2 \in (0,1) \\
              \text{some $R_k (x,y,z)$ is MG} \end{array}$ 
          & $\begin{array}{c} \text{not} \\ \text{always}
               \end{array}$
          & Lemma~\ref{lem:4-deg-3-mixed} \\ \cline{3-5}
          & & $\begin{array}{c} \lambda_1 \in (0,1), \; \lambda_2 < 0 \\
               \text{some $R_k (x,y,z)$ is MG} \end{array}$ 
          & $\begin{array}{c} \text{not} \\ \text{always}
               \end{array}$
          & Lemma~\ref{lem:4-deg-3-mixed} \\ \cline{2-5}
          & $\begin{array}{c} \diag(1,1,\lambda,\overline{\lambda}\, )
              \\ \lambda \in \CC \setminus \RR \end{array}$
          & $\begin{array}{c} \lvert \lambda \rvert \in (0,1) \\
             \text{some $R_k$ is MG} \end{array}$ & $\begin{array}{c}
         \text{not} \\  \text{always} \end{array}$
          & Lemma~\ref{lem:4-deg-3-complex} \\ \hline
  $4$ & $\begin{array}{c} \diag (1,\lambda_1,\lambda_2,\lambda_3) \\
        \text{simple spectrum} \end{array}$ & $ \begin{array}{c}
        \lambda_1, \lambda_2, \lambda_3 \in (0,1) \\
        \log (\one {+} A) \text{ is MG} \end{array} $ & yes 
      & Prop.~\ref{prop:4-real} \\ \cline{2-5}
      & $\begin{array}{c} \diag (1,\lambda,\vartheta,
        \overline{\vartheta}\, ) \\ 1 \ne \lambda \in [-1,1), \,
           \vartheta \in \CC \setminus \RR \end{array}$
      & $ \begin{array}{c} \lambda \in (0,1), \;
                   \lvert \vartheta \rvert \in (0,1) \\
                   \text{some $R_k$ is MG} \end{array} $
      & $\begin{array}{c} \text{not} \\ \text{always}
               \end{array}$
      & Prop.~\ref{prop:4-non-real} \\ \cline{2-5}
      & $ \begin{array}{c} 1 \oplus \JJ_3 (\lambda) \\
        1 \ne \lambda \in [-1,1) \end{array} $                     
      & $ \begin{array}{c} \lambda \in (0,1), \;
                   \lvert \vartheta \rvert \in (0,1) \\
              \log (\one{+}A) \text{ is MG} \end{array} $
      & yes & Prop.~\ref{prop:4-cyclic-1} \\ \cline{2-5}
      & $ \begin{array}{c} \diag (1,\lambda_1) \oplus \JJ_2 
        (\lambda_2) \\ 1 \ne \lambda_1 \ne \lambda_2 \ne 1
        \end{array} $
      & $ \begin{array}{c} \lambda_1, \lambda_2 \in (0,1) \\
        \log (\one {+} A) \text{ is MG} \end{array} $
      & yes & Prop.~\ref{prop:4-cyclic-2} \\ \hline
\end{tabular}
\end{center}
\end{table}

The remaining case is
$\sigma (M) = \{ 1, \lambda^{}_{1}, \lambda^{}_{2}\} \subset \RR$,
where $1\ne \lambda^{}_{1} \ne \lambda^{}_{2} \ne 1$ and, without loss
of generality, $M$ has JNF
$\diag (1 , \lambda^{}_{1}) \oplus \JJ^{}_{2} (\lambda^{}_{2})$. By
Fact~\ref{fact:Culver}, embeddability is at most possible for
$\lambda^{}_{1}, \lambda^{}_{2} \in (0,1)$, and the real logarithm of
$M$ is unique by Lemma~\ref{lem:extend-unique}. Here, the results from
\cite[Thm.~5.3]{BS2} give
\[
  \begin{pmatrix} \mu^{}_{1} & \mu^{2}_{1} & \mu^{3}_{1} \\
   \rule[-9pt]{0pt}{23pt} \mu^{}_{2} & \mu^{2}_{2} & \mu^{3}_{2} \\
   1 & 2 \mu^{}_{2} & 3 \mu^{2}_{2} \end{pmatrix}
   \begin{pmatrix} \alpha \\ \beta \\ \gamma \end{pmatrix} \, = \,
   \begin{pmatrix} \log (\lambda^{}_{1}) \\  
   \nts\rule[-6pt]{0pt}{16pt} \log (\lambda^{}_{2})\ts
    \\  \lambda^{-1}_{2} \end{pmatrix} ,
\]
with the unique solution
\begin{equation}\label{eq:4-cyclic-2}
 \begin{pmatrix} \alpha \\ \beta \\ \gamma \end{pmatrix} \, = \,
  \begin{pmatrix} 
  \frac{\mu^{2}_{2}}{\mu^{}_{1} (\mu^{}_{1} \nts - \mu^{}_{2})^2} & 
  \frac{\mu^{}_{1} (2 \mu^{}_{1} \nts - \ts 3 \mu^{}_{2})}
     {\mu^{}_{2} \ts ( \mu^{}_{1} \nts - \mu^{}_{2})^2} & 
  \frac{-\mu^{}_{1}}{\mu^{}_{1} \nts - \mu^{}_{2}} \\
   \rule[-10pt]{0pt}{26pt} \frac{-2 \mu^{}_{2}}{\mu^{}_{1} 
   (\mu^{}_{1} \nts - \mu^{}_{2})^2} & 
  \frac{3 \mu^{2}_{2} - \mu^{2}_{1}}
     {\mu^{2}_{2} \ts ( \mu^{}_{1} \nts - \mu^{}_{2})^2} & 
  \frac{\mu^{}_{1} + \ts \mu^{}_{2}} {\mu^{}_{2} 
    (\mu^{}_{1} \nts - \mu^{}_{2})} \\
   \frac{1}{\mu^{}_{1} (\mu^{}_{1} \nts - \mu^{}_{2})^2} & 
   \frac{\mu^{}_{1} \nts - 2 \mu^{}_{2}}
   {\mu^{2}_{2} \ts ( \mu^{}_{1} \nts - \mu^{}_{2})^2} 
   & \frac{-1}{\mu^{}_{2} (\mu^{}_{1} \nts - \mu^{}_{2})} \end{pmatrix} 
   \begin{pmatrix} \log (\lambda^{}_{1}) \\  
    \nts\rule[-6pt]{0pt}{16pt}\log(\lambda^{}_{2})\ts
  \\  \lambda^{-1}_{2} \end{pmatrix} .
\end{equation}
This implies the following result.

\begin{prop}\label{prop:4-cyclic-2}
  Let\/ $M\in\cM^{}_{4}$ be cyclic with\/
  $\sigma (M) = \{ 1, \lambda^{}_{1}, \lambda^{}_{2} \}$, for\/
  $1 \ne \lambda_1 \ne \lambda_2 \ne 1$, and JNF\/
  $\diag (1 , \lambda^{}_{1}) \oplus \JJ^{}_{2} (\lambda^{}_{2})$.
  Then, $M = \one + A$ is embeddable if and only if the following two
  conditions hold.
\begin{enumerate}\itemsep=2pt
\item One has\/ $\lambda^{}_{1}, \lambda^{}_{2} \in (0,1)$,
  with\/ $\lambda^{}_{1} \ne \lambda^{}_{2}$.
\item The matrix\/ $\alpha A + \beta A^2 + \gamma A^3$
   with\/  $\alpha, \beta, \gamma$ from
   Eq.~\eqref{eq:4-cyclic-2} is a generator.
\end{enumerate}  
In this case, the embedding is unique.  \qed
\end{prop}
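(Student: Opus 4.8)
The plan is to prove the two directions of the equivalence and then uniqueness, assembling the ingredients already prepared above. For necessity, suppose $M=\ee^Q$ with a generator $Q$. Then $Q$ is in particular a real logarithm of $M$, so Culver's criterion (Fact~\ref{fact:Culver}) applies: the elementary Jordan blocks $\JJ^{}_{1}(\lambda^{}_{1})$ and $\JJ^{}_{2}(\lambda^{}_{2})$ each occur exactly once in the JNF of $M$, hence no eigenvalue of $M$ can be negative, and embeddability also gives $\det (M)\ne 0$, so $\lambda^{}_{1},\lambda^{}_{2}>0$; together with Fact~\ref{fact:Elf} and $\lambda^{}_{i}\ne 1$ this forces $\lambda^{}_{1},\lambda^{}_{2}\in(0,1)$, which is condition $(1)$.

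Next I would pin down $Q$. Since $M$ is cyclic with $q_{_M}(x)=(x-1)(x-\lambda^{}_{1})(x-\lambda^{}_{2})^2$, so $\deg(q_{_A})=\deg(q_{_M})=4$ with $A=M \nts - \one$, Corollary~\ref{coro:alg} yields $Q=\alpha A+\beta A^2+\gamma A^3$ for some real $\alpha,\beta,\gamma$. Writing $p(x)=\alpha x+\beta x^2+\gamma x^3$ and $\lambda^{}_{i}=1+\mu^{}_{i}$, I read off the identity $\ee^{p(A)}=M$ block by block on the JNF of $A$: the $0$-block gives nothing, as $p(0)=0$; the block $\JJ^{}_{1}(\mu^{}_{1})$ gives $\ee^{p(\mu^{}_{1})}=\lambda^{}_{1}$; and the block $\JJ^{}_{2}(\mu^{}_{2})$, via $\ee^{p(\JJ^{}_{2}(\mu^{}_{2}))}=\left(\begin{smallmatrix}\ee^{p(\mu^{}_{2})} & \ee^{p(\mu^{}_{2})}p'(\mu^{}_{2})\\ 0 & \ee^{p(\mu^{}_{2})}\end{smallmatrix}\right)$, gives both $\ee^{p(\mu^{}_{2})}=\lambda^{}_{2}$ and $\ee^{p(\mu^{}_{2})}p'(\mu^{}_{2})=1$. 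As $\mu^{}_{1},\mu^{}_{2}$ are real and $p$ has real coefficients, the quantities $p(\mu^{}_{1}),p(\mu^{}_{2}),p'(\mu^{}_{2})$ are real, so no $2\pi\ii k$ ambiguity of the complex logarithm can enter, and one obtains $p(\mu^{}_{1})=\log\lambda^{}_{1}$, $p(\mu^{}_{2})=\log\lambda^{}_{2}$ and $p'(\mu^{}_{2})=\lambda_{2}^{-1}$ --- exactly the confluent Vandermonde system displayed before the proposition. Since $\mu^{}_{1}\ne\mu^{}_{2}$ and $\mu^{}_{i}\ne 0$, this system is uniquely solvable, with solution \eqref{eq:4-cyclic-2}; hence the matrix in $(2)$ is a generator.

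For sufficiency, assume $(1)$ and $(2)$. By $(1)$, the eigenvalues $\mu^{}_{1},\mu^{}_{2}$ of $A$ lie in $(-1,0)$, so $\varrho_{\nts_A}<1$ and the series \eqref{eq:log-def} converges to a real matrix $\log(\one+A)$ with zero row sums; by Corollary~\ref{coro:alg} and the interpolation formulas of \cite[Thm.~5.3]{BS2} (the same confluent Vandermonde system, now applied to the principal logarithm), one has $\log(\one+A)=\alpha A+\beta A^2+\gamma A^3$ with $\alpha,\beta,\gamma$ as in \eqref{eq:4-cyclic-2}, and $\ee^{\log(\one+A)}=M$. By $(2)$ this matrix is a generator, so $M$ is embeddable. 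For uniqueness, note that all eigenvalues of $M$ are positive real and its elementary Jordan blocks $\JJ^{}_{1}(1)$, $\JJ^{}_{1}(\lambda^{}_{1})$, $\JJ^{}_{2}(\lambda^{}_{2})$ are pairwise distinct (because $1\ne\lambda^{}_{1}\ne\lambda^{}_{2}$), so $M$ has a unique real logarithm by Fact~\ref{fact:Culver}; since every generator for $M$ is a real logarithm, the embedding is unique.

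I expect the only step needing care to be the block-by-block evaluation: one must check that the $\JJ^{}_{2}(\lambda^{}_{2})$ block of $M$ forces the \emph{derivative} relation $p'(\mu^{}_{2})=\lambda_{2}^{-1}$, so that a confluent rather than an ordinary Vandermonde system arises, and that the reality of $p(\mu^{}_{1})$ and $p(\mu^{}_{2})$ excludes the branch ambiguity of the logarithm; both are one-line arguments once $Q=p(A)$ is known. The remaining work --- inverting the $3\times3$ confluent Vandermonde matrix to obtain the explicit coefficients \eqref{eq:4-cyclic-2} --- is routine and already recorded in \cite[Thm.~5.3]{BS2}.
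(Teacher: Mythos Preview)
Your proof is correct and follows essentially the same line as the paper: Culver's criterion for the necessity of positive eigenvalues, cyclicity via Corollary~\ref{coro:alg} to write $Q=\alpha A+\beta A^2+\gamma A^3$, and the confluent Vandermonde system from \cite[Thm.~5.3]{BS2} to pin down the coefficients. Your appeal to the uniqueness clause of Fact~\ref{fact:Culver} (distinct elementary Jordan blocks, all with positive eigenvalues) is in fact the cleaner justification here, since $\ma(1)=1$ in this case and Lemma~\ref{lem:extend-unique} as stated requires $\ma(1)=2$.
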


The basic results of this section are summarised in
Table~\ref{tab:d4}. In combination with \cite{CFR}, this can be turned
into an algorithmic approach to the embedding problem for $d=4$.

\section{Application to phylogenetics}\label{sec:appl}

In this section, we will briefly discuss some models that are in use
for the nucleotide mutation schemes in molecular evolution. We shall
always use the ordering $( A, G, C, T )$.

First, and perhaps simplest, let us look at the widely used
\emph{equal-input} model \cite{Steel,BS2}. The Markov matrices of
equal-input type have the form
\begin{equation}\label{eq:eq-inp-4}
    M_c \, = \, (1-c) \ts \one + C (c^{}_{1}, \ldots , c^{}_{4} ) \ts ,
\end{equation}
where the $C$-matrix contains four equal rows of the form
$(c^{}_{1}, \ldots , c^{}_{4} )$, with parameters $c_i \geqslant 0$
and summatory parameter $c = c^{}_{1} + \ldots + c^{}_{4}$.  For $M_c$
to be Markov, we also need $c \leqslant 1 + c_i$ for all $i$, which
further implies $0 \leqslant c \leqslant \frac{4}{3}$.  Since $d=4$ is
even, \cite[Prop.~2.12]{BS2} or an application of
Lemma~\ref{lem:4-deg-2-easy} gives the following consequence.

\begin{coro}\label{coro:mut-EI}
  The four-dimensional equal-input Markov matrix\/ $M_c$ from
  Eq.~\eqref{eq:eq-inp-4} is embeddable if and only if its summatory
  parameter satisfies\/ $0\leqslant c < 1$. In this case, one
  embedding is\/ $M_c = \ee^Q$ with the equal-input generator
\[
       Q \, = \, \frac{-\log(1-c)}{c_{\vphantom{\chi}}} \ts A \ts ,
\]        
where\/ $A = M\nts - \one$ and\/ $Q=\nix$ for\/ $c=0$.  For\/ $c>0$,
there can be at most finitely many other embeddings, but none with an
equal-input generator.  \qed
\end{coro}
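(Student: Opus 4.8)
The plan is to reduce most of the statement to Lemma~\ref{lem:4-deg-2-easy}, using the rank-one structure of the $C$-matrix. First I would record that $C=C(c^{}_{1},\dots,c^{}_{4})$ has eigenvalue $c$ (with eigenvector $(1,1,1,1)^{\mathsf{T}}$) and eigenvalue $0$ of multiplicity three, so $M_c=(1-c)\ts\one+C$ is diagonalisable with spectrum $\{1,1-c,1-c,1-c\}$. For $c=0$ one has $M_0=\one=\exp(\nix)$, embeddable with the unique generator $\nix$ by Fact~\ref{fact:one}. For $c>0$, writing $\lambda\defeq1-c<1$, the JNF of $M_c$ is $\diag(1,\lambda,\lambda,\lambda)$ and $\deg(q_{_{M_c}})=2$, so Lemma~\ref{lem:4-deg-2-easy} applies directly: $M_c$ is embeddable if and only if $\det(M_c)=\lambda^{3}>0$, equivalently $\lambda\in(0,1)$, equivalently $c\in(0,1)$, and in that case the principal branch gives $Q=\frac{-\log\lambda}{1-\lambda}A=\frac{-\log(1-c)}{c}A$ with $A=M_c-\one=C-c\ts\one$. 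Combining the two parameter ranges yields the claimed equivalence ``$M_c$ embeddable $\iff 0\leqslant c<1$''. Since $0<c<1$ makes $-\log(1-c)>0$, the matrix $Q$ is a positive multiple of the equal-input rate matrix $A$, hence is itself an equal-input generator; explicitly $Q=C(q^{}_{1},\dots,q^{}_{4})-q\ts\one$ with $q=-\log(1-c)$ and $q^{}_{i}=\frac{q}{c}\ts c^{}_{i}$, and $Q=\nix$ when $c=0$. This settles the first two sentences of the corollary.

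For the finiteness of further embeddings when $c\in(0,1)$, I would reuse the pattern of the earlier cases in this section. Any generator $Q'$ with $\ee^{Q'}=M_c$ is a real logarithm, hence diagonalisable by Fact~\ref{fact:diagonal}, and by the SMT \eqref{eq:SMT} its spectrum maps onto $\{1,\lambda,\lambda,\lambda\}$ with multiplicities. As $1$ is a simple eigenvalue of $M_c$ and $0\in\sigma(Q')$, exactly one eigenvalue of $Q'$ is $0$, and the other three, being stable under complex conjugation, are $\log\lambda$ and $\log\lambda\pm2\pi\ii\ts k$ for some $k\in\NN_0$. The generator property bounds the imaginary parts of the eigenvalues of $Q'$, by the estimates behind \cite[Lemma~3.1]{CFR} used repeatedly above (compare Lemma~\ref{lem:4-deg-2-mess}), so only finitely many $k$ are admissible and each determines at most one real logarithm; hence at most finitely many embeddings.

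It remains to show that none of these additional embeddings is of equal-input type, which is the one step genuinely using the structure of $M_c$. Let $Q'=C'-q\ts\one$ be an equal-input generator, with $C'=C(q^{}_{1},\dots,q^{}_{4})$ and $q=q^{}_{1}+\dots+q^{}_{4}$; if $q=0$ then $Q'=\nix$ and $\ee^{Q'}=\one\ne M_c$, so assume $q>0$. Using $(C')^{2}=q\ts C'$ together with $[\ts\one,C'\ts]=\nix$, the exponential series collapses to the key identity
\[
   \ee^{Q'} \, = \, \ee^{-q}\,\ee^{C'} \, = \,
   \ee^{-q}\Bigl(\one+\frac{\ee^{q}-1}{q}\,C'\Bigr) \, = \,
   \ee^{-q}\ts\one+\frac{1-\ee^{-q}}{q}\,C' \ts .
\]
Equating this with $M_c=(1-c)\ts\one+C$ and observing that the difference of the two sides is at once a scalar multiple of $\one$ and a $C$-matrix, hence $\nix$, forces $\ee^{-q}=1-c$ (so $q=-\log(1-c)$) and $\frac{1-\ee^{-q}}{q}C'=C$, i.e.\ $C'=\frac{q}{c}\ts C$. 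Therefore $Q'=\frac{q}{c}\ts C-q\ts\one=\frac{-\log(1-c)}{c}\bigl(C-c\ts\one\bigr)=Q$, so the equal-input generator is unique and equals the one already exhibited. The only point requiring more than Lemma~\ref{lem:4-deg-2-easy} and this short computation is the bound on $k$ in the finiteness claim; I expect that to be the (minor) main obstacle, handled exactly as the analogous bounds elsewhere in this section.
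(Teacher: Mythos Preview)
Your reduction to Lemma~\ref{lem:4-deg-2-easy} and your direct computation for the uniqueness of the equal-input generator are both correct; the paper's own proof is essentially a bare citation of \cite[Prop.~2.12]{BS2} together with Lemma~\ref{lem:4-deg-2-easy}, so on these points you are actually more explicit than the paper.

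There is, however, a genuine gap in your finiteness argument. You assert that each admissible $k$ ``determines at most one real logarithm'', but this is false for $k\ne 0$. When $\sigma(Q')=\{0,\log\lambda,\log\lambda\pm 2\pi\ii\ts k\}$ with $k\ne 0$, the matrix $Q'$ is simple while $M_c$ is not, so $\cent(Q')\subsetneq\cent(M_c)$; conjugating $Q'$ by any invertible $S\in\cent(M_c)\setminus\cent(Q')$ yields a different real logarithm of $M_c$ with the same spectrum. Concretely, on the three-dimensional eigenspace $V_\lambda$ of $M_c$, the restriction of $Q'$ may be any real $3{\times}3$ matrix with spectrum $\{\log\lambda,\log\lambda\pm 2\pi\ii\ts k\}$, which gives an uncountable family for each $k$; compare Remark~\ref{rem:Culver} and the $I_{x,y,z}$ parametrisation used elsewhere in the paper. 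The bound from \cite[Lemma~3.1]{CFR} restricts only the range of $k$, not the number of real logarithms for fixed $k$. What is actually required is to show that, among this continuum, only finitely many satisfy the generator inequalities --- a point the paper itself does not argue out but defers to \cite[Prop.~2.12]{BS2}. Your closing remark rightly flags this step as the residual obstacle, but the mechanism you propose for it does not go through.
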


This result contains the constant-input matrices as the
special case
$c^{}_{1} = \ldots = c^{}_{4} = \frac{c}{4}$, which comprises the
Jukes--Cantor matrices from \cite{JC}. Unlike the situation of odd
dimension, compare Example~\ref{ex:max-c} and \cite{BS2}, no 
further embeddable case can occur here.  \smallskip

A mild extension of the equal-input class is provided by the
\emph{Tamura--Nei} (TN) model from \cite{TN}; see also \cite{CS}.
Here, one considers Markov matrices of the form
\begin{equation}\label{eq:TN-def}
  M \, = \, \begin{pmatrix} 
  * & a^{}_{2} \ts \kappa^{}_{1} & a^{}_{3} & a^{}_{4} \\
  a^{}_{1} \kappa^{}_{1} & * & a^{}_{3} & a^{}_{4} \\
  a^{}_{1} & a^{}_{2} & * & a^{}_{4} \kappa^{}_{2} \\
  a^{}_{1} & a^{}_{2} & a^{}_{3} \kappa^{}_{2} & * 
  \end{pmatrix} 
\end{equation}
with $a^{}_{i} \geqslant 0$ and $\kappa^{}_{j}\geqslant 0$ for all
$i,j$, subject to the condition that $M$ is Markov, which means that
the sum of the off-diagonal elements in each row must not exceed
$1$. The $*$ in each row is the unique number to ensure row sum $1$.
Similarly, when the row sums are all $0$, we shall speak of a TN
generator. The algebraic structure of TN generators is such that their
exponential is always a Markov matrix of TN type, see \cite{CS}, while
it is not clear that a real logarithm should preserve the structure.

TN matrices occur within the often-used hierarchy of
\emph{time-reversible} models, like those implemented in popular
computational phylogenetics software such as \cite{Darriba}.  While
there is no principal reason to restrict to time-reversible models,
this class is pretty versatile and seems general enough while having a
number of computational advantages; compare \cite{Fels}.  Time
reversibility also implies that these matrices have real spectrum, and
they include the HKY matrices from \cite{HKY} via
$\kappa^{}_{1} = \kappa^{}_{2}$.

$M$ from \eqref{eq:TN-def} is always diagonalisable, and has spectrum
$\sigma (M) = \{ 1, \lambda_1, \lambda_2, \lambda_3\} \subset \RR$ with
\[
\begin{split}
   \lambda^{}_{1} \, & = \, 1 - (a^{}_{1} + a^{}_{2} + a^{}_{3} + a^{}_{4})
   \ts ,  \\
   \lambda^{}_{2} \, & = \, 1 - \kappa^{}_{1} (a^{}_{1} + a^{}_{2}) 
         - (a^{}_{3} + a^{}_{4})  \ts , \\
   \lambda^{}_{3} \, & =  \, 1 - (a^{}_{1} + a^{}_{2}) - \kappa^{}_{2}
        (a^{}_{3} + a^{}_{4})   \ts .   
\end{split}        
\]
The spectrum is generically simple. However, the Markov condition does
not imply that all eigenvalues are positive. The $\lambda_i$ all lie
in $(0,1)$ if and only if
\begin{equation}\label{eq:TN-cond}
\begin{split}
   0 \, < & \, \min \{ 1, \kappa^{}_{1} \} (a^{}_{1} + a^{}_{2} ) +
    \min \{ 1, \kappa^{}_{2} \} (a^{}_{3} + a^{}_{4} )  \quad \text{and} \\
    &  \, \max \{ 1, \kappa^{}_{1} \} (a^{}_{1} + a^{}_{2} ) +
    \max \{ 1, \kappa^{}_{2} \} (a^{}_{3} + a^{}_{4} )   \, < \, 1 \ts ,
\end{split}    
\end{equation}
where the first condition ensures that $1$ is a simple eigenvalue.
This gives the following generic answer, by an application of
Proposition~\ref{prop:4-real}.

\begin{coro}\label{coro:TN-gen}
  Let\/ $M\in\cM_4$ be a TN matrix as in \eqref{eq:TN-def}, and assume
  that it is simple. Then, $M$ is embeddable if and only if the
  conditions in \eqref{eq:TN-cond} are satisfied. In this case, the
  embedding is unique, and the generator is of TN type.  \qed
\end{coro}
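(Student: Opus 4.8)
The plan is to apply Proposition~\ref{prop:4-real} directly, since a simple TN matrix is in particular a simple (hence cyclic) Markov matrix with real spectrum, so the generic cyclic machinery of Section~\ref{sec:four} applies verbatim. The two conditions of that proposition are: (i) all eigenvalues positive, and (ii) the real logarithm $R = \log(\one + A)$ is a generator. The first part of the proof is to verify that condition (i) is \emph{equivalent} to the displayed inequalities \eqref{eq:TN-cond}. This is elementary: the eigenvalues $\lambda_1,\lambda_2,\lambda_3$ are given explicitly above in terms of $a_i,\kappa_j$, and demanding each lies in $(0,1)$ unpacks into the stated pair of min/max conditions, with the first line (strict positivity) precisely ensuring that $\lambda=1$ is not also an eigenvalue of the block, i.e. that $1$ is simple (so that $M$ is genuinely simple as assumed).

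**Next I would** address the nontrivial half: showing that when the spectrum is positive, condition (ii) is \emph{automatically} satisfied, so that $M$ is embeddable and, moreover, the generator is of TN type. Here the key input is the cited algebraic structure of the TN class \cite{CS}: the TN generators form a (commutative, since they are simultaneously diagonalised by the fixed reversibility basis) linear subspace $\mathcal{T}$ of $\cA^{(4)}_{0}$, and $\exp$ maps $\mathcal{T}$ into the TN Markov matrices. The clean way to see that $\log(\one+A)$ lands in $\mathcal{T}$ and is a generator is via the \emph{spectral mapping} picture: $M$ is reversible, hence orthogonally diagonalisable in a fixed basis, and $Q \defeq \log(\one+A) = \alpha A + \beta A^2 + \gamma A^3$ (Corollary~\ref{coro:alg}, with $\alpha,\beta,\gamma$ from Eq.~\eqref{eq:4-real}) is a real polynomial in $A$, so it is diagonal in the same basis with eigenvalues $0, \log\lambda_1, \log\lambda_2, \log\lambda_3$, all of which are negative. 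One then checks that a real matrix which is diagonalised by the TN basis, has zero row sums, and has this spectral shape must in fact have nonnegative off-diagonal entries in the TN pattern \eqref{eq:TN-def}; equivalently, one appeals directly to the structural result of \cite{CS} that the real logarithm of a TN Markov matrix with positive spectrum is a TN generator.

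**The main obstacle** is precisely this last point: establishing that the off-diagonal entries of $Q = \log(\one+A)$ are genuinely nonnegative, i.e. that $Q$ really is a \emph{generator} and not merely a real logarithm with the right spectrum. For the general simple case in Proposition~\ref{prop:4-real} this is \emph{not} automatic — that is exactly why condition (ii) there is a separate hypothesis rather than a consequence of (i). What rescues the TN case is the extra algebraic rigidity: the TN pattern is closed under products (one verifies $\mathcal{T}$ together with $\one$ is closed under multiplication, so $\RR[A] \subseteq \mathcal{T} \oplus \RR\one$), and within this smaller algebra the sign conditions on the off-diagonal entries are controlled by the (negative) eigenvalues through explicit, sign-definite formulas. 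So the real work is the bookkeeping of \cite{CS}: confirming that the three-term polynomial $\alpha A + \beta A^2 + \gamma A^3$, with coefficients \eqref{eq:4-real} evaluated at $\mu_i = \lambda_i - 1 \in (-1,0)$, produces off-diagonal entries of the correct sign in each of the twelve positions of \eqref{eq:TN-def}. Uniqueness then follows for free from Theorem~\ref{thm:cyclic-1} (or Proposition~\ref{prop:4-real}), since $M$ is cyclic with positive real spectrum, giving a unique real logarithm with zero row sums.
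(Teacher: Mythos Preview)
Your approach matches the paper's: invoke Proposition~\ref{prop:4-real}, identify \eqref{eq:TN-cond} with condition (1) there, and then use the TN algebraic structure from \cite{CS} to verify that condition (2) --- the principal logarithm being a generator, and of TN type --- holds automatically once the spectrum is positive; uniqueness is then immediate from the cyclic case.

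One correction worth making: the TN generators do \emph{not} globally form a commutative family or share a single diagonalising basis, since different parameter vectors $(a_1,\ldots,a_4)$ give different stationary distributions, and the TN pattern with free $a_i,\kappa_j$ is not even a linear subspace (the constraints relating the $\kappa$-entries are quadratic). What actually works, and is all you need, is the statement for the \emph{fixed} $M$ at hand: $\RR[A]$ is diagonalised by the specific eigenbasis of $M$ (determined by its $a_i$), and one checks --- this is the content of the relevant part of \cite{CS} --- that any zero-row-sum matrix diagonal in that basis with eigenvalues $(0,\log\lambda_1,\log\lambda_2,\log\lambda_3)$ is a TN generator with the same $a_i$ and suitably adjusted $\kappa_j$.
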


When $M$ fails to be simple, the spectrum is still real, but has one
or several degeneracies. If $\lambda_1=\lambda_2=\lambda_3$, we must
have $\kappa^{}_{1} =1$ or $a^{}_{1}=a^{}_{2}=0$ together with
$\kappa^{}_{2} =1$ or $a^{}_{3}=a^{}_{4}=0$. In any of these cases,
one is back to the equal-input matrices, with
$c=a^{}_{1} + \ldots + a^{}_{4}$, which are fully covered by
Corollary~\ref{coro:mut-EI}.

The remaining degenerate cases can lead to the JNF
$\one^{}_{2} \oplus \lambda \one^{}_{2}$ with $\lambda \in (0,1)$, for
instance via $\kappa^{}_{1}=a^{}_{3} = a^{}_{4}=0$, which is always
embeddable by Lemma~\ref{lem:4-deg-2-mess}, but possibly not in a
unique way. Finally, one can have the JNF
$\diag (1, \lambda) \oplus\lambda'\one^{}_{2}$ within the HKY class
(where $\kappa^{}_{1} = \kappa^{}_{2}$), with $1,\lambda,\lambda'$
distinct, then needing Lemma~\ref{lem:4-deg-3-mixed}. We leave further
details to the reader.  \smallskip

The \emph{Kimura} 3 ST model, or K3{\ts}S{\ts}T for short, was
introduced in \cite{Kim81} and comprises all Markov matrices of the
form
\begin{equation}\label{eq:K3ST}
   M \, = \, \begin{pmatrix} * & x & y & z \\
   x & * & z & y \\ y & z & * & x \\ z & y & x & * \end{pmatrix}
   \, = \, \bigl( 1 - (x+y+z)\bigr) \one + x K_1 + y K_2 + z K_3
\end{equation}
with parameters $x,y,z \geqslant 0$ subject to the condition
$x+y+z\leqslant 1$. In each row of $M$, the $*$ again stands for the
unique element that makes the row sum equal to $1$, and the definition
of the matrices $K_i$ is implicit. Under matrix multiplication, the
four mutually commuting matrices
$\{ \one, K_1, K_2, K_3 \} \simeq C_2 {\times}\ts C_2$ form Klein's
$4$-group, with $C_2$ denoting the cyclic group with two elements. The
class of K3{\ts}S{\ts}T \emph{generators} are the matrices of the form
\begin{equation}\label{eq:kst-gen}
    Q (x,y,z) \, = \, x K_1 + y K_2 + z K_3 - (x+y+z) \one \ts,
\end{equation}
which thus constitute an Abelian class under matrix multiplication.

Note also that, under the $( A, G, C, T)$-ordering, Kimura's two
parameter model from \cite{Kim80}, called K2{\ts}P, is the special
case where one takes $y=z$. It was discussed in detail, with various
surprising results, in \cite{CFR-K2}. Further, the set of matrices
that are both K3{\ts}S{\ts}T and equal input are precisely the
constant-input matrices.

The matrix $M$ in \eqref{eq:K3ST} is symmetric, hence always has real
spectrum, namely
\begin{equation}\label{eq:eigen-order}
    \sigma (M) \, = \, \{ 1, 1-2(x+z), 1-2(y+z), 1-2(x+y) \} 
    \, = \, \{ 1, \lambda_1, \lambda_2, \lambda_3 \} \ts ,
\end{equation}
which is meant as a multi-set if degeneracies occur.  Here,
$\deg (q_{_M})=1$ only for $M=\one$, while $\deg (q_{_M}) = 2$ occurs
for $x=y=z\ne 0$. This gives the constant-input matrices, which are
covered by Corollary~\ref{coro:mut-EI}. Next, when
$\deg (q_{_M}) = 3$, we have $x\ne y = z$, which is the K2{\ts}P model
covered in \cite{CFR-K2}, or a scheme that is equivalent to it via a
permutation from $S_4$.

Finally, in the generic case that $M$ in \eqref{eq:K3ST} is simple,
which is true if and only if the non-negative numbers $x,y,z$ are
distinct, we are in the situation of Proposition~\ref{prop:4-real}.
In particular, $M$ has a real logarithm if and only if all its
eigenvalues are positive. There is then only one candidate,
$\log (\one + A)$, which is a generator \cite{CFR-K3} if and only if
the three non-unit eigenvalues $\lambda_1, \lambda_2, \lambda_3$
satisfy the three inequalities
\begin{equation}\label{eq:products}
    \lambda_1 \, \geqslant \, \lambda_2 \ts \lambda_3 \, , \quad
    \lambda_2 \, \geqslant \, \lambda_1 \ts \lambda_3 \, , \quad
    \lambda_3 \, \geqslant \, \lambda_1 \ts \lambda_2 \ts .
\end{equation}
This can here be derived from Proposition~\ref{prop:4-real}, which
gives an explicit form of the only possible generator, or from an
explicit argument based on the diagonalisation of $M$ with the
involutory Fourier matrix
$W = \frac{1}{2} \left( \begin{smallmatrix} 1 & 1 \\ 1 &
    -1 \end{smallmatrix} \right) \otimes \left( \begin{smallmatrix} 1
    & 1 \\ 1 & -1
  \end{smallmatrix} \right)$ from the discrete Fourier transform over
$C_2 {\ts\times\ts} C_2$; see \cite{CFR-K3} for a previous derivation
and some details. The surprisingly simple set of inequalities emerges
from additive conditions on the logarithms of the eigenvalues, for the
generator property of $\log (\one + A)$, upon exponentiation.

\begin{coro}\label{coro:K3ST}
  Let the matrix\/ $M$ from \eqref{eq:K3ST} be simple, so\/
  $\sigma (M) = \{ 1, \lambda_1, \lambda_2, \lambda_3 \}$ with
  distinct\/ $\lambda_i$, all different from\/ $1$. Then, $M$ is
  embeddable if and only if all\/ $\lambda_i > 0$ together with
  obeying the inequalities in \eqref{eq:products}. In this case, with
  the\/ $\lambda_i$ in the order from \eqref{eq:eigen-order} and then
  setting
\[
    (s^{}_{1}, s^{}_{2}, s^{}_{3}) \, \defeq \,  s^{}_{1} \log(\lambda^{}_1) 
    + s^{}_{2} \log(\lambda^{}_2) + s^{}_{3} \log(\lambda^{}_3) 
    \quad \text{for } s^{}_{i} \in \{ +,-\} \ts ,
\]
    one has\/ $M=\ee^Q$ with\/ 
\[
      Q \, = \, \myfrac{1}{4} \begin{pmatrix}
      (+,+,+) & (-,+,-) & (+,-,-) & (-,-,+) \\
      (-,+,-) & (+,+,+) & (-,-,+) & (+,-,-) \\
      (+,-,-) & (-,-,+) & (+,+,+) & (-,+,-) \\
      (-,-,+) & (+,-,-) & (-,+,-) & (+,+,+) \end{pmatrix}
\]
which is a\/ \textnormal{K3{\ts}S{\ts}T} generator, and the embedding
is unique.  \qed
\end{coro}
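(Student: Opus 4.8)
The plan is to reduce everything to the diagonalisation of $M$ by the Fourier matrix $W$, using Proposition~\ref{prop:4-real} for the abstract embedding criterion and the group structure of $\{ \one, K_1, K_2, K_3 \}$ to extract the explicit generator. Since $M$ from \eqref{eq:K3ST} is simple with real spectrum, Proposition~\ref{prop:4-real} (equivalently Fact~\ref{fact:Culver}) already gives that $M$ is embeddable if and only if all eigenvalues are positive and $\log (\one + A)$ is a generator, and that, in the affirmative case, this is the unique embedding. It thus remains to make the generator condition explicit and to write $\log (\one + A)$ in closed form.

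First I would record that $W$ is symmetric and involutory --- a direct check, since $\left( \begin{smallmatrix} 1 & 1 \\ 1 & -1 \end{smallmatrix} \right)^{2} = 2 \one^{}_{2}$ gives $W^2 = \one^{}_{4}$ --- and that conjugation by $W$ simultaneously diagonalises $\one, K_1, K_2, K_3$, because $W$ is, up to normalisation, the character table of $C_2 \times C_2$; equivalently, $W^{-1} \langle \one, K_1, K_2, K_3 \rangle^{}_{\RR} W$ is the algebra of diagonal matrices, and conversely. Assuming all $\lambda_i > 0$, the principal logarithm is therefore $\log M = W \ts \diag \bigl( 0, \log \lambda_1, \log \lambda_2, \log \lambda_3 \bigr) W$, with the matching of eigenvalues to Fourier modes fixed by \eqref{eq:eigen-order}. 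Since conjugating a diagonal matrix back by $W$ returns an element of $\langle \one, K_1, K_2, K_3 \rangle^{}_{\RR}$, we get $\log M = a^{}_0 \one + a^{}_1 K_1 + a^{}_2 K_2 + a^{}_3 K_3$ for some reals $a^{}_i$. This matrix has zero row sums --- the all-ones vector, being a $1$-eigenvector of $M$, is a $0$-eigenvector of $\log M$ --- so it lies in $\cA^{(4)}_{0}$ and is a K3{\ts}S{\ts}T generator precisely when the off-diagonal coefficients $a^{}_1, a^{}_2, a^{}_3$ are non-negative.

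The explicit values of the $a^{}_i$ come from the inverse discrete Fourier transform over $C_2 \times C_2$. By the spectral mapping theorem \eqref{eq:SMT}, applying the four characters of $C_2 \times C_2$ to the coefficient vector $(a^{}_0, a^{}_1, a^{}_2, a^{}_3)$ must reproduce $\{ 0, \log \lambda_1, \log \lambda_2, \log \lambda_3 \}$; inverting this linear system (Fourier inversion with the sign table of the three non-trivial characters, ordered consistently with \eqref{eq:eigen-order}) yields $a^{}_0 = \frac{1}{4} \bigl( \log \lambda_1 + \log \lambda_2 + \log \lambda_3 \bigr)$ together with each of $a^{}_1, a^{}_2, a^{}_3$ of the form $\frac{1}{4} ( s^{}_1, s^{}_2, s^{}_3 )$ with exactly one $s^{}_i = +$. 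Placing these four numbers into the K3{\ts}S{\ts}T pattern of \eqref{eq:K3ST} produces exactly the displayed matrix $Q$, while the conditions $a^{}_1, a^{}_2, a^{}_3 \geqslant 0$ become, upon exponentiation, $\lambda_2 \geqslant \lambda_1 \lambda_3$, $\lambda_1 \geqslant \lambda_2 \lambda_3$ and $\lambda_3 \geqslant \lambda_1 \lambda_2$, i.e.\ precisely the inequalities \eqref{eq:products}. Together with the positivity requirement and the uniqueness already supplied by Proposition~\ref{prop:4-real}, this establishes the corollary.

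The step most prone to error is the bookkeeping in this Fourier inversion: one has to match each non-trivial character of $C_2 \times C_2$ with the correct $\lambda_i$ in the order fixed by \eqref{eq:eigen-order}, so that the signs in the entries of $Q$ come out exactly as displayed and not merely up to a permutation of the states. Everything else is routine. As an alternative that sidesteps the logarithm series, one can instead verify $\exp Q = M$ directly, by exponentiating the displayed $Q$ inside the commutative algebra $\langle \one, K_1, K_2, K_3 \rangle^{}_{\RR}$ (which $W$ diagonalises), and then check separately that $Q$ is a generator exactly under \eqref{eq:products}; uniqueness is then immediate from Fact~\ref{fact:Culver}, since $M$ is simple with positive spectrum.
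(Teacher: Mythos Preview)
Your proposal is correct and follows essentially the same approach as the paper: invoke Proposition~\ref{prop:4-real} for the abstract criterion and uniqueness, diagonalise via the involutory Fourier matrix $W$ of $C_2 \times C_2$, compute $\log M$ by inverse Fourier transform inside the commutative algebra $\langle \one, K_1, K_2, K_3\rangle^{}_{\RR}$, and read off the generator conditions $a^{}_1,a^{}_2,a^{}_3\geqslant 0$ as the inequalities \eqref{eq:products} after exponentiation. The paper merely sketches this route (citing \cite{CFR-K3} for details), whereas you spell out the Fourier inversion and the bookkeeping explicitly; the content is the same.
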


There are of course many more $4$-dimensional models used in
phylogenetics where the embedding problem is relevant and should be
studied. For instance, the strand-symmetric model is considered in
\cite{CFR-strand}. One interesting result for this model is that the
authors identify an open set of embeddable matrices where the
principal matrix logarithm is not a generator.

\section{Extension to time-inhomogeneous cases}\label{sec:inhom}

The standard continuous-time Markov chain solves the \emph{ordinary
  differential equation} (ODE) $\dot{M} = M Q$ with a constant
generator (or rate matrix) $Q$; see \cite{A,W} for an introduction to
classic ODE theory.  More generally, in many real-world applications,
one has to admit \emph{time-dependent} generators, $Q (t)$, which
leads to the Cauchy (or initial value) problem
\[
    \dot{M} (t) \, = \, M(t) \ts Q(t) \quad \text{with } M(0) = \one \ts .
\]
If the generators commute, that is, if $[ Q(t) , Q(s) ] = \nix$ for
all $t,s \geqslant 0$, as for instance the \mbox{K3{\ts}S{\ts}T}
generators from Eq.~\eqref{eq:kst-gen} do, the solution is simply
given by
\begin{equation}\label{eq:standard-solution}
    M (t) \, = \, \exp \Bigl( \, \text{\small $\int_{0}^{t}$ }
    \! Q(\tau) \dd \tau \Bigr) ,
\end{equation}
and any Markov matrix that arises in such a solution is also
embeddable in the classic sense discussed above, because
$\int_{0}^{t} Q (\tau) \dd \tau$ is still a generator. However, this
might (and will) change once one also considers families of generators
that do not commute.

Let us begin by recalling a standard result from ODE theory; see
\cite[Ch.~III]{A} for background and further details. Concretely,
consider the linear ODE on $\RR^d$, with a row vector $x(t)$ in view
of our setting, given by
\begin{equation}\label{eq:vector}
     \dot{x} (t) \, = \, x(t) \ts Q (t) 
\end{equation}
with $ Q \colon [t^{}_0 , t] \xrightarrow{\quad} \Mat (d, \RR)$
continuous, where $t > t^{}_0$ is arbitrary, but fixed. We shall also
need the limiting case where we let $t \to \infty$.  If $X(t)$ denotes
the corresponding (left) fundamental system, and setting $t^{}_0 = 0$,
it satisfies the Cauchy problem
\begin{equation}\label{eq:CP}
  \dot{X} (t) \, = \, X(t) \ts Q (t)  \quad
  \text{with } X (0) \, = \, \one \ts .
\end{equation}
Invoking the transposed version of the classic \emph{Peano--Baker
  series} (PBS), see \cite{BSch} and references therein, one has the
following consequence of the Picard--Lindel\"{o}f theorem in
conjunction with the standard Picard iteration,\footnote{We refer to
  the \textsc{WikipediA} entry on the Picard--Lindel\"{o}f theorem for
  a summary, and to \cite{W} for details.}  as follows from
\cite[Thms.~1 and 2]{BSch} by matrix transposition.

\begin{prop}\label{prop:PBS}
  The Cauchy problem of Eq.~\eqref{eq:CP} with a continuous matrix
  function\/ $Q$ on\/ $\RR_{\geqslant 0}$ has a unique solution.  It
  can be represented by the P{\nts}BS
\[
    X (t) \, = \, \one + \sum_{n\geqslant 1} I_n (t)
\]   
with\/ $I^{}_1 (t) = \int_{0}^{t} Q (\tau) \dd \tau$ and the recursive
structure\/
$I_{n+1} (t) = \int_{0}^{t} I_n (\tau) \ts Q (\tau) \dd \tau$ for\/
$n \in \NN$. In particular, this series is compactly convergent in any
standard matrix norm.  \qed
\end{prop}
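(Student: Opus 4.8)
The plan is to establish Proposition~\ref{prop:PBS} by a direct appeal to the classical Picard--Lindel\"of theory, combined with the standard observation that the Picard iterates for a \emph{linear} Cauchy problem are exactly the partial sums of the Peano--Baker series. First I would fix $t > 0$ and work on the compact interval $[0,t]$, on which $Q$ is continuous and hence bounded, say $\| Q (\tau) \| \leqslant m$ for all $\tau \in [0,t]$ in some fixed sub-multiplicative matrix norm. The right-hand side of \eqref{eq:CP}, viewed as a map $X \mapsto X\ts Q(\tau)$ on $\Mat (d,\RR)$, is linear in $X$ and therefore globally Lipschitz with constant $m$, uniformly in $\tau \in [0,t]$. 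The Picard--Lindel\"of theorem (in the Banach-space formulation, applied on $\Mat (d,\RR) \cong \RR^{d^2}$) then yields existence and uniqueness of a solution on all of $[0,t]$, and since $t$ was arbitrary, on $\RR_{\geqslant 0}$. This takes care of the first sentence of the statement.

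Next I would identify the solution with the Peano--Baker series. Starting the Picard iteration from $X^{(0)} \equiv \one$, one has $X^{(n+1)} (t) = \one + \int_0^t X^{(n)} (\tau) \ts Q (\tau) \dd \tau$, and an immediate induction shows $X^{(n)} (t) = \one + \sum_{k=1}^{n} I_k (t)$ with $I_1, I_2, \ldots$ exactly as defined in the statement: indeed $X^{(1)} (t) = \one + \int_0^t Q(\tau) \dd\tau = \one + I_1 (t)$, and if $X^{(n)} = \one + \sum_{k=1}^n I_k$, then
\[
  X^{(n+1)} (t) \, = \, \one + \int_0^t \Bigl( \one + \sum_{k=1}^n I_k (\tau) \Bigr) Q(\tau) \dd\tau
  \, = \, \one + I_1 (t) + \sum_{k=1}^n I_{k+1} (t) \, = \, \one + \sum_{k=1}^{n+1} I_k (t) ,
\]
using the recursion $I_{k+1} (t) = \int_0^t I_k (\tau) \ts Q(\tau) \dd\tau$. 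Hence the Picard iterates are precisely the partial sums of the stated series, and since the Picard iteration converges (uniformly on $[0,t]$) to the unique solution, the series $\one + \sum_{n\geqslant 1} I_n (t)$ converges to $X(t)$.

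It remains to establish the compact (indeed uniform-on-$[0,t]$) convergence quantitatively, which also re-proves convergence without relying on the abstract iteration. A routine induction on the nested integrals gives the bound $\| I_n (t) \| \leqslant (m\ts t)^n / n!$ for $t\in[0,T]$: the base case is $\| I_1 (t)\| \leqslant \int_0^t \| Q(\tau)\| \dd\tau \leqslant m\ts t$, and if $\| I_n (\tau)\| \leqslant (m\tau)^n/n!$ then $\| I_{n+1} (t)\| \leqslant \int_0^t (m\tau)^n/n! \cdot m \dd\tau = (m\ts t)^{n+1}/(n{+}1)!$. Consequently $\sum_{n\geqslant 1} \sup_{t\in[0,T]} \| I_n (t)\| \leqslant \sum_{n\geqslant 1} (m\ts T)^n/n! = \ee^{m\ts T} - 1 < \infty$, so by the Weierstrass $M$-test the series converges absolutely and uniformly on every compact interval $[0,T]$, i.e.\ compactly on $\RR_{\geqslant 0}$; since all matrix norms on $\Mat (d,\CC)$ are equivalent, this holds in any of them. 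Differentiating the series term by term (legitimate by uniform convergence of the differentiated series, which is again dominated in the same way) shows that the limit solves \eqref{eq:CP}, and by the uniqueness just proved it equals $X(t)$. I do not expect any real obstacle here: every ingredient is a textbook fact about linear ODEs, and the only mild care needed is to note that the Lipschitz constant $m$ depends on the compact interval but not on the point, which is exactly what makes the Picard--Lindel\"of conclusion global; this is the same mechanism by which $\ee^{tQ}$ is entire for constant $Q$.
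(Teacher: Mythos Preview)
Your argument is correct and matches the approach the paper indicates: the paper does not give its own proof but states the result as a consequence of the Picard--Lindel\"of theorem together with the standard Picard iteration, citing \cite[Thms.~1 and 2]{BSch} for the details. You have simply written out that argument in full, including the identification of the Picard iterates with the partial sums of the PBS and the $(mt)^n/n!$ bound yielding compact convergence via the Weierstrass $M$-test.
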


Note that the order under the integral is changed in comparison to
\cite{BSch}, which matches the changed order of matrices in
\eqref{eq:CP} and reflects the standard use of row sum normalisation
for Markov matrices in probability theory.  As explained in detail in
\cite{BSch}, the solution formula with the PBS reduces to the standard
one in Eq.~\eqref{eq:standard-solution} when the $Q (t)$ commute with
one another.

\begin{remark}
  Observe that $I^{}_2$ can be calculated in two different
  ways, namely
\[
    I^{}_2 (t) \, = \int_{0}^{t} \int_{0}^{t^{}_2} Q(t^{}_1)\ts Q(t^{}_2)
    \dd t^{}_1 \dd t^{}_2 \, = \int_{0}^{t} \int_{t^{}_2}^{t} Q(t^{}_2) \ts
    Q(t^{}_1) \dd t^{}_1 \dd t^{}_2 \ts ,
\]   
as follows from changing the order of integration together with a
change of variable transformation. Then, $I^{}_2$ can also be written
as
\[
    I^{}_2 (t) \, = \, \myfrac{1}{2}
    \int_{0}^{t} \int_{0}^{t} T [ Q(t^{}_1)\ts Q(t^{}_2)] \ts
    \dd t^{}_1 \dd t^{}_2
\]   
where $T$ denotes \emph{time ordering} according to
\[
    T [ Q(t^{}_1) Q(t^{}_2)]  \, \defeq \, \begin{cases}
    Q(t^{}_1) Q(t^{}_2) , & \text{if } t^{}_1 \leqslant t^{}_2 , \\
    Q(t^{}_2) Q(t^{}_1),  & \text{otherwise}. \end{cases}  
\]   
Similarly, if $T[Q(t^{}_1)\ts Q(t^{}_{2}) \cdots Q(t^{}_n)]$ denotes
the analogous time-ordered version according to
$t^{}_1 \leqslant t^{}_2 \leqslant \cdots \leqslant t^{}_n$, one finds
the alternative expression
\[
    I^{}_n (t) \, = \, \myfrac{1}{n!} \int_{0}^{t} \int_{0}^{t}
    \cdots \int_{0}^{t} T[ Q(t^{}_1) \ts Q(t^{}_{2}) \cdots Q(t^{}_n)]
    \ts \dd t^{}_1 \dd t^{}_2 \cdots \dd t^{}_n \ts ,
\]
which is often used in physics, then usually with the opposite
ordering due to the action of the matrices to the right (instead of to
the left as above). The PBS is then called the Dyson series, or the
\emph{time-ordered exponential}, though its actual calculation (if
possible at all) is a lot easier with the recursive formulation stated
in Proposition~\ref{prop:PBS}.  \exend
\end{remark}

\begin{theorem}\label{thm:Markov}
  Consider the Cauchy problem of Eq.~\eqref{eq:CP} under the
  assumption that\/ $Q(t)$ is continuous and a Markov generator for
  all\/ $t \geqslant 0$.  Then, the solution flow\/
  $\{ X (t) : t \geqslant 0 \}$ consists of Markov matrices only.
\end{theorem}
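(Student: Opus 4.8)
The plan is to establish the two defining properties of a Markov matrix for each $X(t)$ — equality of all row sums to one, and non-negativity of all entries — separately, reading both off from the Peano--Baker representation of Proposition~\ref{prop:PBS} after a harmless rescaling.

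The row-sum condition should be immediate. Writing $\bs{v} = (1,1,\ldots,1)^{\mathsf{T}}$, one has $Q(t)\ts\bs{v} = \nix$ for every $t\geqslant 0$, since each $Q(t)$ has zero row sums. Then $y(t) \defeq X(t)\ts\bs{v}$ solves $\dot{y}(t) = \dot{X}(t)\ts\bs{v} = X(t)\ts Q(t)\ts\bs{v} = \nix$ with $y(0) = \one\ts\bs{v} = \bs{v}$, so $y(t)\equiv\bs{v}$; equivalently, all row sums of $X(t)$ equal $1$ for all $t\geqslant 0$.

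For the non-negativity, I would fix an arbitrary $T>0$ and work on $[0,T]$. Since $Q$ is continuous, it is bounded there, so there is a constant $\mu = \mu^{}_{T} \geqslant 0$ with $Q(\tau) + \mu\ts\one \geqslant \nix$ entrywise for all $\tau\in[0,T]$; concretely $\mu^{}_{T} = \sup_{\tau\in[0,T]} \max_i \lvert q^{}_{ii}(\tau)\rvert$ works, using that the off-diagonal entries of a generator are already non-negative. Setting $Y(t) \defeq \ee^{\mu t}\ts X(t)$, a short computation gives $\dot{Y}(t) = Y(t)\bigl(Q(t) + \mu\ts\one\bigr)$ with $Y(0) = \one$, that is, $Y$ solves a Cauchy problem of the type in Eq.~\eqref{eq:CP} with the modified, entrywise non-negative generating function $\widetilde{Q} \defeq Q + \mu\ts\one$. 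By Proposition~\ref{prop:PBS}, one has $Y(t) = \one + \sum_{n\geqslant 1} \widetilde{I}_n(t)$ with $\widetilde{I}^{}_{1}(t) = \int_0^t \widetilde{Q}(\tau)\dd\tau$ and $\widetilde{I}^{}_{n+1}(t) = \int_0^t \widetilde{I}^{}_{n}(\tau)\ts\widetilde{Q}(\tau)\dd\tau$. For $t\in[0,T]$, every integral runs over $[0,t]\subseteq[0,T]$, where $\widetilde{Q}(\tau)\geqslant\nix$; since $\one$ is non-negative and products and integrals of entrywise non-negative matrices stay entrywise non-negative, an easy induction yields $\widetilde{I}^{}_{n}(t)\geqslant\nix$ for all $n$, hence $Y(t)\geqslant\nix$ and therefore $X(t) = \ee^{-\mu t}\ts Y(t)\geqslant\nix$ on $[0,T]$. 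As $T$ was arbitrary, $X(t)$ is entrywise non-negative for all $t\geqslant 0$, and together with the row-sum identity this gives $X(t)\in\cM_d$.

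The only real obstacle is the non-negativity, and the rescaling by $\ee^{\mu t}$ is what reduces it to a transparent statement: it replaces the generator, whose diagonal is non-positive, by a matrix with all entries non-negative, for which the termwise-positive Peano--Baker series settles matters at once. As an alternative, one could instead invoke the product-integral description of $X(t)$ as a limit of finite products $\prod_k \exp\bigl((t^{}_{k+1}-t^{}_{k})\ts Q(\xi^{}_{k})\bigr)$ over ever finer partitions of $[0,t]$; each factor is Markov because $(t^{}_{k+1}-t^{}_{k})\ts Q(\xi^{}_{k})$ is a generator, products of Markov matrices are Markov, and $\cM_d$ is closed, so the limit is Markov too. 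The PBS argument is, however, cleaner and stays within the framework already set up here.
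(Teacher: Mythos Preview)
Your argument is correct, and it takes a genuinely different route for the non-negativity part than the paper does. The paper establishes non-negativity via forward invariance of the positive cone $(\RR_{\geqslant 0})^d$ under the flow of Eq.~\eqref{eq:vector}: for $x$ on the boundary of the cone, $x\ts Q(t)$ points into the cone (since $x_i=0$ forces $(x\ts Q(t))_i\geqslant 0$), and then a general ODE result on forward-invariant sets (cited from \cite{A}) does the rest. Your argument instead stays entirely within the PBS framework already set up: the shift $\widetilde{Q}=Q+\mu\one$ makes all PBS terms for $Y=\ee^{\mu t}X$ manifestly non-negative by a trivial induction, and undoing the scalar factor recovers $X\geqslant\nix$. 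This is the matrix analogue of the standard trick $\ee^{t A}=\ee^{-\mu t}\ee^{t(A+\mu\one)}$ for constant $A$, and it has the advantage of being fully self-contained once Proposition~\ref{prop:PBS} is in hand --- no external invariance results needed. The paper's approach, in turn, is more geometric and points directly at why the structure persists, which is useful when one later wants to argue about other invariant sets or subalgebras. For the row sums, your direct computation with $y(t)=X(t)\bs{v}$ is essentially the same observation the paper makes immediately after the theorem; in the proof itself the paper reads the row sums off the PBS instead, but the two are interchangeable.
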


\begin{proof}
  This follows from standard results of ODE theory, as given in
  \cite[Sec.~16]{A}, so we only sketch the required steps.\footnote{To
    better appreciate the flow property of the solution, one should
    write $X(0,t)$ instead of $X(t)$, and more generally
    $X(t^{}_{0},t)$ for the solution on the time interval
    $[t^{}_{0}, t]$ with the fitting initial condition at $t^{}_{0}$,
    which then leads to the concatenation rule
    $X(t^{}_{0}, t^{}_{1} ) X(t^{}_{1}, t)= X(t^{}_{0}, t)$; see
    \cite{A} for more details.}  Consider Eq.~\eqref{eq:vector}. Under
  the assumption on $Q$, for every $x$ on the boundary of the closed
  cone $(\RR_{\geqslant 0})^d$, the vector $x \ts Q (t)$ has a
  direction that points inside the closed cone, because all
  off-diagonal elements of $Q (t)$ are non-negative and $x_i=0$ then
  implies $(x \ts Q (t))_i \geqslant 0$.  This means that all
  non-negative rescalings of $x \ts Q(t)$ remain in the cone and the
  flow thus cannot pass the boundary to the outside. By the transposed
  version of \cite[Thm.~16.9 and Cor.~16.10]{A}, which formulate the
  general result from \cite[Thm.~16.5]{A} for this special case,
  $(\RR_{\geqslant 0})^d$ is forward invariant under the flow.  Since
  this applies to any row of $X (t)$, with $X(0)=\one$, all entries of
  $X(t)$ for $t\geqslant 0$ are non-negative.

  Now, the row sums of all $I_n (t)$ in the PBS are zero, as can
  easily be checked inductively, because all operations stay within
  the (non-unital) algebra $\cA^{(d)}_{0}$ of real matrices with zero
  row sums. Since the first term of the PBS is $\one$, we see that the
  row sums of $X(t)$ are always $1$, which together with the above
  establishes the Markov property as claimed.
\end{proof}  

The fundamental system $X(t)$ for \eqref{eq:vector}, which solves the
matrix equation \eqref{eq:CP}, is the Markov matrix we are after,
called $M(t)$ from now on.  Let us add that
$\bs{1} \defeq (1, \ldots , 1)^{\mathsf{T}}$ is a right eigenvector of
$M(0) = \one$ with eigenvalue $1$. Since $Q(t)$ is a generator, we get
\[
   \frac{\dd}{\dd t} \bigl( M(t) \bs{1} \bigr) \, = \, \dot{M} (t) \bs{1}
   \, = \, M(t) \ts Q(t) \bs{1} \, = \, \bs{0} \ts .
\]
This shows that $\bs{1}$ is a right eigenvector of $M(t)$ for
\emph{all} $t\geqslant 0$, which is another way to see that the row
sums of $M(t)$ are always $1$.

\begin{coro}\label{coro:det}
  Let\/ $M (t) = X (t)$ be the solution from
  Theorem~\textnormal{\ref{thm:Markov}}, with\/ $Q (t)$ being a Markov
  generator for all\/ $t \geqslant 0$. Then, one has\/
  $0 < \det ( M (t)) \leqslant 1$ for all\/ $t \geqslant 0$.
\end{coro}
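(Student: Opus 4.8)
The plan is to invoke the classical Abel--Jacobi--Liouville formula for the determinant along the solution of a linear matrix ODE, adapted to the right-multiplication convention of Eq.~\eqref{eq:CP}. Writing $M(t) = \bigl( m^{}_{ij} (t) \bigr)^{}_{1\leqslant i,j\leqslant d}$, the map $t\mapsto \det M(t)$ is differentiable, being a polynomial in the differentiable entries of $M(t)$. Expanding its derivative by the multilinearity of $\det$ in the rows of $M(t)$ and inserting $\dot{M}(t) = M(t)\ts Q(t)$, one obtains the scalar linear ODE
\[
   \frac{\dd}{\dd t}\det M(t) \, = \, \tr\bigl( Q(t)\bigr)\,\det M(t) ,
\]
which is the transpose of the usual Liouville identity for $\dot{X} = A(t)\ts X$. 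Together with $\det M(0) = \det \one = 1$, this first-order linear equation is solved explicitly by
\begin{equation}\label{eq:trace}
   \det M(t) \, = \, \exp\Bigl( \, \text{\small $\int_{0}^{t}$}\!
   \tr\bigl( Q(\tau)\bigr)\dd\tau \Bigr) ,
\end{equation}
and since the exponential is strictly positive, $\det M(t) > 0$ for all $t\geqslant 0$. (Strict positivity also follows directly from the fact that $M(t)$ is a fundamental system of a linear ODE, hence invertible, so $\det M(t)$ never vanishes and stays positive by continuity from $\det M(0)=1$.)

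For the upper bound, I would use that $Q(\tau)$ is a Markov generator for every $\tau\geqslant 0$: its off-diagonal entries are non-negative and its row sums vanish, so each diagonal entry satisfies $q^{}_{ii}(\tau)\leqslant 0$, and therefore $\tr\bigl(Q(\tau)\bigr) = \sum_{i} q^{}_{ii}(\tau) \leqslant 0$. Consequently the exponent in Eq.~\eqref{eq:trace} is $\leqslant 0$, so $\det M(t) \leqslant \ee^{\ts 0} = 1$. Combining the two estimates yields $0 < \det M(t) \leqslant 1$ for all $t\geqslant 0$, as claimed.

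There is no serious obstacle here; the only point deserving a moment's care is the correct placement and sign of $Q(t)$ in the Liouville formula under the convention $\dot{M} = M\ts Q$ (row-sum normalisation, matrices acting from the right), which is why I would derive the scalar ODE directly from the multilinearity of $\det$ rather than quote a left-action version from a textbook. One could alternatively read Eq.~\eqref{eq:trace} off the flow property $M(t^{}_0,t^{}_1)\ts M(t^{}_1,t) = M(t^{}_0,t)$ together with the multiplicativity of $\det$, but the Liouville route is the cleanest.
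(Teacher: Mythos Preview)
Your argument is correct and essentially identical to the paper's: both apply the Abel--Liouville identity $\dot w(t)=\tr(Q(t))\,w(t)$ with $w(0)=1$ to obtain $\det M(t)=\exp\bigl(\int_0^t \tr Q(\tau)\,\dd\tau\bigr)$, and then use that a generator has non-positive diagonal (hence non-positive trace) to conclude $0<\det M(t)\leqslant 1$. The only cosmetic difference is that you sketch the derivation of Liouville's formula from multilinearity, whereas the paper simply cites it.
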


\begin{proof}
  Set $w (t) = \det (M (t))$. By Liouville's theorem \cite[Prop.~11.4
  and Cor.~11.5]{A}, which is sometimes also known as Abel's identity,
  we have $ \dot{w} (t) = \tr (Q (t)) \, w (t) $ for all
  $t\geqslant 0$, with $w(0) = 1$, and thus
\begin{equation}\label{eq:trace}
    w (t) \, = \, \exp \Bigl( \, \text{\small $\int_{0}^{t}$} \nts
         \tr (Q (\tau)) \dd \tau \Bigr) .
\end{equation}
Since, for all $\tau\geqslant 0$, the diagonal elements of $Q (\tau)$
are non-positive, also $\tr (Q(\tau))$ and hence the argument of the
exponential are non-positive, and the claim follows from the
properties of the exponential function.
\end{proof}

\begin{remark}\label{rem:Volterra}
  In general, one needs to go beyond continuous families
  $\{ Q(t) : t \geqslant 0 \}$.  In particular, one wants to include
  piecewise continuous functions, so that a jump from one generator to
  another is covered. This is achieved by simply replacing the Cauchy
  problem with the corresponding Volterra integral equation,
\[
  M(t) \, = \, \one + \! \int_{0}^{t} \! \!
  M (\tau) \ts Q(\tau) \dd \tau  ,
\]  
and using the solution theory accordingly. We skip further details of
this standard step, and refer to the literature for details
\cite{A,W}. As we shall see later, $Q(t)$ piecewise constant for
$t\geqslant 0$ will essentially be sufficient; see \cite{Good,FS} for
the full measure-theoretic treatment.  \exend
\end{remark}

The result of Theorem~\ref{thm:Markov} remains true if $Q(t)$ is
piecewise continuous, as one can then simply use the arguments in the
proof for each of the finitely many continuity intervals in time,
where the initial condition is always the last Markov matrix from the
previous interval. Beyond this, if $Q$ is locally Lebesgue integrable,
which means that each of its entries is a locally Lebesgue-integrable
function, one can approximate $Q (t)$ on any given compact time
interval by step functions, in the sense of standard Lebesgue theory,
and use a limit argument; see \cite[Ch.~VI.9]{Lang} for background.

\begin{coro}\label{coro:Markov}
  If\/ $Q(t)$ is a locally Lebesgue-integrable function of generators,
  and\/ $M(t)$ is a solution of the Volterra integral equation from
  Remark~\textnormal{\ref{rem:Volterra}}, this solution is Markov for
  all\/ $t\geqslant 0$, with\/ $M (0)=\one$.  \qed
\end{coro}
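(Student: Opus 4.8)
The plan is to split the claim into its two defining parts --- row sums equal to one, which is immediate, and non-negativity of all entries, which is the substantive point --- and to obtain the latter by reducing to the piecewise constant situation already settled in Theorem~\ref{thm:Markov} through an $L^{1}$-approximation of the generator family.

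First, on any compact interval $[0,T]$ the Volterra equation from Remark~\ref{rem:Volterra} has a unique solution $M(t)$: the Picard iteration, equivalently the Peano--Baker series of Proposition~\ref{prop:PBS}, converges because $\int_{0}^{T}\lVert Q(\tau)\rVert\dd\tau < \infty$ supplies the majorant $\sum_{n\geqslant 0}\frac{1}{n!}\bigl(\int_{0}^{T}\lVert Q\rVert\bigr)^{n}$. The row-sum property then follows exactly as in the proof of Theorem~\ref{thm:Markov}: each Picard iterate lies in $\one + \cA^{(d)}_{0}$ because $Q(\tau)\in\cA^{(d)}_{0}$ for almost every $\tau$ and $\cA^{(d)}_{0}$ is an algebra, so $M(t)\bs{1}=\bs{1}$ for all $t\geqslant 0$.

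For non-negativity, fix $T>0$ and let $Q_n$ be the step function on $[0,T]$ that on each of the $2^{n}$ dyadic subintervals equals the average of $Q$ over that subinterval. Since the Markov generators form a closed convex subset of $\Mat(d,\RR)$, every such average is again a generator, so each $Q_n$ is a piecewise constant family of generators, and $\lVert Q_n-Q\rVert_{L^{1}[0,T]}\to 0$ by the martingale (Lebesgue differentiation) theorem applied entrywise. Let $M_n(t)$ solve the Volterra equation with kernel $Q_n$. Applying Theorem~\ref{thm:Markov} successively on the finitely many intervals of constancy of $Q_n$, each time using the previous endpoint as initial value, shows that every $M_n(t)$ is Markov; in particular $\lVert M_n(\tau)\rVert\leqslant 1$ in the maximum-row-sum norm.

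The remaining, and only genuinely delicate, step is to pass to the limit. Subtracting the Volterra equations for $M_n$ and $M$ and using submultiplicativity gives
\[
   \lVert M_n(t)-M(t)\rVert \,\leqslant\,
   \int_{0}^{t}\lVert M_n(\tau)\rVert\,\lVert Q_n(\tau)-Q(\tau)\rVert\dd\tau
   \,+\, \int_{0}^{t}\lVert M_n(\tau)-M(\tau)\rVert\,\lVert Q(\tau)\rVert\dd\tau ,
\]
where the first term is at most $\lVert Q_n-Q\rVert_{L^{1}[0,T]}$ by the bound just noted; Gr\"onwall's inequality in integral form, valid for the $L^{1}$ weight $\lVert Q(\cdot)\rVert$, then yields $\sup_{0\leqslant t\leqslant T}\lVert M_n(t)-M(t)\rVert \leqslant \lVert Q_n-Q\rVert_{L^{1}[0,T]}\,\exp\bigl(\int_{0}^{T}\lVert Q(\tau)\rVert\dd\tau\bigr)$, which tends to $0$. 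Hence $M_n(t)\to M(t)$ for each $t\in[0,T]$, and since every $M_n(t)$ lies in the closed set $\cM_d$, so does $M(t)$. As $T$ was arbitrary and $M(0)=\one$ is built into the equation, $M(t)$ is Markov for all $t\geqslant 0$. I expect the main obstacle to be precisely this stability estimate under mere $L^{1}$-convergence of the generators; once the Gr\"onwall bound is phrased in terms of $\lVert Q_n-Q\rVert_{L^{1}}$, the closedness and convexity of $\cM_d$ finish the argument.
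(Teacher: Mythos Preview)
Your proof is correct and follows essentially the same approach the paper sketches in the paragraph preceding the corollary: approximate $Q$ on a compact time interval by generator-valued step functions and pass to the limit. The paper leaves this as ``use a limit argument'' with a reference to Lang, whereas you supply the concrete details --- dyadic averaging (so that convexity of the generator cone guarantees the approximants are again generators) and the Gr\"onwall stability estimate in $L^{1}$ --- but these are natural elaborations rather than a different route.
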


An important observation is that $M (t)$ is absolutely continuous with
bounded entries, so $M(t) \ts Q(t)$ is again locally integrable.  The
same type of observation applies to the iterative definition of the
$I_n (t) $ in the PBS, which implies that each of these integrals
defines an absolutely continuous matrix function, with
$I_n (t) \in \cA^{(d)}_{0}$ for all $n\in\NN$ and $t\geqslant 0$, as
one can show by induction. This has the following consequence.

\begin{coro}\label{coro:algebra}
  Let\/ $\cA$ be a subalgebra of\/ $\cA^{(d)}_{0}\!$, hence closed
  under addition and matrix multiplication, and assume that\/ $Q (t)$
  defines a locally Lebesgue-integrable matrix function of generators,
  with\/ $Q (t) \in \cA$ for all\/ $t\geqslant 0$.  Then, the PBS
  for\/ $M(t)$ defines an absolutely continuous function of Markov
  matrices that satisfy\/ $M (t) = \one + A(t)$ for all\/
  $t\geqslant 0$, where each\/ $A(t)$ is a rate matrix from\/ $\cA$.
\end{coro}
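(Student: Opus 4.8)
The plan is to push the Peano--Baker series through the subalgebra $\cA$ term by term, exploiting that a finite-dimensional subspace is automatically closed. First I would invoke Corollary~\ref{coro:Markov}: under the stated local integrability hypothesis the PBS solution $M(t)$ is a well-defined Markov matrix for every $t\geqslant 0$, with $M(0)=\one$, and $M(t)-\one=\sum_{n\geqslant 1} I_n(t)$ with the series compactly convergent in any matrix norm. So it only remains to show that $A(t)\defeq M(t)-\one$ is a rate matrix lying in $\cA$, and that $t\mapsto A(t)$ is absolutely continuous.

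For membership in $\cA$, I would argue by induction on $n$ that $I_n(t)\in\cA$ for all $t\geqslant 0$. For $n=1$, one has $I_1(t)=\int_0^t Q(\tau)\dd\tau$; since $Q(\tau)\in\cA$ for a.e.\ $\tau$ and $\cA$ is a finite-dimensional, hence closed, linear subspace of $\Mat(d,\RR)$, the componentwise Lebesgue integral — computed in a basis of $\Mat(d,\RR)$ adapted to $\cA$, or equivalently by noting that the linear projection onto $\cA$ along a complement commutes with integration — again lies in $\cA$. For the inductive step, $I_n(\tau)\in\cA$ and $Q(\tau)\in\cA$ give $I_n(\tau)\ts Q(\tau)\in\cA$ because $\cA$ is closed under matrix multiplication; integrating once more and using closedness of $\cA$ as above yields $I_{n+1}(t)\in\cA$. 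Summing over $n$ and using that the closed set $\cA$ contains the limits of its convergent series, we conclude $A(t)\in\cA$ for every $t\geqslant 0$.

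It then remains to check that $A(t)$ is a rate matrix. Its row sums vanish since $A(t)\in\cA\subseteq\cA^{(d)}_{0}$ (this also follows directly from $I_n(t)\in\cA^{(d)}_{0}$ for all $n$, as noted just before the statement). Its off-diagonal entries are non-negative because $M(t)=\one+A(t)$ is a non-negative matrix by Corollary~\ref{coro:Markov}, and adding $\one$ affects only the diagonal. Hence $A(t)$ is a rate matrix in $\cA$. Finally, absolute continuity of $t\mapsto M(t)$, equivalently of $t\mapsto A(t)$, is already built into the Volterra formulation of Remark~\ref{rem:Volterra}: the identity $M(t)=\one+\int_0^t M(\tau)\ts Q(\tau)\dd\tau$ exhibits $M$ as an indefinite integral of the locally Lebesgue-integrable function $M(\cdot)\ts Q(\cdot)$ — locally integrable because $M$ is bounded on compact time intervals while $Q$ is locally integrable — so $M$ is locally absolutely continuous, and so is $A=M-\one$.

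The only mildly delicate point is the very first one: that a Lebesgue (Bochner) integral of an $\cA$-valued integrable function stays in $\cA$. This is where I expect the (small) work to sit, but since $\dim\cA<\infty$ it is immediate once one fixes a linear complement and observes that the projection onto $\cA$ commutes with the integral; no Hahn--Banach argument or general Bochner-integration theory is needed. Everything else is a routine assembly of Corollary~\ref{coro:Markov}, the closure properties of $\cA$, and the compact convergence of the PBS.
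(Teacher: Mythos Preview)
Your proof is correct and follows essentially the same route as the paper: an induction on $n$ showing $I_n(t)\in\cA$ via the closure properties of the subalgebra, followed by passing to the limit and reading off the rate-matrix property from Corollary~\ref{coro:Markov}. You are simply a bit more explicit than the paper about why the integral stays in the finite-dimensional (hence closed) subspace $\cA$ and about where the absolute continuity comes from.
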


\begin{proof}
  The first integral, $I^{}_1 (t) = \int_{0}^{t} Q(\tau) \dd \tau$, is
  well defined and absolutely continuous, with $I^{}_1 (t) \in \cA$
  for each $t\geqslant 0$.  Then, the integrand for $I^{}_2$ is
  $I^{}_1 \ts Q$, which is Lebesgue-integrable on any interval of type
  $[0,t]$, and
  $I^{}_2 (t) = \int_{0}^{t} I^{}_1 (\tau)\ts Q(\tau) \dd \tau$ is
  again well defined and absolutely continuous. Since each
  $I^{}_2 (t)$ lies once again in $\cA$, we can argue inductively and
  obtain that $M (t) = \one + A(t)$, where
  $A(t) = \sum_{m=1}^{\infty} I_m (t)$ converges and lies in $\cA$ for
  each $t\geqslant 0$.

  Thus, we see that each $A(t)$ has zero row sums. Since $M(t)$ is
  Markov, by Corollary~\ref{coro:Markov}, we can conclude that $A(t)$
  is actually a rate matrix, for any $t\geqslant 0$.
\end{proof}

The significance of this statement is that, even under our
time-inhomogeneous scheme, the type of generator within a certain
class (as given by a subalgebra of $\cA^{(d)}_{0}\!$, say) forces the
same type of structure on the Markov matrices, as we have seen for the
equal-input matrices discussed in and around
Corollary~\ref{coro:mut-EI}. This can be seen as some kind of
consistency property, which needs the algebraic notion.

\begin{example}\label{ex:doubly-stoch}
Consider the two symmetric generators
\[
   Q^{}_{1} \, = \, \begin{pmatrix}
   -1 & 1 & 0 \\ 1 & -1 & 0 \\ 0 & 0 & 0 \end{pmatrix}
   \quad \text{and} \quad
   Q^{}_{2} \, = \, \begin{pmatrix}
   -1 & 0 & 1 \\ 0 & 0 & 0 \\ 1 & 0 & -1 \end{pmatrix} ,
\]
which lead to the Markov matrices
\[
    M^{}_{1} (t) \, = \, \ee^{t \ts  Q_1} \, = \, \begin{pmatrix}
    1{-}a & a & 0 \\ a & 1{-}a & 0 \\ 0 & 0 & 1 \end{pmatrix}
    \quad \text{and} \quad
    M^{}_{2} (s) \, = \, \ee^{s \ts Q_2} \, = \, \begin{pmatrix}
    1{-}b & 0 & b \\ 0 & 1 & 0 \\ b & 0 & 1{-}b \end{pmatrix}
\]
with $a=\frac{1}{2} (1 - \ee^{-2 \ts t})$ and $b = \frac{1}{2} 
(1 - \ee^{-2 s})$. They are also symmetric, while this is no
longer the case for the (doubly stochastic) product,
\begin{equation}\label{eq:bad-prod}
   M^{}_{1} (t) M^{}_{2} (s) \, = \, \begin{pmatrix}
   (1{-}a)(1{-}b) & a & (1{-}a) b \\ a (1{-}b) & 1{-}a & ab \\
   b & 0 & 1{-}b \end{pmatrix} .
\end{equation}
Indeed, the symmetric generators do \emph{not} form an algebra, but
sit inside the doubly stochastic generators, which do, and which thus
form the relevant class to consider in this context.  \exend
\end{example}

Note that the matrix in \eqref{eq:bad-prod} is not embeddable, due to
the $0$ in the last row, though it is a product of two embeddable
ones. Thus, we now extend the notion of embedabbility of Markov
matrices as follows.

\begin{definition}\label{def:gen}
  A Markov matrix $M\in\cM_d$ is called embeddable \emph{in the
    generalised sense}, or g-\emph{embeddable} for short, when it
  occurs as an $X(t)$ in the solution of Eq.~\eqref{eq:CP}, or in the
  corresponding Volterra integral equation from
  Remark~\ref{rem:Volterra}, with $\{ Q (t) : t \geqslant 0 \}$ being
  a locally Lebesgue-integrable family of Markov generators.
  
  The set of all g-embeddable Markov matrices in $d$ dimensions
  is denoted by $\gemb$.
\end{definition} 

The more general version with Lebesgue integrability of $Q (t)$ will
effectively be simplified to a piecewise continuous generator family
shortly. Clearly, an embeddable Markov matrix is also g-embeddable,
via considering a constant function $Q(t)\equiv Q$.  For $d=2$,
Corollary~\ref{coro:det} also gives us the following consequence.

\begin{coro}\label{coro:Kendall-inhom}
  If a Markov matrix\/ $M \in \cM_2$ is \textnormal{g}-embeddable, it
  is also embeddable, and the two notions of embeddability agree for\/
  $d=2$, so\/
  $\cM^{\ts\vphantom{g}\mathrm{e}}_{2} = \cM^{\mathrm{ge}}_{2}\!$.
\end{coro}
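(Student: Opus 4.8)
The plan is to read off $\det(M)>0$ from Corollary~\ref{coro:det} and then invoke Kendall's criterion. The inclusion $\cM^{\ts\vphantom{g}\mathrm{e}}_{2}\subseteq\cM^{\mathrm{ge}}_{2}$ is immediate: an embeddable $M=\ee^Q$ is obtained from the constant family $Q(t)\equiv Q$ in the Cauchy problem of Eq.~\eqref{eq:CP}, hence it is also g-embeddable in the sense of Definition~\ref{def:gen}.

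For the reverse inclusion, I would take $M\in\cM^{\mathrm{ge}}_{2}$, so that $M=M(t^{}_{\ast})$ for some $t^{}_{\ast}\geqslant 0$ and some locally Lebesgue-integrable family $\{ Q(t) : t\geqslant 0 \}$ of Markov generators. The first step is to observe that Corollary~\ref{coro:det}, although stated for continuous $Q$, carries over to this more general setting without change: the solution is Markov by Corollary~\ref{coro:Markov}, and the determinant identity \eqref{eq:trace}, which is just Abel's identity, only requires $t\mapsto\tr (Q(t))$ to be locally integrable; since each $Q(\tau)$ has non-positive diagonal entries, $\tr (Q(\tau))\leqslant 0$ almost everywhere, so $0<\det (M(t^{}_{\ast}))\leqslant 1$. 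Thus $\det (M)>0$.

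It then remains to translate this into Kendall's condition. Writing $M=\left(\begin{smallmatrix}1-a & a\\ b & 1-b\end{smallmatrix}\right)$ with $a,b\in[0,1]$, one has $\det (M)=(1-a)(1-b)-ab=1-a-b$, so $\det (M)>0$ is equivalent to $0\leqslant a+b<1$. By Theorem~\ref{thm:Kendall}, this is precisely the criterion for $M$ to be embeddable (the unique generator being $Q=-\frac{\log(1-a-b)}{a+b}(M-\one)$, read as $Q=\nix$ when $a=b=0$). Hence $M\in\cM^{\ts\vphantom{g}\mathrm{e}}_{2}$, and combining the two inclusions yields $\cM^{\ts\vphantom{g}\mathrm{e}}_{2}=\cM^{\mathrm{ge}}_{2}$.

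There is essentially no hard step here. The only point that needs an explicit line of justification is the extension of the determinant bound in Corollary~\ref{coro:det} from continuous to merely locally Lebesgue-integrable generator families; this is already implicit in the solution theory used for Corollary~\ref{coro:Markov} and in the discussion around Remark~\ref{rem:Volterra}, so it costs nothing. The genuinely special feature of $d=2$ being exploited is that, by Theorem~\ref{thm:Kendall}, $\det (M)>0$ is not merely necessary but also sufficient for classic embeddability there, which is exactly the property that fails in higher dimensions.
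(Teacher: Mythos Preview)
Your proposal is correct and follows essentially the same approach as the paper: obtain $\det(M)\in(0,1]$ from the Liouville/Abel identity \eqref{eq:trace} (noting its validity for locally Lebesgue-integrable $Q$), then apply Kendall's criterion (Theorem~\ref{thm:Kendall}), with the reverse inclusion being trivial via the constant family. Your write-up is slightly more explicit about the computation $\det(M)=1-a-b$ and the extension step, but the argument is the same.
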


\begin{proof}
  Let $M \in \cM_2$ be embeddable in the generalised sense.  When
  $Q(t)$ is continuous, by Corollary~\ref{coro:det}, we then know that
  $\det (M) \in (0,1]$, which means it is embeddable by Kendall's
  criterion (Theorem~\ref{thm:Kendall}). The conclusion is easily
  extended to $Q$ locally Lebesgue-integrable, because $w (t)$ from
  Eq.~\eqref{eq:trace} still is the determinant of $M (t)$, which is
  absolutely continuous, hence $\det (M) \in (0,1]$ also in this case.
    
  The other direction is clear.
\end{proof}  

Let us next look at a simpler pair of generators than that of 
Example~\ref{ex:doubly-stoch}.

\begin{example}\label{ex:there-is-more}
Consider the two non-commuting elementary (or Poisson) generators
\[
    Q^{}_{1} \, = \, \begin{pmatrix} -1 & 1 & 0 \\ 0 & 0 & 0 \\
    0 & 0 & 0 \end{pmatrix} \quad \text{and} \quad
    Q^{}_{2} \, = \, \begin{pmatrix} 0 & 0 & 0 \\ 0 & 0 & 0 \\
    1 & 0 & -1 \end{pmatrix} 
     ,
\]
which give the exponentials
$\ee^{t \ts Q_i} = \one + ( 1 - \ee^{-t} ) Q_{i}$ for $ i \in \{
1,2\}$. One finds
\[
   M(t,s) \, \defeq \, \ee^{t \ts Q^{}_1}\ts \ee^{s \ts Q^{}_2} \, = \,
   \one + a Q^{}_{1} + b \ts Q^{}_{2} \, = \, 
   \begin{pmatrix} 1{-}a & a & 0 \\ 0 & 1 & 0 \\ b & 0 & 1{-}b
   \end{pmatrix}
\]
with $a = 1-\ee^{-t}$ and $b=1-\ee^{-s}$.  When $ab>0$, the matrix
$M=M(t,s)$ cannot be embeddable because $M_{31} M_{12} > 0$, but
$M_{32}=0$, which violates the transitivity property that is necessary
for embeddability; see \cite[Thm.~3.2.1]{Norris} or
\cite[Prop.~2.1]{BS}. By construction, each $M (t,s)$ is g-embeddable
though, where the time evolution uses a piecewise constant generator
family, first being $Q^{}_{2}$ until time $s$, where it switches to
$Q^{}_{1}$ and continues until time $t+s$.

Let us also check this via the results discussed earlier. For
$t,s > 0$ with $t\ne s$, the spectrum of $M$ is
$\{1, 1 \ts{-} a, 1 \ts{-} b \}$, hence simple and positive, wherefore
we know from Proposition~\ref{prop:3-real} that the only possibility
for an embedding of $M = \one + A$ comes from
\[
     \log (\one + A) \, = \, 
     \frac{a^2 \log (1-b) - b^2 \log (1-a)}{a b (b-a)} A + 
     \frac{a \log (1-b) - b \log (1-a)}{a b (b-a)} A^2
\]
with the coefficients calculated via Eq.~\eqref{eq:albe-1}. Since
$A^2 = - a^2 Q^{}_{1} - b^2 Q^{}_{2} + a b Z$ with
\[
    Z \, = \, Q^{}_{2} \ts Q^{}_{1} \, = \, \begin{pmatrix}
    0 & 0 & 0 \\ 0 & 0 & 0 \\ -1 & 1 & 0 \end{pmatrix},
\]
this gives
$\log (\one + A) \, = \, -\log (1-a) Q^{}_{1} - \log (1-b) Q^{}_{2} +
\gamma Z$ with
\[
\begin{split}
   \gamma \, & = \,   \frac{a \log (1-b) - b \log (1-a)}{b-a} \\[1mm]
   & = \, - ab \Bigl( \myfrac{1}{2} + \myfrac{1}{3} (a+b) +
   \myfrac{1}{4} (a^2 + ab + b^2) + \myfrac{1}{5} (a^3 + a^2 b +
   a b^2 + b^3) + \ldots \Bigr) 
\end{split}
\]
which is clearly negative for any $0 < a\ne b < 1$. This carries over
to the case $0<a=b$ by a limiting argument of de l'Hospital type,
where we note that $M(t,t)$ for $t>0$ is still cyclic, but not
diagonalisable.  We thus see that the only real logarithm of $M(t,s)$
for $ts>0$ has a negative entry at the second position in its third
row, and is thus not a generator.  \exend
\end{example}

Clearly, the same type of argument as in
Example~\ref{ex:there-is-more} can be used for any dimension $d>3$,
whence we can summarise our informal discussion as follows.

\begin{fact}
  For any\/ $d\geqslant 3$, there are non-embeddable matrices\/
  $M \nts \nts \in \nts \cM_d$ that are \textnormal{g}-embeddable,
  so\/ $\emb \subsetneq \gemb$ for any\/ $d\geqslant 3$.  \qed
\end{fact}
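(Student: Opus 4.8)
The inclusion $\emb\subseteq\gemb$ needs no argument: a constant family $Q(t)\equiv Q$ with $Q$ a Markov generator is trivially locally Lebesgue-integrable, so every classically embeddable matrix is g-embeddable. Only the strictness has content, and the plan is to reduce it, for every $d\geqslant 3$, to the three-dimensional witness already produced in Example~\ref{ex:there-is-more} by means of a block-diagonal padding with an identity block. The guiding observation is that the obstruction to classical embeddability used there --- a violation of the transitivity (``leads to'') relation, see \cite[Thm.~3.2.1]{Norris} or \cite[Prop.~2.1]{BS} --- is a property of the matrix entries alone and therefore survives such padding, whereas g-embeddability is clearly inherited by the padded matrix.

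Concretely, for $d=3$ nothing is to be done: the matrix $M=M(t,s)$ with $ts>0$ from Example~\ref{ex:there-is-more} is g-embeddable (via a piecewise constant family of elementary generators) but not embeddable, because it has $m^{}_{31}\ts m^{}_{12}>0$ while $m^{}_{32}=0$. For $d>3$, put $M' \defeq M(t,s)\oplus\one^{}_{d-3}$ with $ts>0$. As a direct sum of Markov matrices, $M'\in\cM_d$. To see that $M'$ is g-embeddable, take the family $Q'(\tau)\defeq Q(\tau)\oplus\nix^{}_{d-3}$, where $\{Q(\tau):\tau\geqslant 0\}$ is the piecewise constant family of Markov generators realising $M(t,s)$ as a value of the solution flow in dimension $3$; each $Q'(\tau)$ is again a Markov generator, and since the Cauchy problem \eqref{eq:CP} decouples along the block structure, its solution flow is $X(\tau)\oplus\one^{}_{d-3}$, which at the relevant time equals $M'$. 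To see that $M'$ is not embeddable, note that the entries $m'_{31}=m^{}_{31}$, $m'_{12}=m^{}_{12}$ and $m'_{32}=m^{}_{32}$ are unchanged, so that an embedding $M'=\ee^Q$ with a generator $Q$ would force $3\to 1\to 2$ in the associated $d$-state Markov chain, hence $3\to 2$ and $m'_{32}>0$, a contradiction. Consequently $M'\in\gemb\setminus\emb$, which gives $\emb\subsetneq\gemb$ for every $d\geqslant 3$.

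The step that most deserves care --- and, once recognised, is not really an obstacle --- is the claim that padding with $\one^{}_{d-3}$ cannot restore embeddability: this relies precisely on the transitivity criterion being a necessary condition for \emph{any} generator logarithm, so that it is insensitive to the presence of extra inert states. The remaining points are routine verifications: that the zero-and-positivity pattern of the $3{\times}3$ block is literally preserved in $M'$, and that the block-padded generator family $Q'(\tau)$ yields $M(t,s)\oplus\one^{}_{d-3}$ as a value of the solution flow, which follows at once because the ODE \eqref{eq:CP} respects the block decomposition.
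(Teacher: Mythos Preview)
Your argument is correct and matches the paper's intent. The paper itself does not give a proof beyond the sentence ``the same type of argument as in Example~\ref{ex:there-is-more} can be used for any dimension $d>3$'', followed by a $\qed$; your block-diagonal padding $M(t,s)\oplus\one^{}_{d-3}$ with the generator family $Q(\tau)\oplus\nix^{}_{d-3}$ is precisely the natural reading of that sentence (equivalently, one views the same two Poisson generators $R^{}_{12}$ and $R^{}_{31}$ as $d{\times}d$ matrices), and your explicit verification via the transitivity criterion that the enlarged matrix stays non-embeddable is exactly the point that makes the extension work.
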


Let us state one algebraic advantage of $\gemb$ that was noticed and
used in \cite[Thm.~2.7]{Joh73}.

\begin{lemma}\label{lem:star}
  The set\/ $\gemb$ is a monoid under ordinary matrix multiplication,
  and it is star-shaped with respect to the singular constant-input
  matrix
\[
    J_d \, \defeq \, \myfrac{1}{d} \begin{pmatrix} 1 & \cdots & 1 \\
      \vdots & \vdots & \vdots \\ 1 & \cdots & 1 \end{pmatrix}
    \, = \, \myfrac{1}{d} \, C( 1, \ldots , 1) \ts ,
\]   
  which is an idempotent that lies on the boundary of\/ $\gemb$.
\end{lemma}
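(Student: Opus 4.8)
The plan is to handle the three assertions in turn --- that $\gemb$ is a monoid, that $J_d$ is an idempotent constant-input matrix, and the star-shapedness with $J_d$ on the boundary --- noting that the last one will follow from the first two almost for free.

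First, $\one\in\gemb$ is immediate (take the generator family $Q\equiv\nix$, or simply evaluate any solution at $t=0$). For closure under multiplication I would take $M_1,M_2\in\gemb$, realised by locally Lebesgue-integrable generator families $Q_1,Q_2$ at times $t_1,t_2$, and concatenate them into the family $\widetilde Q$ equal to $Q_1$ on $[0,t_1)$ and to the shifted family $Q_2(\,\cdot\,-t_1)$ on $[t_1,\infty)$, which is again locally Lebesgue-integrable and a family of generators. The flow (cocycle) identity $X(0,t_1)\ts X(t_1,t_1{+}t_2)=X(0,t_1{+}t_2)$ recorded in the footnote to Theorem~\ref{thm:Markov}, together with the time-translation invariance of the Cauchy problem on $[t_1,t_1{+}t_2]$, then gives $X(0,t_1{+}t_2)=M_1M_2$, so $M_1M_2\in\gemb$; this recovers \cite[Thm.~2.7]{Joh73}.

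For the facts about $J_d$, I would write $J_d=\tfrac1d\,\bs{1}\,\bs{1}^{\mathsf{T}}$; since $\bs{1}^{\mathsf{T}}\bs{1}=d$ one has $J_d^2=J_d$, and $J_d$ is by inspection the equal-input matrix with all parameters $\tfrac1d$ and summatory parameter $c=1$, hence constant-input. The one auxiliary computation I need is that $N_\mu\defeq(1-\mu)\one+\mu\,J_d$ is classically embeddable for every $\mu\in[0,1)$: writing $Q_J\defeq J_d-\one$, idempotency of $J_d$ gives $Q_J^2=-Q_J$, whence $\ee^{sQ_J}=\one+(1-\ee^{-s})Q_J=\ee^{-s}\one+(1-\ee^{-s})J_d$, and the choice $\ee^{-s}=1-\mu$ yields $N_\mu=\ee^{sQ_J}\in\emb\subseteq\gemb$.

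Combining these, for any $M\in\gemb$ and $\mu\in[0,1)$ I would use that $M$ is Markov, so $M\bs{1}=\bs{1}$ and therefore $MJ_d=\tfrac1d(M\bs{1})\bs{1}^{\mathsf{T}}=J_d$; then $M\,N_\mu=(1-\mu)M+\mu\,MJ_d=(1-\mu)M+\mu\,J_d$, which lies in $\gemb$ by the monoid property. This is precisely the asserted star-shapedness of $\gemb$ with respect to $J_d$ along the half-open segments towards $J_d$. Letting $\mu\to1^-$ shows $J_d=\lim_\mu N_\mu\in\overline{\gemb}$, while $\det J_d=0$ for $d\geqslant2$ and the strict positivity $\det(M')>0$ for all $M'\in\gemb$ --- Corollary~\ref{coro:det}, extended to locally Lebesgue-integrable generators via Corollary~\ref{coro:Markov} and the Liouville/Abel determinant formula, exactly as in the proof of Corollary~\ref{coro:Kendall-inhom} --- shows $J_d\notin\gemb$; hence $J_d\in\partial\gemb$. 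I expect the only genuinely delicate point to be the monoid property, namely making the concatenation of generator families and the appeal to the flow identity rigorous; the remaining steps are short computations with $J_d$.
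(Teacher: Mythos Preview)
Your proof is correct and follows essentially the same route as the paper: both use that $M_c=(1-c)\one+cJ_d$ is embeddable for $c\in[0,1)$, the identity $PJ_d=J_d$ for Markov $P$, and then the monoid property to get $PM_c=(1-c)P+cJ_d\in\gemb$. You simply spell out more detail where the paper is terse --- in particular the concatenation argument for the semigroup property (which the paper calls ``obvious from the definition''), the explicit exponential $\ee^{sQ_J}=\ee^{-s}\one+(1-\ee^{-s})J_d$ for the embeddability of $N_\mu$, and the determinant argument placing $J_d$ on $\partial\gemb$ (which the paper leaves under ``all other claims are clear'').
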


\begin{proof}
  The semigroup property of $\gemb$ is obvious from the definition,
  and $\one$ clearly is its neutral element.
  
  Now, consider $M_c \defeq (1-c) \one + c J_d$, which is a
  constant-input matrix and as such certainly embeddable (and hence in
  $\emb$) for all $0\leqslant c < 1$. For any fixed $P\in\gemb\!$, we
  know that
\[
    P M_c \, = \, (1-c) P + c J_d 
\]  
lies in $\gemb$ for every $c\in [0,1)$, where we have used that
$P J_d = J_d$ holds because $P$ is Markov. So, $\gemb$ is indeed
star-shaped from $J_d$, while all other claims are clear.
\end{proof}

It is possible to say more on special matrix classes, one being the
equal-input matrices, where one needs to distinguish even and odd
dimensions.

\begin{lemma}\label{lem:no-news}
  Let\/ $M\in\cM_d$ be an equal-input matrix. If\/ $d$ is even, $M$ is
  \textnormal{g}-embeddable if and only if it is also embeddable.
  For\/ $d$ odd, the time-inhomogeneous approach with all\/ $Q(t)$ of
  equal-input type does not produce new \textnormal{g}-embeddable
  cases.
\end{lemma}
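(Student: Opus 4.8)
The plan is to treat the two assertions separately, relying on two facts established above. First, an equal-input matrix $M_c = (1-c)\ts\one + C(c^{}_1,\ldots,c^{}_d)$ with summatory parameter $c = c^{}_1 + \cdots + c^{}_d$ has eigenvalues $1$ and $1-c$ (the latter with multiplicity $d-1$), hence $\det(M_c) = (1-c)^{d-1}$, and it is embeddable whenever $c\in[0,1)$: writing $A = M_c-\one$, one has $A^2 = -c\ts A$, so that $\log(\one+A) = \frac{-\log(1-c)}{c}\ts A$ (understood as $\nix$ when $c=0$) is a non-negative multiple of the rate matrix $A$, hence a generator. Second, I would invoke Corollary~\ref{coro:det} (equivalently Eq.~\eqref{eq:trace}), giving $0<\det(M(t))\leqslant 1$ for any g-embeddable Markov matrix, together with Corollary~\ref{coro:algebra} on preservation of algebraic structure along the flow.

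For $d$ even, one direction is trivial, since embeddable matrices are g-embeddable. Conversely, if the equal-input matrix $M_c$ is g-embeddable, then $\det(M_c) = (1-c)^{d-1}\in(0,1]$; as $d-1$ is odd, $(1-c)^{d-1}>0$ forces $c<1$, and together with $c\geqslant 0$ this places $c$ in $[0,1)$, where $M_c$ is embeddable by the first fact. Hence the two notions coincide on equal-input matrices in even dimension.

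For $d$ odd, I would first note that the equal-input rate matrices $\{\,C(x^{}_1,\ldots,x^{}_d)-(x^{}_1+\cdots+x^{}_d)\ts\one : x\in\RR^d\,\}$ form a (non-unital) subalgebra $\cA$ of $\cA^{(d)}_{0}$: closure under addition is immediate, and a one-line computation gives $\bigl(C(a)-\alpha\ts\one\bigr)\bigl(C(b)-\beta\ts\one\bigr) = -\beta\bigl(C(a)-\alpha\ts\one\bigr)$, with $\alpha,\beta$ the respective summatory parameters. Every equal-input generator lies in $\cA$, so whenever all $Q(\tau)$ are equal-input generators, Corollary~\ref{coro:algebra} shows that the solution is $M(t) = \one + A(t)$ with $A(t)\in\cA$ a rate matrix; that is, $M(t)$ is an equal-input Markov matrix, say with summatory parameter $r(t)\geqslant 0$. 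Since $M(\cdot)$ is absolutely continuous with $M(0)=\one$, the map $r(\cdot) = \frac{d-\tr(M(\cdot))}{d-1}$ is continuous with $r(0)=0$.

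The one minor obstacle is that, for $d$ odd, $d-1$ is even, so $\det(M(t)) = (1-r(t))^{d-1}>0$ only excludes $r(t)=1$, not $r(t)>1$. I would remove this with a continuity argument: since $r(\cdot)$ is continuous, $r(0)<1$, and $r(t)\neq 1$ for all $t$, the intermediate value theorem forces $r(t)<1$ throughout. (Alternatively, taking traces in $\dot M = MQ$ and using $\tr(M(t)Q(t)) = -(d-1)\ts c(t)\bigl(1-r(t)\bigr)$ yields the scalar Cauchy problem $\dot r(t) = c(t)\bigl(1-r(t)\bigr)$ with $r(0)=0$ and $c(t)\geqslant 0$ the summatory parameter of $Q(t)$, whose solution $r(t) = 1-\exp\bigl(-\int_0^t c(\tau)\dd\tau\bigr)$ lies in $[0,1)$ outright.) Either way $r(t)\in[0,1)$, so $M(t)$ is classically embeddable by the first fact, and the equal-input time-inhomogeneous scheme produces no case beyond those already embeddable.
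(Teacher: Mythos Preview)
Your proof is correct and follows essentially the same route as the paper: the even case via the determinant identity $\det(M_c)=(1-c)^{d-1}$ combined with Corollary~\ref{coro:det}, and the odd case via the subalgebra property of equal-input rate matrices together with Corollary~\ref{coro:algebra}, followed by the observation that $r(t)=1$ is excluded by the determinant bound and hence $r(t)<1$ by continuity from $r(0)=0$. Your explicit verification of the algebra closure and your alternative derivation of the scalar ODE $\dot r=c(t)(1-r)$ with its closed-form solution make the argument slightly more self-contained than the paper's version; the paper additionally discusses starting from an equal-input $M(0)$ with $c(0)>1$, but since g-embeddability by definition has $M(0)=\one$, this extra case is not needed for the lemma as stated.
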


\begin{proof}
  When $d$ is even and $M$ is g-embeddable, its determinant must lie
  in $(0,1]$ by Corollary~\ref{coro:det}. Since $M$ is equal input, we
  have $\det (M) = (1-c)^{d-1}$, which implies $c\in [0,1)$, and $M$
  is also embeddable, with an equal-input generator, by
  \cite[Prop.~2.12]{BS2}.

  When $d$ is odd, we know that other types of embeddability are
  possible for $c>1$, but then not with equal-input generators.  If
  $Q(t)$ is equal input for all $t\geqslant 0$, the solution of
  $\dot{M} = M\ts Q$ (respectively of the corresponding Volterra
  integral equation) will be equal input for all $t\geqslant 0$ if
  $M(0)$ is equal input, as follows from the PBS because the algebra
  $\cC^{(d)} \subsetneq \cA^{(d)}_{\ts 0}$ generated by the
  equal-input rate matrices is closed under addition and
  multiplication, so $M(t) = \one + A(t)$ with $A(t)\in\cC^{(d)}$ for
  all $t\geqslant 0$; compare Corollary~\ref{coro:algebra}.

  Now, assume that $M(0)$ has summatory parameter $c(0) < 1$, which is
  certainly true if $M(0)=\one$. Then, $c(t) < 1$ for all
  $t\geqslant 0$ because $c(t)=1$ would mean
  $\det\bigl( M(t)\bigr)=0$, which is impossible due to
  Corollary~\ref{coro:det}.  Consequently, each $M(t)$ will also be
  equal-input embeddable, again by \cite[Prop.~2.12]{BS2}, and no new
  cases emerge here.

  The only remaining case is that $M(0)$ is equal input and
  embeddable, but with $c(0)>1$. Then, by a standard calculation, one
  sees that $M(t)$ remains equal input, with
\[
  c(t) \, = \, c(0) \ts \exp
  \biggl( - \int_{0}^{t} \tilde{c} (\tau) \dd \tau \biggr)
\]
with $\tilde{c}(\tau)$ being the summatory parameter of
$Q(\tau)$. Clearly, $c(t)$ is non-increasing, and a minor deformation
argument around the approach we used in Remark~\ref{rem:alt-reason}
shows that $M (t)$ remains embeddable if $M (0)$ is.
\end{proof}

It is clear that a more systematic analysis of g-embeddability of
matrix models will be necessary, which we defer to future work.  At
this point, we take a look at the most general model, which is based
on $\cA^{(d)}_{0}$ and admits an approach via Poisson matrices. To
describe it, let $E_{ij} \in \Mat (d,\RR)$ be the elementary matrix
that has a single $1$ in position $(i,j)$ and $0\ts$s everywhere
else. They satisfy the multiplication rule
$E_{ij} E_{k\ell} \, = \, \delta_{jk} E_{i\ell}$.

A \emph{Poisson matrix} is any Markov matrix of the form
$M = \one - a E_{ii} + a E_{ij}$ for some $i\ne j$, then with
$a\in [0,1]$. Since $\det (M) = 1-a$, the matrix $M$ is singular if
and only if $a=1$. Likewise, any matrix of the form
$Q = -\alpha E_{ii} + \alpha E_{ij}$ with $i\ne j$ and
$\alpha \geqslant 0$ is a \emph{Poisson generator}, where one often
restricts to $\alpha > 0$ to exclude the trivial case $Q=\nix$;
compare Ex.~\ref{ex:there-is-more}.  A simple calculation shows that
$Q^2 = -\alpha \ts Q$ and then
\[
    \exp (Q) \, = \, \one + (1 - \ee^{-\alpha} ) R_{ij}
    \quad \text{with } \, R_{ij} \defeq - E_{ii} + E_{ij} \ts .
\]
So, a Poisson matrix, with parameter $a$, is embeddable if and only if
$0\leqslant a < 1$, which are precisely all non-singular cases.

\begin{example}
 Let $d=2$ and consider the product of two Poisson matrices,
\[
   \begin{pmatrix} 1{-}a & a \\ 0 & 1 \end{pmatrix}
   \begin{pmatrix} 1 & 0 \\ b & 1{-}b \end{pmatrix} \, = \,
   \begin{pmatrix} 1 {-} a\ts (1{-}b) & a\ts (1{-}b) \\
   b & 1{-}b \end{pmatrix}
\] 
for $a,b \in [0,1)$. Both are regular, and so is their product, with
determinant $1- (a+b-ab)$. One can check that
$0 \leqslant a+b-ab < 1$, which shows embeddability of the product by
Kendall's criterion from Theorem~\ref{thm:Kendall}.  Since $a+b-ab$
can take all values in $[0,1)$, forming the product of just two
Poisson matrices covers all embeddable cases, in line with
Corollary~\ref{coro:Kendall-inhom}.  \exend
\end{example}

Let us collect some general properties of g-embeddable matrices,
starting with the following necessary criterion from
\cite{Good,Fryd80a}.

\begin{lemma}\label{lem:g-emb-nec}
  If\/ $M\in\cM_d$ is \textnormal{g}-embeddable, it must satisfy 
\[
    \prod_{i=1}^{d} m^{}_{ii} \, \geqslant \, \det (M) \, > \, 0 \ts .
\]  
So, if\/ $m^{}_{ii} = 0$ for some\/ $i$, the matrix\/ $M$ is neither
embeddable nor \textnormal{g}-embeddable.  \qed
\end{lemma}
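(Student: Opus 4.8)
The plan is to follow the flow that realises the embedding and to compare the evolution of the product of the diagonal entries with that of the determinant, the latter being already under control via Eq.~\eqref{eq:trace}. Write $M = M(T)$, where $M(t)$ solves the Cauchy problem \eqref{eq:CP} (equivalently, the associated Volterra integral equation from Remark~\ref{rem:Volterra}) with a locally Lebesgue-integrable family $\{ Q(t) : t \geqslant 0\}$ of Markov generators, and $M(T) = M$ for some $T\geqslant 0$. By Corollary~\ref{coro:det} we already know $\det (M) = \det (M(T)) \in (0,1]$, so the only thing left to prove is the inequality $\prod_{i} m^{}_{ii} \geqslant \det (M)$.

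The first step is a differential inequality for each diagonal entry. Since $M(t)$ is absolutely continuous and solves $\dot{M}(t) = M(t)\ts Q(t)$ for almost every $t$, one has
\[
   \dot{m}^{}_{ii}(t) \, = \, m^{}_{ii}(t)\, q^{}_{ii}(t) +
   \sum_{k\ne i} m^{}_{ik}(t)\, q^{}_{ki}(t) \, \geqslant \,
   m^{}_{ii}(t)\, q^{}_{ii}(t)
\]
for a.e.\ $t\in[0,T]$, because $M(t)$ is Markov (so $m^{}_{ik}(t)\geqslant 0$) and $Q(t)$ is a generator (so $q^{}_{ki}(t)\geqslant 0$ for $k\ne i$); this is the only point where both sign structures enter simultaneously.

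The second step is a Gronwall-type integration of this inequality. Put $g^{}_{i}(t) = m^{}_{ii}(t)\ts\exp\bigl(-\int_0^t q^{}_{ii}(\tau)\dd\tau\bigr)$; as $q^{}_{ii}$ is locally integrable, the exponential factor is absolutely continuous and bounded on $[0,T]$, hence $g^{}_{i}$ is absolutely continuous there, with $\dot{g}^{}_{i}(t) = \exp\bigl(-\int_0^t q^{}_{ii}\bigr)\bigl(\dot{m}^{}_{ii}(t) - m^{}_{ii}(t) q^{}_{ii}(t)\bigr)\geqslant 0$ a.e. Thus $g^{}_{i}$ is non-decreasing, and $g^{}_{i}(0) = m^{}_{ii}(0) = 1$ since $M(0) = \one$, so $m^{}_{ii}(t) \geqslant \exp\bigl(\int_0^t q^{}_{ii}(\tau)\dd\tau\bigr) > 0$ for all $t\in[0,T]$; in particular the diagonal entries stay strictly positive along the entire flow. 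Taking the product over $i$ and using $\sum_i q^{}_{ii}(\tau) = \tr (Q(\tau))$ together with Eq.~\eqref{eq:trace} gives
\[
   \prod_{i=1}^{d} m^{}_{ii}(T) \, \geqslant \,
   \exp\Bigl(\, \text{\small $\int_0^T$}\nts \tr (Q(\tau))\dd\tau\Bigr)
   \, = \, \det (M(T)) \, = \, \det (M) ,
\]
which is the claimed bound; combined with $\det (M) > 0$ from Corollary~\ref{coro:det}, the full statement follows, and the last assertion is then immediate, since $m^{}_{ii} = 0$ for some $i$ would make the left-hand side vanish against a positive right-hand side.

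The main technical obstacle is justifying the Gronwall step in the merely Lebesgue-integrable setting: one has to check that products of absolutely continuous functions on the compact interval $[0,T]$ are again absolutely continuous (so that the monotonicity of $g^{}_{i}$ can be read off from its almost-everywhere derivative) and that the differential identity $\dot{M} = MQ$ is available a.e. For the piecewise continuous (or piecewise constant) families $Q(t)$ that are actually needed later, this is unproblematic, as the argument applies verbatim on each continuity interval, with the diagonal entries entering the next interval still strictly positive.
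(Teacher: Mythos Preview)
Your argument is correct. The paper itself does not prove this lemma: it is stated with a \qed\ and attributed to Goodman \cite{Good} and Frydman \cite{Fryd80a}, so there is no in-paper proof to compare against. Your route --- the differential inequality $\dot{m}^{}_{ii} \geqslant m^{}_{ii}\ts q^{}_{ii}$ obtained from the sign structure of $M(t)$ and $Q(t)$, followed by the Gronwall step and comparison with the trace formula \eqref{eq:trace} --- is essentially the classical argument behind the cited result, and it is cleanly executed. The technical caveats you flag (absolute continuity of products on $[0,T]$, a.e.\ validity of $\dot{M}=MQ$ in the Lebesgue-integrable setting) are genuine but routine, and the paper itself already invokes \eqref{eq:trace} in the locally integrable case in the proof of Corollary~\ref{coro:Kendall-inhom}, so you are on safe ground there.
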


An astonishing general result is the following characterisation of
$\gemb$ due to Johansen.

\begin{theorem}[{\cite[Thms.~2.5 and 2.6]{Joh73}}]
  Every\/ $M\in\gemb$ can be approximated arbitrarily well by finite
  products of Poisson matrices, and every\/ $M\in\mathrm{int} (\gemb)$
  has a representation as a finite product of Poisson matrices, known
  as a Bang-Bang representation.
  
  Moreover, Markov idempotents and products of Markov idempotents lie
  on\/ $\partial \gemb\!$. They are finite products of regular or
  singular Poisson matrices.  \qed
\end{theorem}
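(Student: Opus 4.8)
The plan is to present this as an instance of the \emph{Bang--Bang principle} for the bilinear control system $\dot{M} = M\ts Q(t)$ of \eqref{eq:CP}. The starting observation is the cone structure of the generators: every Markov generator $Q = (q^{}_{ij})$ has the unique representation $Q = \sum_{i\ne j} q^{}_{ij}\ts R_{ij}$ with $R_{ij} = - E_{ii} + E_{ij}$ and non-negative coefficients $q^{}_{ij}$, so the Poisson generators $R_{ij}$ are precisely the extreme rays of the closed convex cone of all Markov generators. Hence $\gemb$ is the set of matrices reachable from $\one$ by \eqref{eq:CP} with $Q(t)$ constrained to that cone, and a \emph{bang--bang} control --- a piecewise constant driver each piece of which lies on some ray $\RR_{\geqslant 0}\ts R_{ij}$ --- produces exactly a finite product $\prod_k \ee^{t_k \alpha_k R_{i_k j_k}}$ of Poisson matrices, since $\ee^{s R_{ij}} = \one + (1-\ee^{-s})R_{ij}$ is a regular Poisson matrix for every $s\geqslant 0$. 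So the three assertions say that the bang--bang-reachable set is dense in $\gemb$, shares its interior, and carries the Markov idempotents on its boundary.

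For the density statement I would argue in two steps. First, reduce to piecewise constant drivers: on any compact time interval a locally Lebesgue-integrable family $Q(\cdot)$ of generators is approximated in $L^1$ by step functions with values in the closed convex cone of generators, and the solution map $Q(\cdot)\mapsto X(t)$ of \eqref{eq:CP} is $L^1$-continuous by the usual Gr\"{o}nwall estimate; hence $M = X(t)$ is a limit of solutions $X^{(n)}(t) = \ee^{t_1 Q_1}\cdots\ee^{t_m Q_m}$ driven by piecewise constant generators $Q_k$. Second, on each factor I would decompose $Q_k = \sum_{i\ne j} q^{(k)}_{ij} R_{ij}$ and invoke the Lie--Trotter product formula to write $\ee^{t_k Q_k}$ as a limit of finite products of matrices $\ee^{s R_{ij}}$, which are regular Poisson matrices. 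A diagonal argument then exhibits every $M\in\gemb$ as a limit of finite products of Poisson matrices.

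The substantial point is that every $M$ in the interior of $\gemb$ is \emph{exactly} such a finite product, and here I would invoke the Bang--Bang theorem from control theory in the form used by Johansen (going back to LaSalle and Halkin). Its two ingredients are: (i) the $R_{ij}$ already linearly span $\cA^{(d)}_{0}$, so the Lie-algebra rank (accessibility) condition holds trivially and, in particular, $\gemb$ has non-empty interior; and (ii) a reachability-and-compactness argument showing that the interior of the reachable set of the bang--bang system coincides with the interior of the full reachable set, so that $\mathrm{int}(\gemb)$ is contained in the bang--bang-reachable set and thus consists of finite products of Poisson matrices. Equivalently, with $S$ the sub-semigroup of finite products of regular Poisson matrices, one has $\overline{S}=\overline{\gemb}$ (Step two above) and, by the rank condition, $\mathrm{int}(\gemb)=\mathrm{int}(\overline{S})\subseteq S$; this last passage is precisely Johansen's central-limit-type argument for finite semigroups, and I expect the step from density to exact representation to be the main obstacle. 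I would either reproduce that argument or cite \cite{Joh73} together with the underlying control-theoretic literature.

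Finally, for the idempotent statement I would note that a Markov idempotent $P\in\cM_d$ with $P\ne\one$ has $\sigma(P)\subseteq\{0,1\}$ and $0\in\sigma(P)$, hence $\det(P)=0$; since no element of $\gemb$ is singular by Corollary~\ref{coro:det}, we get $P\notin\gemb$. On the other hand $P\in\overline{\gemb}$: the basic singular idempotents $\one - E_{jj} + E_{ij}$ are limits of the regular --- hence embeddable --- Poisson matrices $\one - a E_{jj} + a E_{ij}$ as $a\to 1^{-}$, and every Markov idempotent is a finite product of such singular ``merging'' idempotents together with regular Poisson matrices --- collapsing the state set onto class representatives and then installing the per-class distributions --- the precise combinatorics being the second half of the cited statement. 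Since $\gemb$, and hence $\overline{\gemb}$, is a multiplicative monoid by Lemma~\ref{lem:star}, it follows that $P\in\partial\gemb$; and the same applies to any finite product of Markov idempotents, which is again a finite product of regular or singular Poisson matrices and lies on $\partial\gemb$, being singular unless it equals $\one$, which is itself on $\partial\gemb$ because $\det$ attains its maximum $1$ over $\gemb$ there.
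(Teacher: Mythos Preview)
The paper does not give its own proof of this theorem: the statement is attributed to \cite[Thms.~2.5 and 2.6]{Joh73} and closed with \qed, which in this paper explicitly signals the \emph{absence} of a proof (see the explanation at the end of Section~1). So there is nothing to compare against beyond the bare citation; your sketch is not competing with an argument in the text but with Johansen's original.

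As a high-level outline your plan matches the control-theoretic viewpoint the paper alludes to: the density step via $L^1$-approximation by step drivers plus Lie--Trotter is sound, and you correctly identify the passage from density to exact representation on $\mathrm{int}(\gemb)$ as the substantive part, which you propose to handle by invoking the Bang--Bang principle or reproducing Johansen's argument. That is honest, but it is not yet a proof of the hard claim. For the idempotent clause there is a circularity you should notice: you argue that $P\in\overline{\gemb}$ by writing $P$ as a finite product of regular or singular Poisson matrices and then taking limits, but ``Markov idempotents are finite products of regular or singular Poisson matrices'' is itself part of the assertion to be proved, and your ``collapsing the state set onto class representatives and then installing the per-class distributions'' is only a heuristic. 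To close this you would need the explicit structure theorem for Markov idempotents (a partition of the state set together with a probability vector on each block) and then exhibit the factorisation concretely; this is elementary but not automatic. Your argument that non-trivial idempotents are singular and hence not in $\gemb$, and that $\one\in\partial\gemb$, is fine.
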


The difficult matrices are thus the non-singular elements on the
boundary of $\gemb\!$, and no general characterisation seems to be
known. The key point of this result is that it is essentially
sufficient to consider the ODE from Eq.~\eqref{eq:CP} with families of
Markov generators that are piecewise constant, at least in an
approximate sense.

\subsection{Characterisations for \textit{d}\textbf{ = 3}}

The situation is a little better here. First, Lemma~\ref{lem:g-emb-nec} 
has some partial converse as follows.

\begin{prop}[{\cite[Thm.~3.1]{Fryd80a}}]
  Let\/ $M\in\cM_3$ satisfy\/ $m^{}_{ij}=0$ for some\/ $i\ne j$. Then,
  $M$ is \textnormal{g}-embeddable if and only if the double
  inequality of Lemma~\textnormal{\ref{lem:g-emb-nec}} holds. These
  matrices are precisely the ones that can be represented as products
  of at most\/ $5$ Poisson matrices.  \qed
\end{prop}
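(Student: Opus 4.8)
The plan of proof, following \cite[Thm.~3.1]{Fryd80a}, is as follows. Necessity of the double inequality needs no assumption on the zero pattern: it is precisely Lemma~\ref{lem:g-emb-nec}. The substance is therefore the converse, together with the refinement that these matrices are products of at most five Poisson matrices. The first step is a reduction: conjugation by a permutation matrix maps Poisson matrices to Poisson matrices, leaves\/ $\gemb$ invariant, and changes neither\/ $\det(M)$ nor\/ $\prod_i m^{}_{ii}$; since\/ $S_3$ permutes the six off-diagonal positions transitively, we may assume without loss of generality that the vanishing entry is\/ $m^{}_{31}=0$. Such an\/ $M$ has a zero entry and hence lies on\/ $\partial\gemb$, so Johansen's representation of interior points of\/ $\gemb$ as finite products of Poisson matrices (the Bang--Bang representation recalled above) does not apply directly, and a direct construction is needed.

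For that construction, the plan is to realise\/ $M$ by routing probability mass. Starting from\/ $\one$ and multiplying successively on the right by Poisson matrices\/ $\one - a E^{}_{ii} + a E^{}_{ij}$, each factor moves the fraction\/ $a$ of column\/ $i$ onto column\/ $j$ (and scales column\/ $i$ by\/ $1-a$), simultaneously in every row, and keeps the matrix Markov. One builds\/ $M$ by a short sequence of such moves; the condition\/ $m^{}_{31}=0$ means that the third row's mass never reaches the first column, and this is exactly the structural simplification that makes a budget of five moves enough. The order and number of moves are fixed by a small case distinction according to which further entries\/ $m^{}_{ij}$ vanish and according to the relative sizes of the diagonal entries. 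The parameters\/ $a^{}_1,\dots,a^{}_5$ extracted along the way satisfy\/ $\prod_k (1-a^{}_k) = \det(M)$, because determinants multiply and each Poisson factor has determinant\/ $1-a^{}_k$; the hypothesis\/ $m^{}_{11} m^{}_{22} m^{}_{33} \geqslant \det(M) > 0$ is precisely what keeps every\/ $a^{}_k$ inside\/ $[0,1)$ during the routing, and conversely any such routing with all\/ $a^{}_k \in [0,1)$ forces the double inequality. Checking that this bookkeeping closes --- that five factors always suffice and that the extracted parameters stay in range across the handful of sub-cases --- is the computational heart of the argument, and is where essentially all of the work lies.

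It remains to close the loop with a routine observation: a finite product\/ $P^{}_1 \cdots P^{}_5$ of Poisson matrices with all parameters in\/ $[0,1)$ --- which is automatic here since\/ $\det(M) > 0$ --- is g-embeddable. Indeed, each such\/ $P^{}_k$ equals\/ $\exp (G^{}_k)$ for a Poisson generator\/ $G^{}_k$, as recalled above, so the piecewise constant generator family that equals\/ $G^{}_k$ on\/ $[k-1, k)$ is locally Lebesgue-integrable and a Markov generator at every time, and the solution of the Cauchy problem of Eq.~\eqref{eq:CP} at\/ $t=5$ is\/ $P^{}_1 \cdots P^{}_5 = M$; hence\/ $M \in \gemb$ by Definition~\ref{def:gen}. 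Putting the three steps together yields, for\/ $M\in\cM_3$ with an off-diagonal zero, the stated chain of equivalences: g-embeddability\/ $\Longleftrightarrow$ the double inequality\/ $\Longleftrightarrow$ representability as a product of at most five Poisson matrices. The main obstacle throughout is the middle step; the first and last are essentially immediate.
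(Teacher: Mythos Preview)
The paper does not give a proof of this proposition at all: it is quoted from \cite[Thm.~3.1]{Fryd80a} and marked with \qed to signal that the proof is omitted. So there is nothing to compare your attempt against here; your outline is effectively a sketch of Frydman's original argument rather than an alternative to anything in this paper.

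As a sketch of Frydman's proof, your plan is broadly sound: the permutation reduction to $m^{}_{31}=0$ is valid, the interpretation of right-multiplication by a Poisson matrix as routing column mass is correct, and the closing observation that a finite product of regular Poisson matrices lies in $\gemb$ is fine. You are also honest that the actual content --- the case analysis showing five factors suffice and that the extracted parameters stay in $[0,1)$ precisely when $\prod_i m^{}_{ii} \geqslant \det(M) > 0$ --- is not carried out. One small imprecision: your claim that a zero off-diagonal entry forces $M\in\partial\gemb$ is not quite the right justification for why Johansen's interior Bang--Bang theorem does not apply directly (the relevant topology needs care), but this is only motivational and does not affect the argument.
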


All remaining cases share the condition to be totally positive, so
$m^{}_{ij} > 0$ for all $1\leqslant i,j \leqslant 3$.  To proceed, we
need one further quantity that seems to be specific to $d=3$, and no
analogue in higher dimensions has been found so far. Let $M^{(ij)}$
denote the minors of $M\nts$, meaning the determinants of the
sub-matrices of $M$ that emerge from removing row $i$ and column
$j$. Then, define the real number
\[
    B_{_M} \, \defeq  \max_{1\leqslant i,j \leqslant 3}
    \frac{m^{}_{ii} m^{}_{jj}}{m^{}_{ij}} \ts
    (-1)^{i+j+\delta_{ij} -1} M^{(ij)}.
\] 
Now, we can recall the following results from the literature.

\begin{theorem}
   If\/ $M\in\cM_3$ is totally positive, one has the following results. 
\begin{enumerate}\itemsep=2pt   
\item If\/ $B_{_M} \geqslant \det(M) >0$, the matrix\/
   $M$ is \textnormal{g}-embeddable, and has a representation as a
   product of at most\/ $6$ Poisson matrices.
 \item If\/ $\det (M)>0$ with\/ $B_{_M} < \det(M)$, a representation as
   a product of at most\/ $6$ Poisson matrices exists if and only if a
   set of\/ $9$ inequalities and\/ $6$ equations is satisfied, as
   detailed in\/ \textnormal{\cite[Eq.~1.2]{Fryd83}}.
\item If\/ $M$ is \textnormal{g}-embeddable, it can always be written as 
   a product of finitely many Poisson matrices, the number of which is
   bounded by 
\[
  6 \left\lceil \frac{\ts \log (\det (M))}{\log (1/2)}
  \right\rceil .
\]      
\item If\/ $M$ is \textnormal{g}-embeddable with\/
  $\det (M) \geqslant \frac{1}{8}$, one has\/
  $B_{_M} \geqslant \det (M)$ and a representation as in\/ $(1)$, with
  at most\/ $6$ Poisson matrices.
\item If\/ $M$ is \textnormal{g}-embeddable with\/
  $B_{_M} < \det (M)$, one has a representation as a product of
  finitely many Poisson matrices. For\/ $k\geqslant 2$, their number
  is bounded by\/ $n^{}_{k}$, where
\[
    n^{}_{k} \, = \, \begin{cases}
      5k-2 \ts , & \text{if }\,  \tfrac{1}{2\cdot 8^{k-1}_{\vphantom{\chi}}}
                      \leqslant \det (M) 
                            < \tfrac{1}{8^{k-1}} \ts , \\
    5k-1 \ts , & \text{if } \, \tfrac{1^{\vphantom{I}}}{8^k} \leqslant 
                    \det (M)  < \tfrac{1}{2\cdot 8^{k-1}} \ts ,
    \end{cases}    
\]
   and it is known that\/ $6$ is generally not sufficient.
\end{enumerate}
\end{theorem}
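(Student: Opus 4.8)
The plan is to assemble the five statements from the structure theory of g-embeddable matrices, reducing everything to Johansen's Bang-Bang principle recalled above together with the sharper counting estimates of Frydman. The common starting point is that, by Johansen's theorem, membership in $\mathrm{int}(\gemb)$ is equivalent to admitting a finite representation as a product of Poisson matrices, so for totally positive $M$ the only genuine questions are whether $M\in\gemb$ at all and, if so, how many Poisson factors are needed. Throughout I would track determinants via multiplicativity of $\det$ together with Corollary~\ref{coro:det} and Eq.~\eqref{eq:trace}, which control how fast the determinant decreases along a flow.

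For part (1), I would argue constructively. Given totally positive $M$ with $B_{_M}\geqslant\det(M)>0$, one multiplies $M$ on one side by a single carefully chosen Poisson matrix so as to create a zero off-diagonal entry while remaining in $\cM_3$; the quantity $B_{_M}$ is precisely the threshold measuring when this one-step reduction succeeds, which is exactly why the hypothesis reads $B_{_M}\geqslant\det(M)$. Once an off-diagonal zero appears, the result is covered by the preceding Proposition on matrices with a vanishing off-diagonal entry (the $m^{}_{ij}=0$ case of \cite[Thm.~3.1]{Fryd80a}), which costs at most $5$ Poisson factors, so $1+5=6$ suffice; this is the content of \cite{Fryd80a,Fryd80b}. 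Part (4) is the matching converse: if $M$ is g-embeddable with $\det(M)\geqslant\tfrac{1}{8}$, then total positivity plus the determinant bound forces the minors $M^{(ij)}$ to be large enough that $B_{_M}\geqslant\det(M)$ holds automatically, so part (1) applies. Part (3) then follows by telescoping along the underlying flow: writing a g-embeddable $M$ as $M=M(0,t_1)\,M(t_1,t_2)\cdots M(t_{k-1},t)$ and using continuity and monotonicity of $s\mapsto\det M(0,s)$, one chooses the intermediate times so that each factor has determinant $(\det M)^{1/k}\geqslant\tfrac{1}{2}$ when $k=\lceil\log(\det M)/\log(1/2)\rceil$; each factor is again g-embeddable and — being either non-totally-positive, hence covered by the preceding Proposition, or totally positive with determinant at least $\tfrac{1}{8}$, hence covered by part (4) — is a product of at most $6$ Poisson matrices, giving the bound $6k$.

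Parts (2) and (5) are the delicate ones, and I would handle them by invoking Frydman's analysis directly. When $B_{_M}<\det(M)$ the single-step reduction of part (1) is no longer available, and whether a product of at most $6$ Poisson matrices exists becomes equivalent to the solvability of the explicit system of $9$ inequalities and $6$ equations in \cite[Eq.~1.2]{Fryd83}; deriving that system amounts to optimising over the finitely many admissible orderings and positions of the successive Poisson factors and recording the resulting constraints. For part (5), one tracks how the minimal number of factors in this regime grows as $\det(M)$ crosses the geometric thresholds $8^{-(k-1)}$ and $2^{-1}8^{-(k-1)}$, producing the piecewise formula for $n^{}_{k}$, together with an explicit family of matrices witnessing that $6$ factors do not always suffice.

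The main obstacle, as I see it, is precisely parts (2) and (5): the system of \cite{Fryd83} is genuinely computational, obtained by an intricate case distinction over the combinatorics of where the off-diagonal $1$s of the successive Poisson factors sit, and there appears to be no conceptual shortcut that would replace it. The honest route in an expository account is therefore to cite \cite{Fryd83} for those two parts, in accordance with the paper's convention of using $\,\Box\,$ for results taken over from other sources, while giving the short, essentially self-contained arguments sketched above for parts (1), (3) and (4).
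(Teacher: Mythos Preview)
Your proposal is reasonable in spirit, but it departs considerably from what the paper actually does. The paper's proof is purely citational: parts (1) and (2) are attributed to \cite[Thm.~1.1]{Fryd83} (and the earlier \cite{Fryd80a,Fryd80b}), part (3) to \cite[Thm.~10]{JR}, and parts (4) and (5) to \cite[Thms.~4.1 and 4.2]{Fryd83}. No argument is given beyond these pointers, in keeping with the paper's role here as a survey of the three-dimensional time-inhomogeneous case.

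You, by contrast, attempt to sketch the underlying constructions for (1), (3) and (4). These sketches are broadly along the right lines --- the ``introduce a zero by one Poisson factor, then apply the five-factor result'' idea for (1), and the determinant-based telescoping for (3), are indeed how the cited proofs proceed --- but a couple of points deserve care. First, your derivation of (3) invokes (4), whereas the bound in \cite{JR} predates \cite{Fryd83}; the original argument uses the threshold $\tfrac{1}{2}$ directly rather than routing through the $\tfrac{1}{8}$ criterion, which is why the bound in (3) carries $\log(1/2)$ and not $\log(1/8)$. Your version is logically sound once (4) is available but is a reorganisation, not the historical proof. Second, your justification of (4) (``the determinant bound forces the minors to be large enough'') is too vague to stand on its own; the actual argument in \cite[Thm.~4.1]{Fryd83} requires a genuine estimate, and you would need to reproduce it. You correctly identify that (2) and (5) have no conceptual shortcut and must be cited from \cite{Fryd83}, which matches the paper's treatment.
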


\begin{proof}
  The first two claims follow from \cite[Thm.~1.1]{Fryd83}, but are
  also contained in the earlier work of \cite{Fryd80a,Fryd80b}.

  Claim (3) follows from \cite[Thm.~10]{JR}, while Claims (4) and (5)
  are a mild reformulation of \cite[Thm.~4.1]{Fryd83} and
  \cite[Thm.~4.2]{Fryd83}, respectively.
\end{proof}

While it is not known how good the representation bounds are with
growing $k$, it is clear that the number of matrices in a Bang-Bang
representation becomes more difficult to control as the determinant of
$M$ shrinks.

\section{Outlook}

The classic embedding problem for $d\leqslant 4$ is in fairly good
shape, and further classes of examples can be studied or refined with
the above tools and criteria. In this context, for $d=4$, the role of
algebraic models will become important, as is visible with symmetric
versus doubly stochastic generators; compare
Example~\ref{ex:doubly-stoch}. Various further classes for $d=4$ are
worth looking at, and a revised systematic treatment of embeddability
for time-reversible classes of Markov matrices seems useful, too.

While results in more than four dimensions are still limited, they are
nonetheless important, also in the context of population
genetics. Here, we only mention recombination, which can be understood
as a Markov process \cite{BB}. Clearly, embeddability is an important
feature again, and some progress will be possible because, in a
natural formulation with posets and lattices, one can work with
triangular Markov matrices, where the diagonal elements are the
eigenvalues, which then are non-negative real numbers.

Clearly, perhaps the most important next step will be the development
of the generalised embedding for $d=4$, as time-inhomogeneous Markov
chains are practically important and unavoidable. While we already
know that we cannot get all theoretically possible determinants this
way, relevant extensions do occur, though many questions remain
open. This is even true in the context of equal-input matrices, where
one needs to relate products with the natural grading, and to
understand precisely which additional cases emerge in this general
scheme.

\section*{Acknowledgements}

It is our pleasure to thank Ellen Baake and Tanja Schindler for
valuable discussions. We thank two anonymous referees for their
thoughtful comments, which helped us to improve the readability of
this paper. This work was supported by the German Research
Foundation (DFG, Deutsche Forschungsgemeinschaft) under \mbox{SFB
  1283/2 2021 -- 317210226}.
\medskip


\begin{thebibliography}{99}
\small

\bibitem{A}
H.~Amann,
\textit{Ordinary Differential Equations},
de Gruyter, Berlin (1990).

\bibitem{BB}
E.~Baake and M.~Baake,
Haldane linearisation done right: Solving the
nonlinear recombination equation the easy way,
\textit{Discr.\ Cont.\ Dynam.\ Syst.\ A} \textbf{36} (2016) 6645--6656;
\texttt{arXiv:1606.05175}.

\bibitem{BSch}
M.~Baake and U.~Schl\"{a}gel,
The Peano--Baker series,
\textit{Proc.\ Steklov Inst.\ Math.} \textbf{275} (2011) 167--171;
\texttt{arXiv:1011.1775}.

\bibitem{BS}
M.~Baake and J.~Sumner,
Notes on Markov embedding,
\textit{Lin.\ Alg.\ Appl.} \textbf{594} (2020) 262--299;
\texttt{arXiv:1903.08736}.

\bibitem{BS2}
M.~Baake and J.~Sumner,
On equal-input and monotone Markov matrices,
\textit{J.\ Appl.\ Probab.} \textbf{54} (2022) 460--492;
\texttt{arXiv:2007.11433}.

\bibitem{Carette}
P.~Carette,
Characterizations of embeddable $3\times 3$ stochastic
matrices with a negative eigenvalue,
\textit{New York J.\ Math.} \textbf{1} (1995) 120--129.

\bibitem{CFR-K3}
M.~Casanellas, J.~Fern\'{a}ndez-S\'{a}nchez and J.~Roca-Lacostena,
Embeddability of Kimura 3{\ts}S{\ts}T Markov matrices,
\textit{J.\ Theor.\ Biol.} \textbf{445} (2018) 128--135.

\bibitem{CFR-strand}
M.~Casanellas, J.~Fern\'{a}ndez-S\'{a}nchez and J.~Roca-Lacostena,
An open set of $4 {\times} 4$ embeddable matrices whose principal 
logarithm is not a Markov generator,
\textit{Lin.\ Multilin.\ Alg.} \textbf{70} (2022) 3768--3779; 
\texttt{arXiv:2005.00821}.

\bibitem{CFR-K2}
M.~Casanellas, J.~Fern\'{a}ndez-S\'{a}nchez and J.~Roca-Lacostena,
Embeddability and rate identifiability of Kimura 2-parameter matrices,
\textit{J.\ Math.\ Biol.} \textbf{80} (2020) 995--1019;
\texttt{arXiv:1902.08555}.

\bibitem{CFR}
M.~Casanellas, J.~Fern\'{a}ndez-S\'{a}nchez and J.~Roca-Lacostena,
The embedding problem for Markov matrices,
\textit{Public.\ Matem.} \textbf{67} (2023) 411--445;
\texttt{arXiv:2005.00818}.

\bibitem{CS}
L.~Cooper and J.~Sumner,
Uniformization stable Markov models and their Jordan algebraic
structure, \textit{SIAM J.\ Matrix Anal.\ Appl.}
\textbf{44} (2023) 1822--1851;
\texttt{arXiv:2105.03558}.

\bibitem{Cox}
C.J.~Cox, P.G.~Foster, R.P.~Hirt, S.R.~Harris and T.M.~Embley,
The archaebacterial origin of eucaryotes,
\textit{Proc.\ Nat.\ Acad.\ Sci.} \textbf{105} (2008) 20356--20361.
  
\bibitem{Culver}
W.J.~Culver,
On the existence and uniqueness of the real logarithm
of a matrix, \textit{Proc.\ Amer.\ Math.\ Soc.} 
\textbf{17} (1966) 1146--1151.

\bibitem{Cuth}
J.R.~Cuthbert,
On uniqueness of the logarithm for Markov semi-groups,
\textit{J.\ London Math.\ Soc.} \textbf{4} (1972) 623--630.

\bibitem{Cuth73}
J.R.~Cuthbert,
The logarithm function for finite-state Markov semi-groups,
\textit{J.\ London Math.\ Soc.} \textbf{6} (1973) 524--532.

\bibitem{Darriba}
D.~Darriba, G.~Taboada, R.~Doallo and D.~Prosada,
\textsf{jModelTest2}: more models, new heuristics and parallel computing,
\textit{Nature Methods} \textbf{9} (2012) 772.

\bibitem{Davies}
E.B.~Davies,
Embeddable Markov matrices,
\textit{Electronic J.\ Probab.} \textbf{15} (2010)
paper 47, 1474--1486;
\texttt{arXiv:1001.1693}.

\bibitem{Elfving}
G.~Elfving,
Zur Theorie der Markoffschen Ketten,
\textit{Acta Soc.\ Sci.\ Fennicae}
\textbf{A2} (1937) 1--17.

\bibitem{Fels}
J.~Felsenstein,
Maximum likelihood and minimum-steps methods for estimating
evolutionary trees from data of discrete characters,
\textit{Syst.\ Biol.} \textbf{22} (1973) 240--249.

\bibitem{Fryd80a}
H.~Frydman,
The embedding problem for Markov chains with three states,
\textit{Math.\ Proc.\ Cambridge Philos. Soc.}
\textbf{87} (1980) 285--294.

\bibitem{Fryd80b}
H.~Frydman,
A structure of the Bang-Bang representation for 
$3{\times}3$  embeddable matrices,
\textit{Z.\ Wahrscheinlichkeitsth.\ Verw.\ Geb.}
\textbf{53} (1980) 305--316.

\bibitem{Fryd83}
H.~Frydman,
On a number of Poisson matrices in Bang-Bang representations
for $3{\times}3$ embeddable matrices,
\textit{J.\ Multivar.\ Anal.} \textbf{13} (1983) 464--472.

\bibitem{FS}
H.~Frydman and B.~Singer,
Total positivity and the embedding problem for Markov chains,
\textit{Math.\ Proc.\ Cambridge Philos. Soc.}
\textit{86} (1979) 339--344.

\bibitem{Gant}
F.R.~Gantmacher,
\textit{Matrizentheorie}, Springer, Berlin (1986).

\bibitem{Good}
G.S.~Goodman,
An intrinsic time for non-stationary finite Markov chains,
\textit{Z.\ Wahrscheinlichkeitsth.\ Verw.\ Geb.}
\textbf{16} (1970) 165--180.

\bibitem{HKY}
M.~Hasegawa, H.~Kishino and T.~Yano,
Dating of the human-ape splitting by a molecular clock of
mitochondrial DNA,
\textit{J.\ Mol.\ Evol.} \textbf{22} (1985) 160--174.

\bibitem{Higham}
N.J.~Higham, 
\textit{Functions of Matrices:\ Theory and Computation},
SIAM, Philadelphia, PA (2008).

\bibitem{HP}
E.~Hille and R.S.~Phillips,
\textit{Functional Analysis and Semi-Groups},
rev.\ ed., Amer.\ Math.\ Society, Providence, RI (1957).

\bibitem{HJ}
R.A.~Horn and C.R.~Johnson,
\textit{Matrix Analysis}, 2nd ed.,
Cambridge University Press, Cambridge (2013).

\bibitem{Jay}
V.~Jayaswal, F.~Ababneh, L.S.~Jermiin and J.~Robinson,
Reducing model complexity of the general Markov model
of evolution, \textit{Mol.\ Biol.\ Evol.}
\textbf{28} (2011) 3045--3059.

\bibitem{Jay2}
V.~Jayaswal, T.K.~Wong, J.~Robinson, L.~Poladian and L.S.~Jermiin,
Mixture models of nucleotide sequence evolution that account for
heterogeneity in the substitution process across sites and
across lineages,
\textit{Syst.\ Biol.} \textbf{63} (2014) 726--742.

\bibitem{Joh73}
S.~Johansen,
The Bang-Bang problem for stochastic matrices,
\textit{Z.\ Wahrscheinlichkeitsth.\ Verw.\ Geb.}
\textbf{26} (1973) 191--195.
\clearpage

\bibitem{Joh}
S.~Johansen,
Some results on the imbedding problem for finite Markov chains,
\textit{J.\ London Math.\ Soc.} \textbf{8} (1974) 345--351.

\bibitem{JR}
S.~Johansen and F.L.~Ramsey,
A Bang-Bang representation for $3{\times}3$ embeddable 
stochastic matrices,
\textit{Z.\ Wahrscheinlichkeitsth.\ Verw.\ Geb.}
\textbf{47} (1979) 107--118.

\bibitem{JC}
T.H.~Jukes and C.R.~Cantor, 
\textit{Evolution of Protein Molecules},
Academic Press, New York (1969).

\bibitem{Kim80}
M.~Kimura,
A simple method for estimating evolutionary rates of base substitutions 
through comparative studies of nucleotide sequences,
\textit{J.\ Mol.\ Evol.} \textbf{16} (1980) 111--120.

\bibitem{Kim81}
M.~Kimura,
Estimation of evolutionary distances between homologous nucleotide
sequences, \textit{Proc.\ Nat.\ Acad.\ Sci.} \textbf{78} (1981) 454--458.

\bibitem{King}
J.F.C.~Kingman,
The imbedding problem for finite Markov chains,
\textit{Z.\ Wahrscheinlichkeitsth.\ Verw.\ Geb.}
\textbf{1} (1962) 14--24.

\bibitem{Lang}
S.~Lang,
\textit{Real and Functional Analysis},
3rd ed., Springer, New York (1993).

\bibitem{Norris}
J.R.~Norris,
\textit{Markov Chains}, reprint,
Cambridge University Press, Cambridge (2005).

\bibitem{Rudin}
W.~Rudin,
\textit{Functional Analysis}, 2nd ed.,
McGraw-Hill, New York (1991).

\bibitem{Speak}
J.M.O.~Speakman,
Two Markov chains with a common sceleton,
\textit{Z.\ Wahrscheinlichkeitsth.\ Verw.\ Geb.}
\textbf{7} (1967) 224.

\bibitem{Steel}
M.~Steel,
\textit{Phylogeny{\ts\ts}---{\ts}Discrete and Random Processes
in Evolution}, SIAM, Philadelphia, PA (2016).

\bibitem{TN}
K.~Tamura and M.~Nei,
Estimation of the number of nucleotide substitutions in the
control region of mitochondrial {DNA} in humans and chimpanzees,
\textit{Mol.\ Biol.\ Evol.} \textbf{10} (1993) 512--526.

\bibitem{W}
W.~Walter,
\textit{Ordinary Differential Equations}, GTM 182,
Springer, New York (1998).

\end{thebibliography}
\end{document}